\numberwithin{equation}{section}
\begin{document}

\fancyhf{}

\fancyhead[EC]{W. LI}

\fancyhead[EL]{\thepage}

\fancyhead[OC]{Maximal functions associated with nonisotropic dilations of hypersufaces in  $\mathbb{R}^3$}

\fancyhead[OR]{\thepage}

\renewcommand{\headrulewidth}{0pt}
\renewcommand{\thefootnote}{\fnsymbol {footnote}}

\title{\textbf{Maximal functions associated with nonisotropic dilations of hypersufaces in  $\mathbb{R}^3$}
\footnotetext {{}{2000 \emph{Mathematics Subject
 Classification}: 42B20, 42B25}}
\footnotetext {{}\emph{Key words and phrases}:Maximal function, nonisotropic dilations, hypersurfaces, cinematic curvature condition, local smoothing estimate.}} \setcounter{footnote}{0}
\author{
Wenjuan Li}

\date{}
\maketitle

\begin{abstract}
The goal of this article is to establish $L^p$-estimates for maximal functions associated with nonisotropic dilations of hypersurfaces in $\mathbb{R}^3$.
Several results have already been obtained by Greenleaf, Iosevich-Sawyer, Iosevich-Sawyer-Seeger, Ikromov-Kempe-M\"{u}ller and Zimmermann, but for some situations such as the hypersurface parameterized as the graph of a smooth function $\Phi(x_1,x_2)=x_2^d(1+\mathcal{O}(x_2^m))$ near the origin, where $d\geq 2$, $m\geq 1$, and associated dilations $\delta_t(x)=(t^ax_1,tx_2,t^dx_3)$ for an arbitrary real number $a>0$, the question was open until recently. In fact, such problems do arise already in lower dimensions. For instance, we consider the curve $\gamma(x)=(x,x^2(1+\phi(x)))$ and associated dilations $\delta_t(x)=(tx_1,t^2x_2)$. If $\phi\equiv 0$, then the corresponding maximal function is the maximal function along parabolas in the plane, which plays an important role in the study of singular Randon transforms, and which is very well understood due to the work by Nagel-Riviere-Wainger and others. If $\phi\neq 0$ and $\phi(x)=\mathcal{O}(x^m)$, $m\geq 1$, the problem was open until recently, however, the corresponding maximal function shows features related to the Bourgain circular maximal function, which required deep ideas and local smoothing estimates established by Mockenhaupt-Seeger-Sogge  for Fourier integral operators satisfying the so-called "cinematic curvature" condition. However, we observe that in the study of $\mathcal{M}$ related to the mentioned curve $\gamma(x)$ and associated dilations, we will consider a family of corresponding Fourier integral operators which fail to satisfy the "cinematic curvature condition" uniformly, which means that classical local smoothing estimates could not be directly applied to our problem. In this article, we develop new ideas  in order to overcome the above difficulty and finally establish sharp $L^p$-estimates for the maximal function related to the curve $\gamma(x)$ with associated dilations in the plane. Later, we generalize the result to curves of finite type $d$ ($d\geq 2$) and associated dilations $\delta_t(x)=(tx_1,t^dx_2)$. Furthermore, we also obtain $L^p$-estimates for the maximal function related to the mentioned hypersurface $\Phi(x_1,x_2)$ in $\mathbb{R}^3$  with associated dilations. Moreover,  by an alternative approach, we also get  $L^p$-estimates for some particular classes of maximal functions in $\mathbb{R}^3$ established earlier by Greenleaf, Iosevich-Sawyer, Ikromov-Kempe-M\"{u}ller and Zimmermann.
\end{abstract}

\tableofcontents

\theoremstyle{plain} 
\newtheorem{thm}{\indent\sc Theorem}[section] 
\newtheorem{lem}[thm]{\indent\sc Lemma}
\newtheorem{cor}[thm]{\indent\sc Corollary}
\newtheorem{prop}[thm]{\indent\sc Proposition}
\newtheorem{claim}[thm]{\indent\sc Claim}
\theoremstyle{definition} 
\newtheorem{dfn}[thm]{\indent\sc Definition}
\newtheorem{rem}[thm]{\indent\sc Remark}
\newtheorem{ex}[thm]{\indent\sc Example}
\newtheorem{notation}[thm]{\indent\sc Notation}
\newtheorem{assertion}[thm]{\indent\sc Assertion}
%
%
\numberwithin{equation}{section}
\renewcommand{\proofname}{\indent\sc Proof.} 
\def\C{\mathbb{C}}
\def\R{\mathbb{R}}
\def\Rn{{\mathbb{R}^n}}
\def\M{\mathbb{M}}
\def\N{\mathbb{N}}
\def\Q{{\mathbb{Q}}}
\def\Z{\mathbb{Z}}
\def\F{\mathcal{F}}
\def\L{\mathcal{L}}
\def\S{\mathcal{S}}
\def\supp{\operatorname{supp}}
\def\essi{\operatornamewithlimits{ess\,inf}}
\def\esss{\operatornamewithlimits{ess\,sup}}
\def\dlim{\displaystyle\lim}

\section[Introduction]{Introduction.}

The starting point for intensive studies associated with averages over low dimensional sets is based on an
earlier bound of Stein \cite{stein3} from 1976 on the spherical maximal function
\begin{equation*}
\mathcal{M}f(x):=\sup_{t>0}|M_tf(x)|,
\end{equation*}
where $M_t$ are the spherical averaging operators
\begin{equation*}
\mathcal{M}_tf(x)=\int_{|y|=1}f(x-ty)d\sigma(y),
\end{equation*}
and $d\sigma$ is normalized surface measure on the sphere $S^{n-1}$. Then Stein's fundamental result shows that for $n\geq 3$,  the corresponding spherical maximal operator is bounded on $L^p(\mathbb{R}^n)$ for every $p>n/(n-1)$.  The
analogous result in dimension two was later proved by J. Bougain
\cite{JB}.

Then one turned to deal with generalizations of $\mathcal{M}_t$ defined as before, i.e. the sphere is replaced by a more general smooth hypersurface $S \in {\mathbb{R}}^n$. Let $\rho\in C_0^{\infty}(S)$ be a smooth non-negative function with compact support. Then the associated maximal operator is defined as
\begin{equation}\label{maximalfunsur}
\mathcal{M}f(x):=\sup_{t>0}\left|\int_{S}f(x-ty)\rho(y)d\mu(y)\right|, \hspace{0.2cm} x\in \mathbb{R}^n,
\end{equation}
where $d\mu$ denotes the surface measure on $S$. Greenleaf \cite{Greenleaf} proved that $\mathcal{M}$ is bounded on $L^p(\mathbb{R}^n)$ if $n\geq 3$ and $p>(k+1)/k$, provided $S$ has at least $k\geq 2$ non-vanishing principal curvatures and $S$ is starshaped with respect to the origin.

A fundamental and still largely open problem is to characterize the $L^p$ boundedness properties of the maximal operator associated to hypersurface where the Gaussian curvature at some points is allowed to vanish. Completely understood is is only the $2$-dimensional case, i.e. the case of finite type curves in $\mathbb{R}^2$ studied by A. Iosevich in \cite{I}.

Many authors put a lot of effort on the development of this subject and obtained partial results in high dimension. Sogge and Stein \cite{ss} showed that if the Gaussian curvature of S does not vanish of infinite order at any point of S, then there exists a $p_0(S)<\infty$ so that the maximal function is bounded on $L^p$, $p>p_0(S)$.
However, the exponent $p_0(S)$ given in that paper is in general far from being optimal. In order to find the smallest value of $p_0(S)$, one will put more restriction on the surfaces. It is worth mentioning that in 1992, Sogge \cite{Sogge} employed the local smoothing estimate to get the $L^p$-boundedness $p>2$ for $\mathcal{M}$,
 where the surface has at least one non-vanishing principal curvature everywhere in $\mathbb{R}^n$ $(n\geq 2)$. The perhaps best understood class in higher dimension is the class of convex hypersurface of finite line type, and the related work in this settings included Cowling and Mauceri \cite{cw1} \cite{cw2}, Nagel, Seeger and Wainger\cite{nsw},  Iosevich
 and Sawyer \cite{ios1} \cite{ios2}, and Iosevich, Sawyer and Seeger \cite{Iss}. In particular, Iosevich and Sawyer proved in \cite{ios2} sharp $L^p$-estimates for  maximal functions related to the surface which are given by  smooth convex functions of finite line type for $p>2$. In 2010, Ikoromov, Kempe and M\"{u}ller \cite{IKMU}
 discovered a connection between the $L^p$-boundedness ($p>max\{h(S),2\}$) of $\mathcal{M}$ and the height $h(S)$ of a smooth, compact hypersurface $S$ of finite type in $\mathbb{R}^3$ satisfying the transversality assumption (i.e. for every $x\in S$, the affine tangent plane $x+T_xS$ to $S$ through $x$ does not pass through the origin in $\mathbb{R}^3$) on $S$.
  Recently, Zimmermann in his thesis \cite{Zimmermann} proved that  maximal averages over analytic hypersurfaces passing through the origin in general behave more regularly than the maximal averages over hypersurfaces satisfying
the transversality condition.

Maximal operators defined by averages over curves or surfaces with nonisotropic dilations have also been extensively considered, i.e. the dilation $ty$ appearing in (\ref{maximalfunsur}) is replaced by  $\delta_t(y)=(t^{a_1}y_1, t^{a_2}y_2,\cdots, t^{a_n}y_n)$, where $a_j>0$, $j=1,2,\cdots,n$. In 1970, in the study of a problem related
to Poisson integrals for symmetric spaces, Stein raised the question as to when the operator $\mathcal{M}_{\gamma}$ defined by
$$\mathcal{M}_{\gamma}f(x)=\sup_{h>0}\frac{1}{h}\int_{0}^h|f(x-\gamma(t))|dt,$$
where $\gamma(t)=(A_1t^{a_1},A_2t^{a_2},\cdots,A_nt^{a_n})$ and $A_1,A_2,\cdots,A_n$ are real, $a_i>0$, is bounded on $L^p(\mathbb{R}^n)$.
Nagel, Riviere and Wainger \cite {NRW} showed that the $L^p$-boundedness of $\mathcal{M}$ holds for  $p>1$ for the special case $\gamma(t)=(t,t^2)$ in $\mathbb{R}^2$ and Stein \cite{stein} for  homogeneous curves in $\mathbb{R}^n$. More general maximal operators involving two-parameter dilations related to homogeneous curves  have studied by Marletta and Ricci \cite{MRicci}. For maximal functions $\mathcal{M}$ associated with nonisotropic dilations in higher dimensions, one can see the work by Greenleaf \cite{Greenleaf}, Sogge and Stein \cite{ss}, Iosevich and Sawyer \cite{ios2}, Ikoromov, Kempe and M\"{u}ller \cite{IKMU}, Zimmermann \cite{Zimmermann}.

\textbf{Acknowledgement} The author would like to thank her supervisor Professor D. M\"{u}ller for his constant support and encouragement. She has learned a lot of mathematics and she has immensely profited from her mathematical discussions  with Professor D. M\"{u}ller.

\subsection[Outline of the problem and statement of main theorems in the plane]{Outline of the problem and statement of main theorems in the plane.}

\noindent The maximal operator along curves in $\mathbb{R}^2$ is defined by
\begin{equation*}
\mathcal{M}f(x)=\sup_{t>0}|f*\mu_t(x)|,
\end{equation*}
where $\mu$ is a arc length measure supported on the curve and $\mu_t$ is the same measure dilated by $t>0$ and appropriately normalized.

Two different situations will arise.  (1) A maximal operator of the first type: the curve $x_2=\gamma(x_1)$ is homogeneous under the given dilations
(such as $\gamma(x_1)=x_1^d, d>0,d\neq 1$).  Basically one obtains the same operator by restricting the supremum to $t=2^j$. Under appropriate assumptions
on the curve, one can prove that $\mathcal{M}$ is bounded on $L^p$ for $p>1$. If $d=2$, then the corresponding maximal function is the maximal function along parabolas in the plane, which plays an important role in the study of singular Randon transforms, and which is very well understood due to the work by Nagel-Riviere-Wainger \cite{NRW} and \cite{stein}.
(2) A maximal operator of the second type: the curve $x_2=\gamma(x_1)$ is not homogeneous under the given dilations. Then the various $\mu_t$ are supported on
 different curves and the problem becomes much more complicated. We have the following subcases:

\noindent(2a) if the dilations are isotropic and the Gaussian curvature of
 the curve does not vanish, then $\mathcal{M}$ is bounded on $L^p$ for $p>2$, see \cite{JB} and \cite{mss2}. The range of $p$ must be further restricted if
 the curvature allows to vanish at some point \cite{I} unless one introduces a damping factor \cite{Marletta};

 \noindent(2b) if the dilations are non-isotropic and the
 curve with non-vanishing Gaussian curvature away from the origin is homogeneous,  Marletta and Ricci \cite{MRicci} combine Bourgain's theorem with a Littlewood-Paley
 decomposition to show that $\mathcal{M}$ is bounded on $L^p$ if and only if $p>2$;

A natural question to ask is whether there exists some positive constant $p_0>1$ such that $\mathcal{M}$ is bounded on $L^p$ for $p>p_0$ in the following setting:

 \noindent(2c) if the dilations
are non-isotropic and the curve $x_2=\gamma(x_1)$ is non-homogeneous and of finite type, such as $\gamma(x_1)=x_1^d(1+\mathcal{O}(x_1^m))$, where $d\geq 2$ and $m\geq 1$? The problem was open until recently, however, the corresponding maximal function shows features related to the Bourgain circular maximal function, which required deep ideas and $L^4$-techniques. An alternative approach was later given by Mockenhaupt, Seeger and Sogge,  who established local smoothing estimates for Fourier integral operators satisfying the so-called "cinematic curvature" condition. However, we observe that in the study of $\mathcal{M}$ related to the mentioned curve $\gamma(x)$ and associated dilations, in our situation, we will consider a family of corresponding Fourier integral operators which fail to satisfy the "cinematic curvature condition" uniformly, which means that classical local smoothing estimates could not be directly applied to our problem. In this article, we develop new ideas  in order to overcome the above difficulty and finally establish sharp $L^p$-estimates for the maximal function related to the curve $\gamma(x)$ with associated dilations in the plane.

Next, we list our main results in the plane.

Let $\phi \in C^{\infty}(I,\mathbb{R})$, where $I$ is a bounded interval containing the origin, and
\begin{equation}\label{planeimporcondi}
\phi(0)\neq 0; \hspace{0.1cm}\phi^{(j)}(0)= 0, \hspace{0.1cm}j=1,2,\cdots,m-1; \hspace{0.1cm}\phi^{(m)}(0)\neq 0 \hspace{0.1cm}(m\geq 1).
 \end{equation}

\begin{thm}\label{planetheorem}
Define the maximal operator
\begin{equation}
\mathcal{M}f(y):=\sup_{t>0}\left|\int_{\mathbb{R}}f(y_1-tx,y_2-t^2x^2\phi(x))\eta(x)dx\right|,
\end{equation}
where $\eta(x)$ is supported in a sufficiently small neighborhood of the origin. If $\phi$ satisfies (\ref{planeimporcondi}), then for $p>2$,
there exists a constant
$C_p$ such that the following inequality holds true:
\begin{equation}\label{equ:planem=1}
\|\mathcal{M}f\|_{L^{p}}\leq  C_p \|f\|_{L^{p}}, \hspace{0.5cm}f\in C_0^{\infty}(\mathbb{R}^2).
\end{equation}
\end{thm}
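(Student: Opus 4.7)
The plan is a three-level decomposition (dyadic in $t$, Littlewood--Paley in frequency, dyadic in the integration variable $x$) followed by local smoothing estimates for a family of Fourier integral operators. As flagged in the introduction, the main difficulty is that this family does not satisfy the cinematic curvature condition uniformly, so classical Mockenhaupt--Seeger--Sogge bounds cannot be applied off the shelf; I would compensate for the non-uniformity via an interpolation against a uniform $L^2$ estimate obtained by stationary phase.

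\emph{Step 1 (reduction to unit scale in $t$).} Write
\begin{equation*}
\mathcal{M}f(y)\le\sum_{\ell\in\mathbb{Z}}\sup_{t\in[2^\ell,2^{\ell+1}]}|A_tf(y)|,\qquad A_tf(y):=\int f(y-\delta_t\gamma(x))\eta(x)\,dx,
\end{equation*}
with $\gamma(x)=(x,x^2\phi(x))$. An $x$-substitution combined with the anisotropic dilation $D_s(y_1,y_2)=(sy_1,s^2y_2)$ conjugates each dyadic piece into a model operator on $t\in[1,2]$ in which $\phi$ is replaced by $\phi(2^{-\ell}\,\cdot)$. For $\ell\to+\infty$ this profile tends on the relevant support to the constant $\phi(0)$, formally recovering the exact parabola problem solved by Nagel--Rivi\`ere--Wainger. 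It therefore suffices to prove a uniform $L^p$ bound, $p>2$, for the unit-scale model maximal operator.

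\emph{Step 2 (frequency decomposition and Sobolev embedding in $t$).} Perform a Littlewood--Paley decomposition $f=\sum_{k\ge0}P_kf$ with $\widehat{P_k f}$ supported in $|\xi|\sim 2^k$. The low-frequency contribution is dominated pointwise by the Hardy--Littlewood maximal function. For $k\ge 1$, exploit
\begin{equation*}
\sup_{t\in[1,2]}|A_tP_kf|^p\le p\int_1^2|A_tP_kf|^{p-1}\,|\partial_tA_tP_kf|\,dt
\end{equation*}
and H\"older's inequality to reduce the maximal estimate to the space--time bound $\|A_tP_kf\|_{L^p(\mathbb{R}^2\times[1,2])}+2^{-k}\|\partial_tA_tP_kf\|_{L^p(\mathbb{R}^2\times[1,2])}\lesssim 2^{-k\delta(p)}\|P_kf\|_p$ with some $\delta(p)>0$.

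\emph{Step 3 (decomposition in $x$, local smoothing, and interpolation).} Write $\phi(x)=\phi(0)+x^m\psi(x)$ and decompose $\eta=\eta_0+\sum_{n\ge1}\eta_n$, with $\eta_n$ supported in $|x|\sim 2^{-n}$. On the $n$-th piece the substitution $x=2^{-n}u$ produces a rescaled family of FIOs $\{B^{(n)}_t\}$ with phase
\begin{equation*}
\Phi_n(u,t,\xi)=tu\xi_1+t^2u^2\bigl(\phi(0)+2^{-nm}u^m\psi(2^{-n}u)\bigr)\xi_2,
\end{equation*}
i.e.\ a $2^{-nm}$-sized perturbation of the exact parabola phase. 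For each fixed $n$ the cinematic curvature condition is satisfied and the Mockenhaupt--Seeger--Sogge local smoothing estimate yields the desired gain $2^{-k\delta(p)}$; the central obstacle is that the cinematic curvature constants degenerate as $n\to\infty$, so the MSS bound is not uniform in $n$. My plan is to interpolate, for $p>2$, between this non-uniform local smoothing bound and the uniform $L^2$ decay $\|A_tP_kf\|_{L^2(\mathbb{R}^2\times[1,2])}\lesssim 2^{-k/2}\|P_kf\|_2$ obtained from Plancherel and the non-degenerate stationary phase in $u$ on $\Phi_n$; choosing the interpolation parameter so that the $n$-dependence becomes summable, and combining with the $2^{-n}$ factor produced by the $L^p$ scaling identity associated with the rescaling $x=2^{-n}u$, one obtains a geometric series in $n$. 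Summing over $n$ gives Step 2's space--time bound; summing over $k$ gives the unit-scale maximal bound; summing over $\ell$ from Step 1 completes the estimate for $\sup_{t>0}$. The technical heart of the argument is the interpolation step that overcomes the failure of uniform cinematic curvature, which is exactly the new idea announced in the introduction.
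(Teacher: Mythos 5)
Your overall architecture (dyadic localization in the integration variable, Littlewood--Paley in frequency, Sobolev embedding in $t$, local smoothing for the resulting FIOs, interpolation with $L^2$/$L^\infty$ bounds) matches the paper's, but the step you delegate to interpolation is precisely where the paper has to do its real work, and as proposed it does not close. You invoke the Mockenhaupt--Seeger--Sogge theorem at each fixed $n$ as a black box and then claim that "choosing the interpolation parameter so that the $n$-dependence becomes summable" finishes the argument. The classical local smoothing theorem gives no control whatsoever on how its constant depends on the lower bound for the cinematic curvature, which on your $n$-th piece degenerates like $2^{-nm}$; without a quantitative bound (at least polynomial, and with a sufficiently small exponent) there is no admissible choice of interpolation parameter. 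Indeed, for $p$ near $4$ the interpolation weight on the $L^4$ bound is close to $1$, so the unknown constant enters at full strength and must be beaten by the single factor $2^{-n}$ coming from the measure of the dyadic piece; this forces an explicit estimate of the form proved in the paper, namely $\|\mathcal{F}_{\lambda}^{\delta}\|_{L^4\to L^4}\lesssim \lambda^{1/8+\epsilon_1}\delta^{-(1/2+\epsilon_2)}$ (Theorems \ref{L^4therom} and \ref{planetheorem2}), obtained by reworking the entire machinery of \cite{mss2,mss} — the $\tau$-decomposition, the angular decomposition adapted to the flattened cone $\{(\xi,\delta q(\xi))\}$, the overlap lemma, and a Kakeya-type maximal estimate with the sharp $\delta^{-1/2}$ loss — and already the subject of Kung's quantitative work \cite{Kungpaper}. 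Your uniform $L^2$ ingredient also misses a structural point the paper exploits: after the parabolic rescaling the $t$-dependence of the phase sits entirely in the $O(\delta)$ term, so $\partial_t(\mathrm{phase})=O(\delta 2^j)$, and this is what yields the estimate $\|\widetilde{M_{j}^{k,1}}f\|_{L^p}\lesssim 2^{-(j\wedge k)/p}\|f\|_{L^p}$ of Lemma \ref{lemmaL^2}, which disposes of the whole range $j\lesssim k$ and supplies the interpolation endpoints; the bare stationary-phase decay $2^{-k/2}$ for the space-time $L^2$ norm gives no maximal-function gain after the Sobolev embedding.

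Two further corrections. First, your Step 1 is not a valid reduction: since $\delta_{2^{\ell}s}\gamma(x)=\delta_{2^{\ell}}\delta_s\gamma(x)$, conjugating the block $t\in[2^{\ell},2^{\ell+1}]$ by the parabolic dilation reproduces the same unit-scale operator with the same $\phi$ (it does not become $\phi(2^{-\ell}\cdot)$ unless you also rescale $x$, which blows up the support of $\eta$), and a uniform bound on identical dyadic pieces cannot be summed over $\ell\in\mathbb{Z}$. The passage from $\sup_{t>0}$ to $\sup_{t\in[1,2]}$ must be done after frequency localization, via the nonisotropic Littlewood--Paley square function as in Lemma \ref{Appendix}; the small parameter $\delta=2^{-k}$ is generated solely by the dyadic decomposition in $x$ (your $n$), which you introduce only in Step 3. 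Second, even granting a polynomial dependence of the local smoothing constant on the curvature parameter, one must verify that the resulting exponents make the double sum over the frequency parameter and over $n$ converge for every $p>2$; the paper does this bookkeeping explicitly in the two regimes $j\le 2k$ (resp.\ $j\le 9km$) and $j>2k$ (resp.\ $j>9km$), and it is exactly the size of the exponent $-(1/2+\epsilon_2)$ in $\delta$ that makes it work. So the missing idea is not an interpolation trick but the quantitative $\delta$-dependent $L^4$ local smoothing estimate itself.
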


\begin{rem} If $m\rightarrow \infty$,  then we obtain the maximal operator of the first type described above which is bounded on $L^p$ for all $p>1$.
\end{rem}

\begin{rem} The above theorem is sharp if $1\leq m<\infty$, which follow from the proposition  given in Appendix of this article.
\end{rem}

Next we will generalize Theorem \ref{planetheorem} to the curve of finite type $d$ $(\geq 2)$ associated with the function $\phi$ defined by (\ref{planeimporcondi}).

\begin{thm}\label{planetheoremfinite}
Define the maximal operator
\begin{equation}
\mathcal{M}f(y):=\sup_{t>0}\left|\int_{\mathbb{R}}f(y_1-tx,y_2-t^dx^d\phi(x))\eta(x)dx\right|,
\end{equation}
where $\eta(x)$ is supported in a sufficiently small neighborhood of the origin. If $\phi$ satisfies (\ref{planeimporcondi}) and $d\geq 2$, then
for $p>2$, there exists a  constant
$C_p$ such that
\begin{equation}\label{equ:planetheoremfinite}
\|\mathcal{M}f\|_{L^{p}}\leq  C_p\|f\|_{L^{p}}, \hspace{0.5cm}f\in C_0^{\infty}(\mathbb{R}^2).
\end{equation}
\end{thm}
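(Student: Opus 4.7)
The plan is to adapt the proof of Theorem~\ref{planetheorem} to the finite-type setting by combining the single-scale machinery developed for $d=2$ with an additional Littlewood--Paley decomposition in the $x$-variable that absorbs the higher-order vanishing of the curve's curvature at the origin. Write $\eta=\eta_0+\sum_{j\geq 1}\eta_j$ with $\eta_0$ supported away from the origin and $\eta_j$ supported in $|x|\sim 2^{-j}$, so that $\mathcal{M}f\leq \mathcal{M}^{0}f+\sum_{j\geq 1}\mathcal{M}^{j}f$. The piece $\mathcal{M}^{0}$ corresponds to a curve of non-vanishing Gaussian curvature and is bounded on $L^p$ for $p>2$ by standard Bourgain--Sogge and Mockenhaupt--Seeger--Sogge arguments.

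For each $j\geq 1$, the anisotropic rescaling $x=2^{-j}u$, $t=2^{j}\tau$ --- which is adapted to the dilation $\delta_t$ --- yields
\[
\mathcal{M}^{j}f(y)=2^{-j}\sup_{\tau>0}\Bigl|\int f\bigl(y_1-\tau u,\,y_2-\tau^d u^d\phi_j(u)\bigr)\,\widetilde{\eta}_j(u)\,du\Bigr|,
\]
where $\phi_j(u):=\phi(2^{-j}u)$ and $\widetilde{\eta}_j(u):=\eta(2^{-j}u)\chi(u)$ for a fixed bump $\chi$ on $|u|\sim 1$. Since $\phi_j^{(k)}(0)=2^{-jk}\phi^{(k)}(0)$, each $\phi_j$ still satisfies \eqref{planeimporcondi} with the same $m$, and the family $\{\phi_j\}$ is uniformly bounded in every $C^N$-seminorm. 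Because $\widetilde{\eta}_j$ is supported away from the origin, the curve $(u,u^d\phi_j(u))$ has non-vanishing curvature on $\mathrm{supp}\,\widetilde{\eta}_j$. Applying the proof of Theorem~\ref{planetheorem}, with the exponent $d=2$ replaced by $d\geq 2$ throughout --- dyadic decomposition in $\tau$, reduction to $\tau\in[1,2]$, and the refined local smoothing estimate for the resulting family of Fourier integral operators which circumvents the non-uniform validity of the cinematic curvature condition --- delivers $\|\mathcal{N}_j f\|_p\leq C_p\|f\|_p$ for $p>2$, where $\mathcal{N}_j$ denotes the inner supremum and $C_p$ depends only on finitely many $C^N$-seminorms of $\phi_j$, hence is uniform in $j$. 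Summing $\|\mathcal{M}^{j}f\|_p\leq C_p2^{-j}\|f\|_p$ over $j\geq 0$ yields \eqref{equ:planetheoremfinite}.

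The main obstacle is the uniformity in $j$ of the single-scale estimate above. Under the rescaling, the non-homogeneous perturbation inside $\phi_j$ becomes $c_m2^{-jm}u^m$, so the cinematic curvature of the family of curves $\{\delta_\tau(u,u^d\phi_j(u)):\tau>0\}$ degenerates at rate $2^{-jm}$ as $j\to\infty$; a black-box application of Mockenhaupt--Seeger--Sogge would therefore produce a single-scale constant growing as a positive power of $2^{j}$, destroying the summation in $j$. The key point is that the refined decomposition of the Fourier integral operators devised in the proof of Theorem~\ref{planetheorem} in order to handle the failure of the cinematic curvature condition depends on $\phi$ only through finitely many $C^N$-seminorms and extends without essential change to arbitrary $d\geq 2$; verifying that its quantitative output is uniform in $j$, so that the geometric factor $2^{-j}$ closes the sum, is the technically delicate step and the place where the new ideas mentioned in the introduction are essential.
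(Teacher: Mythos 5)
Your outer skeleton is exactly the one the paper uses: a dyadic decomposition of $\eta$ on the scales $|x|\sim 2^{-j}$, an anisotropic rescaling that puts each piece at unit scale with $\phi_j(u)=\phi(2^{-j}u)$, the Jacobian gain $2^{-j}$, and then the single-scale machinery of Theorem \ref{planetheorem} with $2$ replaced by $d$ (the paper does the same reduction via the isometries $T f(x_1,x_2)=2^{(d+1)k/p}f(2^k x_1,2^{dk}x_2)$, which is equivalent to your substitution $t=2^{j}\tau$; your $j$ is the paper's $k$). The problem is that the step you yourself call ``the technically delicate step'' and leave unverified is the actual content of the theorem, and the justification you offer for it is not correct. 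A bound on $\mathcal N_j$ whose constant ``depends only on finitely many $C^N$-seminorms of $\phi_j$'' cannot exist in the form you claim: seminorm upper bounds are insensitive to the fact that $\phi_j^{(m)}(0)=2^{-jm}\phi^{(m)}(0)$, i.e.\ to the degeneration of the cinematic curvature, which is precisely the quantity governing the size of the local smoothing constant. If such a seminorm-only bound held, it would apply equally with the perturbation deleted, and no refined analysis would be needed at all.

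What the paper actually proves is not a uniform single-scale bound but a quantified, non-uniform one, and the summability comes from the interplay of three explicit ingredients: (i) after stationary phase the perturbation carries the factor $\delta'=\delta^{m}=2^{-jm}$ and, for general $d$, the principal term of the phase is no longer $\xi_1^2/(2\xi_2)$ but $\bigl(\frac{1}{d^{d/(d-1)}}-\frac{1}{d^{1/(d-1)}}\bigr)(-d\xi_1^d/\xi_2)^{1/(d-1)}$ with perturbation $\sim \delta^m t^{-m}\bigl(-\xi_1/\xi_2^{(m+1)/(m+d)}\bigr)^{(d+m)/(d-1)}$, which requires the change of variables $t\mapsto \tilde t^{-1/m}$ before the Section~4 analysis applies (Theorem \ref{123}); (ii) the regularity estimates of Lemma \ref{mlemmaL^2} type, $\|\widetilde{M_{j'}^{k,1}}\|_{L^p\to L^p}\lesssim 2^{-(j'\wedge km)/p}$, handle the frequencies $2^{j'}$ with $j'\le 9km$; (iii) for $j'>9km$ the new $L^4$ estimate with constant $\lambda^{1/8+\epsilon_1}\delta'^{-(1/2+\epsilon_2)}$ (Theorem \ref{F_lambda}, Theorem \ref{123}) is interpolated against (ii), and only after using $km<j'/9$ to trade the powers of $2^{km}$ for negative powers of $2^{j'}$ does the frequency sum converge, with the factor $2^{-k}$ then closing the sum over scales. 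None of this quantitative structure appears in your write-up: you assert uniformity in $j$, give an invalid reason for it, and defer the verification. Since that verification is exactly the proof of the theorem, the proposal as it stands has a genuine gap at its core, even though the reduction that precedes it matches the paper.
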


\begin{rem}
If $m\rightarrow \infty$, then $\mathcal{M}$ coincides with the maximal operator of the second type described above.
\end{rem}

\subsection[Outline of the problem and statement of main theorems in $\mathbb{R}^3$]{Outline of the problem and statement of main theorems in $\mathbb{R}^3$.}

Let $S$ be a smooth hypersurface in $\mathbb{R}^n$ with a smooth measure $d\mu$ compactly supported away from the boundary. Given an $n$-tuple $(a_1,a_2,\cdots,a_n)$ of nonnegative real numbers, consider the maximal operator $\mathcal{M}$ defined by
\begin{equation*}
\mathcal{M}f(x)=\sup_{t>0}\left|\int_{S}f(x-\delta_t(y))d\mu(y)\right|,
\end{equation*}
where $\delta_t$ denotes the nonisotropic dilation given by $\delta_t(x)=(t^{a_1}x_1,t^{a_2}x_2\cdots,t^{a_n}x_n)$. Iosevich and Sawyer \cite{ios2} observed that $\mathcal{M}$ often behaves much better than the maximal function with standard dilations due to a "rotational curvature" in the time parameter $t$.  Let $\mathcal{H}$ be a hyperplane and $d(x,\mathcal{H})$ denote the distance from $x$ on $S$ to $\mathcal{H}$. They showed that in the case $a_1=a_2=\cdots=a_{n-1}\neq a_n$, the condition $d(x,\mathcal{H})^{-1}\in L_{loc}^{1/p}(S)$
over all horizontal hyperplanes $\mathcal{H}$ is sufficient for the $L^p$-boundedness of $\mathcal{M}$ when the surface $S$ is given as the graph of a mixed homogeneous function with finite-type level sets. Their point of departure in dealing with $\mathcal{M}$  is the extension of the square function theorem of Sogge and Stein. In their paper, to get the $L^2$-estimate of $\mathcal{M}$ one would mind that $J(\xi):=(\int_{t\sim 1}|\nabla^{\alpha}\widehat{\mu_t}(\xi)|^2dt)^{1/2}\leq C(1+|\xi|)^{-1/2}\gamma(|\xi|)$, where $\alpha=0, 1$, $\gamma$ is bounded and nonincreasing on $[0,\infty)$, and $\sum_{n=0}^{\infty}\gamma(2^n)<\infty$, and $\widehat{\mu_t}(\xi)=\hat{\mu}(\delta_t\xi)$. However, the above method cannot work in our problems. In our paper, to apply the Sobolev embedding for control of the maximal operator $\mathcal{M}$, for example, we need to estimate the absolute value of $\widehat{\mu_t}(\xi)=\int e^{-i\langle\delta_t\xi,(x_1,x_2, x_2^2+x_2^m)\rangle}\eta_2(x_2)dx_2$ if $\xi_1\sim \xi_2$, where $\eta_2\in C_0^{\infty}(\mathbb{R})$ and $\eta_2$ is supported on an interval not containing the origin. In fact, by the standard method of stationary phase we get that $|\widehat{\mu_t}(\xi)|\sim |\xi|^{-1/2}$, but $J(\xi)\sim |\xi|^{-1/2}$.  In 2010, Ikoromov, Kempe and M\"{u}ller \cite{IKMU} discovered a connection between the $L^p$-boundedness of $\mathcal{M}$ and the height of a smooth, compact hypersurface of finite type in $\mathbb{R}^3$ satisfying a transversality assumption on $S$. Recently, Zimmermann \cite{Zimmermann} proved that the maximal averages over analytic hypersurfaces located at the origin in $\mathbb{R}^3$ generally behave more regularly than the maximal averages over hypersurfaces satisfying
the transversality condition. It is worth to mention that in \cite{Zimmermann}, he could not handle some situations such as the hypersurface parameterized as the graph of a smooth function $\Phi(x_1,x_2)=x_2^d(1+\mathcal{O}(x_2^m))$ near the origin, where $d\geq 2$, $m\geq 1$, and associated dilations $\delta_t(x)=(t^ax_1,tx_2,t^dx_3)$ for an arbitrary real number $a>0$, the question was open until recently.

In this article, we establish $L^p$-estimates for maximal functions related to hypersurfaces $(x_1,x_2)\rightarrow (x_1,x_2, x_2^d(1+\mathcal{O}(x_2^m)))$ with associated dilations $\delta_t(x)=(t^ax_1,tx_2,t^dx_3)$ for an arbitrary real number $a>0$. These results are stated in Theorem \ref{theocurvanishiin} and  Theorem \ref{vanish3}, and could not be covered by the theorems respectively developed in the above references.  Moreover,  by an alternative approach, we also get  $L^p$-estimates for a large class of maximal functions in $\mathbb{R}^3$ proved in \cite{Greenleaf}, \cite{ios2}, \cite{IKMU} and \cite{Zimmermann}, see Theorem \ref{3nonvanish}, Theorem \ref{3vanish2} and Theorem \ref{corollary}.

Let $\Omega$ be an open neighborhood of the origin. Suppose $\Gamma$ is a hypersurface in $\mathbb{R}^3$ which is parametrized as the graph
of a smooth function $\Phi: \Omega \rightarrow \mathbb{R}$ at the origin, i.e. $\Gamma=\{(x,\Phi(x)), x\in \Omega\subset \mathbb{R}^2\}$.

In $\mathbb{R}^3$,  our proofs always follow the idea. First we "freeze" the first variable $x_1$ and apply the method of stationary phase to curves in $(x_2, x_3)-$ plane, then by Sobolev-embedding, we can reduce to apply $L^p$-estimates for certain Fourier integral operators which appeared in Theorem \ref{lem:lemma4}, Theorem \ref{L^4therom}, Theorem \ref{F_lambda} and Theorem \ref{123}.

First, we show $L^p$ estimates for  maximal functions related to hypersurfaces with at least one non-vanishing principal curvature.

\begin{thm}\label{3nonvanish}
Assume that $\Phi(x_1,x_2)\in C^{\infty}(\Omega)$  satisfies
\begin{equation}\label{conditionnonvani}
\partial_2\Phi(0,0)=0,\hspace{0.5cm}\partial_2^2\Phi(0,0)\neq 0,
\end{equation}
and  $2a_2\neq a_3$.
Then there exists a sufficiently small neighborhood of the origin $U\subset \Omega$ such that
for every positive smooth function $\eta\in C_0^{\infty}(U)$, the associated maximal function
\begin{equation}
\mathcal{M}f(y):=\sup_{t>0}\left|\int_{\mathbb{R}^2}f(y-\delta_t(x_1,x_2,\Phi(x_1,x_2)))\eta(x)dx\right|,
\end{equation}
initially defined on $C_0^{\infty}({\mathbb{R}}^3)$, is bounded on $L^p({\mathbb{R}}^3)$ for $p>2$.
\end{thm}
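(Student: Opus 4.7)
The plan is to decompose $\mathcal{M}$ dyadically in frequency, extract the Fourier integral operator (FIO) structure via the method of stationary phase in $x_2$, and then invoke the fixed--time $L^p$--bounds for FIOs supplied by Theorem \ref{lem:lemma4}, Theorem \ref{L^4therom}, Theorem \ref{F_lambda} and Theorem \ref{123} in conjunction with a Sobolev embedding in $t$.

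First, a standard dyadic decomposition of the supremum over $t>0$, combined with conjugation by the anisotropic dilation $\delta_{2^j}$, reduces matters to bounding $\sup_{t\in[1,2]}|f*\mu_t|$ in $L^p(\mathbb{R}^3)$. I then introduce a Littlewood--Paley decomposition $\widehat{\mu_t}=\widehat{\mu_t^0}+\sum_{k\ge 1}\widehat{\mu_t^k}$ with $\widehat{\mu_t^k}(\xi)=\widehat{\mu_t}(\xi)\chi(2^{-k}|\xi|)$. The low--frequency piece is controlled by the Hardy--Littlewood maximal function, so the core task is to establish a geometric decay
$$\|\sup_{t\in[1,2]}|f*\mu_t^k|\|_{L^p(\mathbb{R}^3)}\le C\,2^{-\varepsilon k}\|f\|_{L^p(\mathbb{R}^3)}\qquad(k\ge 1),$$
for some $\varepsilon=\varepsilon(p)>0$ and every $p>2$.

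Next, with $x_1$ frozen, the phase $t^{a_2}\xi_2 x_2+t^{a_3}\xi_3\Phi(x_1,x_2)$ has, thanks to $\partial_2\Phi(0,0)=0$ and $\partial_2^2\Phi(0,0)\ne 0$, a unique non--degenerate critical point $x_2^*=x_2^*(x_1;\xi,t)$ whenever $|t^{a_2-a_3}\xi_2/\xi_3|$ is suitably small; outside this range repeated integration by parts in $x_2$ yields rapid decay. Stationary phase in $x_2$ then gives
$$\widehat{\mu_t}(\xi)=(t^{a_3}|\xi_3|)^{-1/2}\int e^{i\Psi(x_1;\xi,t)}a(x_1;\xi,t)\,dx_1+\text{lower order},$$
where $a$ is a classical symbol of order $0$ and $\Psi$ is the critical value. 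Substituting into $f*\mu_t^k$ exhibits the operator as a Fourier integral operator in $\mathbb{R}^3$ with one extra integration in $x_1$ and a prefactor $2^{-k/2}$ from the stationary--phase gain. A Sobolev embedding in $t$, applied in the form
$$\sup_{t\in[1,2]}|F(t,y)|^2\le\int_1^2\bigl(|F(t,y)|^2+|\partial_tF(t,y)|^2\bigr)dt,$$
reduces the maximal estimate to fixed--time $L^p$--bounds for $f*\mu_t^k$ and $\partial_t(f*\mu_t^k)$ on $\mathbb{R}^3\times[1,2]$, which are precisely the bounds furnished by Theorem \ref{lem:lemma4}, Theorem \ref{L^4therom}, Theorem \ref{F_lambda} and Theorem \ref{123}. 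These supply the required $2^{-\varepsilon k}$ decay, and summing in $k$ completes the argument.

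The delicate point, and the main obstacle, is verifying the non--degeneracy hypotheses required by those FIO estimates for the phase $\Psi$ produced above. This is where $2a_2\ne a_3$ is essential: after the anisotropic rescaling $x_2\mapsto t^{-a_2}y_2$, the leading quadratic contribution of $t^{a_3}\xi_3\Phi(x_1,x_2)$ becomes $\tfrac12\partial_2^2\Phi(0,0)\,\xi_3\,t^{a_3-2a_2}y_2^2$ and therefore carries a genuine $t$--dependence; combined with the term $t^{a_1}\xi_1 x_1$, this generates the mixed $(t,x_1)$--curvature underpinning the cinematic--type non--degeneracy of $\Psi$. If $2a_2=a_3$, this quadratic term loses its $t$--dependence, the problem becomes formally homogeneous under $\delta_t$, and a different argument would be required---the phenomenon already flagged by the author in the introduction as typical of the nonisotropic, non--homogeneous setting.
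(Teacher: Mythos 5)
Your outline coincides in most respects with the paper's actual argument (dyadic frequency decomposition and reduction to $t\in[1,2]$ via the nonisotropic Littlewood--Paley lemma, stationary phase in $x_2$ with $x_1$ frozen, Sobolev embedding in $t$, then a local smoothing estimate), but there is a genuine gap at precisely the step you yourself single out as the main obstacle: the verification of the hypotheses of the FIO theorem. Your claim that the quadratic term $t^{a_3-2a_2}y_2^2$ ``combined with the term $t^{a_1}\xi_1x_1$ generates the mixed $(t,x_1)$--curvature underpinning the cinematic--type non--degeneracy'' does not work. There is in general no curvature available in the $x_1$ direction (take $\Phi(x_1,x_2)=x_2^2$, a cylinder), so if you view the operator as a genuine Fourier integral operator in $\mathbb{R}^3$ with frequency $\xi\in\mathbb{R}^3$, the cone condition (which in three variables would demand two non-vanishing principal curvatures of $\Gamma_{z_0}$) fails, and in any case Theorem \ref{lem:lemma4} is the two-dimensional statement. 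The paper's route is to keep $x_1$ frozen to the very end: after substituting $x_1\mapsto x_1/t$ (so that the translation in $y_1$ becomes $t$-independent, which is needed before one can take the supremum in $t$ and pull the $x_1$-integral out) and applying Minkowski's and Young's inequalities in $x_1$, one applies the two-dimensional local smoothing theorem, for each fixed $x_1$, to the FIO in $(y_2,y_3,t;\xi_2,\xi_3)$ with phase $Q_{x_1}(y',t,\xi')=\xi'\cdot y'-t^{a_3}\xi_3\tilde{\Psi}(x_1/t,s)$. The cone condition is then checked by the explicit computation $|\partial_{\xi_2}^2\partial_tQ_{x_1}|\approx 1$, whose leading term is $(a_3-2a_2)\,\partial_2\psi$ with $\partial_2\psi=1/\partial_2^2\Phi\neq 0$; this is exactly where both $2a_2\neq a_3$ and $\partial_2^2\Phi(0,0)\neq 0$ enter, and the resulting bounds must be uniform in the frozen parameter $x_1$. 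Your proposal never supplies this computation, and the mechanism it gestures at would not supply it.

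Two further corrections of substance. First, for this theorem only Theorem \ref{lem:lemma4} (the Mockenhaupt--Seeger--Sogge estimate under cinematic curvature) is needed; Theorems \ref{L^4therom}, \ref{F_lambda} and \ref{123} are the new estimates for operators failing cinematic curvature uniformly and are used only in the degenerate cases $2a_2=a_3$, $da_2=a_3$ (Theorems \ref{theocurvanishi} and \ref{vanish3}). Second, be careful with the phrase ``fixed--time $L^p$--bounds'': after the Sobolev embedding the symbol of order $-1/2$ gives $2^{-k/2}$, the $t$-derivative costs $2^{k/p}$, and the sharp fixed--time FIO estimate loses exactly $2^{k(1/2-1/p)}$, so the net exponent is $0$ and the sum over $k$ diverges. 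It is the extra gain $\epsilon(p)$ in the space--time local smoothing estimate of Theorem \ref{lem:lemma4} that produces the factor $2^{-k\epsilon(p)}$ and makes the argument close; your sketch cites the right theorems but should say this explicitly.
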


\begin{thm}\label{theocurvanishi}Let $\phi\in C^{\infty}(I)$, where $I$ is a bounded interval containing the origin.
Define the maximal function by
\begin{equation}
\mathcal{M}f(y):=\sup_{t>0}\left|\int_{\mathbb{R}^2}f(y-\delta_t(x_1,x_2,x_2^{2}\phi(x_2)))\eta(x)dx\right|,
\end{equation}
where $\eta$ is supported in a sufficiently small neighborhood $U$ of the origin.
Assume that $\phi$ satisfies (\ref{planeimporcondi}), i.e.
\begin{equation*}
\phi(0)\neq 0; \hspace{0.1cm}\phi^{(j)}(0)= 0, \hspace{0.1cm}j=1,2,\cdots,m-1; \hspace{0.1cm}\phi^{(m)}(0)\neq 0 \hspace{0.1cm}(m\geq 1),
\end{equation*}
and $2a_2=a_3$. Then for  $p>2$, there exists a constant
$C_p$ such that  the maximal operator
satisfies the following  estimate:
\begin{equation*}\label{theocurvanishiin}
\|\mathcal{M}f\|_{L^p}\leq C_p\|f\|_{L^p}, \hspace{0.5cm} f\in
C_0^{\infty}({\mathbb{R}}^3).
\end{equation*}
\end{thm}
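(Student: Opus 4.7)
The plan is to reduce the three-dimensional maximal operator $\mathcal{M}$ to the planar maximal operator of Theorem \ref{planetheorem}, exploiting the scaling identity $a_3=2a_2$ together with an elementary transfer argument in the $x_1$-variable. The key geometric observation is that on the $(x_2,x_3)$-slice the nonisotropic dilation reads
\begin{equation*}
\delta_t\bigl(x_2,\,x_2^{2}\phi(x_2)\bigr)=\bigl(t^{a_2}x_2,\,t^{2a_2}x_2^{2}\phi(x_2)\bigr),
\end{equation*}
so the substitution $\tau=t^{a_2}$ casts the $(y_2,y_3)$-part of the problem into exactly the form already handled by Theorem \ref{planetheorem}.

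First, since $\eta$ is smooth and compactly supported near the origin, I would dominate $|\eta(x_1,x_2)|\le C\,\chi(x_1)\chi(x_2)$ for a nonnegative bump $\chi$. Performing the change of variable $u=t^{a_1}x_1$ in the inner $x_1$-integral then produces the (uniform in $t>0$) pointwise bound
\begin{equation*}
|\mathcal{M}f(y)|\le C\sup_{t>0}\int_{\mathbb{R}} M_1 f\bigl(y_1,\,y_2-t^{a_2}x_2,\,y_3-t^{a_3}x_2^{2}\phi(x_2)\bigr)\,\chi(x_2)\,dx_2,
\end{equation*}
where $M_1$ denotes the one-dimensional Hardy--Littlewood maximal operator acting in the first variable only. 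This step works cleanly because the $x_1$-dilation $t^{a_1}$ enters only as a rescaling of a Lebesgue average, which $M_1$ absorbs.

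Using $a_3=2a_2$ and setting $\tau=t^{a_2}$, the right-hand side coincides, for each fixed $y_1$, with the planar maximal operator from Theorem \ref{planetheorem} applied to $g:=M_1 f(y_1,\cdot,\cdot)$. Theorem \ref{planetheorem} therefore yields, for every $p>2$, the slicewise bound $\|\mathcal{M}f(y_1,\cdot,\cdot)\|_{L^p(\mathbb{R}^2)}\le C_p\|M_1 f(y_1,\cdot,\cdot)\|_{L^p(\mathbb{R}^2)}$; integrating in $y_1$ by Fubini and applying the $L^p(\mathbb{R})$-boundedness of $M_1$ in $y_1$ gives the desired estimate $\|\mathcal{M}f\|_{L^p(\mathbb{R}^3)}\le C_p'\|f\|_{L^p(\mathbb{R}^3)}$.

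The main obstacle has already been absorbed into Theorem \ref{planetheorem}, whose proof confronts the failure of uniform cinematic curvature and contains the novel local-smoothing analysis of the paper; the reduction outlined above is essentially a Fubini/slicing argument once the scaling identity $a_3=2a_2$ is spotted. An alternative route, closer to the outline given in the introduction, would be to perform stationary phase in $x_2$ so as to produce a Fourier integral operator in $(y_2,y_3)$ and then invoke Theorems \ref{lem:lemma4}, \ref{L^4therom}, \ref{F_lambda} and \ref{123}; this path yields the same range $p>2$ but reworks more of the hard analysis instead of using Theorem \ref{planetheorem} as a black box.
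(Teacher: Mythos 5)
Your reduction is correct, and it is genuinely different from (and considerably shorter than) the argument in the paper. You exploit the fact that for this theorem the hypersurface is a cylinder over the plane curve $x_2\mapsto(x_2,x_2^2\phi(x_2))$: the third coordinate does not depend on $x_1$, so for every fixed $t>0$ the $x_1$-average is an average over an interval of length $\sim t^{a_1}$ centred at $y_1$ and is dominated, uniformly in $t$, by the one-dimensional Hardy--Littlewood operator $M_1$ acting in the first variable; the identity $a_3=2a_2$ and the substitution $\tau=t^{a_2}$ then turn what remains into exactly the planar operator of Theorem \ref{planetheorem} applied to $g=M_1f(y_1,\cdot,\cdot)$, and Fubini plus the $L^p$-boundedness of $M_1$ finish the proof. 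The paper does not use Theorem \ref{planetheorem} as a black box: in Section 5.1.2 it reruns the whole machinery in three variables (dyadic decomposition in $x_2$ and rescaling, the nonisotropic Littlewood--Paley Lemma \ref{Appendix} to localize $t\in[1,2]$, stationary phase in $x_2$, Sobolev embedding in $t$, freezing $x_1$ via Minkowski's and Young's inequalities at the level of the frequency-localized Fourier integral operators, and then Lemma \ref{mlemmaL^2} together with the local smoothing Theorem \ref{F_lambda}). That heavier template is what the paper needs for Theorems \ref{3nonvanish} and \ref{3vanish2}, where $\Phi$ genuinely depends on $x_1$ and the trivial slicing is unavailable; for the cylindrical surfaces of Theorems \ref{theocurvanishi} and \ref{vanish3} your argument buys a much cleaner proof (the same slicing, combined with Theorem \ref{planetheoremfinite}, would also give Theorem \ref{vanish3}). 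Two routine points you should still record: Theorem \ref{planetheorem} is stated for $f\in C_0^{\infty}(\mathbb{R}^2)$, whereas $M_1f(y_1,\cdot,\cdot)$ is only a bounded, compactly supported (in $(y_2,y_3)$), continuous function, so you need the standard extension of the a priori bound — each $A_\tau$ is convolution with a finite positive measure and the kernel is positive, so a density/majorization argument applies; and the usual measurability of the supremum over $t>0$ (reduction to a countable dense set of $t$) should be noted. Neither is a genuine obstacle.
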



Then  we extend the above theorems to hypersurfaces of finite type.

\begin{thm}\label{3vanish2}
 Assume that  $\Phi(x_1,x_2)\in C^{\infty}(\Omega)$  satisfies $\Phi(0,0)\neq 0$
and  $da_2\neq a_3$, $d\geq 2$.
Then there exists a sufficiently small neighborhood of the origin $U\subset \Omega$ such that
for every positive smooth function $\eta\in C_0^{\infty}(U)$, the associated maximal function
\begin{equation}
\mathcal{M}f(y):=\sup_{t>0}\left|\int_{\mathbb{R}^2}f(y-\delta_t(x_1,x_2,x_2^d\Phi(x_1,x_2)))\eta(x)dx\right|,
\end{equation}
initially defined on $C_0^{\infty}({\mathbb{R}}^3)$, is bounded on $L^p({\mathbb{R}}^3)$ for $p>2$.
\end{thm}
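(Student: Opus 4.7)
The plan follows the general template announced in the paper for results in $\mathbb{R}^3$: freeze $x_1$, apply stationary phase in the $(x_2,x_3)$-plane, use Sobolev embedding in $t$, and reduce to the $L^p$ theory of certain Fourier integral operators developed elsewhere in the article. Since $da_2\neq a_3$, we are in the analogue of the ``non-homogeneous'' regime, so Theorem~\ref{3nonvanish} (the case $d=2$, $\partial_2^2\Phi(0,0)\neq 0$) is the natural model to generalize. The hypothesis $\Phi(0,0)\neq 0$ plays the role of ensuring that the function $x_2\mapsto x_2^d\Phi(x_1,x_2)$ is of exact finite type $d$, uniformly in $x_1$ in a small neighborhood of the origin; this is what replaces the condition $\partial_2^2\Phi(0,0)\neq 0$ used in the $d=2$ case.

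First I would carry out a smooth dyadic decomposition $\mathcal{M}f=\sup_k\sup_{t\sim 2^k}|f*\mu_t|$ in $t$. A rescaling argument that exploits the $\delta_t$-structure, combined with the observation that on a small support of $\eta$ the leading coefficient $\Phi(0,0)$ is essentially frozen, reduces matters to a uniform $L^p$-bound for $\sup_{t\in[1,2]}|f*\mu_t|$; the outer dyadic sum over $k$ converges for $p>2$. By Fourier inversion $f*\mu_t(y)=(2\pi)^{-3}\int e^{iy\cdot\xi}\hat f(\xi)\int e^{-i\Psi(x,\xi,t)}\eta(x)\,dx\,d\xi$ with
\begin{equation*}
\Psi(x,\xi,t)=t^{a_1}\xi_1x_1+t^{a_2}\xi_2x_2+t^{a_3}\xi_3\,x_2^d\,\Phi(x_1,x_2).
\end{equation*}
A Littlewood--Paley piece $|\xi|\sim\lambda$ and a further dyadic splitting according to the relative sizes of the three summands of $\Psi$ reduce to the regime in which all three terms are comparable (elsewhere, repeated integration by parts in $x_1$ or $x_2$ gives arbitrary decay). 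In that regime, freezing $x_1$ and performing stationary phase in $x_2$, legitimate because $\partial_{x_2}^d[x_2^d\Phi(x_1,\cdot)]|_{x_2=0}=d!\,\Phi(x_1,0)\neq 0$, produces a residual Fourier integral operator in $(x_1,t)$ with a gain of order $\lambda^{-1/d}$, smoothly depending on the parameter $x_1$.

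After this reduction, the supremum in $t$ is absorbed via Sobolev embedding $\sup_{t\in[1,2]}|F(t)|\lesssim\|F\|_{L^p_t}+\|\partial_tF\|_{L^p_t}$, so it suffices to bound each of $F$ and $\partial_tF$ in $L^p(\mathbb{R}^3\times[1,2])$. The resulting operators fall into the FIO classes treated in Theorems~\ref{lem:lemma4}, \ref{L^4therom}, \ref{F_lambda} and \ref{123}, and one applies these estimates uniformly in the frozen parameter $x_1$ before integrating against $\eta$. The condition $da_2\neq a_3$ is precisely what ensures that the relevant canonical relation satisfies the required non-degeneracy (the analogue of the cinematic curvature condition discussed in the introduction), so that those FIO bounds are applicable.

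The main obstacle is the \emph{uniformity} in $x_1$: because $\Phi$ depends non-trivially on $x_1$, the FIOs produced by the stationary phase step form a parameter-dependent family, and one must verify that both the curvature-type condition and the resulting $L^p$-constants do not degenerate as $x_1$ varies over the support of $\eta$. Both uniformities follow from $\Phi(0,0)\neq 0$ together with $da_2\neq a_3$, but the quantitative bookkeeping --- reconciling the $\lambda^{-1/d}$ gain from stationary phase with the $\lambda^{\varepsilon}$ losses inherent in the FIO bounds and then summing over the frequency decomposition of the previous step --- is the delicate part of the argument. The geometric sum in $\lambda$ closes exactly when $p>2$, which matches the threshold in the statement and confirms the sharpness noted after Theorem~\ref{planetheorem}.
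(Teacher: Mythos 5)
Your overall template (freeze $x_1$, stationary phase in $x_2$, Sobolev embedding in $t$, then local smoothing estimates for the resulting Fourier integral operators) is indeed the strategy the paper announces, but two of your reductions do not work as stated. First, the passage from $\sup_{t>0}$ to $\sup_{t\in[1,2]}$: the operator is essentially invariant under the dilations $\delta_{2^k}$, so after rescaling every dyadic block $\sup_{t\approx 2^k}|f*\mu_t|$ has exactly the same $L^p$ operator norm; there is no decay in the $t$-scale parameter, and the claim that ``the outer dyadic sum over $k$ converges for $p>2$'' is false. The gluing of $t$-scales must come from frequency localization: one decomposes $|\delta_t\xi|\approx 2^j$, uses the non-isotropic Littlewood--Paley square function (Lemma \ref{Appendix}) to prove $\|M_j\|_{L^p\to L^p}\lesssim\|M_{j,loc}\|_{L^p\to L^p}$, and then the gain $2^{-j\epsilon(p)}$ in the \emph{frequency} parameter, coming from local smoothing, makes the sum converge; this is how the paper argues in Sections \ref{nonvanishing} and \ref{da2neqa3}.

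Second, and more seriously, your treatment of the degenerate point $x_2=0$ does not close. You invoke only $\partial_{x_2}^d\bigl[x_2^d\Phi(x_1,x_2)\bigr]\big|_{x_2=0}=d!\,\Phi(x_1,0)\neq 0$, which yields decay $\lambda^{-1/d}$, i.e.\ a symbol of order $-1/d$ (and a finite-type stationary point does not even produce a clean FIO with classical symbol bounds). Feeding this into Theorem \ref{lem:lemma4} and paying the factor $2^{j/p}$ from the Sobolev embedding in $t$ gives $2^{j(1/2-1/d-\epsilon(p))}$ per dyadic frequency, which is not summable for any $p>2$ once $d\geq 3$; so the ``geometric sum in $\lambda$'' does not close. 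The paper's proof of Theorem \ref{3vanish2} avoids this by first decomposing dyadically in the \emph{space} variable $x_2$, with $\tilde{\rho}(2^kx_2)$ localizing to $|x_2|\approx 2^{-k}$, and rescaling by $Tf(x)=2^{k(d+1)/p}f(x_1,2^kx_2,2^{dk}x_3)$: on the rescaled piece $x_2\approx 1$, the hypothesis $\Phi(0,0)\neq 0$ makes $\partial_2^2\bigl[x_2^d\Phi(x_1,2^{-k}x_2)\bigr]$ uniformly non-vanishing, stationary phase gives the full $\lambda^{-1/2}$ gain, and the measure of the dyadic piece contributes the factor $2^{-k}$ that makes the sum over $k$ converge. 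This is exactly where $\Phi(0,0)\neq 0$ is used, and it is the step missing from your argument. A minor point: for $da_2\neq a_3$ the relevant FIOs \emph{do} satisfy the cinematic curvature condition (this is where $da_2\neq a_3$ enters, via the cone condition verified as in Section \ref{nonvanishing}), so only the classical Mockenhaupt--Seeger--Sogge estimate, Theorem \ref{lem:lemma4}, is needed; Theorems \ref{L^4therom}, \ref{F_lambda} and \ref{123} belong to the degenerate case $da_2=a_3$ treated in Theorems \ref{theocurvanishi} and \ref{vanish3}.
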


The case when $da_2=a_3$ in Lemma \ref{theocurvanishi} turns out to be much harder. For this case, we have the following result.

\begin{thm}\label{vanish3}Let $\phi\in C^{\infty}(I)$, where $I$ is a bounded interval containing the origin.
Define the maximal function by
\begin{equation}
\mathcal{M}f(y):=\sup_{t>0}\left|\int_{\mathbb{R}^2}f(y-\delta_t(x_1,x_2,x_2^{d}\phi(x_2)))\eta(x)dx\right|,
\end{equation}
where $\eta$ is supported in a sufficiently small neighborhood $U$ of the origin.
Assume that $\phi$ satisfies (\ref{planeimporcondi}), $d\geq 2$ and $da_2=a_3$. Then for  $p>2$, there exists a constant
$C_p$ such that  the maximal operator
satisfies the following  estimate:
\begin{equation*}\label{equ:vanish3}
\|\mathcal{M}f\|_{L^p}\leq C_p\|f\|_{L^p}, \hspace{0.5cm} f\in
C_0^{\infty}({\mathbb{R}}^3).
\end{equation*}
\end{thm}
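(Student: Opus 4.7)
The plan is to follow the roadmap outlined just before Theorem~\ref{3nonvanish}: freeze the variable $x_1$, apply stationary phase in $x_2$, remove the supremum in $t$ via Sobolev embedding, and invoke the $L^p$-estimates for the associated Fourier integral operators (Theorems~\ref{lem:lemma4}, \ref{L^4therom}, \ref{F_lambda} and \ref{123}). The hypothesis $d a_2 = a_3$ makes the leading term $x_2^d \phi(0)$ exactly $\delta_t$-homogeneous, so in close analogy with the planar Theorem~\ref{planetheoremfinite} the genuine difficulty sits entirely in the perturbation $x_2^d(\phi(x_2)-\phi(0))=\mathcal{O}(x_2^{d+m})$.

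First, I dyadically localize $t\sim 2^j$ and rescale by $\delta_{2^{-j}}$. The $\delta_t$-homogeneity of the model part reduces matters to a uniform-in-$j$ $L^p$-bound for the averaging operator restricted to $t\in [1,2]$, with the perturbation appearing with an amplitude $\sim 2^{-j a_2 m}$ that is harmless for large $j$; the summation in $j$ is then absorbed by a Littlewood--Paley square-function argument. For the fixed-scale piece, I choose a product cut-off $\eta(x_1,x_2)=\eta_1(x_1)\eta_2(x_2)$ (which is harmless by density), so that
\[
\widehat{\mu_t}(\xi)=\widehat{\eta_1}(t^{a_1}\xi_1)\int e^{-i(t^{a_2}x_2\xi_2+t^{a_3}x_2^d\phi(x_2)\xi_3)}\eta_2(x_2)\,dx_2,
\]
and the remaining $x_2$-integral is precisely the oscillatory integral treated in Theorem~\ref{planetheoremfinite}. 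A Littlewood--Paley decomposition in $\xi$ together with stationary phase in $x_2$ writes each dyadic kernel as a Fourier integral operator in $(y_2,y_3)$, parametrised by $t$ and by the frozen frequency $\xi_1$.

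Next, the Sobolev embedding in $t$ dominates $\sup_{t\in[1,2]}|f*\mu_t(y)|$ by the $L^p_t$-norms of $f*\mu_t(y)$ and $f*\partial_t\mu_t(y)$, reducing the maximal estimate to $L^p(\mathbb{R}^3\times[1,2])$-bounds for the dyadic FIOs and their $t$-derivatives. Plancherel in $\xi_1$ integrates out the frozen direction and converts these into planar bounds parametrised by $\xi_1$; these are then handled block by block via Theorems~\ref{lem:lemma4}, \ref{L^4therom}, \ref{F_lambda} and \ref{123}, and finally summed.

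The principal obstacle, already highlighted in the introduction, is that the canonical relations of these FIOs do \emph{not} satisfy the Mockenhaupt--Seeger--Sogge cinematic curvature condition uniformly across the dyadic frequency regimes: their curvature degenerates as the ratio $|\xi_2|/|\xi_3|^{1/d}$ traverses different scales. Classical local smoothing therefore cannot be invoked as a single black box. The key new input is a further frequency decomposition on which a quantitative form of cinematic curvature is restored, combined with parameter-dependent local smoothing estimates whose loss is summable precisely when $p>2$, matching the sharpness indicated by the appendix proposition. Calibrating this decomposition and tracking its $p$-dependence is where I expect the main technical work to lie.
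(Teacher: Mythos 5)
Your overall skeleton (freeze $x_1$, stationary phase in $x_2$, Sobolev embedding in $t$, Littlewood--Paley, and an appeal to Theorems \ref{F_lambda} and \ref{123}) is the same as the paper's, which obtains Theorem \ref{vanish3} by combining the argument of Section \ref{dim32a2=a3} with the finite-type computations of Section \ref{vanishfinite}. But your opening reduction contains a genuine error that the rest of the argument cannot recover from: you dyadically localize $t\sim 2^j$, rescale by $\delta_{2^{-j}}$, and claim the inhomogeneous part then carries an amplitude $\sim 2^{-ja_2m}$ which is ``harmless for large $j$''. It does not. Writing $t=2^js$ and $y=\delta_{2^j}z$, one has $y-\delta_t(x_1,x_2,x_2^d\phi(x_2))=\delta_{2^j}\bigl(z-\delta_s(x_1,x_2,x_2^d\phi(x_2))\bigr)$, so the rescaled single-scale operator is \emph{identical} to the $t\in[1,2]$ operator for every $j$: the perturbation $x_2^d(\phi(x_2)-\phi(0))$ is a function of $x_2$, which lives at the fixed unit scale of the support of $\eta$ and is untouched by the $t$-rescaling. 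Consequently your decomposition produces no decaying parameter, hence no small $\delta$ quantifying the failure of cinematic curvature, and Theorems \ref{F_lambda} and \ref{123} (formulated for $\delta'=2^{-km}$ with $\lambda>\delta'^{-8}$) cannot even be set up. The smallness must come from a dyadic decomposition in the \emph{spatial} variable, $|x_2|\approx 2^{-k}$, followed by the rescaling $T_k$: then $\phi(2^{-k}x_2)-\phi(0)=\mathcal{O}(2^{-km})$, the phase becomes the perturbed homogeneous phase as in (\ref{weneed}) and its degree-$d$ analogue, and the substance of the proof is to balance the factor $2^{-k}$ coming from the measure of the $k$-th piece against the $k$-growth of the frequency-localized maximal bounds, via the split $j\le 9km$ (handled by the $L^2$--$L^\infty$ regularity estimates and interpolation) versus $j>9km$ (handled by the $L^4$ estimates with loss $\delta'^{-(1/2+\epsilon_2)}$). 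This bookkeeping is entirely absent from your proposal; your closing paragraph in effect defers exactly this step, which is where the theorem is actually proved.

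A secondary, fixable point: ``Plancherel in $\xi_1$'' cannot integrate out the frozen variable for $p\neq 2$. The paper keeps the partial Fourier transform only in $\xi'=(\xi_2,\xi_3)$, writes the integrand as $f(y_1-x_1,\widehat{\xi'})$, and removes the $x_1$-integration by Minkowski's and Young's inequalities, using that the planar FIO bounds are uniform in the frozen parameter; you should do the same.
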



Finally, we notice that the surfaces in Lemma \ref{3vanish2} are required to go through the origin. Now we consider the other case.


\begin{thm}\label{corollary}
Assume that $\Phi(x_1,x_2)\in C^{\infty}(\Omega)$ satisfies
$\Phi(0,0)\neq 0$ and $da_2\neq a_3$, $d\geq 2$.
Then there exists a sufficiently small neighborhood of the origin $U\subset \Omega$ such that
for every positive smooth function $\eta\in C_0^{\infty}(U)$, the associated maximal function
\begin{equation}
\mathcal{M}f(y):=\sup_{t>0}\left|\int_{\mathbb{R}^2}f(y-\delta_t(x_1,x_2,1+x_2^d\Phi(x_1,x_2)))\eta(x)dx\right|,
\end{equation}
initially defined on $C_0^{\infty}({\mathbb{R}}^3)$, is bounded on $L^p({\mathbb{R}}^3)$ for $p>d$.
\end{thm}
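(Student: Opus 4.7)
The plan is to follow the general three-dimensional strategy announced in the introduction of the $\mathbb{R}^3$-section: first freeze the variable $x_1$, apply stationary phase to the curve $x_2\mapsto(x_2,\,1+x_2^d\Phi(x_1,x_2))$ in the $(x_2,x_3)$-plane, and then use a Sobolev embedding in $t$ to reduce the bound on $\mathcal{M}f$ to $L^p$-estimates for the Fourier integral operators treated in Theorems \ref{lem:lemma4}, \ref{L^4therom}, \ref{F_lambda} and \ref{123}. The new feature of the present theorem compared to Theorem \ref{3vanish2} is the constant term $1$: it keeps $\Gamma$ bounded away from the origin (so $\Gamma$ is transversal at $(0,0,1)$), and it produces an extra oscillating factor $e^{-it^{a_3}\xi_3}$ in the Fourier transform of $\mu_t$, which is what ultimately drives the threshold $p>d$ rather than $p>2$.

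Concretely, I would start from
$$\widehat{\mu_t}(\xi)=e^{-it^{a_3}\xi_3}\int\eta(x)\,e^{-i\Psi(x,t,\xi)}\,dx,\qquad \Psi=t^{a_1}x_1\xi_1+t^{a_2}x_2\xi_2+t^{a_3}x_2^d\Phi(x_1,x_2)\xi_3,$$
and perform dyadic decompositions in $|\xi|$ and in $t$. With $x_1$ frozen, the $x_2$-integral is a one-dimensional oscillatory integral whose phase has a zero of order $d$ at $x_2=0$ in the regime $t^{a_2}|\xi_2|\lesssim t^{a_3}|\xi_3|$, so van der Corput yields the decay $(t^{a_3}|\xi_3|)^{-1/d}$, which is precisely the source of the $d$ in the exponent. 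The hypothesis $da_2\neq a_3$ guarantees that on each dyadic block one of the two scales $t^{a_2}|\xi_2|$ and $t^{a_3-da_2}|\xi_3|$ strictly dominates, thereby avoiding a resonance. Feeding the resulting amplitude back into the $x_1$-integral, one recognises $(x_1,t)$ as the phase-space variables of a Fourier integral operator to which the earlier theorems apply; the Sobolev embedding
$$\sup_{t\sim 2^j}|F(t)|\lesssim\|F\|_{L^p(dt/t)}+\|t\,\partial_t F\|_{L^p(dt/t)},$$
applied after rescaling each dyadic annulus to $t\sim 1$, then converts this into the desired maximal bound.

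The main obstacle I expect lies in the passage from the stationary-phase output to the FIO estimates: because the constant $1$ prevents $\delta_t$ from acting homogeneously on $\Gamma$, the rescaling does not eliminate the $t$-dependence, and after stationary phase in $x_2$ the phase retains a genuine two-parameter structure in $(x_1,t)$. One has to verify, as in the author's treatment of the plane theorems, that the resulting family of Fourier integral operators fulfils the hypotheses of Theorems \ref{lem:lemma4}--\ref{123} uniformly in the dyadic parameters, presumably by an additional decomposition separating the region where the stationary point in $x_2$ lies inside the support of $\eta$ from its complement, together with a trivial $L^1$--$L^\infty$ treatment of the extreme regimes of $t$. The exponent $p>d$ should then appear precisely when the gain $(t^{a_3}|\xi_3|)^{-1/d}$ from van der Corput is balanced against the Sobolev cost $|\xi|^{1/p}$ incurred by taking the supremum in $t$, and the two balance exactly at $p=d$.
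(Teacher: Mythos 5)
There is a genuine gap: you have misidentified the mechanism that produces the threshold $p>d$, and the van der Corput argument your outline is built around would not give the theorem. The decay $(t^{a_3}|\xi_3|)^{-1/d}$ at the flat point $x_2=0$ cannot be ``precisely the source of the $d$ in the exponent'': the surface in Theorem \ref{3vanish2} has exactly the same degeneracy $x_2^{d}\Phi(x_1,x_2)$ at $x_2=0$, yet that theorem holds for all $p>2$; balancing a gain $|\xi|^{-1/d}$ against a Sobolev cost $|\xi|^{1/p}$ would predict $p>d$ there as well, so the flatness alone cannot be what forces $p>d$ here. In the paper the flatness is removed not by van der Corput but by the dyadic decomposition $\tilde{\rho}(2^kx_2)$ in $x_2$ together with the rescaling: on each piece $x_2\approx 1$, and since $\Phi(0,0)\neq 0$, $d\geq 2$, the rescaled curve has non-vanishing curvature, stationary phase yields the full decay $(1+|\delta'_t\xi'|)^{-1/2}$, and the argument reduces to the case $d=2$ of Section \ref{da2neqa3}; the hypothesis $da_2\neq a_3$ is then used to verify the cone/cinematic-curvature condition as in (\ref{coneoc}), so that the classical local smoothing estimate of Theorem \ref{lem:lemma4} applies — it is not a matter of one of two scales dominating on each block. (For Theorem \ref{corollary} only Theorem \ref{lem:lemma4} is needed; Theorems \ref{L^4therom}, \ref{F_lambda} and \ref{123} enter only in the resonant cases $da_2=a_3$.)

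The actual source of $p>d$ is the constant term $1$, which is incompatible with the dilations: after rescaling the $k$-th dyadic piece it becomes the huge translate $\sigma=(0,0,2^{dk})$ in the phase, see (\ref{Atl}), and this costs a factor $2^{kd/p}$ per piece in two places that your proposal does not account for. First, the low-frequency part of the rescaled operator is convolution with $\delta_t$-dilates of the translated kernel $K^{\sigma}(y)=K(y-\sigma)$; the corresponding maximal operator $M^{k,0}$ is bounded on $L^{\infty}$ uniformly in $\sigma$, but only satisfies $\|M^{k,0}\|_{L^1\rightarrow L^{1,\infty}}\lesssim 2^{kd}$, whence $\|M^{k,0}\|_{L^p\rightarrow L^p}\lesssim 2^{kd/p}$ by interpolation. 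Second, in the high-frequency part the $t$-derivative of the phase contains $a_3t^{a_3-1}2^{kd}\xi_3\approx 2^{kd}2^{j}$, so the Sobolev embedding in $t$ (Lemma \ref{lem:Lemma3}) costs an extra $2^{kd/p}$ on top of the local smoothing gain $2^{-j\epsilon(p)}$. The resulting series $\sum_k2^{-k}2^{kd/p}\sum_j2^{-jp\epsilon(p)}$ converges exactly when $p>d$, which is where the threshold comes from; without producing and summing these $2^{kd/p}$ losses, the proof does not close, and a direct van der Corput plus Sobolev argument at the flat point (with decay only $|\xi'|^{-1/d}$ and no curvature available for local smoothing there) does not even sum over the frequency-dyadic parameter when $d>2$.
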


\begin{thm}\label{vanishnotorigin}
Let $\phi\in C^{\infty}(I)$, where $I$ is a bounded interval containing the origin.
Define the maximal function by
\begin{equation}
\mathcal{M}f(y):=\sup_{t>0}\left|\int_{\mathbb{R}^2}f(y-\delta_t(x_1,x_2,1+x_2^{d}\phi(x_2)))\eta(x)dx\right|,
\end{equation}
where $\eta$ is supported in a sufficiently small neighborhood $U$ of the origin.
Assume that $\phi$ satisfies (\ref{planeimporcondi}), $d\geq 2$ with $da_2=a_3$ and $1\leq m<\infty$. Then for  $p>d$, there exists a constant
$C_p$ such that  the maximal operator
satisfies the following  estimate:
\begin{equation*}\label{equ:vanish3}
\|\mathcal{M}f\|_{L^p}\leq C_pm\|f\|_{L^p}, \hspace{0.5cm} f\in
C_0^{\infty}({\mathbb{R}}^3).
\end{equation*}
\end{thm}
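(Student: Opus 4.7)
My plan is to follow the general strategy outlined after Theorem~\ref{3nonvanish}: freeze the first variable $x_1$, perform stationary phase in $x_2$, and reduce to the $L^p$-estimates for Fourier integral operators encoded in Theorem~\ref{123}. The distinctive feature of the present theorem, in comparison with Theorem~\ref{vanish3}, is that the ``$1$'' appearing in the third slot of $(x_1,x_2,1+x_2^d\phi(x_2))$ keeps the averaging surface uniformly separated from the origin; consequently, after applying stationary phase, the resulting oscillatory integral is non-degenerate, and the Fourier integral operator estimate of Theorem~\ref{123} can be applied in the larger range $p > d$ without invoking local smoothing. The cost of this simplification is that the constant in the final bound depends linearly on $m$.

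Concretely, I would first use the non-isotropic dilation invariance $\mathcal{M}f(\delta_s y) = \mathcal{M}(f\circ\delta_s)(y)$ together with a dyadic decomposition in $t$ to reduce matters to proving $\|\sup_{t\in[1,2]}|A_t f|\|_p \lesssim m\|f\|_p$, where
\begin{equation*}
A_t f(y) := \int_{\mathbb{R}^2} f\bigl(y - \delta_t(x_1,x_2,1+x_2^d\phi(x_2))\bigr)\,\eta(x)\,dx.
\end{equation*}
The Sobolev embedding in $t$,
\begin{equation*}
\sup_{t\in[1,2]}|A_t f(y)|^p \lesssim \int_1^2 |A_t f(y)|^p\,dt + \int_1^2 |\partial_t A_t f(y)|^p\,dt,
\end{equation*}
integrated in $y$, reduces the problem to uniform $L^p$-bounds on $A_t$ and $\partial_t A_t$ for $t\in[1,2]$. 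The first is immediate from Minkowski. For $\partial_t A_t$, pass to the Fourier side so that $\partial_t A_t$ is the Fourier multiplier $\partial_t \widehat{\mu_t}(\xi)$, and perform a Littlewood-Paley decomposition $\partial_t A_t = \sum_k (\partial_t A_t)P_k$ at dyadic scales $|\xi|\sim 2^k$. The $x_1$-integral contributes a rapidly decreasing factor $\widehat{\eta}_1(t^a\xi_1)$, while applying stationary phase to the remaining $x_2$-integral against the phase $tx_2\xi_2 + t^d(1+x_2^d\phi(x_2))\xi_3$---whose critical point equation $\xi_2 + t^{d-1}\xi_3 x_2^{d-1}(d\phi(x_2)+x_2\phi'(x_2))=0$ has a unique small solution thanks to $\phi(0)\neq 0$---represents each dyadic piece $(\partial_t A_t)P_k$ as a Fourier integral operator of the form covered by Theorem~\ref{123}. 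That theorem yields $\|(\partial_t A_t)P_k f\|_p \lesssim 2^{-k\alpha_p}\|f\|_p$ for some $\alpha_p > 0$ when $p > d$, so the pieces sum.

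The main obstacle is the fine analysis near $x_2 = 0$ that gives rise to the factor $m$. Because $\phi^{(j)}(0)=0$ for $1\leq j<m$ and $\phi^{(m)}(0)\neq 0$, the subleading behaviour of $\phi$ is governed by the single term $\tfrac{1}{m!}\phi^{(m)}(0)x_2^m$, and the stationary-phase analysis must be carried out separately on dyadic shells $|x_2|\sim 2^{-j}$, each corresponding to a different balance between $x_2^d$ and $x_2^{d+m}$ in the phase. The number of shells whose contribution is not controlled by trivial estimates is $O(m)$, and summing the resulting $L^p$ bounds over these shells produces the linear dependence on $m$ claimed in the theorem.
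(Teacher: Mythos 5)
There is a genuine gap, and it is located precisely at the point where your proposal replaces the paper's mechanism by a shortcut that cannot work. Your reduction rests on the additive Sobolev inequality in $t$ together with the claim that the frequency-localized derivative pieces satisfy $\|(\partial_t A_t)P_k f\|_{L^p}\lesssim 2^{-k\alpha_p}\|f\|_{L^p}$. This estimate is false: differentiating in $t$ brings down a factor of the phase of size comparable to the frequency (and, after the rescaling that the problem forces, of size $2^{kd}2^{j}$ because of the summand $1$), while stationary phase in $x_2$ only yields amplitude decay $|\xi'|^{-1/2}$, so the derivative pieces grow with the frequency instead of decaying. This is exactly why the paper never uses the additive form but the multiplicative inequality of Lemma \ref{lem:Lemma3}, which takes a geometric mean of the bound for $A_{t,j}$ and the growing bound for $\partial_tA_{t,j}$; with your additive version the dyadic pieces do not sum. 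Two further structural points: for $d>2$ you cannot run stationary phase in $x_2$ uniformly near $x_2=0$ (the Hessian of the phase is $\sim x_2^{d-2}$), so the argument must begin with the dyadic decomposition $|x_2|\sim 2^{-k}$ and the anisotropic rescaling, as in Sections \ref{dim32a2=a3} and \ref{da2=a_3}; and the reduction of $\sup_{t>0}$ to $\sup_{t\in[1,2]}$ is not a plain scaling argument (the surface $1+x_2^d\phi(x_2)$ is not homogeneous under $\delta_t$), but goes through the non-isotropic Littlewood--Paley square function of Lemma \ref{Appendix} applied to each frequency-localized piece.

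You have also misidentified the two features that distinguish this theorem, namely the restriction $p>d$ and the constant $C_pm$. They do not come from Theorem \ref{123} ``applying in a larger range'' nor from avoiding local smoothing --- Theorem \ref{123} \emph{is} the local smoothing type $L^4$ estimate, and it is still needed for the high frequencies $j>9km$. The real role of the summand $1$ is that, after rescaling the shell $|x_2|\sim 2^{-k}$, it becomes the large translation $\sigma=(0,0,2^{dk})$ in the phase, as in (\ref{Atl}). For the low-frequency part this only allows a comparison with $2^{kd}M$ in weak $(1,1)$, and interpolation with the trivial $L^\infty$ bound gives $\|M^{k,0}\|_{L^p\to L^p}\lesssim 2^{kd/p}$, so that $\sum_k2^{-k}2^{kd/p}$ converges exactly when $p>d$. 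For the intermediate frequencies the $t$-derivative of the phase is dominated by $a_3t^{a_3-1}\xi_32^{kd}$, of size $2^{kd}2^{j}$, so in the range $0<j\le 9km$ one gets no decay in $j$ at all, only $\|\widetilde{\mathcal{M}_{j,loc}^{k,1}}\|_{L^p\to L^p}\lesssim 2^{kd/p}$; summing the $O(km)$ admissible values of $j$ and then over $k$ gives $\sum_k2^{-k(1-d/p)}km\lesssim m$, which is the actual source of the factor $m$ --- not a count of $O(m)$ exceptional $x_2$-shells. Without these two mechanisms your proposal neither explains why $p>d$ is needed nor produces the stated bound $C_pm\|f\|_{L^p}$.
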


\subsection[Organization of the article]{Organization of the article}

In our settings, the averaging operator is always written as a Fourier integral operator, then we turn to prove $L^p$-boundedness for the Fourier integral operator.
In Section 2, first, we give an overview of the theory of Fourier integral operators and a concrete description of the so-called cinematic curvature condition. Furthermore,
we establish $L^p$-boundedness of the maximal operator related to the curve $x\rightarrow (x, x^2(1+\phi(x)))$, where $\phi(x)=\mathcal{O}(x^m)$ and $m\geq 1$,
with associated dilations $(x_1,x_2)\rightarrow(tx_1,t^2x_2)$. After a scaling argument, application of the method of stationary phase and Littlewood-Paley theory, we observe that the phase
 function of the corresponding Fourier integral operator will not satisfy the cinematic curvature condition uniformly, which causes major difficulties to apply the local
 smoothing estimate directly to our Fourier integral operator.  We separate the problem into two parts, depending on a dyadic decomposition for the frequency variables of
 the corresponding Fourier integral operator, i.e. low frequency and high frequency. For low frequency, we mainly use the better endpoint regularity estimates,
 the M. Riesz interpolation theorem to get the desired result. For high frequency, comparing the light cone $\{(\xi,|\xi|)\}$, the associated cone of the  Fourier integral operator $\tilde{\mathcal{F}}_{\lambda}^{\delta}$ which does not satisfy the cinematic curvature condition uniformly becomes flatter. A very natural question is to ask how the $L^4$-estimate for a Fourier integral operator depends on the cinematic curvature. The simplest model is to study how the key $L^4$-boundedness of the Fourier integral operator $\mathcal{F}_{\lambda}^{\delta}$ which can approximate the Fourier integral operator $\tilde{\mathcal{F}}_{\lambda}^{\delta}$. In order to clarify the above questions, a direct idea is whether we can develop a new method based on the main idea of the local smoothing estimate
 from \cite{mss2} which is mainly to get the key $L^4$-boundedness of the Fourier integral operator associated to the light cone. However, we need various modifications to overcome a lot of difficulties for $L^4$-boundedness of the Fourier integral operator $\mathcal{F}_{\lambda}^{\delta}$, the details will appear in Section 3. Based on some ideas from \cite{mss}, we obtain the $L^4$-estimate of the Fourier integral operator $\tilde{\mathcal{F}}_{\lambda}^{\delta}$ in Section 4. In fact, we observe that $L^4$-estimate of the Fourier integral operator $\mathcal{F}_{\lambda}^{\delta}$ remains valid under small, sufficiently smooth perturbation, and the constant $C_p$ depends only on a finite number of derivatives of the phase function and the symbol. In the last part of Section 2, we generalize the result to curves of finite type $d$ ($d\geq 2$) and associated dilations $(x_1,x_2)\rightarrow(tx_1,t^dx_2)$.
 These results answer the question which was unsolved until recently as to how to characterize the range of all $p$ for which maximal functions associated with nonisotropic dilations of non-homogeneous curves of finite type in the plane is bounded.

In Section 5, employing some arguments used and some results obtained in the plane, we also establish $L^p$-estimates for the maximal function related to
the hypersurface $(x_1,x_2)\rightarrow (x_1,x_2, x_2^d(1+\mathcal{O}(x_2^m)))$ with associated dilations $\delta_t(x)=(t^ax_1,tx_2,t^dx_3)$ for
 an arbitrary real number $a>0$. These results could not be covered by the theorems about maximal functions associated with nonisotropic dilations
 of hypersurfaces in $\mathbb{R}^3$ from Greenleaf \cite{Greenleaf}, Iosevich-Sawyer \cite{ios2},  Ikromov-Kempe-M\"{u}ller \cite{IKMU} and Zimmermann \cite{Zimmermann}.
 Moreover,  by an alternative approach, we also get  $L^p$-estimates for some classes of maximal functions in $\mathbb{R}^3$ proved in \cite{Greenleaf}, \cite{ios2}, \cite{IKMU} and \cite{Zimmermann}.

\textbf{Conventions}: Throughout this article, we shall use the well known notation $A\ll B$, which means if there is a sufficiently large constant $G$, which does not depend on the relevant parameters arising in the context in which
the quantities $A$ and $B$ appear, such that $G A\leq B$.  We write
$A\approx B$, and mean that $A$ and $B$ are comparable. We write
$A \lesssim B$ if $A\ll B$ or $A \approx B$.  $A\wedge B$ means if $A\leq B$, then $A\wedge B=A$; if $A\geq B$, then $A\wedge B=B$.

\section[Maximal functions associated with nonisotropic dilations
 of curves in the plane]{Maximal functions associated with nonisotropic dilations
 of curves in the plane.}

\subsection[Background on Fourier integral operators and
auxiliary results]{Background on Fourier integral operators and
auxiliary results.}

\subsubsection[Local smoothing of Fourier integral operators]{Local smoothing of Fourier integral operators.}
In our settings, the averaging operator is always expressed as a Fourier integral operator, then we turn to prove the $L^p$ boundedness for these operators.
So here we will make a brief introduction to local smoothing of Fourier integral operators  in \cite{mss}.

We consider a class of Fourier integral operators $I^{\nu} (\mathbb{R}^{n+1},\mathbb{R}^n;\Lambda)$, which is determined by the properties of its canonical relation $\Lambda$,
which is a conic Lagrangian in  $T^*Y\backslash0$ to $T^*Z\backslash0$ with respect to the symplectic form $d\zeta\wedge dz-d\eta\wedge dy$,
and closed in $T^*Z\backslash0\times T^*Y\backslash0$. In fact, these assumptions imply that $\Lambda\subset T^*Z\backslash0\times T^*Y\backslash0$ is a conic (immersed) submanifold of dimension $2n+1$.

To guarantee local regularity properties of operators $\mathcal{F} \in I^{\nu} (\mathbb{R}^{n+1},\mathbb{R}^n;\Lambda)$,
we shall impose conditions on $\Lambda$ which are based on the properties of the following three projections

\begin{center}
\begin{tikzpicture}
  \tikzstyle{l1} = []
  \tikzstyle{l2} = []
  \node[l2] {$\Lambda$} [grow'=down]
  child {
    node[l2] {$T_{z_0}^*Z\backslash 0$}
  }
  child {
    node[l1] (t2) {Z}
  }
  child {
    node[l1] (t2) {$T^*Y\backslash 0$}
  };
\end{tikzpicture}

\end{center}

We assume that $\Pi_{X}$ is the projection of $\Lambda$ onto X, X=$T_{z_0}^*Z\backslash 0$, Z, or $T^*Y\backslash 0$.  The condition has two parts:

(1) non-degeneracy condition:
\begin{equation}\label{nondegeneracy1}
\textrm{rank}\hspace{0.2cm} d\Pi_{T^*Y}\equiv 2n,
\end{equation}
\begin{equation}\label{nondegeneracy2}
\textrm{rank}\hspace{0.2cm} d\Pi_{Z}\equiv n+1;
\end{equation}
analogue of the Carleson-Sj\"{o}lin condition for nonhomogeneous phase (for every $z_0\in Z$, $\Gamma_{z_0}:=\Pi_{T_{z_0}^*Z}(\Lambda)$ has $n$ non-vanishing principal curvature);

(2) cone condition: for every $z_0\in Z$, $\Gamma_{z_0}$ is a smooth conic $n$-dimensional hypersurface,   $n-1$ principal curvatures do not vanish.
A Fourier integral operator which satisfies (\ref{nondegeneracy1}), (\ref{nondegeneracy2}) and the cone condition is said to satisfy the "cinematic curvature" condition.

The exact description of this condition can be found in \cite{sogge2}. Here we like to see how the condition can be reformulated if we use local coordinates.
The non-degeneracy condition implies that near a given point $(z_0,\zeta_0,y_0,\eta_0)\in \Lambda$,  local coordinates can be chosen so that $\Lambda\ni (z,\zeta,y, \eta)\rightarrow (z,\eta)$ has bijective differential and there must be a phase function $\varphi(z,\eta)$ so that $\Lambda$ takes the form
\begin{equation}\label{cone:condition1}
\{(z,\varphi'_z(z,\eta),\varphi'_{\eta}(z,\eta),\eta):\hspace{0.2cm}\eta \in \mathbb{R}^n\backslash 0 \hspace{0.1cm}\textrm{in}\hspace{0.1cm}\textrm{ a } \hspace{0.1cm}\textrm{conic}\hspace{0.1cm} \textrm{neighborhood} \hspace{0.1cm}\textrm{of}\hspace{0.1cm} \eta_0\}.
\end{equation}

In this case, the condition (\ref{nondegeneracy1}) becomes
\begin{equation}\label{nondegeneracy}
\textrm{rank}\hspace{0.1cm} \varphi_{z,\eta}''\equiv n,
\end{equation}
which means if we fix $z_0$, then,
$$\Gamma_{z_0}=\{\varphi_z'(z_0,\eta):\hspace{0.2cm}\eta\in \mathbb{R}^n\backslash0 \hspace{0.3cm} \textrm{in} \hspace{0.1cm}\textrm{ a } \hspace{0.1cm}\textrm{conic}\hspace{0.1cm} \textrm{neighborhood} \hspace{0.1cm}\textrm{of}\hspace{0.1cm} \eta_0\}\subset T_{z_0}^*Z\backslash 0$$
must be a smooth conic submanifold of dimension $n$. Then if $\Gamma_{z_0} \ni \zeta=\varphi_z'(z_0,\eta)$ and $\theta \in S^n$ is normal to $\Gamma_{z_0}$ at $\zeta$, it follows that $\pm \theta$ are the unique directions for which $\bigtriangledown_{\eta}\langle\varphi'_z(z_0,\eta),\theta\rangle=0$. The cone condition (2) is just that
\begin{equation}\label{cone:condition}
\textrm{rank}(\frac{\partial^2}{\partial\eta_j\partial\eta_k})\langle\varphi_z'(z_0,\eta),\theta\rangle=n-1 \hspace{0.2cm}\textrm{if}\hspace{0.1cm} \eta,\hspace{0.1cm}\theta \hspace{0.1cm}\textrm{are}\hspace{0.1cm} \textrm{obtained} \hspace{0.1cm}\textrm{from}\hspace{0.1cm} \textrm{the}\hspace{0.1cm}\textrm{ above}.
\end{equation}


Suppose $\mathcal{F}\in I^{\mu-1/4}(Z,Y;\Lambda)$, where $\Lambda$ satisfies (\ref{nondegeneracy1}), (\ref{nondegeneracy2}) and (\ref{cone:condition1}), $\mathcal{F}f$ can be written as a finite sum of the form
\begin{equation}\label{localinteg}
\int_{{\mathbb{R}}^n}e^{i\varphi(z,\eta)}a(z,\eta)\widehat{f}(\eta)d\eta,\hspace{0.3cm}f\in C_0^{\infty}(\mathbb{R}^n),
\end{equation}
where the phase function $\varphi$  satisfyies (\ref{nondegeneracy}) and (\ref{cone:condition}) and the symbol $a$ of order $\mu$ has small conic support in $\mathbb{R}^{n+1}\times \mathbb{R}^n$, which means that $a$ vanishes for all $z$
outside a small compact set and for all $\eta=(\eta_1,\eta')$
outside a narrow cone $\{\eta:|\eta'|\leq \varepsilon\eta_1\}$.

In this article, we always consider the case $n=2$ and $z=(x,t)\in \mathbb{R}^2\times \mathbb{R}$.  We fix $\beta\in C_0^{\infty}(\mathbb{R})$ supported in $[1/2,2]$ and set $a_{\lambda}(x,t,\eta)=\lambda^{-\mu}\beta(|\eta|/\lambda)a(x,t,\eta)$ for fixed $\lambda>1$. Then $a_{\lambda}$ is a symbol of order zero and satisfies the usual symbol estimates uniformly in $\lambda$.  Mockenhaupt, Seeger and Sogge show that the dyadic estimate of the Fourier integral operator $\mathcal{F}$ is as following.
\begin{thm}\cite{mss}\label{lem:lemma4}
\begin{equation*}
\biggl(\int_{1/2}^4\int_{{\mathbb{R}}^2}\left|\int_{{\mathbb{R}}^2}e^{i\varphi(x,t,\eta)}a_{\lambda}(x,t,\eta)\widehat{f}(\eta)d\eta\right|^pdxdt\biggl)^{1/p}\leq
C_p {\lambda}^{1/2-1/p-\epsilon(p)},
\end{equation*}

where $\epsilon(p)=\frac{1}{2p}$, if $4\leq p<\infty$;
$\epsilon(p)=\frac{1}{2}(\frac{1}{2}-\frac{1}{p})$, if $2<p\leq 4$.
\end{thm}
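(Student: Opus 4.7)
The plan is to establish the endpoint $L^{4}$ bound with exponent $\lambda^{1/8}$ and then obtain the full range by complex interpolation against trivial endpoints. For $p=2$, Plancherel in the $x$-variable together with the non-degeneracy condition (\ref{nondegeneracy}) yields
\begin{equation*}
\Big(\int_{1/2}^{4}\!\!\int_{\mathbb{R}^{2}}|T_{\lambda}f|^{2}\,dx\,dt\Big)^{1/2}\lesssim\|f\|_{L^{2}},
\end{equation*}
which corresponds to $\epsilon(2)=0$. For the $p=\infty$ endpoint, a stationary-phase analysis of the Schwartz kernel of $T_{\lambda}$ that exploits the cone condition (\ref{cone:condition}) localizes the kernel near a one-dimensional conic hypersurface and produces the bound $\|T_{\lambda}f\|_{L^{\infty}}\lesssim\lambda^{1/2}\|f\|_{L^{\infty}}$, matching $\epsilon(\infty)=0$. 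The exponents claimed in the two ranges $2<p\leq 4$ and $p\geq 4$ then drop out by interpolating the sharp $L^{4}$ estimate against these two endpoints; a direct check shows that linear interpolation of the three exponents recovers $\epsilon(p)=\tfrac{1}{2}(\tfrac{1}{2}-\tfrac{1}{p})$ on $2<p\leq 4$ and $\epsilon(p)=\tfrac{1}{2p}$ on $p\geq 4$.

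The $L^{4}$ endpoint is the heart of the argument, and I would prove it by following the Mockenhaupt-Seeger-Sogge scheme. Decompose the $\eta$-support of $a_{\lambda}$ into $O(\lambda^{1/2})$ conic sectors $\Gamma_{\nu}$ of aperture $\delta=\lambda^{-1/2}$, producing a splitting $T_{\lambda}f=\sum_{\nu}T_{\lambda}^{\nu}f$. Expanding $|T_{\lambda}f|^{2}=\sum_{\nu,\nu'}T_{\lambda}^{\nu}f\,\overline{T_{\lambda}^{\nu'}f}$ and grouping pairs dyadically by the angular separation $\theta\sim 2^{-k}$, $\lambda^{-1/2}\leq 2^{-k}\leq 1$, the matter reduces to the key bilinear $L^{2}$ estimate
\begin{equation*}
\|T_{\lambda}^{\nu}f\cdot T_{\lambda}^{\nu'}g\|_{L^{2}(\mathbb{R}^{2}\times[1/2,4])}\lesssim \lambda^{-1}\theta^{-1/2}\|f\|_{L^{2}}\|g\|_{L^{2}}
\end{equation*}
for $\theta$-transverse pairs. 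After squaring and applying Plancherel in $z=(x,t)$, this reduces to a Jacobian lower bound for the change of variables $(\eta,\eta')\mapsto(\nabla_{\eta}\varphi(z,\eta)+\nabla_{\eta}\varphi(z,\eta'),\nabla_{\eta}\varphi(z,\eta)-\nabla_{\eta}\varphi(z,\eta'))$; the rank hypothesis in (\ref{cone:condition}) is precisely what furnishes a Jacobian of size $\theta$ in the direction transverse to the cone $\Gamma_{z_{0}}$. Resumming the bilinear bounds via the Plancherel-P\'olya lemma applied to the nearly disjoint plates $\mathrm{supp}\,\widehat{T_{\lambda}^{\nu}f}$ yields $\|T_{\lambda}f\|_{L^{4}}\lesssim\lambda^{1/8}\|f\|_{L^{4}}$.

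The principal obstacle is the quantitative tracking of the $\theta$-dependence in the bilinear estimate. One must verify that the cone condition (\ref{cone:condition}) can be exploited \emph{uniformly} as $\theta$ ranges dyadically between $\lambda^{-1/2}$ and $1$, so that the bilinear bound produces exactly the power $\theta^{-1/2}$ needed to be summable after Cauchy-Schwarz in the sector count; an overly crude lower bound on the Jacobian at either end of the dyadic range would forfeit the endpoint. A secondary technical point is the almost-orthogonality in the final resumming step, where one must show that the nearly disjoint Fourier supports of the $T_{\lambda}^{\nu}f$ contribute only a logarithmic loss, absorbed by the $\lambda^{1/8}$ gain. Once these points are secured, the interpolation step is routine and produces the stated $\epsilon(p)$ in both ranges.
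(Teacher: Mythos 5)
Your outer scaffolding (an $L^2$ bound with constant $O(1)$, an $L^\infty$ bound with constant $O(\lambda^{1/2})$, a key $L^4$ bound with constant $O(\lambda^{1/8})$, then interpolation) matches how Theorem \ref{lem:lemma4} is organized in \cite{mss}, and your interpolation numerology is correct. The genuine gap is inside the $L^4$ step. The mechanism you describe --- bilinear $L^2$ estimates for $\theta$-separated sectors with a $\theta^{-1/2}$ transversality gain, followed by almost-orthogonal resummation over dyadic $\theta$ --- only produces estimates whose right-hand side carries $\|f\|_{L^2}$ (or $\ell^2$ sums of $\|f_\nu\|_{L^2}$), not $\|f\|_{L^4}$: expanding $|T_\lambda f|^2$, using the bounded overlap of the plate supports and then your bilinear bound gives, after summing in $\theta$, a statement of the form $\|T_\lambda f\|_{L^4}\lesssim C(\lambda)\|f\|_{L^2}$. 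Converting this to $L^4$ data by H\"older/Bernstein on the compact support, or by interpolating with the $\lambda^{1/2}$ bound on $L^\infty$, costs far more than the target allows (one lands near $\lambda^{3/8}$ instead of $\lambda^{1/8}$), so ``resumming via Plancherel--P\'olya'' cannot close the argument as stated. The source of $\lambda^{1/8}$ is also misidentified: in \cite{mss} (and in the adaptation carried out in Sections 3--4 of this paper for $\mathcal{F}_\lambda^{\delta}$) the factor $\lambda^{1/4-1/(2p)}=\lambda^{1/8}$ at $p=4$ comes from decomposing the dual time variable $\tau$ into roughly $\lambda^{1/2}$ slabs (the operators $P^n$) and a Cauchy--Schwarz step as in Lemma \ref{planelemma2}, not from summing $\theta^{-1/2}$ over angles.

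What is missing, and what actually carries the $L^4(L^4)$ bound in \cite{mss}, is the square-function/Kakeya machinery: after the $\tau$-slab and angular decompositions one must control $\bigl\|\bigl(\sum_{n,\nu}|P^nQ_\nu(\cdots)|^2\bigr)^{1/2}\bigr\|_{L^4}$ by a C\'ordoba-type overlap lemma for the sums $\mathcal{U}_\nu^n+\mathcal{U}_{\nu'}^{n'}$, pointwise kernel bounds that dominate each piece by averages of $|f_{n,\nu}|^2$ over $\lambda^{-1}\times\lambda^{-1/2}\times 1$ plates, and an $L^2$ estimate for a Nikodym/Kakeya-type maximal operator over those plates; with genuine cinematic curvature this last step costs only logarithmic factors. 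None of these ingredients appears in your sketch, and transversality alone cannot replace them. (A genuinely bilinear route in the spirit of Tao--Vargas does exist for the translation-invariant cone, but it needs parabolic rescaling and induction on scales, which are unavailable for a general phase satisfying only (\ref{nondegeneracy}) and (\ref{cone:condition}), and it is not what you outline.) Your two endpoint bounds and the interpolation step are fine; the content of the theorem lives entirely in the $L^4$ estimate, and that is precisely where your proposal does not yet have a proof.
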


A very natural question is to ask how the $L^4$-estimate for a Fourier integral operator depends on the cinematic curvature. In Section 3 and Section 4, we show  that a class of Fourier integral operators
which do not satisfy the cinematic curvature condition uniformly, still satisfy a local smoothing
estimate. Just when we finished our work and searched for background materials on "cinematic curvature" online,
it came to our attention that in 2000, Kung already
obtained some results for a related problem in his thesis \cite{Kung} which has not been
 published until now and only show the results in \cite{Kungpaper} which we had not been aware of. We are thankful to Kung who provided us with his thesis soon after we wrote an Email to him. Through reading his thisis we know that the basic structure of both Kung's and our approach might appear similar,
 since both strategies rely on papers \cite{mss} and \cite{mss2} by Mockenhaupt, Seeger and Sogge. Nevertheless, our approach differs from \cite{Kung}, since we made use of a different angular
 decomposition. In this way, we obtain stronger $L^4$-estimates for a class of Fourier integral operators than in \cite{Kung}, which can be applied on establishing the $L^p$-boundedness of the maximal operator associated with
 dilations $\delta_t(x)=(tx_1,t^dx_2)$ of the curve $\gamma(s)=s^d(1+\phi(s))$, where $\phi(s)=\mathcal{O}(s^m)$, $m\geq 1$ and $d\geq 2$. In a model case,
 the associated cone for the corresponding Fourier integral operator $\mathcal{F}_{\lambda}^{\delta}$ which is localized to frequencies $|\xi|\approx \lambda$ is of the
  form $\{(\xi,\delta q(\xi))\}$, where $\delta>0$ is very small and $q(\xi)$ is homogeneous of degree one, smooth on the support of the symbol of $\mathcal{F}_{\lambda}^{\delta}$.
   Comparing the light cone, we observe that the level curves of the cone $\{(\xi,\delta q(\xi))\}$ become flatter and the cinematic curvature of $\mathcal{F}_{\lambda}^{\delta}$ is only greater than or equal to $\delta$.
   In \cite{Kungpaper}, Kung got that $\|\mathcal{F}_{\lambda}^{\delta}\|_{L^4\rightarrow L^4}\leq \lambda^{1/8+\epsilon_1}\delta^{-1/2}$. However, in this article
   we obtain the better estimate that $\|\mathcal{F}_{\lambda}^{\delta}\|_{L^4\rightarrow L^4}\leq \lambda^{1/8+\epsilon_1}\delta^{-(3/8+\epsilon_2-\epsilon_1)}$ (
   see Section 3 of this article ).

 Moreover, in \cite{Kungpaper}, Kung
extends his estimate for the operator $\mathcal{F}_{\lambda}^{\delta}$ to more general Fourier integral operators which correspond the cones $\{(\xi,q(x,t,\xi))\}$,
where for all $x$, $t$, the curvature of the curve $\{\xi:q(x,t,\xi)=1\}$ is greater than or equal to $\delta$. In view of an application to our problems,
we extend our estimate for the operator $\mathcal{F}_{\lambda}^{\delta}$ to a class of Fourier integral operators which correspond the cones $\{(\xi,\delta q_{\delta}'(\xi,t))\}$,
where $q_{\delta}(\xi,t)$ is homogeneous of degree one in $\xi$, smooth on the support of the symbol of the corresponding Fourier integral operator ( see Section 4 of this article ).
However, it is still open which exponent of $\delta$ is optimal.

\subsubsection[Auxiliary results]{Auxiliary results.}

We will often use the following method of stationary phase.
\begin{lem} (Theorem 1.2.1 in \cite{sogge2})\label{lem:Lemma1}
Let S be a smooth hypersurface in $\mathbb{R}^n$ with non-vanishing Gaussian curvature and $d\mu$ be the Lebesgue measure on $S$. Then,
\begin{equation}
|\widehat{d\mu}(\xi)|\leq C(1+|\xi|)^{-\frac{n-1}{2}}.
\end{equation}

Moreover, suppose that $\Gamma\subset \mathbb{R}^n\backslash 0$ is the cone consisting of all $\xi$ which are normal to $S$ at some point $x\in S$ belonging to a fixed relatively compact neighborhood $\mathcal{N}$ of supp $d\mu$. Then
\begin{equation*}
\left|\left(\frac{\partial}{\partial\xi}\right)^{\alpha}\widehat{d\mu}(\xi)\right|=\mathcal{O}(1+|\xi|)^{-N}\hspace{0.2cm}for\hspace{0.1cm} all \hspace{0.1cm}N\in \mathbb{N}, \hspace{0.2cm}if \hspace{0.2cm}\xi\not \in \Gamma,
\end{equation*}
\begin{equation}
\widehat{d\mu}(\xi)=\sum e^{-i\langle x_j,\xi \rangle}a_j(\xi)\hspace{0.5cm}if \hspace{0.2cm}\xi\in \Gamma,
\end{equation}
where the finite sum is taken over all $x_j\in \mathcal{N}$ having $\xi$ as the normal and
\begin{equation}
\left|\left(\frac{\partial}{\partial\xi}\right)^{\alpha}a_j(\xi)\right|\leq C_{\alpha}(1+|\xi|)^{-\frac{n-1}{2}-|\alpha|}.
\end{equation}

\end{lem}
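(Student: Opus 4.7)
The plan is to reduce the statement to standard oscillatory integral estimates by localization. First I would introduce a smooth partition of unity subordinate to a finite cover of $\supp d\mu$ by coordinate charts small enough that, in each chart, $S$ is the graph of a smooth function $x_n = \psi_k(x')$ after a suitable rotation of coordinates. This reduces $\widehat{d\mu}(\xi)$ to a finite sum of oscillatory integrals of the form
\begin{equation*}
I_k(\xi) = \int_{\mathbb{R}^{n-1}} e^{-i\Phi_k(x',\xi)} b_k(x')\, dx',
\qquad \Phi_k(x',\xi) = x'\cdot \xi' + \psi_k(x')\xi_n,
\end{equation*}
with $b_k$ smooth and compactly supported (the Jacobian factor relating surface measure to Lebesgue measure on the graph gets absorbed into $b_k$).

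Next I would split according to whether $\xi$ lies in the normal cone $\Gamma$ or not. A critical point of $\Phi_k$ in $x'$ satisfies $\xi' + \xi_n\nabla\psi_k(x')=0$, which is exactly the condition that $\xi$ be normal to $S$ at $(x',\psi_k(x'))$. Hence for $\xi \notin \Gamma$ one has $|\nabla_{x'}\Phi_k| \gtrsim |\xi|$ uniformly on $\supp b_k$, and $N$-fold integration by parts against the transposed non-stationary operator $L^{t}f = -i\nabla_{x'}\cdot\bigl(|\nabla_{x'}\Phi_k|^{-2}\nabla_{x'}\Phi_k\, f\bigr)$ produces the $\mathcal{O}(|\xi|^{-N})$ bound; differentiating in $\xi$ under the integral and repeating the same argument handles derivatives as well.

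For $\xi \in \Gamma$ I would apply stationary phase. The Hessian $\partial^2_{x'}\Phi_k = \xi_n D^2\psi_k(x')$ at a critical point is nondegenerate precisely because the Gaussian curvature of $S$, which up to a positive factor is $\det D^2\psi_k$ at the corresponding point, does not vanish. Thus critical points are isolated, and by the implicit function theorem the critical point $x_j'(\xi)$ is a smooth function of $\xi$ homogeneous of degree $0$, with derivatives bounded independently of $|\xi|$. The standard stationary phase expansion (e.g.\ Theorem 7.7.5 in H\"ormander) gives
\begin{equation*}
I_k(\xi) = \sum_{j} e^{-i\Phi_k(x_j'(\xi),\xi)} a_{k,j}(\xi)
        = \sum_{j} e^{-i\langle x_j,\xi\rangle}a_{k,j}(\xi),
\end{equation*}
since $\Phi_k(x_j'(\xi),\xi) = \langle x_j,\xi\rangle$ follows from $\xi' + \xi_n\nabla\psi_k(x_j') = 0$. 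Each $a_{k,j}$ is a classical symbol of order $-(n-1)/2$, giving both the asymptotic identity and the global bound $|\widehat{d\mu}(\xi)| \leq C(1+|\xi|)^{-(n-1)/2}$ after summing over charts.

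The main technical point is obtaining the symbol-type derivative estimates on the $a_j(\xi)$. Each $\xi$-derivative either lands on the slowly varying amplitude in the Morse-normal form (harmless), or on the critical point $x_j'(\xi)$, which by the implicit function theorem applied to $\nabla_{x'}\Phi_k = 0$ contributes a factor of order $|\xi|^{-1}$; tracking this bookkeeping through the asymptotic expansion gives the claimed $|\alpha|$-th order gain. A minor subtlety is the transition region where $\xi$ lies near $\partial\Gamma$: here I would choose the coordinate patches small enough that the normal cones of adjacent charts overlap, so that any $\xi$ in a conic neighborhood of $\Gamma$ belongs to the normal cone of some chart and the two regimes above cover $\mathbb{R}^n\setminus 0$ seamlessly.
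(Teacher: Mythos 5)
Your argument is correct and is essentially the standard proof of this result: the paper itself gives no proof but quotes it as Theorem 1.2.1 of Sogge's book, whose argument is exactly your localization to graph coordinates, non\-stationary-phase integration by parts off the normal cone $\Gamma$, and uniform stationary phase with the nondegenerate Hessian $\xi_n D^2\psi_k$ supplying the $(1+|\xi|)^{-\frac{n-1}{2}}$ decay and the symbol estimates on the $a_j$.
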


 We also use the following well-known estimate.
\begin{lem} (Theorem 2.4.2 in \cite{sogge2})\label{lem:Lemma3}
Suppose that $F$ is $C^1(\mathbb{R})$. Then if $p>1$ and $1/p+1/p'=1$,
\begin{equation*}
\sup_{\lambda}|F(\lambda)|^p\leq |F(0)|^p+p\biggl(\int|F(\lambda)|^pd\lambda\biggl)^{1/p'}\biggl(\int|F'(\lambda)|^pd\lambda\biggl)^{1/p}.
\end{equation*}
\end{lem}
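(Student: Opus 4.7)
The plan is to combine the fundamental theorem of calculus applied to $|F|^p$ with a single use of H\"older's inequality. The key observation is that since $p>1$, the map $x\mapsto |x|^p$ is continuously differentiable on $\mathbb{R}$ with derivative $p|x|^{p-1}\mathrm{sgn}(x)$; composing with the $C^1$ function $F$ shows that $s\mapsto |F(s)|^p$ is $C^1$ with derivative $p|F(s)|^{p-1}\mathrm{sgn}(F(s))F'(s)$.

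First I would fix $\lambda\in\mathbb{R}$ and write, by the fundamental theorem of calculus,
\begin{equation*}
|F(\lambda)|^p \;=\; |F(0)|^p + \int_{0}^{\lambda} p|F(s)|^{p-1}\mathrm{sgn}(F(s))\, F'(s)\, ds.
\end{equation*}
Taking absolute values inside the integral and enlarging the interval of integration from $[0,\lambda]$ (or $[\lambda,0]$ if $\lambda<0$) to all of $\mathbb{R}$ yields the pointwise bound
\begin{equation*}
|F(\lambda)|^p \;\leq\; |F(0)|^p + p\int_{\mathbb{R}}|F(s)|^{p-1}|F'(s)|\, ds,
\end{equation*}
which is valid uniformly in $\lambda$. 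Observe that the claim is trivial whenever the right-hand side of the asserted inequality is infinite, so one may harmlessly assume $F,F'\in L^p(\mathbb{R})$.

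Next I would apply H\"older's inequality with conjugate exponents $p'$ and $p$ to the last integral, using the identity $(p-1)p'=p$ which follows from $1/p+1/p'=1$:
\begin{equation*}
\int_{\mathbb{R}}|F(s)|^{p-1}|F'(s)|\, ds \;\leq\; \left(\int_{\mathbb{R}}|F(s)|^{(p-1)p'}\, ds\right)^{1/p'}\left(\int_{\mathbb{R}}|F'(s)|^{p}\, ds\right)^{1/p} \;=\; \|F\|_{L^p}^{p/p'}\, \|F'\|_{L^p}.
\end{equation*}
Substituting this into the previous pointwise bound and then taking the supremum over $\lambda\in\mathbb{R}$ on the left-hand side produces exactly the inequality claimed in the lemma. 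There is no substantive obstacle in this argument; it amounts to a clean one-dimensional Sobolev embedding of $W^{1,p}$ into $L^\infty$ with an explicit constant, and the only point requiring a moment of care is the chain rule for $|F|^p$, which is automatic under the assumption $p>1$.
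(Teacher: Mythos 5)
Your argument is correct and complete: the chain rule for $|x|^p$ (valid since $p>1$), the fundamental theorem of calculus, and a single application of H\"older's inequality with $(p-1)p'=p$ give exactly the stated bound, and you rightly note the inequality is vacuous if the right-hand side is infinite. The paper itself offers no proof, quoting the result from Sogge's book, and your argument is essentially the standard proof given there, so there is nothing further to compare.
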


Next, we will introduce the definition of the nonisotropic Littlewood-Paley  operator associated with a parameter and show a related lemma.

Let $ \beta \in C_0^{\infty}(\mathbb{R})$ be non-negative and supp $\beta\subset [1/\sqrt{c_0},\sqrt{c_0}]$, where $c_0>1$ is a real number. For a fixed integer $j>0$, real numbers $a_1, a_2>0$ and $\ell\in \mathbb{Z}$, we define the non-isotropic Littlewood-Paley  operator with parameter $j$ as
\begin{equation}
\widehat{\Delta_{\ell}^j f}(\xi)=\beta(2^{-j}|\delta_{2^{\ell}}\xi|)\hat{f}(\xi),
\end{equation}
where $\delta_{2^{\ell}}\xi=(2^{\ell a_1}\xi_1,2^{\ell a_2}\xi_2)$.

\begin{lem}\label{Appendix}
Let
\begin{equation*}
g_j(f)=\biggl(\sum_{\ell\in \mathbb{Z}}|\Delta_{\ell}^j f|^2\biggl)^{1/2},
\end{equation*}
then the following holds true:
\begin{equation}\label{g_jf}
\|g_j(f)\|_{L^p(\mathbb{R}^2)}\leq C_p \|f\|_{L^p(\mathbb{R}^2)}.
\end{equation}
if $p\in (1,\infty)$, and $C_p$ only depends on $p$.
\end{lem}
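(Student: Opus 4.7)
\textbf{Proof proposal for Lemma \ref{Appendix}.} The strategy is the standard two-step argument for Littlewood--Paley square functions, adapted to the one-parameter non-isotropic dilation group $\{\delta_t\}$, with the essential point being that every constant must be independent of the dyadic parameter $j$.

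First I would establish the $L^2$-bound by Plancherel. Observe that for each fixed $\xi\neq 0$, the map $\ell\mapsto|\delta_{2^\ell}\xi|=(2^{2\ell a_1}\xi_1^2+2^{2\ell a_2}\xi_2^2)^{1/2}$ is strictly increasing from $0$ to $\infty$ in $\ell$, so the set of indices $\ell$ for which $\beta(2^{-j}|\delta_{2^\ell}\xi|)\neq 0$ is contained in an interval whose length is at most $\tfrac{\log_2 c_0}{\min(a_1,a_2)}$, a constant $N_0$ independent of $j$ and of $\xi$. Hence $\sum_\ell |\beta(2^{-j}|\delta_{2^\ell}\xi|)|^2\leq N_0\|\beta\|_\infty^2$ pointwise, and Plancherel yields
\[
\|g_j f\|_{L^2}^2=\int_{\mathbb{R}^2}|\hat f(\xi)|^2\sum_{\ell}|\beta(2^{-j}|\delta_{2^\ell}\xi|)|^2\,d\xi\leq N_0\|\beta\|_\infty^2\|f\|_{L^2}^2.
\]

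For $p\neq 2$ I would randomize with Rademacher functions $(r_\ell)$ on $[0,1]$. By Khintchine's inequality, proving \eqref{g_jf} reduces to the uniform (in $\omega$ and $j$) $L^p$-estimate for the Fourier multiplier operator
\[
T_\omega^j f=\sum_{\ell}r_\ell(\omega)\,\Delta_\ell^j f,\qquad \widehat{T_\omega^j f}(\xi)=m_\omega^j(\xi)\hat f(\xi),\quad m_\omega^j(\xi)=\sum_\ell r_\ell(\omega)\beta(2^{-j}|\delta_{2^\ell}\xi|).
\]
An explicit computation, substituting first $\eta=\delta_{2^\ell}\xi$ (with Jacobian $2^{-\ell(a_1+a_2)}$) and then $\eta=2^j\zeta$, expresses the convolution kernel of $\Delta_\ell^j$ as
\[
K_\ell^j(x)=2^{2j-\ell(a_1+a_2)}\,\Phi\bigl(2^{j}\delta_{2^{-\ell}}x\bigr),\qquad \Phi(y)=(2\pi)^{-2}\int_{\mathbb{R}^2}\beta(|\zeta|)e^{i\langle y,\zeta\rangle}\,d\zeta,
\]
so each $K_\ell^j$ is an $L^1$-normalized, Schwartz-class bump concentrated at the non-isotropic scale $(2^{\ell a_1-j},2^{\ell a_2-j})$, with mean zero (since $\beta$ vanishes at $0$). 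Using this decay one verifies directly the classical Hörmander regularity condition
\[
\sup_{y\neq 0}\int_{|x|\geq 2|y|}|K_\omega^j(x-y)-K_\omega^j(x)|\,dx\leq C,
\]
uniformly in $\omega$ and $j$. Combined with the $L^2$ bound on $T_\omega^j$ (which follows from Step~1), the Calderón--Zygmund theorem delivers $\|T_\omega^j f\|_{L^p}\leq C_p\|f\|_{L^p}$ uniformly, and Khintchine finishes the proof.

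The main obstacle is the bookkeeping in the Hörmander estimate, so that the constant does not depend on $j$ and on the auxiliary direction information coming from the non-isotropic group. The standard device is to split the sum over $\ell$ at the critical index $\ell_\ast(y)$ determined by $2^{j-\ell_\ast a_1}|y_1|+2^{j-\ell_\ast a_2}|y_2|\sim 1$: for $\ell\geq\ell_\ast$ one uses the mean-value theorem together with the Schwartz decay of $\nabla\Phi$, while for $\ell<\ell_\ast$ one uses the cancellation $\int K_\ell^j=0$ and the Schwartz decay of $\Phi$ itself. In both regimes the contributions form a geometric series in $|\ell-\ell_\ast|$ whose sum is bounded independently of $j$, which is precisely the uniformity asserted in the lemma.
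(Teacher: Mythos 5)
Your Khintchine reduction to the randomized multipliers $T^j_\omega$, the $L^2$ bound by bounded overlap in $\ell$, and the kernel formula $K^j_\ell(x)=2^{2j-\ell(a_1+a_2)}\Phi(2^j\delta_{2^{-\ell}}x)$ are all correct. The gap is the claim that the classical (Euclidean) H\"{o}rmander condition $\sup_{y\neq0}\int_{|x|\ge2|y|}|K^j_\omega(x-y)-K^j_\omega(x)|\,dx\le C$ can be verified by splitting at the critical index $\ell_*(y)$. When $a_1\neq a_2$ your two regimes do not exhaust the sum: $K^j_\ell$ is adapted to a rectangle with side lengths $s_i=2^{\ell a_i-j}$, and there is an intermediate range of $\ell$, namely those with $\min(s_1,s_2)\ll|y|\ll\max(s_1,s_2)$, whose cardinality is comparable to $\bigl|\tfrac{1}{a_1}-\tfrac{1}{a_2}\bigr|\,\bigl|j+\log_2|y|\bigr|$ and hence is not bounded uniformly in $j$ and $y$. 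For such $\ell$ (take $y$ along the short direction, so $|y|\gg s_1$ while $s_2\gg|y|$) the translated and untranslated bumps are essentially disjoint and most of their mass lies inside $\{|x|\ge2|y|\}$, so $\int_{|x|\ge2|y|}|K^j_\ell(x-y)-K^j_\ell(x)|\,dx\approx2\|\Phi\|_{L^1}$: the mean-value bound gives nothing because $|y|/s_1\gg1$, the concentration/decay bound gives nothing because the long side exceeds $|y|$, and $\int K^j_\ell=0$ does not help estimate an $L^1$ norm of a difference. So the asserted geometric series in $|\ell-\ell_*|$ breaks down and the triangle inequality only yields a bound growing linearly in the number of intermediate scales. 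Moreover, one should not expect any such kernel condition to hold uniformly in $j$ by a direct attack: $j$ enters through an isotropic dilation while the $\ell$-family is anisotropic, so for large $j$ the pieces $K^j_\ell$ are eccentric rectangles both for the Euclidean metric and for the quasi-norm $|\cdot|_\delta$.

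The paper removes exactly this obstruction before any kernel or multiplier analysis: since $m_{j,t}(\xi)=m_{0,t}(2^{-j}\xi)$, conjugation with the isotropic dilation $\tau_jf(x)=2^{2j/p}f(2^jx)$ gives $\|T_{m_{j,t}}\|_{L^p\to L^p}=\|T_{m_{0,t}}\|_{L^p\to L^p}$, so only $j=0$ has to be treated; there each dyadic piece is supported where $|\xi|_\delta\approx2^{-\ell}$, the anisotropic Mikhlin bounds (\ref{T_m}) follow at once from bounded overlap, and the Fabes--Rivi\`ere multiplier theorem (Calder\'on--Zygmund theory for the quasi-distance $|\cdot|_\delta$) concludes. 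Your argument can be repaired along the same lines: first conjugate to $j=0$ at no cost in operator norm, and then run your kernel computation with quasi-balls of $|\cdot|_\delta$ in place of Euclidean balls; at $j=0$ each $K^0_\ell$ genuinely is a bump adapted to a $\delta$-ball of radius $2^\ell$, and your two-regime splitting at $2^{\ell_*}\approx|y|_\delta$ then does produce geometric decay on both sides.
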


\begin{proof}
The idea of proof is very similar to the classic one in \cite{steinbook2}. For completeness, we give the proof here. For $a=(a_1, a_2)$ and $\alpha=(\alpha_1, \alpha_2)$,
we define $(a,\alpha)=a_1\alpha_1 +a_2\alpha_2 $, $\xi^{\alpha}=\xi_1^{\alpha_1}\xi_2^{\alpha_2}$. Let $|\cdot|_{\delta}$ be a homogeneous norm,
i.e. $|\delta_{r}\xi|_{\delta}=r|\xi|_{\delta}$ for $r>0$. H\"{o}rmander's theorem in nonisotropic case is usually stated in the following way.

\begin{lem}\cite{Fabes} Let $m(x)\in L^{\infty}(\mathbb{R}^2)$, and assume $m(x)$ is $N$ times continuously differentiable where $N>|a|/2$; moreover, assume that
\begin{equation*}
\int_{R/2\leq |\xi|_{\delta}\leq 2R}\left|R^{(a,\alpha)}\left(\frac{\partial}{\partial \xi}\right)^{\alpha}m(\xi)\right|^2\frac{d\xi}{R^{|a|}}\leq C_{\alpha},\hspace{0.3cm} |m(\xi)|\leq C \hspace{0.3cm} a.e.,
\end{equation*}
where $C$ is independent of $R$, say $C\geq 1$.
Then there exists a constant $A_p$ such that
\begin{equation}
\|T_mf\|_{L^p}=\|\mathcal{F}^{-1}(m\hat{f})\|_{L^p}\leq A_p \|f\|_{L^p}, \hspace{0.2cm}1<p<\infty, \hspace{0.2cm}f\in C_0^{\infty}(\mathbb{R}^2),
\end{equation}
where $A_p$ depends only on $a$ and $p$. In particular, if $m $ satisfies that
\begin{equation}\label{T_m}
\left|\left(\frac{\partial}{\partial \xi}\right)^{\alpha}m(\xi)\right|\leq C |\xi|_{\delta}^{-(a,\alpha)},
\end{equation}
for all $\xi\neq 0$, then $T_m$ is bounded on $L^p(\mathbb{R}^2)$.
\end{lem}

Set
$$ R_{\ell}:=\left\{
\begin{aligned}
r_{2\ell}, \hspace{0.2cm}if\hspace{0.2cm} \ell\in \mathbb{Z}, \hspace{0.2cm}\ell >0;\\
r_{|2\ell|+1}, \hspace{0.2cm} if \hspace{0.2cm}\ell\in \mathbb{Z}, \hspace{0.2cm} \ell \leq 0,
\end{aligned}
\right.
$$
where $\{r_k\}_{k\in \mathbb{N}}$ is the Rademacher function system on $[0,1]$. Then
\begin{equation}\label{har}
g_j(f)(x)^p \approx \int_0^1|\sum_{\ell\in \mathbb{Z}}\Delta_{\ell}^j f(x)R_{\ell}(t)|^pdt.
\end{equation}

Since $|\cdot|_{\delta}\approx |\cdot|$ for $|\xi|\approx 1$, then it is easy to see that
$\sum_{\ell\in \mathbb{Z}}\beta(2^{-j}|\delta_{2^{\ell}}\xi|)\leq C$. So we can define
\begin{equation}
T_{m_j,t}f:=\left(\sum_{\ell\in \mathbb{Z}}R_{\ell}(t)\Delta_{\ell}^j\right)f,
\end{equation}
then $T_{m_j,t}$ is a Fourier multiplier operator with multiplier
\begin{equation*}
m_{j,t}(\xi):=\sum_{\ell\in \mathbb{Z}}R_{\ell}(t)\beta(2^{-j}|\delta_{2^{\ell}}\xi|).
\end{equation*}

However, $|R_{\ell}(t)|\leq 1$ for  all $\ell\in \mathbb{Z}$ and $\|T_{m_j,t}\|_{L^p\rightarrow L^p}=\|T_{m_0,t}\|_{L^p\rightarrow L^p}$. In fact, if  $\tau_{j}f(x)=2^{2j/p}f(2^{j}x)$, then
\begin{align*}
T_{m_j,t}f&=T_{2^{2j/p}\tau_{-j}m_{0,t}}f\\
&=\mathcal{F}^{-1}(2^{2j/p}\tau_{-j}m_{0,t}\hat{f})\\
&=2^{2j/p}\mathcal{F}^{-1}(\tau_{-j}(m_{0,t}\hat{f}(2^j\cdot)))\\
&=2^{2j}2^{-2j/p}\tau_j\mathcal{F}^{-1}(m_{0,t}\hat{f}(2^j\cdot))\\
&=\tau_j\mathcal{F}^{-1}(m_{0,t}2^{-2j/p}2^{2j}\hat{f}(2^j\cdot))\\
&=\tau_j\mathcal{F}^{-1}(m_{0,t}\widehat{\tau_{-j}f})=\tau_jT_{m_{0,t}}(\tau_{-j}f),
\end{align*}
and  $\|\tau_jT_{m_{0,t}}\tau_{-j}\|_{L^p\rightarrow L^p}=\|T_{m_{0,t}}\|_{L^p\rightarrow L^p}$ implies the desired estimate.

It thus suffices to show that $m_{0,t}$ satisfies (\ref{T_m}).  Since for some $\ell\in \mathbb{Z}$,
$\beta(|\delta_{2^{\ell}}\xi|)\neq 0\Rightarrow|\xi|_{\delta}\approx 2^{-\ell}$,
then
\begin{equation*}
\sum_{\ell\in \mathbb{Z}}\left(\frac{\partial}{\partial \xi}\right)^{\alpha}\beta(|\delta_{2^{\ell}}\xi|) \leq C_{\alpha}C |\xi|_{\delta}^{-(a,\alpha)},
\end{equation*}
for arbitrary $\xi\neq 0$, which implies that $m_{0,t}$ satisfies (\ref{T_m}). Then there exists a constant $A_p$ such that
\begin{equation*}
\|T_{m_{j,t}}f\|_{L^p}\leq A_p\|f\|_{L^p}, \hspace{0.2cm}\textmd{for} \hspace{0.1cm}\textmd{all }\hspace{0.1cm}f\in L^p(\mathbb{R}^2).
\end{equation*}

Furthermore, (\ref{har}) and Fubini's theorem gives
\begin{align*}\|g_jf\|_{L^p}^p&\approx\int_{\mathbb{R}^2}\biggl(\int_0^1|T_{m_{j,t}}f|^pdt\biggl)dx\\
&=\int_0^1\|T_{m_{j,t}}f\|_{L^p}^pdt\\
&\leq \int_0^1A_p^p\|f\|_{L^p}^pdt=A_p^p\|f\|_{L^p}^p.
\end{align*}

\end{proof}


\subsection[The proof for curves with non-vanishing Gaussian curvature]{The proof for curves with non-vanishing Gaussian curvature.}\label{nonvanishingcur}

In this section, we will give the proof of Theorem \ref{planetheorem}.

\subsubsection[The case when $m=1$]{The case when $m=1$.} \label{nonvanishingsec1}

In this section, $\phi \in C^{\infty}(\mathbb{R})$ satisfies the following condition
\begin{equation}\label{phi}
\phi(0)\neq 0,\hspace{0.2cm}\phi'(0)\neq 0.
 \end{equation}

We choose $B>0$ very small and  $\tilde{\rho}\in C_0^{\infty}(\mathbb{R})$ such that supp $\tilde{\rho}\subset\{x:B/2\leq|x|\leq 2B\}$  and $\sum_k\tilde{\rho}(2^kx)=1$ for $x\in \mathbb{R}$.

Put
\begin{align*}
A_tf(y):&=\int f(y_1-tx,y_2-t^2x^2\phi(x))\eta(x)dx\\
&=\sum_k\int f(y_1-tx,y_2-t^2x^2\phi(x))\tilde{\rho}(2^kx)\eta(x)dx=\sum_kA_t^kf(y),
\end{align*}
where
\begin{equation}
A_t^kf(y):=\int f(y_1-tx,y_2-t^2x^2\phi(x))\tilde{\rho}(2^kx)\eta(x)dx.
\end{equation}

Since $\eta$ is supported in a sufficiently small neighborhood of the origin, then we only need to consider $k>0$ sufficiently large.

Considering the isometric operator on $L^p(\mathbb{R}^2)$ defined by $T_kf(x_1,x_2)=2^{3k/p}f(2^kx_1,2^{2k}x_2)$, one can compute that
\begin{equation}
T_k^{-1}A_t^kT_kf(y)=2^{-k}\int f(y_1-tx,y_2-t^2x^2\phi(\frac{x}{2^k}))\tilde{\rho}(x)\eta(2^{-k}x)dx.
\end{equation}

Then it suffices to prove the following estimate
\begin{equation}\label{Target}
\sum_k2^{-k}\left\|\sup_{t>0}|\widetilde{A_t^k}|\right\|_{L^p\rightarrow L^p}\leq C_p, \hspace{0.2cm}\textrm{for }\hspace{0.1cm}\textrm{all} \hspace{0.1cm} p>2,
\end{equation}
where
\begin{equation}
\widetilde{A_t^k}f(y):=\int f(y_1-tx,y_2-t^2x^2\phi(\frac{x}{2^k}))\tilde{\rho}(x)\eta(2^{-k}x)dx.
\end{equation}

By means of the Fourier inversion formula, we have
\begin{align*}
\widetilde{A_t^k}f(y)&=\int_{{\mathbb{R}}^2}e^{i\xi\cdot y}\int_{\mathbb{R}}e^{-i(t\xi_1x+t^2\xi_2x^2\phi(\frac{x}{2^k}))}\tilde{\rho}(x)\eta(2^{-k}x)dx\hat{f}(\xi)d\xi
\\
&=\int_{{\mathbb{R}}^2}e^{i\xi\cdot y}\widehat{d\mu_k}(\delta_t\xi)\hat{f}(\xi)d\xi,
\end{align*}
where
\begin{equation}\label{dmu_k}
\widehat{d\mu_k}(\xi):=\int_{\mathbb{R}}e^{-i(\xi_1x+\xi_2x^2\phi(\frac{x}{2^k}))}\tilde{\rho}(x)\eta(2^{-k}x)dx.
\end{equation}

We choose a non-negative function $\beta\in C_0^{\infty}(\mathbb{R})$ such that supp $\beta\subset[1/2,2]$ and $\sum_{j\in\mathbb{Z}}\beta(2^{-j}r)=1$ for $r>0$. Define the dyadic operators
\begin{equation}
\widetilde{A_{t,j}^k}f(y)=\int_{{\mathbb{R}}^2}e^{i\xi\cdot y}\widehat{d\mu_k}(\delta_t\xi)\beta(2^{-j}|\delta_t\xi|)\hat{f}(\xi)d\xi,
\end{equation}
and denote by $\widetilde{\mathcal{M}_{j}^k}$ the corresponding maximal operator.  Now we have that
\begin{equation*}
\sup_{t>0}|\widetilde{A_t^k}f(y)|\leq \widetilde{\mathcal{M}^{k,0}}f(y)+\sum_{j\geq 1}\widetilde{\mathcal{M}_{j}^k}f(y), \hspace{0.2cm}\textmd{for}\hspace{0.2cm}y\in \mathbb{R}^2.
\end{equation*}
where
\begin{equation}
\widetilde{\mathcal{M}^{k,0}}f(y):=\sup_{t>0}|\sum_{j\leq 0}\widetilde{A_{t,j}^k}f(y)|.
\end{equation}

We observe that $\widetilde{\mathcal{M}^{k,0}}f(y)=\sup_{t>0}|f*K_{\delta_{t^{-1}}}(y)|$, where $K_{\delta_{t^{-1}}}(x)=t^{-3}K(\frac{x_1}{t},\frac{x_2}{t^2})$ and
\begin{equation}
K(y):=\int_{{\mathbb{R}}^2}e^{i\xi\cdot y}\widehat{d\mu_k}(\xi)\rho(|\xi|)d\xi,
 \end{equation}
where $\rho\in C_0^{\infty}(\mathbb{R})$ is supported in $[0,1]$.  Since  $\phi$ satisfies (\ref{phi}), supp $\tilde{\rho}\subset \{x:B/2\leq |x|\leq 2B\}$,
then Lemma \ref{lem:Lemma1} implies that for a multi-index $\alpha$,
\begin{equation}
\left|\left(\frac{\partial}{\partial\xi}\right)^{\alpha}\widehat{d\mu_k}(\xi)\right|\leq C_{B,\alpha}(1+|\xi|)^{-1/2}.
\end{equation}

By integration by parts, we obtain that
\begin{equation}
|K(y)|\leq C_N (1+|y|)^{-N}.
\end{equation}
Then $\widetilde{\mathcal{M}^{k,0}}f(y)\leq C_N'Mf(y)$, where $M$ is the Hardy-Littlewood maximal operator defined by
\begin{equation}\label{Hardylittlewood}
Mf(y):=\sup_{t>0}\frac{1}{|B(y,t)|}\int_{B(y,t)}f(x)dx,
\end{equation}
where $B(y,t)=\{x:|y-x|_{\delta}<t\}$ and $|x|_{\delta}=\max\{|x_1|,|x_2|^{1/2}\}$. For the exact description, one can see page 8-13 in \cite{steinbook}. So it suffices to prove that
\begin{equation}
\sum_k2^{-k}\sum_{j\geq 1}\|\widetilde{\mathcal{M}_{j}^k}\|_{L^p\rightarrow L^p} \leq C_p.
\end{equation}

Since $\widetilde{A_{t,j}^k}$ is localized to frequency $|\delta_t\xi|\approx 2^j$, we will show that
\begin{equation*}
 \|\widetilde{\mathcal{M}_{j}^k}\|_{L^p\rightarrow L^p}\lesssim \|\mathcal{M}_{j,loc}^k\|_{L^p\rightarrow L^p},
\end{equation*}
where $\mathcal{M}_{j,loc}^kf(y)=\sup_{t\in [1,2]}|\widetilde{A_{t,j}^k}f(y)|$. In fact, for fixed $j\geq 1$ and all $\ell\in \mathbb{Z}$, let $\bigtriangleup^j_{\ell}$ be the non-isotropic Littlewood-Paley
operator in ${\mathbb{R}}^2$ defined by
$\widehat{\bigtriangleup^j_{\ell}f}(\xi)=\tilde{\beta}(2^{-j}|\delta_{2^{\ell}}\xi|)\hat{f}(\xi)$, here $\tilde{\beta}\in C_0^{\infty}(\mathbb{R})$ is nonnegative and satisfies $\beta(|\delta_{t}\xi|)=\beta(|\delta_{t}\xi|)\tilde{\beta}(|\xi|)$, for any $t\in [1,2]$. Then
\begin{align*}
\widetilde{M_{j}^k}f(y)&=\sup_{\ell\in \mathbb{Z}}\sup_{t\in[1,2]}\left|\int_{{\mathbb{R}}^2}e^{i\xi\cdot y}\widehat{d\mu_{k}}(\delta_{2^{\ell}t}\xi)\beta(2^{-j}
|\delta_{2^{\ell}t}\xi|)\tilde{\beta}(2^{-j}|\delta_{2^{\ell}}\xi|)
\hat{f}(\xi)d\xi\right|
\\
&\leq \biggl(\sum_{\ell\in \mathbb{Z}}\sup_{t\in[1,2]}\left|\int_{{\mathbb{R}}^2}e^{i\xi\cdot y}\widehat{d\mu_{k}}(\delta_{2^{\ell}t}\xi)\beta(2^{-j}
|\delta_{2^{\ell}t}\xi|)\widehat{\bigtriangleup^j_{\ell}f}(\xi)d\xi\right|^p\biggl)^{1/p}
\\
&=\biggl(\sum_{\ell\in \mathbb{Z}}\sup_{t\in[1,2]}\left|2^{-3\ell }\int_{{\mathbb{R}}^2}e^{i\xi\cdot \delta_{2^{-\ell}}y}\widehat{d\mu_{k}}(\delta_{t}\xi)\beta(2^{-j}
|\delta_{t}\xi|)\widehat{\bigtriangleup^j_{\ell}f}(\delta_{2^{-\ell}}\xi)d\xi\right|^p\biggl)^{1/p}
\\
&=\biggl(\sum_{\ell\in \mathbb{Z}}\biggl|\mathcal{M}_{j,loc}^k(\bigtriangleup^j_{\ell}f\circ\delta_{2^{\ell}})(\delta_{2^{-\ell}}y)\biggl|^p\biggl)^{1/p}.
\end{align*}

Since $p>2$, Lemma \ref{Appendix} implies that
\begin{align*}
\|\widetilde{M_{j}^k}f\|_{L^p}^p
&\leq \sum_{\ell\in \mathbb{Z}}\int_{{\mathbb{R}}^2}\biggl|\mathcal{M}_{j,loc}^k(\bigtriangleup^j_{\ell}f\circ\delta_{2^{\ell}})(\delta_{2^{-\ell}}y)\biggl|^pdy
\\
& =\sum_{\ell\in \mathbb{Z}}2^{3\ell }\biggl\|\mathcal{M}_{j,loc}^k(\bigtriangleup^j_{\ell}f\circ\delta_{2^{\ell}})\biggl\|_{L^p}^p
\\
& \leq \|\mathcal{M}_{j,loc}^k\|_{L^p\rightarrow L^p}^p \sum_{\ell\in \mathbb{Z}}\int_{{\mathbb{R}}^2}|\bigtriangleup^j_{\ell}f(y)|^pdy
\\
&\leq \|\mathcal{M}_{j,loc}^k\|_{L^p\rightarrow L^p}^p \biggl\|\biggl(\sum_{\ell\in \mathbb{Z}}|\bigtriangleup^j_{\ell}f(y)|^2\biggl)^{1/2}\biggl\|_{L^p({\mathbb{R}}^2)}^p
\\
&\lesssim\|\mathcal{M}_{j,loc}^k\|_{L^p\rightarrow L^p}^p\|f\|_{L^p({\mathbb{R}}^2)}^p.
\end{align*}

Based on the above argument, next we will only consider
\begin{equation}\label{loc}
\sum_k2^{-k}\sum_{j\geq 1}\|\mathcal{M}_{j,loc}^k\|_{L^p\rightarrow L^p} \leq C_p.
\end{equation}

In order to get (\ref{loc}), first we will estimate
\begin{equation}
\widehat{d\mu_k}(\delta_t\xi)=\int_{\mathbb{R}}e^{-it^2\xi_2(-sx+x^2\phi(\delta x))}\tilde{\rho}(x)\eta(\delta x)dx,
\end{equation}
where $2^{-k}=\delta$ and
\begin{equation}
s:=s(\xi,t)=-\frac{\xi_1}{t\xi_2}, \hspace{0.3cm}\textrm{for} \hspace{0.2cm} \xi_2\neq 0.
\end{equation}
If $\xi_2=0$, then
\begin{equation*}
|\widehat{d\mu_k}(\delta_t\xi)|=|(\eta(\delta\cdot)\tilde{\rho})^{\wedge}(t\xi_1)|\leq \frac{C_N'}{(1+|t\xi_1|)^N}=\frac{C_N'}{(1+|\delta_t\xi|)^N},
\end{equation*}
and for multi-index $\alpha$,
\begin{equation*}
|D_{\xi}^{\alpha}\widehat{d\mu_k}(\delta_t\xi)|=|D_{\xi}^{\alpha}(\eta(\delta\cdot)\tilde{\rho})^{\wedge}(t\xi_1)|\leq \frac{C_{\alpha,N}}{(1+|\delta_t\xi|)^N}.
\end{equation*}
Since $t\approx 1$, then we can deduce the case $\xi_2=0$ into $B_k$ of the following (\ref{station}).

Put
\begin{equation}
\Phi(s,x,\delta)=-sx+x^2\phi(\delta x),
\end{equation}
then we have
\begin{equation*}
\partial_x\Phi(s,x,\delta)=-s+2x\phi(\delta x)+x^2\delta\phi'(\delta x)
\end{equation*}
and
\begin{equation*}
\partial_x^2\Phi(s,x,\delta)=2\phi(\delta x)+4x\delta\phi'(\delta x)+x^2\delta^2\phi''(\delta x).
\end{equation*}
Since $k$ is sufficiently large and $\phi(0)\neq 0$, then the implicit function theorem implies that there exists a smooth solution $x_c=\tilde{q}(s,\delta)$ of the equation $\partial_x\Phi(s,x,\delta)=0$. For the sake of simplicity, we may assume $\phi(0)=1/2$.

Meanwhile, we observe when $k$ tends to infinity, $\tilde{q}(s,\delta)$ smoothly converges to the solution $\tilde {q}(s,0)=s$ of the equation  $\partial_x\Phi(s,x,0)=0$.

Let $\tilde{\Phi}(s,\delta):=\Phi(s,\tilde{q}(s,\delta),\delta)$. From the above arguments and Taylor expansion of smooth functions $\tilde{q}(s,\delta)$ and $\phi(x)$,  the phase function
\begin{equation}\label{phase function}
-t^2\xi_2\tilde{\Phi}(s,\delta)=\frac{\xi_1^2}{2\xi_2}+\delta\frac{\xi_1^3}{t\xi_2^2}\phi'(0) + \delta^2R(t,\xi,\delta),
\end{equation}
where $R(t,\xi,\delta)$ is homogeneous of degree one in $\xi$.  $-t^2\xi_2\tilde{\Phi}(s,\delta)$ can be considered as a small perturbation of  $\frac{\xi_1^2}{2\xi_2}+\delta\frac{\xi_1^3}{t\xi_2^2}\phi'(0)$.

By applying the method of stationary phase, we have
 \begin{equation}\label{station}
\widehat{d\mu_k}(\delta_t\xi)=e^{-it^2\xi_2\tilde{\Phi}(s,\delta)}\chi_k(\frac{\xi_1}{t\xi_2})
\frac{A_k(\delta_t\xi)}{(1+|\delta_t\xi|)^{1/2}}+B_k(\delta_t\xi),
\end{equation}
where $\chi_k$ is a smooth function supported in the interval $[c_k,\tilde{c}_k]$, for certain non-zero positive constants $c_1\leq c_k, \tilde{c_k}\leq c_2$ depending only on $k$. $\{A_k(\delta_t\xi)\}_k$ is contained in a bounded subset of symbols of order zero. More precisely, for arbitrary $t \in [1,2]$,
 \begin{equation}\label{symbol}
 |D_{\xi}^{\alpha}A_k(\delta_t\xi)|\leq C_{\alpha}(1+|\xi|)^{-\alpha},
\end{equation}
where $C_{\alpha}$ do not depend on $k$ and $t$. Furthermore, $B_k$ is a remainder term and satisfies for arbitrary $t\in [1,2]$,
\begin{equation}\label{symbol2}
 |D_{\xi}^{\alpha}B_k(\delta_t\xi)|\leq C_{\alpha,N}(1+|\xi|)^{-N},
\end{equation}
where $C_{\alpha,N}$ are admissible constants and again do not depend on $k$ and $t$.

First, let us consider the remainder part of (\ref{loc}). Set
\begin{equation}
M_{j}^{k,0}f(y):=\sup_{t\in[1,2]}\left|\int_{{\mathbb{R}}^2}e^{i\xi\cdot y}B_k(\delta_t\xi)\beta(2^{-j}|\delta_t\xi|)\hat{f}(\xi)d\xi \right|.
\end{equation}
By (\ref{symbol2}) and integration by parts, it is easy to get $|(B_k\beta(2^{-j}\cdot))^{\vee}(x)|\leq C_N 2^{-jN}(1+|x|)^{-N}$. So we have that
\begin{align*}
\|M_{j}^{k,0}\|_{L^p\rightarrow L^p}&=\sup\{\|M_{j}^{k,0}f\|_{L^p}:\|f\|_{L^p}=1\}\\
&=\sup\{\|\sup_{t\in[1,2]}|t^{-3}(B_k\beta(2^{-j}\cdot))^{\vee}(\delta_{t^{-1}}\cdot)*f|\|_{L^p}:\|f\|_{L^p}=1\}\\
&\leq \sup\{C_N 2^{-jN}\|Mf\|_{L^p}:\|f\|_{L^p}=1\}\\
&\leq C_N 2^{-jN},
\end{align*}
where $M$ is the Hardy-Littlewood maximal operator defined by (\ref{Hardylittlewood}), and the $L^p$-boundedness  $(1<p<\infty)$ of $M$ implies (\ref{Target}) for remainder part of (\ref{loc}).

Put
\begin{equation}
A_{t,j}^kf(y):=\int_{{\mathbb{R}}^2}e^{i(\xi \cdot y-t^2\xi_2\tilde{\Phi}(s,\delta))}\chi_k(\frac{\xi_1}{t\xi_2})
\frac{A_k(\delta_t\xi)}{(1+|\delta_t\xi|)^{1/2}}\beta(2^{-j}|\delta_t \xi|)\hat{f}(\xi)d\xi.
\end{equation}
Denote by $M_{j}^{k,1}$ the corresponding maximal operator over $[1,2]$. It remains to prove that
\begin{equation} \label{object}
\sum_k 2^{-k}\sum_{j\geq 1} \|M_{j}^{k,1}\|_{L^p\rightarrow L^p}\leq C_{p,N}.
\end{equation}

Since $\tilde{\Phi}(s,\delta)$ is homogeneous of degree zero in $\xi$ and $\frac{\xi_1}{\xi_2}\approx 1$, then
$$\left|\nabla_{\xi}[\xi \cdot (y-x)-t^2\xi_2\tilde{\Phi}(s,\delta)]\right|\geq C|y-x|$$
provided $|y-x|\geq L$, where $L$ is very large and determined by $c_1$, $c_2$ and $\|\phi\|_{\infty\hspace{0.1cm} (I)}$. By integration by parts, we will see that the kernel of the operator $A_{t,j}^k$ is dominated by $2^{-jN}\mathcal{O}(|y-x|^{-N})$ if $|y-x|\geq L$.  From now on, we will restrict our view on the situation
\begin{equation}\label{case}
|y-x|\leq L.
\end{equation}

Let $B_i(L)$ be a ball with center $i$ and radius $L$. Furthermore, we will show that
\begin{equation}\label{local}
\sup\{\|M_{j}^{k,1}f\|_{L^p}:\|f\|_{L^p}=1, \textmd{supp}\hspace{0.2cm} f\subset B_0(L)\}\leq C_{p}2^{-j\tilde{\epsilon}_1(p)}2^{k\tilde{\epsilon}_2(p)}
\end{equation}
implies that
\begin{equation}\label{local2}
\|M_{j}^{k,1}\|_{L^p\rightarrow L^p}\leq C_{p}2^{-j\tilde{\epsilon}_1(p)}2^{k\tilde{\epsilon}_2(p)},
\end{equation}
where $C_{p}$ depends on $p$, $c_1$, $c_2$ and $\|\phi\|_{\infty\hspace{0.1cm} (I)}$, and $\tilde{\epsilon}_1(p)$, $\tilde{\epsilon}_2(p)>0$. Then in order to prove inequality (\ref{loc}), it suffices to prove inequality (\ref{local}).

In fact, we can decompose $f=\sum_{i\in L\mathbb{Z}^2} f_{i}$, where supp $f_{i}\subset B_i(L)$.  If we have proved (\ref{local}), then
\begin{align*}
\|M_{j}^{k,1}f\|_{L^p}&\leq \|\sum_{i\in L\mathbb{Z}^2}M_{j}^{k,1}f_i\|_{L^p}\\
&\leq \left(\sum_{i\in L\mathbb{Z}^2}\|M_{j}^{k,1}(f_{i}(\cdot+i))\|_{L^p}^p\right)^{1/p}\\
&\leq C_{p}2^{-j\tilde{\epsilon}_1(p)}2^{k\tilde{\epsilon}_2(p)} \left(\sum_{i\in L\mathbb{Z}^2}\|f_{i}(\cdot+i)\|_{L^p}^p\right)^{1/p}\\
&\leq C_{p}2^{-j\tilde{\epsilon}_1(p)}2^{k\tilde{\epsilon}_2(p)} \|f\|_{L^p},
\end{align*}
which implies (\ref{local2}).

Now we observe inequality (\ref{local}),  together with the assumption (\ref{case}),  we can choose $\rho_1\in C_0^{\infty}(\mathbb{R}^2\times[\frac{1}{2},4])$ such that (\ref{local}) will follow from that
\begin{equation}
\|\widetilde{M_{j}^{k,1}}\|_{L^p\rightarrow L^p}\leq C_{p}2^{-j\tilde{\epsilon}_1(p)}2^{k\tilde{\epsilon}_2(p)},
\end{equation}
where
\begin{equation}
\widetilde{M_{j}^{k,1}}f(y):=\sup_{t\in[1,2]}\left|\rho_1(y,t)A_{t,j}^kf(y)\right|.
\end{equation}

\begin{lem}\label{lemmaL^2}
\begin{equation*}
\|\widetilde{M_{j}^{k,1}}f\|_{L^p}\leq C_{p} 2^{-(j\wedge k)/p}\|f\|_{L^p}, \hspace{0.5cm}2\leq p\leq \infty.
\end{equation*}
\end{lem}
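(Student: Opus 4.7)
The strategy is to establish the endpoint bounds $\|\widetilde{M_{j}^{k,1}}\|_{L^{\infty}\to L^{\infty}}\lesssim 1$ and $\|\widetilde{M_{j}^{k,1}}\|_{L^{2}\to L^{2}}\lesssim 2^{-(j\wedge k)/2}$, and then interpolate by Marcinkiewicz. Since $\widetilde{M_{j}^{k,1}}$ is sublinear and both endpoint estimates are strong, this produces precisely $\|\widetilde{M_{j}^{k,1}}\|_{L^{p}\to L^{p}}\lesssim 2^{-(j\wedge k)/p}$ for $2\leq p\leq\infty$, as required.

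The $L^{\infty}$ endpoint is essentially routine. Writing $\widetilde{A_{t,j}^{k}}=\widetilde{A_{t}^{k}}\circ P_{t,j}$, where $P_{t,j}$ is the Fourier multiplier with symbol $\beta(2^{-j}|\delta_{t}\xi|)$, the change of variables $\eta=\delta_{t}\xi$ shows that the kernel of $P_{t,j}$ has $L^{1}$-norm equal to a constant depending only on $\beta$, uniformly in $j$, $k$ and $t\in[1,2]$. Since $\widetilde{A_{t}^{k}}$ is convolution with a curve measure of total variation $O(1)$, the composition is bounded on $L^{\infty}$ uniformly; combined with the already established rapid decay $\|M_{j}^{k,0}\|_{L^{p}\to L^{p}}\lesssim 2^{-jN}$, one obtains $\|A_{t,j}^{k}f\|_{L^{\infty}}\lesssim\|f\|_{L^{\infty}}$ uniformly in $t$. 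The cut-off $\rho_{1}$ and the supremum over $t$ preserve this bound.

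For the $L^{2}$ endpoint, I apply Lemma \ref{lem:Lemma3} with $p=2$ to $F(t):=\rho_{1}(y,t)A_{t,j}^{k}f(y)$. Because $\rho_{1}$ is compactly supported in $[1/2,4]$, $F(0)=0$, giving
\[
\sup_{t}|F(t)|^{2}\leq 2\Bigl(\int|F|^{2}\,dt\Bigr)^{1/2}\Bigl(\int|\partial_{t}F|^{2}\,dt\Bigr)^{1/2}.
\]
Integration in $y$ and Cauchy--Schwarz yield $\|\widetilde{M_{j}^{k,1}}f\|_{L^{2}}^{2}\lesssim\|F\|_{L^{2}(dy\,dt)}\|\partial_{t}F\|_{L^{2}(dy\,dt)}$. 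By Plancherel and the trivial bound $|m_{t,j,k}(\xi)|\lesssim 2^{-j/2}$ on the support of $\beta$, the first factor is $\lesssim 2^{-j/2}\|f\|_{L^{2}}$. The contributions to $\partial_{t}F$ in which $\partial_{t}$ falls on $\rho_{1}$, $\chi_{k}(\xi_{1}/(t\xi_{2}))$, $A_{k}(\delta_{t}\xi)$, $(1+|\delta_{t}\xi|)^{-1/2}$ or $\beta(2^{-j}|\delta_{t}\xi|)$ each produce $O(2^{-j/2})\|f\|_{L^{2}}$, since $\xi_{1}\approx\xi_{2}\approx 2^{j}$ on the support of the symbol. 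The decisive contribution comes from $\partial_{t}e^{-it^{2}\xi_{2}\tilde{\Phi}(s,\delta)}$: the refined expansion (\ref{phase function}) reveals that the $O(1)$ leading piece $\xi_{1}^{2}/(2\xi_{2})$ of $-t^{2}\xi_{2}\tilde{\Phi}(s,\delta)$ is $t$-independent, and therefore
\[
\bigl|\partial_{t}\bigl(-t^{2}\xi_{2}\tilde{\Phi}(s,\delta)\bigr)\bigr|\,\lesssim\,\delta\frac{|\xi_{1}|^{3}}{|\xi_{2}|^{2}}+\delta^{2}|\xi|\,\lesssim\,2^{j-k}.
\]
This gives $\|\partial_{t}m_{t,j,k}\|_{L^{\infty}}\lesssim 2^{-j/2}(1+2^{j-k})$ and hence $\|\partial_{t}F\|_{L^{2}(dy\,dt)}\lesssim 2^{-j/2}(1+2^{j-k})\|f\|_{L^{2}}$. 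Multiplying the two factors yields $\|\widetilde{M_{j}^{k,1}}f\|_{L^{2}}^{2}\lesssim 2^{-j}(1+2^{j-k})\|f\|_{L^{2}}^{2}\lesssim 2^{-(j\wedge k)}\|f\|_{L^{2}}^{2}$, which is the desired $L^{2}$-bound.

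The main obstacle is the sharp estimate on $\partial_{t}m_{t,j,k}$. A naive bound $|\partial_{t}(\text{phase})|\lesssim|\xi|\sim 2^{j}$ would only yield $\|\partial_{t}m\|_{\infty}\lesssim 2^{j/2}$, producing no $L^{2}$-decay at all. The gain of the factor $\delta=2^{-k}$ is dictated by the fact that the $O(1)$ part of the phase, $\xi_{1}^{2}/(2\xi_{2})$, does not depend on $t$; it is precisely this cancellation that forces the appearance of ``$j\wedge k$'' in the exponent and captures the interplay between the dyadic frequency scale $j$ and the curve-scaling parameter $k=\log_{2}\delta^{-1}$.
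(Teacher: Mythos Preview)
Your $L^{2}$ argument is essentially identical to the paper's: both apply Lemma~\ref{lem:Lemma3}, use Plancherel for the fixed-$t$ bounds, and exploit the crucial observation that the leading term $\xi_{1}^{2}/(2\xi_{2})$ of the phase (\ref{phase function}) is $t$-independent, so that $|\partial_{t}(\text{phase})|\lesssim\delta\,2^{j}$ and hence $|h_{k}|\lesssim 2^{j/2}\delta+2^{-j/2}$. This is exactly the mechanism behind the $j\wedge k$ exponent.

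Your $L^{\infty}$ argument, however, takes a genuinely different and more elementary route. The paper establishes (\ref{lemmaL^infty}) by an angular decomposition of the kernel of $A_{t,j}^{k}$ into $O(2^{j/2})$ pieces $K_{t}^{\nu}$ supported in narrow cones, and then proves the pointwise $L^{1}$ bound $\int|K_{t}^{\nu}|\,dy\lesssim 2^{-j/2}$ via the standard ``second dyadic decomposition'' machinery (homogeneity of the phase, the operator $\mathcal{L}=I-2^{2j}\partial_{\xi_{1}}^{2}-2^{j}\partial_{\xi_{2}}^{2}$, and the nondegeneracy of $\det\partial^{2}_{\xi,y}\tilde{\tilde{\Phi}}$). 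You instead write $A_{t,j}^{k}=\widetilde{A_{t,j}^{k}}-(\text{remainder})$, bound $\widetilde{A_{t,j}^{k}}=\widetilde{A_{t}^{k}}\circ P_{t,j}$ trivially on $L^{\infty}$ (curve measure of bounded total variation composed with a multiplier whose kernel has uniformly bounded $L^{1}$-norm), and invoke the already-established rapid decay of the remainder $M_{j}^{k,0}$. This is correct and shorter. The paper's approach has the advantage that the angular decomposition and the kernel estimates (\ref{h1})--(\ref{h2}) are reused later in Section~3 (see (\ref{h11})--(\ref{h12}) and the proof of Lemma~\ref{lemma index2}), so introducing them here serves an expository purpose; for the lemma in isolation, your argument is cleaner.
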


\begin{proof}
To prove the above lemma, we employ the M. Riesz interpolation theorem between the $L^2$ and the $L^{\infty}$-estimate. By Lemma \ref{lem:Lemma3}, we have
\begin{align*}
&\|\widetilde{M_{j}^{k,1}}f\|_{L^2}^2\\
&\leq\biggl(\int_{1/2}^4\int_{{\mathbb{R}}^2}|\rho_1(y,t)A_{t,j}^kf(y)|^2dydt\biggl)^{1/2}
\biggl(\int_{1/2}^4\int_{{\mathbb{R}}^2}|\frac{\partial}{\partial t}(\rho_1(y,t)A_{t,j}^kf(y))|^2dydt\biggl)^{1/2}\\
&=\|\|\rho_1(y,t)A_{t,j}^kf(y)\|_{(L^2,dy)}\|_{(L^2,dt)}
\biggl\|\left\|\frac{\partial}{\partial t}(\rho_1(y,t)A_{t,j}^kf(y))\right\|_{(L^2,dy)}\biggl\|_{(L^2,dt)}.
\end{align*}

Moreover,
\begin{equation}\label{h_k}
\frac{\partial}{\partial t}(\rho_1(y,t)A_{t,j}^kf(y))=\int_{{\mathbb{R}}^2}e^{i({\xi \cdot y-t^2\xi_2\tilde{\Phi}(s,\delta)})}h_k(y,t,\xi,j)\hat{f}(\xi)d\xi,
\end{equation}
where
\begin{align*}
h_k(y,t,\xi,j)&=\left(\frac{\partial}{\partial t}\rho_1(y,t)+i\frac{\partial}{\partial t}(-t^2\xi_2\tilde{\Phi}(s,\delta))\rho_1(y,t)\right)\chi_k(\frac{\xi_1}{t\xi_2})\frac{A_k(\delta_t\xi)}{(1+|\delta_t\xi|)^{1/2}}\beta(2^{-j}|\delta_t \xi|)\\
& \quad +\rho_1(y,t)\frac{\partial}{\partial t}(\chi_k(\frac{\xi_1}{t\xi_2}))\frac{A_k(\delta_t\xi)}{(1+|\delta_t\xi|)^{1/2}}\beta(2^{-j}|\delta_t \xi|)\\
& \quad +\rho_1(y,t)\chi_k(\frac{\xi_1}{t\xi_2})\frac{\partial}{\partial t}\left(\frac{A_k(\delta_t\xi)}{(1+|\delta_t\xi|)^{1/2}}\right)\beta(2^{-j}|\delta_t \xi|)\\
& \quad +\rho_1(y,t)\chi_k(\frac{\xi_1}{t\xi_2})\frac{A_k(\delta_t\xi)}{(1+|\delta_t\xi|)^{1/2}}\frac{\partial}{\partial t}(\beta(2^{-j}|\delta_t \xi|)).
\end{align*}
From (\ref{phase function}) and  (\ref{symbol}), we get
$|\frac{\partial}{\partial t}(-t^2\xi_2\tilde{\Phi}(s,\delta))|=|-\delta\frac{\xi_1^3}{t^2\xi_2^2}\phi'(0)+\delta^2\frac{\partial}{\partial t}R(t,\xi,\delta)|\lesssim \delta 2^{j}$, $|\frac{\partial}{\partial t}(\chi_k(\frac{\xi_1}{t\xi_2}))|\approx 1$,
$|\frac{\partial}{\partial t}\left(\frac{A_k(\delta_t\xi)}{(1+|\delta_t\xi|)^{1/2}}\right)| \lesssim 2^{-j/2}$ and $|\frac{\partial}{\partial t}(\beta(2^{-j}|\delta_t \xi|))|\lesssim 1$.
Therefore $|h_k(y,t,\xi,j)|\lesssim 2^{j/2}\delta+2^{-j/2}$. Then $(2^{j/2}\delta+2^{-j/2})^{-1}\frac{\partial}{\partial t}(\rho_1(y,t)A_{t,j}^kf(y))$ behaves like $2^{j/2}A_{t,j}^k$ which is a symbol of order zero. So we only consider the $L^2$-boundedness of the operator $2^{j/2}\rho A_{t,j}^k$, for fixed $t\in[1/2,4]$, however this is easily obtained by the Plancherel theorem, i.e.
\begin{align*}
\|2^{j/2}\rho_1(y,t)A_{t,j}^kf(y)\|_{(L^2,dy)}&\leq \|e^{-it^2\xi_2\tilde{\Phi}(s,\delta)}2^{j/2}\chi_k(\frac{\xi_1}{t\xi_2})
\frac{A_k(\delta_t\xi)}{(1+|\delta_t\xi|)^{1/2}}\beta(2^{-j}|\delta_t \xi|)\hat{f}(\xi)\|_{(L^2,d\xi)}\\
&\leq C  \|f\|_{L^2}.
\end{align*}

From the above arguments, we obtain
\begin{align*}
\|\widetilde{M_{j}^{k,1}}f\|_{L^2}^2&
\leq 2^{-j/2}(2^{j/2}\delta+2^{-j/2})\|\|2^{j/2}\rho_1(y,t)A_{t,j}^kf(y)\|_{(L^2,dy)}\|_{(L^2,dt)}\\
&\quad\times\biggl\|\left\|\frac{\partial}{\partial t}((2^{j/2}\delta+2^{-j/2})^{-1}\rho_1(y,t)A_{t,j}^kf(y))\right\|_{(L^2,dy)}\biggl\|_{(L^2,dt)}\\
&\leq C 2^{-j/2}(2^{j/2}\delta+2^{-j/2})\|f\|_{L^2}^2\\
&\leq C (\delta+2^{-j})\|f\|_{L^2}^2,
\end{align*}
then
\begin{equation}
\|\widetilde{M_{j}^{k,1}}f\|_{L^2}\leq C (\delta+2^{-j})^{1/2}\|f\|_{L^2}.
\end{equation}

Next, let us turn to prove
\begin{equation}\label{lemmaL^infty}
\|\widetilde{M_{j}^{k,1}}f\|_{L^{\infty}}\leq C\|f\|_{L^{\infty}}.
\end{equation}
Inequality (\ref{lemmaL^infty}) follows from the fact that the kernels of $A_{t,j}^k$ are uniformly bounded in $L^1({\mathbb{R}}^2)$. The idea of the proof can be found in page 406-408 in \cite{steinbook}. For completeness, in fact, this idea also will be used later, so we give a brief overview here.

First, we introduce the angular decomposition as in \cite{steinbook} of the $\xi$-space in the plane.  For each positive integer $j$, we consider a roughly equally spaced set of points with grid length $2^{-j/2}$ on the unit circle $S^1$; that is, we fix a collection $\{\xi_j^{\nu}\}_{\nu}$ of unit vectors, that satisfy:

$(a)$ $|\xi_j^{\nu}-\xi_j^{\nu'}|\geq 2^{-j/2}$, if $\nu\neq\nu'$;

$(b)$ if $\xi\in S^1$, then there exists a $\xi_j^{\nu}$ so that $|\xi-\xi_j^{\nu}|<2^{-j/2}$.

Let $\Gamma_j^{\nu}$ denote the corresponding cone in the $\xi$-space whose central direction is $\xi_j^{\nu}$, i.e.
\begin{equation*}
\Gamma_j^{\nu}=\{\xi:|\xi/|\xi|-\xi_j^{\nu}|\leq2\cdot 2^{-j/2}\}.
\end{equation*}

We can construct an associated partition of unity:
\begin{lem}\cite{steinbook}\label{angularde}
$\chi_j^{\nu}$ is  homogeneous of degree zero in $\xi$ and supported in $\Gamma_j^{\nu}$, with
\begin{equation}\label{anghomofunct1}
\sum_{\nu}\chi_j^{\nu}(\xi)=1 \hspace{0.5cm}\textrm{for}\hspace{0.2cm}\textrm{ all}\hspace{0.2cm} \xi\neq 0 \hspace{0.2cm}\textrm{and}\hspace{0.2cm} \textrm{all}\hspace{0.2cm} j,
\end{equation}
and
\begin{equation}\label{anghomofunct2}
|\partial_{\xi}^{\alpha}\chi_j^{\nu}(\xi)|\leq A_{\alpha}2^{|\alpha|j/2}|\xi|^{-|\alpha|}.
\end{equation}
\end{lem}

Hence, in order to establish (\ref{lemmaL^infty}), it is sufficient to prove
\begin{equation}\label{kernalL^infty}
\int_{{\mathbb{R}}^2}|K_t^{\nu}(y)|dy\leq C 2^{-j/2},
\end{equation}
where $C$ does not depend on $t$, $j$, $k$ and $\nu$, and
\begin{equation}\label{ker1}
K_t^{\nu}(y)=\rho_1(y,t)\int_{{\mathbb{R}}^2}e^{i(\xi \cdot y-t^2\xi_2\tilde{\Phi}(s,\delta))}\widetilde{A_k}(\xi,t)\beta(2^{-j}|\delta_t \xi|)\chi_j^{\nu}(\xi)d\xi,
\end{equation}
and  $\widetilde{A_k}(\xi,t)=\chi_k(\frac{\xi_1}{t\xi_2})
\frac{A_k(\delta_t\xi)}{(1+|\delta_t\xi|)^{1/2}}$.

Then we have
\begin{align*}
\|\widetilde{M_{j}^{k,1}}f\|_{L^{\infty}}&:=\|\sup_{t>0}|K_t*f|\|_{L^{\infty}}\\
&\leq \|f\|_{L^{\infty}}\sup_{t>0}\sum_{\nu\lesssim 2^{j/2}}\|K_t^{\nu}\|_{L^1}\\
&\lesssim\|f\|_{L^{\infty}},
\end{align*}
and get inequality (\ref{lemmaL^infty}).

For fixed $\nu$, the inner integral (\ref{ker1}) is supported in the truncated cone $\Gamma_j^{\nu}$. Let $T$ be the transpose operator, i.e. $T(x_1,x_2)=(x_1,x_2)^{\mathrm{t}}$. We can find a rotation $\rho_j^{\nu}$ such that $T^{-1}\rho_j^{\nu}T(1,0)=\xi_j^{\nu}$. Write $\tilde{T}=T^{-1}\rho_j^{\nu}T$ and $\tilde{T}_i$ the $i$-th variable after the action of $\tilde{T}$,
this leads to prove the following estimate
\begin{equation}\label{ker2}
\int_{{\mathbb{R}}^2}\biggl|\rho_1(y,t)\int_{{\mathbb{R}}^2}e^{i(\tilde{T}\xi \cdot y-t^2\tilde{T}_2\xi\tilde{\Phi}(\tilde{s},\delta))}\widetilde{A_k}(\tilde{T}\xi,t)\beta(2^{-j}|\delta_t \tilde{T}\xi|)\chi_j^{\nu}(\tilde{T}\xi)d\xi\biggl|dy\leq C 2^{-j/2},
\end{equation}
where $\tilde{s}=-\frac{\tilde{T}_1\xi}{t\tilde{T}_2\xi}$.
From the $\xi$-support of the inner integral of (\ref{ker2}), we know that $\xi$ belongs to the following region, see Figure 1.
\begin{center}
\includegraphics[height=6cm]{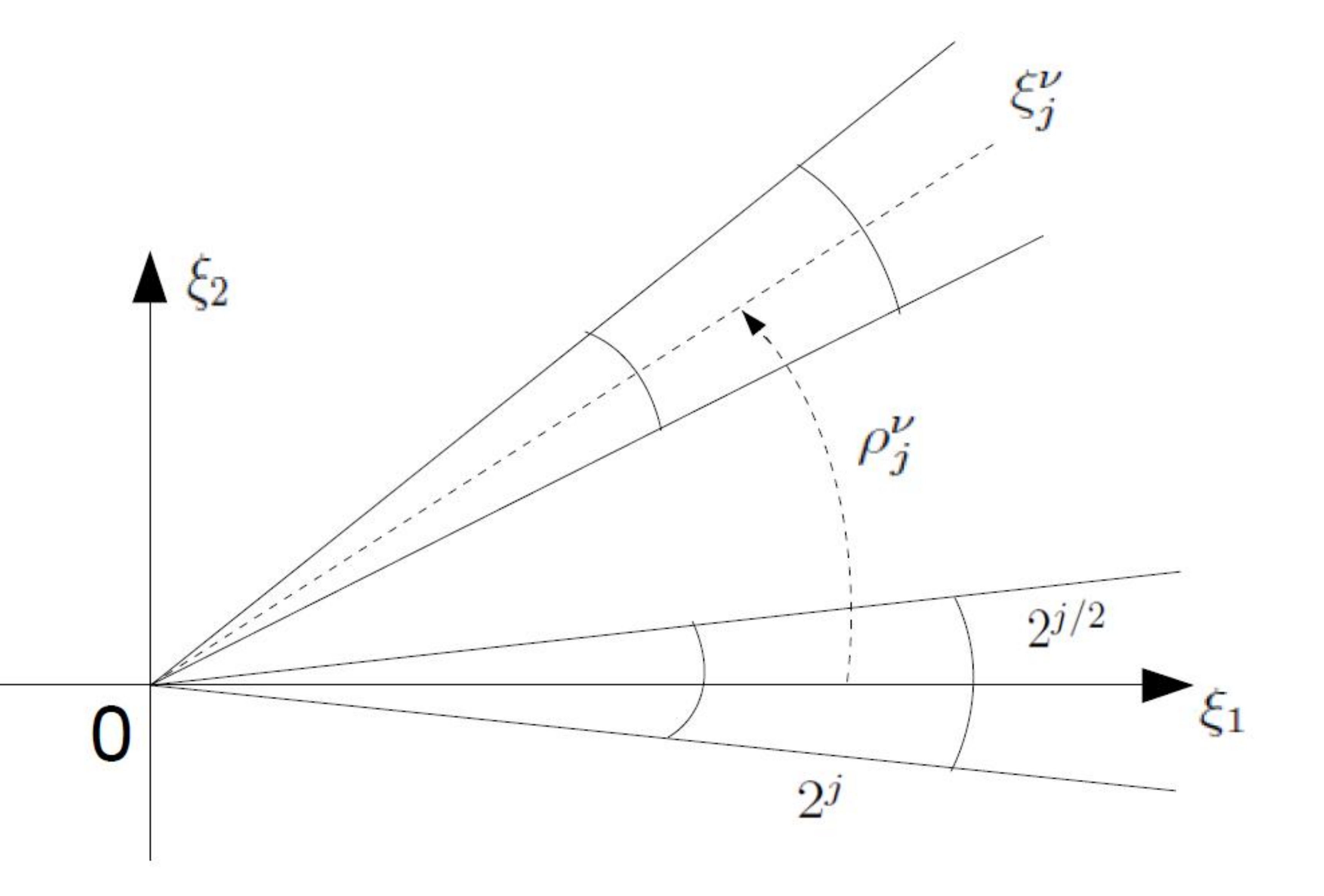}
\end{center}
\begin{center}
Figure 1
\end{center}

Put $\tilde{\tilde{\Phi}}(y,\xi)=\tilde{T}\xi \cdot y-t^2\tilde{T}_2\xi\tilde{\Phi}(\tilde{s},\delta)$. Clealy  $\tilde{\tilde{\Phi}}$ is homogeneous of degree one in $\xi$, hence $\tilde{\tilde{\Phi}}(y,\xi)=\xi\cdot \nabla_{\xi}\tilde{\tilde{\Phi}}(y,\xi)$. Let $h(\xi)=\tilde{\tilde{\Phi}}(y,\xi)-\xi\cdot \nabla_{\xi}\tilde{\tilde{\Phi}}(y,\bar{\xi})$, where $\bar{\xi}=(1,0)$. We claim that

\begin{equation}\label{h1}
\left|\left(\frac{\partial}{\partial\xi_1}\right)^Nh(\xi)\right|\leq A_N\cdot 2^{-jN},
\end{equation}
\begin{equation}\label{h2}
\left|\left(\frac{\partial}{\partial\xi_2}\right)^Nh(\xi)\right|\leq A_N\cdot 2^{-jN/2}.
\end{equation}

Since $h(\xi)$ is also homogeneous of degree one, then $h(\bar{\xi})=0$ and $\nabla_{\xi}h(\bar{\xi})=0$. As a result
\begin{equation*}
\left(\frac{\partial}{\partial\xi_1}\right)^Nh(\bar{\xi})=\frac{\partial}{\partial \xi_2}\left(\frac{\partial}{\partial\xi_1}\right)^Nh(\bar{\xi})=0.
\end{equation*}
Thus
$\left(\frac{\partial}{\partial\xi_1}\right)^Nh(\xi)=\mathcal{O}(\frac{|\xi_2|^2}{|\xi|^{N+1}})$. Since $|\xi_2|\lesssim 2^{j/2}$ and $|\xi|\approx 2^{j}$, we have that $\frac{|\xi_2|^2}{|\xi|^{N+1}}\leq \mathcal{O}(2^{-jN})$, we get inequality (\ref{h1}).

Again, $\frac{\partial}{\partial\xi_2}h(\bar{\xi})=0$, so $\frac{\partial}{\partial\xi_2}h(\xi)=\mathcal{O}(\frac{\xi_2}{|\xi|})\approx 2^{-j/2}$. For $N>1$, we have that $\left|\left(\frac{\partial}{\partial\xi_2}\right)^Nh(\xi)\right|\leq A_N|\xi|^{1-N}$, by homogeneity, and $2^{j(1-N)}\leq 2^{-Nj/2}$,  so we finish the proof of inequality (\ref{h2}).

We observe that $\chi_j^{\nu}\circ \tilde{T}$ satisfies the same conditions (\ref{anghomofunct1}) and (\ref{anghomofunct2}) as $\chi_j^{\nu}$. Meanwhile, supp $ (\chi_j^{\nu}\circ \tilde{T})\subset \{\xi:|\frac{\xi}{|\xi|}-(1,0)|\leq 2\cdot 2^{-j/2}\}$. If we set $\epsilon=2^{-j/2}$ in (iii) of Lemma 14 in \cite{AM}, then it follows that (\ref{anghomofunct2}) has an improvement for $\chi_j^{\nu}\circ \tilde{T}$, i.e.
\begin{equation}\label{anghomofunct2impr}
\left|\left(\frac{\partial}{\partial\xi_1}\right)^N(\chi_j^{\nu}\circ \tilde{T})(\xi)\right|\leq C_N|\xi|^{-N}\approx2^{-jN}.
\end{equation}

We now rewrite
\begin{equation*}
K_t^{\nu}(y)=\rho_1(y,t)\int_{{\mathbb{R}}^2}e^{i\xi\cdot \nabla_{\xi}\tilde{\tilde{\Phi}}(y,\bar{\xi})}e^{ih(\xi)}\widetilde{A_k}(\tilde{T}\xi,t)\beta(2^{-j}|\delta_t \tilde{T}\xi|)(\chi_j^{\nu}\circ\tilde{T})(\xi)d\xi.
\end{equation*}
Next, we introduce the operator $\mathcal{L}$ defined by $\mathcal{L}=I-2^{2j}\frac{{\partial}^2}{{\partial \xi_1^2}}-2^{j}\frac{\partial^2}{\partial \xi_2^2}$. Because of (\ref{anghomofunct2}), (\ref{anghomofunct2impr}) and the fact that $\widetilde{A_k}$ is a symbol of order $-1/2$, we get that
\begin{equation*}
|\mathcal{L}^N(e^{ih(\xi)}\widetilde{A_k}(\tilde{T}\xi,t)\beta(2^{-j}|\delta_t \tilde{T}\xi|)(\chi_j^{\nu}\circ\tilde{T})(\xi))|\leq A_N 2^{-j/2}.
\end{equation*}
However,
\begin{equation*}
\mathcal{L}^N e^{i\xi\cdot \nabla_{\xi}\tilde{\tilde{\Phi}}(y,\bar{\xi})}=\biggl(1+2^{2j}|\frac{\partial}{\partial \xi_1}\tilde{\tilde{\Phi}}(y,\bar{\xi})|^2+2^{j}|\frac{\partial}{\partial \xi_2}\tilde{\tilde{\Phi}}(y,\bar{\xi})|^2\biggl)^N\cdot e^{i\xi\cdot \nabla_{\xi}\tilde{\tilde{\Phi}}(y,\bar{\xi})}.
\end{equation*}

From Figure 1, we note that the support of the integrand has volume at most $\mathcal{O}(2^j\cdot 2^{j/2})\approx 2^{3j/2}$, thus we obtain by integration by parts that
\begin{equation*}
|K_t^{\nu}(y)|\leq A_N 2^{3j/2}2^{-j/2}(1+2^{2j}|\frac{\partial}{\partial \xi_1}\tilde{\tilde{\Phi}}(y,\bar{\xi})|^2+2^{j}|\frac{\partial}{\partial \xi_2}\tilde{\tilde{\Phi}}(y,\bar{\xi})|^2)^{-N}.
\end{equation*}
In order to prove (\ref{kernalL^infty}), we make the change of variables $y\mapsto(\frac{\partial}{\partial \xi_1}\tilde{\tilde{\Phi}}(y,\bar{\xi}), \frac{\partial}{\partial \xi_2}\tilde{\tilde{\Phi}}(y,\bar{\xi}))$. Since
\begin{equation*}
\textrm{det }\hspace{0.1cm}
\left(
\begin{array} {lcr}
\frac{{\partial}^2\tilde{\tilde{\Phi}}(y,\bar{\xi})}{\partial \xi_i\partial y_i}
\end{array}
\right)\neq 0,
\end{equation*}
and the Jacobian is bounded from below, then we have
\begin{align*}
\int_{{\mathbb{R}}^2}|K_t^{\nu}(y)|dy\leq A_N 2^j\int_{{\mathbb{R}}^2}\frac{dy}{(1+|2^{j}y_1|+|2^{j/2}y_2|)^{2N}}
\leq C2^{-j/2}.
\end{align*}
\end{proof}

Now we split the set of $j$ into two parts $j>2k$ and $j\leq 2k$.  By Lemma \ref{lemmaL^2}, we get
\begin{equation}
\sum_k2^{-k}\sum_{j\leq 2k}\|\widetilde{M_{j}^{k,1}}\|_{L^p\rightarrow L^p}\leq C_p.
\end{equation}

For $j>2k$, we introduce some new notations. Let $j=k+j'$, then $j'>k$. Set $\lambda=2^{j'}>2^k=\delta^{-1}$, then $2^j=\frac{\lambda}{\delta}$ . By Lemma \ref{lem:Lemma3},
\begin{equation}\label{well}
\begin{aligned}
&\|\widetilde{M_{j}^{k,1}}f\|_{L^4}^4\\
&\leq C
\frac{\delta^2}{\lambda}\biggl(\int_{{\mathbb{R}}^2}\int^4_{1/2}\biggl|\rho_1(y,t)\int_{{\mathbb{R}}^2}e^{i(\xi \cdot y-t^2\xi_2\tilde{\Phi}(s,\delta))}(\lambda/\delta)^{1/2}\widetilde{A_k}(\xi,t)\beta(\frac{\delta}{\lambda}|\delta_t \xi|)\hat{f}(\xi)d\xi\biggl|^4dydt \biggl)^{3/4}\\
& \quad \times
\biggl(\int_{{\mathbb{R}}^2}\int^4_{1/2}\biggl|\frac{\partial}{\partial t}(\rho_1(y,t)\int_{{\mathbb{R}}^2}e^{i(\xi \cdot y-t^2\xi_2\tilde{\Phi}(s,\delta))}(\delta\lambda)^{-1/2}\widetilde{A_k}(\xi,t)\beta(\frac{\delta}{\lambda}|\delta_t \xi|)\hat{f}(\xi)d\xi)\biggl|^4dydt \biggl)^{1/4}.
\end{aligned}
\end{equation}

In order to simplify the notations, we choose $\tilde{\chi}\in C_0^{\infty}([c_1,c_2])$ so that  $\tilde{\chi}(\frac{\xi_1}{\xi_2})\chi(\frac{\xi_1}{t\xi_2})=\chi(\frac{\xi_1}{t\xi_2})$
 for arbitrary $t\in [1/2,4]$ and $k$ sufficiently large. In a similar way we choose $\rho_0\in C_0^{\infty}((-10,10))$ such that $\rho_0(|\xi|)\beta(|\delta_t \xi|)=\beta(|\delta_t \xi|)$ for arbitrary $t\in [1/2,4]$. Furthermore, since $A_k$ satisfies (\ref{symbol}), if  $a(\xi,t):=(\lambda/\delta)^{1/2}\widetilde{A}_k(\xi,t)\beta(\frac{\delta}{\lambda}|\delta_t \xi|)$ for $k$ sufficiently large, then $a(\xi,t)$ is a symbol of order zero, i.e. for any $t\in [1/2,4]$, $\alpha\in {\mathbb{N}}^2$,
\begin{equation}\label{symbola}
\left|\left(\frac{\partial}{\partial \xi}\right)^{\alpha}a(\xi,t)\right|\leq C_{\alpha}(1+|\xi|)^{-|\alpha|}.
\end{equation}

Assume that we have obtained the following theorem:
\begin{thm}\label{L^4therom}
For all $\lambda >1/\delta$,
\begin{equation}
\biggl(\int_{{\mathbb{R}}^2}\int^4_{1/2}|\tilde{F}_{\lambda}^{\delta}f(y,t)|^4dydt \biggl)^{1/4}
\leq C \lambda^{1/8+\epsilon_1}\delta^{-(1/2+\epsilon_2)}\|f\|_{L^4({\mathbb{R}}^2)},\hspace{0.2cm}\textmd{some }\hspace{0.2cm}\epsilon_1, \epsilon_2 >0,
\end{equation}
where
\begin{equation}\label{estimate:L4}
 \tilde{F}_{\lambda}^{\delta}f(y,t)=\rho_1(y,t)\int_{{\mathbb{R}}^2}e^{i(\xi \cdot y-t^2\xi_2\tilde{\Phi}(s,\delta))}a(\xi,t)\rho_0(\frac{\delta}{\lambda}|\xi|)\tilde{\chi}(\frac{\xi_1}{\xi_2})\hat{f}(\xi)d\xi.
\end{equation}
\end{thm}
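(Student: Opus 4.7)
The plan is to adapt the Mockenhaupt--Seeger--Sogge $L^4$ strategy for Fourier integral operators associated with the light cone to the $\delta$-flat cone that arises here. From (\ref{phase function}), the phase $\xi\cdot y-t^{2}\xi_{2}\tilde{\Phi}(s,\delta)$ expands on the symbol support as $\xi\cdot y+\frac{\xi_{1}^{2}}{2\xi_{2}}+\delta\frac{\xi_{1}^{3}}{t\xi_{2}^{2}}\phi'(0)+O(\delta^{2})$, so the associated cone $\Gamma_{z_{0}}$ has its one nontrivial principal curvature of order exactly $\delta$ (the other direction being conic). This is precisely the degeneration of the cone condition (\ref{cone:condition}) responsible for the $\delta^{-(1/2+\epsilon_{2})}$ loss, and the proof must track the $\delta$ dependence sharply throughout.

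As a first reduction, I would peel off the $O(\delta^{2})$ remainder $\delta^{2}R(t,\xi,\delta)$ and pass to the model operator $F_{\lambda}^{\delta}$ whose phase is just $\xi\cdot y+\frac{\xi_{1}^{2}}{2\xi_{2}}+\delta\frac{\xi_{1}^{3}}{t\xi_{2}^{2}}\phi'(0)$. Because that remainder is one power of $\delta$ smaller than the leading $\delta$-perturbation, it acts as a small, sufficiently smooth perturbation of the model, and the estimate for $F_{\lambda}^{\delta}$ will then lift to $\tilde{F}_{\lambda}^{\delta}$ by the perturbation principle flagged in the introduction (this is what Section 4 is advertised to accomplish). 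After this reduction, I would introduce the angular decomposition $\{\chi_{j}^{\nu}\}$ of Lemma \ref{angularde}, but with sectors of angular width tuned to the flattened cone, namely $(\lambda\delta)^{-1/2}$ instead of $\lambda^{-1/2}$. Writing $F_{\lambda}^{\delta}f=\sum_{\nu}F_{\lambda}^{\delta,\nu}f$ and using the MSS bilinear identity
\begin{equation*}
\|F_{\lambda}^{\delta}f\|_{L^{4}}^{2}=\biggl\|\sum_{\nu,\nu'}F_{\lambda}^{\delta,\nu}f\cdot F_{\lambda}^{\delta,\nu'}f\biggr\|_{L^{2}},
\end{equation*}
I would split the double sum by the angular separation $|\nu-\nu'|$ and estimate each dyadic piece by Plancherel together with an $L^{2}$ orthogonality count on the space-time Fourier supports of the products.

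The main obstacle is to run this almost-orthogonality count with the correct $\delta$ dependence. For the genuine light cone, transversal pairs of thickened sector supports intersect in a plate whose volume is proportional to $\lambda\sin(\angle)$, yielding the standard MSS bound; here, because the cone has curvature only $\delta$, the effective thickness in the transverse direction is reduced by a factor of $\delta$, so the Plancherel overlap is larger and one loses a power of $\delta$ at each angular scale. Balancing this loss against the modified sector count $(\lambda\delta)^{1/2}$, summing the resulting contributions over the $O(\log\lambda)$ separation scales (which produces the $\epsilon_{1},\epsilon_{2}$ losses), and feeding in a diagonal bound coming from the localized $L^{2}$ estimate in the spirit of Lemma \ref{lemmaL^2}, should yield
\begin{equation*}
\|F_{\lambda}^{\delta}f\|_{L^{4}}\lesssim\lambda^{1/8+\epsilon_{1}}\delta^{-(1/2+\epsilon_{2})}\|f\|_{L^{4}},
\end{equation*}
which transfers back to $\tilde{F}_{\lambda}^{\delta}$ by the perturbation step. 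I expect the combinatorial bookkeeping in this count---in particular verifying the correct transversality estimate on the flattened cone $\{(\xi,\delta q(\xi))\}$ and tracking how the improved angular decomposition (\ref{anghomofunct2impr}) on sector-adapted functions interacts with the $\delta$-degeneracy---to be the delicate point, with the remainder amounting to a careful rescaling of existing machinery.
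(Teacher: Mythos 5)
Your overall framework (adapting the Mockenhaupt--Seeger--Sogge $L^4$ machinery to the $\delta$-flat cone and tracking the $\delta$-dependence) is the right one, and the bilinear decomposition by angular separation does occur in the paper's argument. But the step where you actually close the estimate is missing. After the bilinear splitting you propose to finish by ``Plancherel together with an $L^2$ orthogonality count on the supports of the products'', plus a diagonal bound in the spirit of Lemma \ref{lemmaL^2}. That is not sufficient, and it is not what produces the exponents $\lambda^{1/8}$ and $\delta^{-1/2}$. In the paper, biorthogonality (two almost-orthogonality lemmas, proved by integration by parts in $t$ together with Lemma 6.8 of \cite{mss}) only reduces matters to the square function $\bigl(\sum_{j,\nu,n}|\tilde{F}_{\lambda,\ell}^{\nu,j,n}f|^2\bigr)^{1/2}$ in $L^4$, after an additional localization at scale $\lambda^{1/2}$ in $q_{\delta}'(\xi,t)$ (the $P^n$, $f_n$ decomposition), whose overlap is controlled by the sumset Lemma \ref{coverlemma1}; and that square function is then estimated not by Plancherel but by pointwise kernel bounds for the plate operators (Lemma \ref{lemma index2}), whose kernels concentrate on $\frac{\delta}{\lambda^{1/2}}\times(\frac{\delta}{\lambda})^{1/2}\times 1$ boxes tangent to the flattened cone, combined with a Kakeya-type $L^2(\mathbb{R}^3)\rightarrow L^2(\mathbb{R}^2)$ maximal estimate over such tubes, inequality (\ref{planelemma4}), which carries the sharp loss $\delta^{-1/2}(\log_2\frac{\lambda^{1/2}}{\delta})^2$, and by the multiplicity count $\mathrm{card}\{(n,\nu):m\in J_{n,\nu}\}\lesssim \delta^{-1/2}$. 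These maximal-function and counting inputs are exactly where $\delta^{-(1/2+\epsilon_2)}$ comes from, and nothing in your sketch replaces them; the heuristic ``the overlap is larger by a factor of $\delta$, balance against the sector count'' does not by itself yield the claimed bound. Note also that the paper's sectors have width $(\delta/\lambda)^{1/2}=|\xi|^{-1/2}$ (a choice the author singles out as the source of the improvement over Kung), not $(\lambda\delta)^{-1/2}$; at your width a single cone piece deviates from its tangent plane by about $\delta^{-1}$, so the plate geometry you appeal to breaks down.

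The reduction to the translation-invariant model is also circular as you have set it up. The discarded remainder $\delta^2R(t,\xi,\delta)$ is homogeneous of degree one in $\xi$, hence of size $\delta^2\cdot\lambda/\delta=\delta\lambda>1$ on the symbol support (possibly much larger than the width $\lambda^{1/2}$ of the slabs in the $P^n$ decomposition), it depends on $t$, and $e^{i\delta^2R}$ is not a symbol of order zero uniformly in $\delta$ and $\lambda$; so it can neither be dropped nor absorbed into the amplitude by a soft argument. Theorem \ref{L^4therom} \emph{is} the statement for the perturbed, $t$-dependent phase; saying the passage from the model operator of Theorem \ref{planetheorem2} to $\tilde{F}_{\lambda}^{\delta}$ is ``what Section 4 is advertised to accomplish'' assumes precisely what is to be proved here. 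In the paper this passage is a genuine argument: the bilinear almost-orthogonality scheme of Theorem 6.1 in \cite{mss} is rerun for the full $t$-dependent cone $q_{\delta}(\xi,t)=t\delta\phi'(0)\xi_1^3/\xi_2^2+\delta^2R(t,\xi,\delta)$, and only the final kernel and Kakeya estimates from Section 3 are invoked as stable under the perturbation $q_{\delta}'$. To make your plan work you would need either to supply that $t$-dependent almost-orthogonality argument or an honest perturbation lemma with uniform constants; as written, both halves of the proposal (the reduction and the closing $L^4$ count) have gaps.
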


\begin{rem}
If we replace the localized frequency $|\xi|\approx \lambda$ of Theorem 3.3 in \cite{Kungpaper} by $|\xi|\approx \frac{\lambda}{\delta}$, then we obtain the factor $\lambda^{1/8+\epsilon}2^{5k/8+\epsilon}$ which is larger than the factor $\lambda^{1/8+\epsilon_1}2^{k(1/2+\epsilon_2)}$ appeared in the above Theorem \ref{L^4therom}. This means that we get a better $L^4$-estimate for Fourier integral operators not satisfying the cinematic curvature uniformly.
\end{rem}

By (\ref{well}), we obtain
\begin{align*}
\|\widetilde{M_{j}^{k,1}}f\|_{L^4({\mathbb{R}}^2)}&\leq C(\frac{\delta^2}{\lambda})^{1/4}\lambda^{1/8+\epsilon_1}\delta^{-(1/2+\epsilon_2)}\|f\|_{L^4({\mathbb{R}}^2)}\\
&=C2^{j'(-1/8+\epsilon_1)}{2}^{k\epsilon_2}\|f\|_{L^4({\mathbb{R}}^2)}\\
&=C2^{j(-1/8+\epsilon_1)}2^{k(1/8-\epsilon_1+\epsilon_2)}\|f\|_{L^4({\mathbb{R}}^2)}.
\end{align*}

Employing the M. Riesz interpolation theorem and Lemma \ref{lemmaL^2}, we have for $2<p\leq 4$,
\begin{equation*}
\|\widetilde{M_{j}^{k,1}}f\|_{L^p({\mathbb{R}}^2)}
\leq C2^{j(-1/8+\epsilon_1)(2-4/p)}2^{k(3/4-5/(2p)+(2-4/p)(\epsilon_2-\epsilon_1))}\|f\|_{L^p({\mathbb{R}}^2)},
\end{equation*}
and
\begin{align*}
\sum_k 2^{-k}\sum_{j>2k}\|\widetilde{M_{j}^{k,1}}\|_{L^p\rightarrow L^p}\leq C\sum_k2^{-k(1/4+5/(2p)-(2-4/p)(\epsilon_2-\epsilon_1))}\sum_{j>2k}2^{j(-1/8+\epsilon_1)(2-4/p)}
\leq C_p.
\end{align*}

For $4\leq p<\infty$, the M. Riesz interpolation theorem and Lemma \ref{lemmaL^2} imply that
\begin{equation*}
\|\widetilde{M_{j}^{k,1}}f\|_{L^p({\mathbb{R}}^2)}
\leq C2^{j(-1/2+4\epsilon_1)/p}2^{4k(1/8-\epsilon_1+\epsilon_2)/p}\|f\|_{L^p({\mathbb{R}}^2)},
\end{equation*}
and
\begin{align*}
\sum_k 2^{-k}\sum_{j>2k}\|\widetilde{M_{j}^{k,1}}\|_{L^p\rightarrow L^p}\leq C\sum_k2^{k(1/(2p)-1+4(\epsilon_2-\epsilon_1)/p)}\sum_{j>2k}2^{j(-1/(2p)+4\epsilon_1/p)}\leq C_p.
\end{align*}
We have finished the proof of Theorem \ref{planetheorem}.

It remains to prove Theorem \ref{L^4therom}. Since $t\sim 1$, we will replace $t$ by $1/t$ in (\ref{estimate:L4}).  From (\ref{phase function}), the associated cone for the Fourier integral operator
 $\tilde{F}_{\lambda}^{\delta}$ which is localized to frequencies $|\xi|\sim \frac{\lambda}{\delta}$ is of the form $\{(\xi,\delta \phi'(0)\frac{\xi_1^3}{\xi_2^2}+\delta^2\frac{\partial}{\partial t}R(1/t,\xi,\delta))\}$,
where $R(1/t,\xi,\delta)$ is homogeneous of degree one in $\xi$. Since $t\sim 1$ and $\delta$ is sufficiently small, $\delta\frac{\partial}{\partial t}R(1/t,\xi,\delta)$ can be considered as a small perturbation of $\frac{\xi_1^3}{\xi_2^2}$ which is independent on $t$. If we just consider a translation-invariant  model operator which is defined by
\begin{equation}\label{model}
 \mathcal{F}_{\lambda}^{\delta}f(y,t)=\rho_1(y,t)\int_{{\mathbb{R}}^2}e^{i(\xi\cdot y+E(\xi)+t q_{\delta}(\xi))}a(\xi,t)\rho_0(\frac{\delta}{\lambda}|\xi|)\tilde{\chi}(\frac{\xi_1}{\xi_2})\hat{f}(\xi)d\xi,
\end{equation}
where
\begin{equation}
E(\xi)=\frac{\xi_1^2}{2\xi_2}\hspace{0.3cm} \textmd{and} \hspace{0.3cm} q_{\delta}(\xi)=\delta\cdot \frac{\xi_1^3}{\xi_2^2}\phi'(0),
\end{equation}
then the associated cone is $\{(\xi,\delta \phi'(0)\frac{\xi_1^3}{\xi_2^2})\}$. Comparing the light cone $\{(\xi,|\xi|)\}$, the level curve of the cone $\{(\xi,\delta \phi'(0)\frac{\xi_1^3}{\xi_2^2})\}$ become flatter and the cinematic curvature of $\mathcal{F}_{\lambda}^{\delta}$ is only greater than of equal to $\delta$. A very natural question is to ask how the $L^4$-estimate for a Fourier integral operator depends on the cinematic curvature. The simplest model is to study how the $L^4$-boundedness of the Fourier integral operator $\mathcal{F}_{\lambda}^{\delta}$ depends on $\delta$ and $\lambda$. In order to clarify the above questions, a direct idea is whether we can follow the proof of Proposition 3 in \cite{mss2}, which is mainly to get the $L^4$-boundedness of the Fourier integral operator associated to the light cone. In fact we need various modifications to overcome a lot of difficulties for $L^4$-boundedness of the Fourier integral operator $\mathcal{F}_{\lambda}^{\delta}$. However, finally we get the following theorem.
\begin{thm}\label{planetheorem2}
For all $\lambda >1/\delta$,
we have
\begin{equation}\label{Osci:L4}
\|\mathcal{F}_{\lambda}^{\delta}f(y,t)\|_{L^4({\mathbb{R}}^3)}\leq C \lambda^{1/8+\epsilon_1}\delta^{-(1/2+\epsilon_2)}\|f\|_{L^4({\mathbb{R}}^2)}.
\end{equation}
\end{thm}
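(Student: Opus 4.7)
The plan is to adapt the Mockenhaupt--Seeger--Sogge argument of \cite{mss2} for the $L^4$ bound associated to the light cone, while carefully tracking the dependence on the flatness parameter $\delta$. The phase in (\ref{model}) is $\xi\cdot y + E(\xi) + tq_\delta(\xi)$ with $E(\xi)=\xi_1^2/(2\xi_2)$ and $q_\delta(\xi)=\delta\phi'(0)\xi_1^3/\xi_2^2$, and the symbol is supported in $|\xi|\sim\lambda/\delta$ with $\xi_1/\xi_2\sim 1$. The two principal curvatures of the associated cone $\Gamma_\delta=\{(\xi,q_\delta(\xi))\}$ at such $\xi$ are of size $0$ (radial) and of size $\sim \delta\,|\xi|^{-1}$; at frequency scale $\lambda/\delta$ the effective cinematic curvature is therefore only of order $\delta$, and this is the source of the $\delta^{-(1/2+\epsilon_2)}$ loss that must be extracted.

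First I would perform an angular decomposition of the symbol, writing $\tilde\chi(\xi_1/\xi_2)=\sum_\nu \chi_\nu(\xi)$ into sectors of opening $\theta$ around unit directions $\xi^\nu$ with $\xi_1^\nu/\xi_2^\nu\in [c_1,c_2]$ in the spirit of Lemma \ref{angularde}, yielding a decomposition $\mathcal{F}_\lambda^\delta f=\sum_\nu \mathcal{F}_{\lambda,\nu}^\delta f$. The opening $\theta$ is the main free parameter and will be fixed only at the very end. Using the identity $\|\mathcal{F}_\lambda^\delta f\|_4^4=\big\||\mathcal{F}_\lambda^\delta f|^2\big\|_2^2$ and Plancherel on the $(y,t)$-side, I would expand $|\mathcal{F}_\lambda^\delta f|^2$ into a double sum of bilinear pieces $\mathcal{F}_{\lambda,\nu}^\delta f \cdot \overline{\mathcal{F}_{\lambda,\nu'}^\delta f}$. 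Their $(y,t)$-Fourier supports lie in thin neighbourhoods of the sets $\{(\xi-\xi',q_\delta(\xi)-q_\delta(\xi')):\xi\in \mathrm{sec}_\nu,\ \xi'\in \mathrm{sec}_{\nu'}\}$, and the flatness of $\Gamma_\delta$ controls the overlap multiplicity; this is where the $\delta$-dependence first enters, via a modified almost-orthogonality bookkeeping compared with the light-cone case.

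Next I would estimate each single pair: for sectors $\nu,\nu'$ with angular separation $\sim\theta$, the change of variables $(\xi,\xi')\mapsto (\xi+\xi',q_\delta(\xi)+q_\delta(\xi'))$ has Jacobian controlled by the transversality of the tangent planes to $\Gamma_\delta$ over the two sectors. Because $\Gamma_\delta$ has curvature of order $\delta$, this Jacobian is of size $\sim \delta\theta(\lambda/\delta)$ rather than $\sim \theta(\lambda/\delta)$ as in \cite{mss2}, and Plancherel gives an $L^2$ bilinear estimate of the form
\begin{equation*}
\|\mathcal{F}_{\lambda,\nu}^\delta f\cdot \mathcal{F}_{\lambda,\nu'}^\delta f\|_{L^2(\mathbb{R}^3)} \lesssim (\delta\theta)^{-1/2}(\lambda/\delta)^{-1/2}\|f_\nu\|_2\|f_{\nu'}\|_2.
\end{equation*}
Summing this bound over admissible pairs via Cauchy--Schwarz, reassembling with the $\xi$-side angular Littlewood--Paley inequality (in the spirit of Lemma \ref{Appendix}, applied to the angular pieces) to replace $\sum_\nu \|f_\nu\|_2^2$ by $\|f\|_2^2$, and finally interpolating back to $L^4$ as in \cite{mss2}, produces a bound of the form $\lambda^{a}\delta^{-b}\theta^{-c}$. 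Optimizing in $\theta$ balances the bilinear loss against the number of angular sectors and should reproduce the target exponents $\lambda^{1/8+\epsilon_1}\delta^{-(1/2+\epsilon_2)}$.

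The main obstacle is Step three together with the correct bookkeeping of near-orthogonality in Step two: a naive transcription of the MSS argument produces spurious factors of $\delta^{-1}$ or worse, because every stage of their proof invokes the unit cinematic curvature of the light cone. The improvement over \cite{Kungpaper} relies on a finer-than-standard angular decomposition, chosen so that the bilinear Jacobian exhibits only a $\delta^{-1/2}$ loss instead of $\delta^{-1}$, while the sector-overlap counting still gives enough cancellation. I would expect to spend essentially all of the real effort here; once the $\delta$-corrected bilinear $L^2$ bound is in hand, the interpolation and reassembly follow the template of \cite{mss2} verbatim.
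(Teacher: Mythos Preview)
Your outline diverges substantially from the paper's proof, and in its present form it has a genuine gap.

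The paper does \emph{not} proceed via a bilinear $L^2$ Jacobian estimate with a free angular parameter $\theta$ to be optimized. Instead, the angular scale is fixed from the outset at $(\delta/\lambda)^{1/2}$, and the argument runs through the full square-function machinery of \cite{mss2}: a decomposition in the dual variable $\tau$ via operators $P^n$ with $(P^ng)^\wedge(\eta,\tau)=\varphi(\lambda^{-1/2}\tau-n)\hat g(\eta,\tau)$; microlocalisation near the cone via operators $Q_\nu$; a geometric overlap lemma bounding $\sum_{\nu,\nu'}\chi_{\mathcal{U}_\nu^n+\mathcal{U}_{\nu'}^{n'}}$ by $C(\lambda/\delta)^{2\epsilon}\log(\lambda/\delta)$; and finally a Kakeya-type maximal estimate for averages over $\tfrac{\delta}{\lambda^{1/2}}\times(\tfrac{\delta}{\lambda})^{1/2}\times 1$ tubes tangent to the dual cone. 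The two $\delta^{-1/2}$ losses that together produce $\delta^{-(1/2+\epsilon_2)}$ are pinpointed exactly: one is the cardinality bound $\mathrm{card}\{(n,\nu):m\in J_{n,\nu}\}\lesssim\delta^{-1/2}$, and the other is the constant in the Kakeya estimate, which is shown to be sharp. None of these ingredients appears in your plan.

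Your proposed mechanism has two concrete problems. First, the map $(\xi,\xi')\mapsto(\xi+\xi',q_\delta(\xi)+q_\delta(\xi'))$ goes from $\mathbb{R}^4$ to $\mathbb{R}^3$, so speaking of its ``Jacobian of size $\sim\delta\theta(\lambda/\delta)$'' is not meaningful as written; a correct bilinear $L^2$ bound would require a fibre-measure estimate, and you have not supplied one. Second, and more seriously, even granting a bilinear inequality of the stated form, summing over pairs $(\nu,\nu')$ by Cauchy--Schwarz without an overlap lemma costs a factor of the square root of the number of sectors, and your ``reassembly via angular Littlewood--Paley'' only controls $\sum_\nu\|f_\nu\|_2^2$, not $\sum_\nu\|f_\nu\|_4^4$. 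The $\lambda^{1/8}$ gain in the paper comes precisely from the $P^n$ square-function reduction (Lemma 3.2) followed by the overlap count; the Kakeya step is what keeps the $\delta$-loss at $\delta^{-1/2}$ rather than $\delta^{-1}$. Without both of those ingredients your scheme will not close, and ``optimizing in $\theta$'' cannot repair this because the loss is structural, not a matter of parameter choice.
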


Notice since $t\approx 1$, then
the model operator $\mathcal{F}_{\lambda}^{\delta}$ approximates the operator $\tilde{F}_{\lambda}^{\delta}$ whose phase function has of the form (\ref{phase function}). We give the proof of Theorem \ref{planetheorem2} in Section 3. In Section 4, the proof of Theorem \ref{L^4therom} will be shown for the phase function $-t^2\xi_2\tilde{\Phi}(s,\delta)$ in (\ref{phase function}).


\subsubsection[The case when $m>1$]{The case when $m>1$.}\label{vanishm>1}

Here we only show the differences with the case $m=1$ and  continue to follow the notation used in the previous section. We will mainly estimate
\begin{equation*}
\widehat{d\mu_{k,m}}(\delta_t\xi)=\int_{\mathbb{R}}e^{-i(t\xi_1x+t^2\xi_2x^2\phi(\delta x))}\tilde{\rho}(x)\eta(\delta x)dx,
\end{equation*}
where $\delta=2^{-k}$.

Set
\begin{equation}
s:=s(\xi,t)=-\frac{\xi_1}{t\xi_2}, \hspace{0.3cm}\textrm{for }\hspace{0.2cm} \xi_2\neq 0,
\end{equation}
and
\begin{equation}
\Phi(s,x,\delta)=-sx+x^2\phi(\frac{x}{2^k}).
\end{equation}

The implicit function theorem implies that for enough large $k$, there exists a smooth solution $\tilde{q}(s,\delta)$ for the equation $\partial_2\Phi(s,x,\delta)=0$. Meanwhile, if we choose $k$ sufficiently large and assume $\phi(0)=1/2$, then $\tilde{q}(s,\delta)$ smoothly converges to the solution $\tilde{q}(s,0)=s$ of the equation $\partial _2\Phi(s,x,0)=0$.

Let $\tilde{\Phi}(s,\delta):=\Phi(s,q(s,\delta),\delta)$. The phase function can be written as
\begin{equation*}
-t^2\xi_2\tilde{\Phi}(s,\delta)=\frac{\xi_1^2}{2\xi_2}+(-1)^{m+1}\frac{{\phi}^{(m)}(0)}{m!}\delta^{m}\frac{\xi_1^{m+2}}{t^m\xi_2^{m+1}} + R(t,\xi,\delta),
\end{equation*}
where $R(t,\xi,\delta)$ is homogeneous of degree one in $\xi$ and has at least m+1 power of $\delta$.

Using the similar argument as in the last section, it suffices to prove that
\begin{equation}\label{target2}
\sum_k2^{-k}\sum_{j\geq 1}\|\sup_{t\in[1,2]}|\widetilde{A_{t,j}^k}|\|_{L^p\rightarrow L^p}\leq C_p,
\end{equation}
where
\begin{equation*}
\widetilde{A_{t,j}^k}f(y):=\int_{\mathbb{R}^2}e^{i\xi\cdot y}\widehat{d\mu_{k,m}}(\delta_t\xi)\beta(2^{-j}|\delta_t\xi|)\hat{f}(\xi)d\xi.
\end{equation*}

A standard application of the method of stationary phase yields that
\begin{align}
\widehat{d\mu_{k,m}}(\delta_t\xi)=e^{-it^2\xi_2\tilde{\Phi}(s,\delta)}\chi_{k,m}(\frac{\xi_1}{t\xi_2})
\frac{A_{k,m}(\delta_t\xi)}{(1+|\delta_t\xi|)^{1/2}}+B_{k,m}(\delta_t\xi),
\end{align}
where  $\chi_{k,m}$ is a smooth function supported in $[c_{k,m},\widetilde{c_{k,m}}]$, for certain non-zero constant $c_{k,m}$ and $\widetilde{c_{k,m}}$ dependent only on $k$ and $m$. $A_{k,m}$ is a symbol of order zero in $\xi$ and $\{A_{k,m}(\delta_t\xi)\}_{k,m}$ is contained in a bounded subset of symbol of order zero. More precisely, for arbitrary $t \in [1,2]$,
 \begin{equation}\label{msymbol}
 |D_{\xi}^{\alpha}A_{k,m}(\delta_t\xi)|\leq C_{\alpha}(1+|\xi|)^{-\alpha},
\end{equation}
where $C_{\alpha}$ do not depend on $k$ and $m$. Furthermore, $B_{k,m}$ is a remaind term and satisfies for arbitrary $t\in [1,2]$,
\begin{equation}
 |D_{\xi}^{\alpha}B_{k,m}(\delta_t\xi)|\leq C_{\alpha,N}(1+|\xi|)^{-N},
\end{equation}
where $C_{\alpha,N}$ are admissible constants and again do not depend on $k$ and $m$.

Put
\begin{equation}
A^k_{t,j}f(y):=\int_{{\mathbb{R}}^2}e^{i(\xi \cdot y-t^2\xi_2\tilde{\Phi}(s,\delta))}\chi_{k,m}(\frac{\xi_1}{t\xi_2})
\frac{A_{k,m}(\delta_t\xi)}{(1+|\delta_t\xi|)^{1/2}}\beta(2^{-j}|\delta_t \xi|)\hat{f}(\xi)d\xi.
\end{equation}
A similar discussion as before allows us to choose $\rho_1(y,t)\in C_0^{\infty}(\mathbb{R}^2\times[1/2,4])$ and  to show that it is sufficient to prove that
\begin{equation}\label{target3}
\sum_k2^{-k}\sum_{j\geq 1}\|\widetilde{M_{j}^{k,1}}\|_{L^p\rightarrow L^p}\leq C_p,
\end{equation}
where
\begin{equation*}
\widetilde{M_{j}^{k,1}}f(y):=\sup_{t\in[1,2]}|\rho_1(y,t)A_{t,j}^kf(y)|,
\end{equation*}
( compare (\ref{local}), (\ref{local2}) ).

Moreover,
\begin{equation}\label{h_k_m}
\frac{\partial}{\partial t}(\rho_1(y,t)A^k_{t,j}f(y))=\int_{{\mathbb{R}}^2}e^{i(\xi \cdot y-t^2\xi_2\tilde{\Phi}(s,\delta))}h_{k,m}(y,t,\xi,j)\hat{f}(\xi)d\xi,
\end{equation}
where $|h_{k,m}(y,t,\xi,j)|\lesssim  2^{j/2}\delta^m+2^{-j/2}$, which follows from the similar argument with $h_k$ in (\ref{h_k}), together with the facts that
$|\frac{\partial}{\partial t}(-t^2\xi_2\tilde{\Phi}(s,\delta))|=|(-1)^{m}\frac{{\phi}^{(m)}(0)}{m!}\delta^{m}\frac{\xi_1^{m+2}}{t^{m+1}\xi_2^{m+1}} + \frac{\partial}{\partial t}R(t,\xi,\delta)|\lesssim \delta^m 2^{j}$.

So we still have the  following regularity estimates.
\begin{lem}\label{mlemmaL^2}
\begin{equation}\label{lemmmm}
\|\widetilde{M_{j}^{k,1}}f\|_{L^p({\mathbb{R}}^3)}\leq C_p 2^{-(j\wedge k)/p}\|f\|_{L^p({\mathbb{R}}^2)}, \hspace{0.5cm}2\leq p\leq \infty.
\end{equation}
\end{lem}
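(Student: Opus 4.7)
The plan is to follow the strategy used for Lemma \ref{lemmaL^2} almost verbatim, using the M.~Riesz interpolation theorem between the endpoints $p=2$ and $p=\infty$. The only new input is the improved derivative bound $|h_{k,m}(y,t,\xi,j)|\lesssim 2^{j/2}\delta^m+2^{-j/2}$ recorded in (\ref{h_k_m}), which replaces $|h_k|\lesssim 2^{j/2}\delta+2^{-j/2}$ from the case $m=1$; everything else in the geometry of the phase function $-t^2\xi_2\tilde{\Phi}(s,\delta)$ is unchanged to leading order, since the main term $\xi_1^2/(2\xi_2)$ persists and the extra terms are still smooth perturbations homogeneous of degree one in $\xi$.

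For the $L^2$ endpoint, I would apply Lemma \ref{lem:Lemma3} in the $t$-variable to write
\begin{equation*}
\|\widetilde{M_{j}^{k,1}}f\|_{L^2}^2\leq \bigl\|\|\rho_1(\cdot,t)A_{t,j}^kf\|_{L^2_y}\bigr\|_{L^2_t}\cdot\bigl\|\|\partial_t(\rho_1(\cdot,t)A_{t,j}^kf)\|_{L^2_y}\bigr\|_{L^2_t},
\end{equation*}
and then invoke Plancherel together with the new pointwise estimate for $h_{k,m}$. Normalizing by $(2^{j/2}\delta^m+2^{-j/2})^{-1}$ reduces $\partial_t$ to a symbol of order zero, exactly as in the $m=1$ argument, and yields
\begin{equation*}
\|\widetilde{M_{j}^{k,1}}f\|_{L^2}^2\leq C\,2^{-j/2}(2^{j/2}\delta^m+2^{-j/2})\|f\|_{L^2}^2=C(\delta^m+2^{-j})\|f\|_{L^2}^2.
\end{equation*}
Since $\delta^m\leq\delta=2^{-k}$ for $m\geq 1$, this dominates $2^{-(j\wedge k)}\|f\|_{L^2}^2$, giving the $p=2$ case of (\ref{lemmmm}).

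For the $L^\infty$ endpoint I claim that $\|\widetilde{M_{j}^{k,1}}f\|_{L^\infty}\leq C\|f\|_{L^\infty}$, uniformly in $j,k$, which corresponds to the trivial factor $2^{-(j\wedge k)\cdot 0}=1$ in (\ref{lemmmm}). The proof is a repetition of the angular decomposition argument used in Lemma \ref{lemmaL^2}: writing the kernel $K_t^\nu$ as in (\ref{ker1}) with the cutoffs $\chi_j^\nu$ from Lemma \ref{angularde}, rotating so that $\xi_j^\nu$ becomes $(1,0)$, subtracting the linear part of the phase, and integrating by parts with the operator $\mathcal{L}=I-2^{2j}\partial_{\xi_1}^2-2^j\partial_{\xi_2}^2$. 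The validity of the estimates (\ref{h1})--(\ref{h2}) only uses that the auxiliary function $h(\xi)$ is homogeneous of degree one and vanishes to second order at $\bar{\xi}=(1,0)$, and the change of variables at the end uses only the nondegeneracy of the $\xi$-$y$ Hessian of the phase. Since the phase in the present case differs from (\ref{phase function}) only by the replacement of the $\delta$-term by $\delta^m$-term (plus higher-order remainders), both conditions hold for all $m\geq 1$ with the same constants. This yields $\int|K_t^\nu|\,dy\leq C 2^{-j/2}$, and summing over the $\mathcal{O}(2^{j/2})$ sectors gives the uniform $L^\infty\to L^\infty$ bound.

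The main interpolation then gives, for $1/p=\theta/2$ with $\theta=2/p$, the bound $\|\widetilde{M_{j}^{k,1}}\|_{L^p\to L^p}\leq C_p\bigl(2^{-(j\wedge k)/2}\bigr)^{2/p}=C_p 2^{-(j\wedge k)/p}$, which is (\ref{lemmmm}). The only step where I anticipate any care being needed is the $L^\infty$ argument: one must verify that the supports of the integrands still have volume $\mathcal{O}(2^{3j/2})$ and that the symbol $\widetilde{A_{k,m}}(\xi,t)$ together with $\beta(2^{-j}|\delta_t\xi|)$ satisfies the same derivative bounds (\ref{symbol}), (\ref{msymbol}) uniformly in $k,m$; both follow since $A_{k,m}$ was set up to satisfy exactly those uniform symbol estimates. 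No essentially new ideas are required beyond those of Section~\ref{nonvanishingsec1}.
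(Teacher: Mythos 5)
Your proposal is correct and follows essentially the same route as the paper, which proves Lemma \ref{mlemmaL^2} exactly by repeating the argument of Lemma \ref{lemmaL^2} with the improved bound $|h_{k,m}|\lesssim 2^{j/2}\delta^m+2^{-j/2}$ in the $L^2$ step and the unchanged angular-decomposition kernel estimate for $L^\infty$, followed by M.~Riesz interpolation. Only note the wording: $\delta^m+2^{-j}$ \emph{is dominated by} $C\,2^{-(j\wedge k)}$ (since $2^{-km}\leq 2^{-k}$), not the reverse, which is what your computation actually uses.
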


The proof is similar to the one of  Lemma \ref{lemmaL^2}. Hence, we have that
\begin{align*}
\sum_k2^{-k}\sum_{0<j\leq 9km}\|\widetilde{M_{j}^{k,1}}\|_{L^p\rightarrow L^p}
& \leq C_p +\sum_k2^{-k}\sum_{km<j\leq 9km}2^{-km/p}\\
&\leq C_p  +8m\sum_k k2^{-k(1+m/p)}\lesssim_p 1,
\end{align*}
where $2< p<\infty$.

Based on these results, let us now assume that $j>9km$. We introduce some notations. Let $j=km+j'$ and $\delta'=\delta^m$, then $\lambda=2^{j'}>{\delta'}^{-8}$ and $2^j=\frac{\lambda}{\delta'}$. After the same simplification as in the previous section, we will prove the following theorem:
\begin{thm}\label{F_lambda}
For $\lambda>{\delta'}^{-8}$, the following inequality
\begin{equation}\label{mestimate:L4}
\biggl(\int_{{\mathbb{R}}^2}\int^4_{1/2}|\tilde{F}_{\lambda}^{\delta'}f(y,t)|^4dydt \biggl)^{1/4}
\leq C \lambda^{1/8+\epsilon_1}{\delta'}^{-(1/2+\epsilon_2)}\|f\|_{L^4({\mathbb{R}}^2)}
\end{equation}
holds true for some $\epsilon_1$, $\epsilon_2>0$, where
\begin{equation*}
 \tilde{F}_{\lambda}^{\delta'}f(y,t)=\rho_1(y,t)\int_{{\mathbb{R}}^2}e^{i(\xi \cdot y-t^2\xi_2\tilde{\Phi}(s,\delta))}a(\xi,t)\rho_0(\frac{\delta'}{\lambda}|\xi|)\tilde{\chi}(\frac{\xi_1}{\xi_2})\hat{f}(\xi)d\xi.
\end{equation*}
\end{thm}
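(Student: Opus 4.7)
\textbf{Proposal for proof of Theorem \ref{F_lambda}.} The plan is to mirror the strategy already outlined for the case $m=1$: reduce $\tilde F_{\lambda}^{\delta'}$ to a translation-invariant model operator whose associated cone captures the leading non-Euclidean part of the phase, prove the desired $L^4$-bound for that model by adapting the argument of Theorem \ref{planetheorem2}, and finally absorb the remaining terms as a smooth perturbation along the lines of Section 4. Explicitly, from the expansion
\[
-t^2\xi_2\tilde{\Phi}(s,\delta)=\frac{\xi_1^2}{2\xi_2}+(-1)^{m+1}\frac{\phi^{(m)}(0)}{m!}\,\delta'\,t^{-m}\frac{\xi_1^{m+2}}{\xi_2^{m+1}}+R(t,\xi,\delta),
\]
the remainder $R(t,\xi,\delta)$ is homogeneous of degree one in $\xi$ and carries a factor $\delta^{m+1}=\delta\cdot\delta'$, so it is strictly smaller than the $\delta'$-sized cone term by a factor of $\delta$.

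First I would introduce the model operator
\[
\mathcal{F}_{\lambda}^{\delta'}f(y,t)=\rho_1(y,t)\int_{\mathbb{R}^2}e^{i(\xi\cdot y+E(\xi)+tq_{\delta'}(\xi))}a(\xi,t)\rho_0\!\Big(\tfrac{\delta'}{\lambda}|\xi|\Big)\tilde{\chi}\!\Big(\tfrac{\xi_1}{\xi_2}\Big)\widehat{f}(\xi)d\xi,
\]
with $E(\xi)=\xi_1^2/(2\xi_2)$ and $q_{\delta'}(\xi)=c_m\,\delta'\,\xi_1^{m+2}/\xi_2^{m+1}$. On the support of $\tilde{\chi}(\xi_1/\xi_2)$, the level curves of the cone $\{(\xi,q_{\delta'}(\xi))\}$ have non-vanishing curvature comparable to $\delta'$; this is the only geometric input used in the proof of Theorem \ref{planetheorem2}. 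Consequently I would run the proof of Theorem \ref{planetheorem2} verbatim for $\mathcal{F}_{\lambda}^{\delta'}$: apply the angular decomposition at scale $(\delta'/\lambda)^{1/2}$ (in normalized frequency), use Plancherel together with the partial-derivative estimates on the phase adapted to these sectors, and combine the resulting $L^2\to L^2$ and $L^2\to L^\infty$ bounds with the MSS-type local smoothing machinery of Section 3. This yields
\[
\|\mathcal{F}_{\lambda}^{\delta'}f\|_{L^4(\mathbb{R}^3)}\leq C\,\lambda^{1/8+\epsilon_1}(\delta')^{-(1/2+\epsilon_2)}\|f\|_{L^4(\mathbb{R}^2)},
\]
provided $\lambda>(\delta')^{-8}$, the latter hypothesis guaranteeing that the frequency scale is large enough relative to $\delta'$ for the stationary-phase and angular-decomposition arguments to give the gain from cinematic curvature.

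Next, I would handle the passage from the model $\mathcal{F}_{\lambda}^{\delta'}$ to the actual operator $\tilde F_{\lambda}^{\delta'}$. After the substitution $t\mapsto 1/t$ (permissible since $t\sim 1$), the phase of $\tilde F_{\lambda}^{\delta'}$ equals the phase of $\mathcal{F}_{\lambda}^{\delta'}$ up to the term $R(1/t,\xi,\delta)=\delta\cdot\delta'\,\tilde R(t,\xi,\delta)$, where $\tilde R$ is homogeneous of degree one in $\xi$ and smooth with bounds uniform in $\delta$. Since $\delta$ is small, $\tilde R$ is a small smooth perturbation of the model phase, and exactly the observation stated at the end of Section 2.2.1 applies: the $L^4$-estimate for $\mathcal{F}_{\lambda}^{\delta'}$ is stable under such perturbations, with constants depending only on finitely many derivatives of the phase and symbol. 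This is precisely the content of Section 4 (proof of Theorem \ref{L^4therom} from Theorem \ref{planetheorem2}), and applying it here yields the claimed bound for $\tilde F_{\lambda}^{\delta'}$.

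The main obstacle is ensuring that the constants in the model $L^4$-bound and in the perturbation step are uniform in $m$ as well as in $\delta$. The monomial $\xi_1^{m+2}/\xi_2^{m+1}$ produces derivatives of $q_{\delta'}$ that grow in $m$, and the factor $t^{-m}$ in the $t$-derivative of the phase must be controlled against the angular scale; one must choose the angular decomposition threshold so that the cinematic curvature lower bound $\sim \delta'$ is realized with $m$-independent implicit constants on $\text{supp}\,\tilde{\chi}$. A careful verification that the strict frequency requirement $\lambda>(\delta')^{-8}$ (which is slightly stronger than the $\lambda>1/\delta$ of Theorem \ref{L^4therom}) gives enough room for these derivative losses is what makes the estimate valid uniformly in $m\geq 1$; this uniformity is also what ultimately allowed the summation $\sum_k 2^{-k}\sum_{j>9km}$ in the preceding subsection.
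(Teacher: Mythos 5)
Your two-step plan (translation-invariant model operator plus a perturbation argument in the spirit of Section 4) matches the paper's overall strategy, but the reduction you propose contains a genuine error. You claim that after the substitution $t\mapsto 1/t$ the phase of $\tilde F_{\lambda}^{\delta'}$ agrees with the model phase $E(\xi)+t\,q_{\delta'}(\xi)$ up to the remainder $R$, which carries the extra factor $\delta$. This is only true for $m=1$. For $m>1$ the main cone term is $(-1)^{m+1}\frac{\phi^{(m)}(0)}{m!}\delta'\,t^{-m}\frac{\xi_1^{m+2}}{\xi_2^{m+1}}$, so after $t\mapsto 1/t$ it becomes $t^{m}q_{\delta'}(\xi)$, and the discrepancy with your model, $(t^{m}-t)\,q_{\delta'}(\xi)$, is of size $\delta'|\xi|\approx\lambda$ on the support of the symbol --- the same size as the main term, not a small smooth perturbation. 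The perturbation machinery of Section 4 cannot absorb it: that argument relies on the leading cone term being \emph{linear} in the time variable, so that all second and higher $t$-derivatives of $q_{\delta}(\xi,t)$ come only from the genuinely small remainder and satisfy $|\partial_t^{2+\alpha}q_{\delta}(\cdot,t)|\lesssim\delta\lambda$, and likewise $|\partial_t b|\lesssim 2^{\ell}\delta$; with a $t^{m}$ main term these quantities are of size $\lambda$ and $2^{\ell}$ respectively, and the almost-orthogonality lemmas (Lemma \ref{othogonal1} and its companions) fail as written.

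The missing idea is the paper's change of variable $t\rightarrow\tilde t^{-1/m}$: since the cone term depends on $t$ only through $t^{-m}$, setting $t^{-m}=\tilde t$ makes it exactly linear in the new time variable, which now ranges over the compact interval $[2^{-2m},2^{m}]$; the phase takes the form $\frac{\xi_1^2}{2\xi_2}+\tilde t\,\delta'\,c_m\frac{\xi_1^{m+2}}{\xi_2^{m+1}}+R(\tilde t^{-1/m},\xi,\delta)$ with $R$ carrying the factor $\delta\cdot\delta'$, and only then do the Section 3 model estimate and the Section 4 perturbation argument apply verbatim. Your closing concern about uniformity in $m$ is not the real obstruction: the paper allows its constants to depend on $m$ (the sums over $j\leq 9km$ absorb factors of $m$, and Theorem \ref{vanishnotorigin} even states a constant $C_p m$), so no $m$-uniform bookkeeping is required, whereas without the $m$-th root substitution the estimate (\ref{mestimate:L4}) is not reached at all.
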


In order to use the proof of Chapter 4 to get (\ref{mestimate:L4}), we make the change of variable $t\rightarrow \tilde{t}^{-1/m}$, then it suffices to prove that
\begin{equation}\label{mestimate:L5}
\biggl(\int_{{\mathbb{R}}^2}\int^{2^m}_{2^{-2m}}|\tilde{F}_{\lambda}^{\delta'}f(y,\tilde{t}^{-1/m})|^4dyd\tilde{t} \biggl)^{1/4}\leq C \lambda^{1/8+\epsilon_1}{\delta'}^{-(1/2+\epsilon_2)}\|f\|_{L^4({\mathbb{R}}^2)},
\end{equation}
where
\begin{equation*}
\tilde{F}_{\lambda}^{\delta'}f(y,\tilde{t}^{-1/m})=\rho_1(y,\tilde{t}^{-1/m})\int_{{\mathbb{R}}^2}e^{i(\xi \cdot y-\tilde{t}^{-2/m}\xi_2\tilde{\Phi}(\tilde{s},\delta))}a(\xi,\tilde{t}^{-1/m})\rho_0(\frac{\delta'}{\lambda}|\xi|)
 \tilde{\chi}(\frac{\xi_1}{\xi_2})\hat{f}(\xi)d\xi,
\end{equation*}
and
 \begin{equation}
\tilde{s}=-\tilde{t}^{1/m}\frac{\xi_1}{\xi_2},\hspace{0.4cm}-\tilde{t}^{-2/m}\xi_2\tilde{\Phi}(\tilde{s},\delta)=\frac{\xi_1^2}{2\xi_2}+\tilde{t}\delta'(-1)^{m+1}\frac{{\phi}^{(m)}(0)}{m!}\cdot \frac{\xi_1^{m+2}}{\xi_2^{m+1}}+R(\tilde{t}^{-1/m},\xi,\delta).
\end{equation}
 Hence, we employ the similar idea of Chapter 4 to obtain (\ref{mestimate:L5}),  then Theorem \ref{F_lambda} is proved.

Based on the above theorem, we get that
\begin{equation}\label{lemmm2}
\begin{aligned}
\|\widetilde{M_{j}^{k,1}}f\|_{L^4}
&\leq C(\frac{\delta'^2}{\lambda})^{1/4}\lambda^{1/8+\epsilon_1}\delta'^{-(1/2+\epsilon_2)}\|f\|_{L^4}\\
&=C2^{j'(-1/8+\epsilon_1)}2^{km\epsilon_2}\|f\|_{L^4}\\
&=C2^{j(-1/8+\epsilon_1)}2^{km(1/8+\epsilon_2-\epsilon_1)}\|f\|_{L^4}.
\end{aligned}
\end{equation}

Employing the M. Riesz interpolation theorem between (\ref{lemmmm}) for $p=2$ and (\ref{lemmm2}),  we have for $2<p\leq 4$,
\begin{equation*}
 \|\widetilde{M_{j}^{k,1}}f\|_{L^p({\mathbb{R}}^3)}
\leq C2^{-j(1/8-\epsilon_1)(2-4/p)}2^{km[3/4-5/(2p)+(2-4/p)(\epsilon_2-\epsilon_1)]}\|f\|_{L^p({\mathbb{R}}^2)}.
\end{equation*}

If $3/4-5/(2p)+(2-4/p)(\epsilon_2-\epsilon_1)\leq 0$, then (\ref{target3}) will follow directly for $j>9km$. If $3/4-5/(2p)+(2-4/p)(\epsilon_2-\epsilon_1)>0$, then $km<j/9$ yields that
\begin{align*}
\|\widetilde{M_{j}^{k,1}}f\|_{L^p({\mathbb{R}}^3)}
&\leq C2^{-j(1/8-\epsilon_1)(2-4/p)}2^{j[3/4-5/(2p)+(2-4/p)(\epsilon_2-\epsilon_1)]/9}\|f\|_{L^p({\mathbb{R}}^2)}\\
&=C2^{j[-1/6+2/(9p)+(2-4/p)(8\epsilon_1+\epsilon_2)/9]}\|f\|_{L^p({\mathbb{R}}^2)}.
\end{align*}

Employing the M. Riesz interpolation theorem (\ref{lemmmm}) for $p=\infty$ and (\ref{lemmm2}), we have for $4\leq p<\infty$,
\begin{equation*}
 \|\widetilde{M_{j}^{k,1}}f\|_{L^p({\mathbb{R}}^3)}
\leq C2^{4j(-1/8+\epsilon_1)/p}2^{4km(1/8+(\epsilon_2-\epsilon_1))/p}\|f\|_{L^p({\mathbb{R}}^2)}.
\end{equation*}
Since $1/8+(\epsilon_2-\epsilon_1)>0$, then $km<j/9$ implies that
\begin{align*}
 \|\widetilde{M_{j}^{k,1}}f\|_{L^p({\mathbb{R}}^3)}
&\leq C2^{4j(-1/8+\epsilon_1)/p}2^{4j(1/8+(\epsilon_2-\epsilon_1))/(9p)}\|f\|_{L^p({\mathbb{R}}^2)}
\\
& = C2^{4j(-1+8\epsilon_1+\epsilon_2)/(9p)}\|f\|_{L^p({\mathbb{R}}^2)},
\end{align*}
which finishes the proof of (\ref{target3}), hence of Theorem \ref{planetheorem} when $m>1$.


\subsection[The proof for curves of finite type]{The proof for curves of finite type.} \label{vanishfinite}
In this section, we will prove Theorem \ref{planetheoremfinite}.

The proof of Theorem \ref{planetheorem} inspires us to prove Theorem \ref{planetheoremfinite} directly. Because at the beginning of the proof, we employ a dyadic decomposition to restrict on an interval far away from the origin, and this means whether $d=2$, the curve has still non-vanishing Gaussian curvature in this constant interval and we can still apply the method of stationary phase. We proceed as in the proof of Theorem \ref{planetheorem}.

We choose $\tilde{\rho}\in C_0^{\infty}(\mathbb{R})$ such that supp $\tilde{\rho}\subset\{x:B/2\leq|x|\leq 2B\}$  and $\sum_k\tilde{\rho}(2^kx)=1$. Since the support of $\eta$ is sufficiently small, then we can choose $k$ sufficiently large.

Put
\begin{equation*}
A_tf(y):=\int_{\mathbb{R}}f(y_1-tx,y_2-t^dx^d\phi(x))\eta(x)dx=\sum_k\widetilde{A_t^k}f(y),
\end{equation*}
where $\widetilde{A_t^k}f(y):=\int_{\mathbb{R}}f(y_1-tx,y_2-t^dx^d\phi(x))\eta(x)\tilde{\rho}(2^kx)dx$.

Consider the isometric operator on $L^p(\mathbb{R}^2)$ defined by
\begin{equation*}
Tf(x_1,x_2)=2^{(d+1)k/p}f(2^kx_1,2^{dk}x_2).
\end{equation*}
As the argument in the last section,  it suffices to prove the following estimate
\begin{equation*}
\sum_k2^{-k}\|\sup_{t>0}|A_t^k|\|_{L^p}\leq C_p, \hspace{0.2cm}\textmd{for}\hspace{0.2cm} p>2,
\end{equation*}
where $A_t^kf(y):=\int_{\mathbb{R}}f(y_1-tx,y_2-t^dx^d\phi(\frac{x}{2^k}))\tilde{\rho}(x)\eta(2^{-k}x)dx$.

By means of the Fourier inversion formula, we can write
\begin{equation*}
 A_t^kf(y)=\frac{1}{(2\pi)^2}\int_{{\mathbb{R}}^2}e^{i\xi\cdot
  y}\widehat{d\mu_{k,d,m}}(\delta_t\xi)\hat{f}(\xi)d\xi,
\end{equation*}
where
\begin{equation*}
 \widehat{d\mu_{k,d,m}}(\delta_t\xi)=\int_{\mathbb{R}}e^{-i(t\xi_1x+t^d\xi_2x^d\phi(\frac{x}{2^k}))}\tilde{\rho}(x)\eta(2^{-k}x)dx.
\end{equation*}

Choosing a non-negative function $\beta\in C_0^{\infty}(\mathbb{R})$ as before,
set
\begin{equation*}A_{t,j}^kf(y):=\frac{1}{(2\pi)^2}\int_{{\mathbb{R}}^2}e^{i\xi\cdot
  y}\widehat{d\mu_{k,d,m}}(\delta_t\xi)\beta(2^{-j}|\delta_t\xi|)\hat{f}(\xi)d\xi
\end{equation*}
and denote by $M_j^k$ the corresponding maximal operator.

Since $A_{t,j}^k$ is localized to frequencies $|\delta_t\xi|\approx 2^{j}$, we can still use Lemma \ref{Appendix}
to prove that
\begin{equation*}
\|M_j^k\|_{L^p\rightarrow L^p}\lesssim \|M_{j,loc}^k\|_{L^p\rightarrow
L^p},
\end{equation*}
where $M_{j,loc}^kf(y):=\sup_{t\in[1,2]}|A_{t,j}^kf(y)|$.

Set
\begin{equation}
\delta:=2^{-k}, \hspace{0.2cm} s:=s(\xi,t)=-\frac{\xi_1}{t^{d-1}\xi_2}, \hspace{0.3cm}\textrm{for} \hspace{0.2cm} \xi_2\neq 0,
\end{equation}
and
\begin{equation}
\Phi(s,x,\delta)=-sx+x^d\phi(\delta x).
\end{equation}

From the proof of Theorem \ref{planetheorem}, for fixed $t\in[1,2]$, we would mainly estimate
 \begin{equation*}
A^k_{t,j}f(y):=\int_{{\mathbb{R}}^2}e^{i(\xi\cdot y-t^d\xi_2\tilde{\Phi}(s,\delta))}\chi_{k,d,m}(\frac{\xi_1}{t^{d-1}\xi_2})
\frac{A_{k,d,m}(\delta_t\xi)}{(1+|\delta_t\xi|)^{1/2}}
\beta(2^{-j}|\delta_t\xi|)\hat{f}(\xi)d\xi,
\end{equation*}
where $\chi_{k,d,m}$ is a smooth function supported in the conical region $[c_{k,d,m},\widetilde{c_{k,d,m}}]$, for certain non-zero constant $c_{k,d,m}$ and $\widetilde{c_{k,d,m}}$ dependent only on $k, m$ and $d$. $A_{k,d,m}$ is a symbol of order zero in $\xi$ and $\{A_{k,d,m}(\delta_t\xi)\}_k$ is contained in a bounded subset of symbol of order zero. Let $\phi(0)=1/d$ and  $\tilde{\Phi}(s,\delta):=\Phi(s,\tilde{q}(s,\delta),\delta)$, then the phase function can be written as
\begin{align*}
-t^d\xi_2\tilde{\Phi}(s,\delta)=\biggl(\frac{1}{d^{\frac{d}{d-1}}}-\frac{1}{d^{\frac{1}{d-1}}}\biggl)(-\frac{d\xi_1^d}{\xi_2})^{\frac{1}{d-1}}-\frac{\delta^m\phi^{(m)}(0)}{t^mm!}
\biggl(-\frac{\xi_1}{\xi_2^{\frac{m+1}{m+d}}}\biggl)^{\frac{d+m}{d-1}}+ R(t,\xi,\delta,d),
\end{align*}
where $R(t,\xi,\delta,d)$ is homogeneous of degree one in $\xi$ and has at least $m+1$ power of $\delta$.

The similar argument with (\ref{local}) allows us to choose $\rho_1(y,t)\in C_0^{\infty}(\mathbb{R}^2\times[1/2,4])$ and  it is sufficient to prove that
\begin{equation}
\sum_k2^{-k}\sum_{j\geq 1}\|\widetilde{M_{j}^{k,1}}\|_{L^p\rightarrow L^p}\leq C_p,
\end{equation}
where
\begin{equation*}
\widetilde{M_{j}^{k,1}}f(y):=\sup_{t\in[1,2]}|\rho_1(y,t)A_{t,j}^kf(y)|.
\end{equation*}

Moreover,
\begin{equation}\label{h_k_m}
\frac{\partial}{\partial t}(\rho_1(y,t)A^k_{t,j}f(y))=\int_{{\mathbb{R}}^2}e^{i(\xi \cdot y-t^d\xi_2\tilde{\Phi}(s,\delta))}h_{k,m,d}(y,t,\xi,j)\hat{f}(\xi)d\xi,
\end{equation}
where $|h_{k,m}(y,t,\xi,j)|\lesssim  2^{j/2}\delta^m+2^{-j/2}$, which follows from the similar argument with $h_k$ in (\ref{h_k}), together with the facts that
$$|\frac{\partial}{\partial t}(-t^2\xi_2\tilde{\Phi}(s,\delta))|=\left|m\frac{\delta^m\phi^{(m)}(0)}{t^{m+1}m!}
\biggl(-\frac{\xi_1}{\xi_2^{\frac{m+1}{m+d}}}\biggl)^{\frac{d+m}{d-1}}+\frac{\partial}{\partial t}R(t,\xi,\delta,d)\right|\lesssim \delta^m 2^{j}.$$

So we still have the same regularity estimates as (\ref{mlemmaL^2}) and
\begin{equation*}
\sum_k2^{-k}\sum_{0<j\leq 9km}\|\widetilde{M_{j}^{k,1}}\|_{L^p\rightarrow L^p}\leq C_p, \hspace{0.2cm}\textmd{for}\hspace{0.2cm} 2< p<\infty.
\end{equation*}

Based on these arguments, for $j>9km$, let $j=km+j'$ and $\delta'=\delta^m$, then $j'>8km$ and $\lambda=2^{j'}>{\delta'}^{-8}$. We would be done if we could prove the following theorem.

\begin{thm}\label{123}
\begin{equation}
\biggl(\int_{{\mathbb{R}}^2}\int^{2^m}_{2^{-2m}}|\tilde{F}_{\lambda}^{\delta'}f(y,\tilde{t}^{-1/m})|^4dyd\tilde{t} \biggl)^{1/4}\leq C \lambda^{1/8+\epsilon_1}{\delta'}^{-(1/2+\epsilon_2)}\|f\|_{L^4({\mathbb{R}}^2)},
\end{equation}
where
\begin{equation*}
\tilde{F}_{\lambda}^{\delta'}f(y,\tilde{t}^{-1/m})=\rho_1(y,\tilde{t}^{-1/m})\int_{{\mathbb{R}}^2}e^{i(\xi\cdot y-\tilde{t}^{-d/m}\xi_2\tilde{\Phi}(\tilde{s},\delta))}a(\xi,\tilde{t}^{-1/m})\rho_0(\frac{\delta'}{\lambda}|\xi|)\tilde{\chi}(\frac{\xi_1}{\xi_2})\hat{f}(\xi)d\xi,
\end{equation*}
and
 \begin{equation*}
\tilde{s}=-\tilde{t}^{(d-1)/m}\frac{\xi_1}{\xi_2},
\end{equation*}
\begin{equation}
-\tilde{t}^{-d/m}\xi_2\tilde{\Phi}(\tilde{s},\delta)=\biggl(\frac{1}{d^{\frac{d}{d-1}}}-\frac{1}{d^{\frac{1}{d-1}}}\biggl)(-\frac{d\xi_1^d}{\xi_2})^{\frac{1}{d-1}}
-\tilde{t}\delta'\frac{\phi^{(m)}(0)}{m!}
\biggl(-\frac{\xi_1}{\xi_2^{\frac{m+1}{m+d}}}\biggl)^{\frac{d+m}{d-1}}+R(\tilde{t}^{-1/m},\xi,\delta,d).
\end{equation}
\end{thm}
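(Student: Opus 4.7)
The plan is to mimic the proof of Theorem \ref{F_lambda} essentially verbatim, the only genuine novelty being the replacement of the quadratic leading term $\xi_1^2/(2\xi_2)$ by the homogeneous-degree-one function
\[
E_d(\xi):=\biggl(\frac{1}{d^{d/(d-1)}}-\frac{1}{d^{1/(d-1)}}\biggr)\biggl(-\frac{d\xi_1^d}{\xi_2}\biggr)^{1/(d-1)},
\]
and of the cone-function $\xi_1^3/\xi_2^2$ by
\[
q_{d,m}(\xi):=\biggl(-\frac{\xi_1}{\xi_2^{(m+1)/(m+d)}}\biggr)^{(d+m)/(d-1)}.
\]
On the conic support where $\tilde{\chi}(\xi_1/\xi_2)\neq 0$ both functions are smooth and homogeneous of degree one, so every step in the $d=2$ proof that exploited the explicit form of these two objects carries over with $d$- and $m$-dependent constants absorbed into $C$.

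First I would peel off the remainder $R(\tilde{t}^{-1/m},\xi,\delta,d)$ from the phase. Because $R$ carries at least $m+1$ powers of $\delta$ whereas the main perturbation $\tilde{t}\delta' q_{d,m}(\xi)$ carries exactly one factor of $\delta'=\delta^m$, differentiation in $\tilde{t}$ produces an extra $\delta$-gain on $R$ compared with the model term. This is the same perturbation budget that was used in Section 4 to pass from Theorem \ref{planetheorem2} to Theorem \ref{L^4therom}, and it allows $R$ to be absorbed as a symbol-level correction to the model operator.

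It then remains to prove the $L^4$ bound for the translation-invariant model with phase $\xi\cdot y+E_d(\xi)-\tilde{t}\delta' c_m q_{d,m}(\xi)$. Its associated cone $\{(\xi,\delta' c_m q_{d,m}(\xi))\}$ displays the same \emph{flatness by a factor of} $\delta'$ that motivated Theorem \ref{planetheorem2}: one principal curvature of the level curves $\{\delta' q_{d,m}=1\}$ is of order $\delta'$ while the other vanishes by homogeneity. I would therefore localise frequencies to $|\xi|\approx\lambda/\delta'$, perform the angular decomposition in the $\xi$-direction tangent to these level curves at the scale $(\delta'/\lambda)^{1/2}$ dictated by the $\delta'$-flattened cone, represent each angular sector as a plate of dimensions $\lambda^{-1}\times(\delta'\lambda)^{-1/2}\times 1$ in the physical variables $(y,t)$, and run the $TT^*$-plus-orthogonality argument of Section 3. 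The factor $\lambda^{1/8+\epsilon_1}(\delta')^{-(1/2+\epsilon_2)}$ arises exactly as there, from the one-plate estimate combined with the almost-orthogonal overlap count of the plate family.

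The main obstacle will be checking, uniformly in $\tilde{t}\in[2^{-2m},2^m]$ and $\delta$ small, the two geometric conditions that underlie the plate decomposition: non-degeneracy of the mixed Hessian $\partial^2_{y\xi}(\xi\cdot y+E_d(\xi))$ on the $\tilde{\chi}$-support, and a quantitative lower bound of size $\delta'$ on the non-trivial principal curvature of the cone $\{(\xi,\delta' c_m q_{d,m}(\xi))\}$ transverse to the rulings. Both hold by direct differentiation of the explicit fractional-power expressions for $E_d$ and $q_{d,m}$ on the conic set where $\xi_1/\xi_2$ stays in a compact subinterval of $(0,\infty)$; once they are in hand the angular decomposition and the Kakeya-type $L^4$ orthogonality step of Section 4 transfer line by line and yield the claimed estimate.
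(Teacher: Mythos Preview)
Your proposal is correct and matches the paper's approach: the paper's proof is the single line ``The proof is similar to the one in Section 4.'' You have correctly unpacked what that entails—replacing $E$ and $q_\delta$ by their finite-type analogues $E_d$ and $q_{d,m}$, verifying their homogeneity and rank-one Hessian properties on the conic support of $\tilde\chi$, and then rerunning the Section~3 model-operator argument together with the Section~4 almost-orthogonality/perturbation machinery.
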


The proof is similar to the one in Section 4.

\section[A translation-invariant model Fourier integral operator]{A translation-invariant model Fourier integral operator.}

In this section, we will prove Theorem \ref{planetheorem2}, which will be obtained in a similar way as the proof of $L^4$-boundedness of the following operator:
\begin{equation}
 \mathcal{F}_{\lambda}^{\delta}f(y,t)=\rho_1(y,t)\int_{{\mathbb{R}}^2}e^{i(y\cdot\xi+E(\xi)+t q_{\delta}(\xi))}a(\xi,t)\rho_0(\frac{\delta}{\lambda}|\xi|)\tilde{\chi}(\frac{\xi_1}{\xi_2})\hat{f}(\xi)d\xi,
\end{equation}
where $E(\xi)=\frac{\xi_1^2}{\xi_2}$, $q_{\delta}(\xi)=\delta\cdot \frac{\xi_1^3}{\xi_2^2}$, $\tilde{\chi}\in C_0^{\infty}([c_1,c_2])$ ($c_1$, $c_2$ are very small positive constants), $\rho_0\in C_0^{\infty}((-10,10))$ and $a$ is a symbol of order zero. So we need to prove that for all $\lambda >1/\delta$,
\begin{equation}
\|\mathcal{F}_{\lambda}^{\delta}f(y,t)\|_{L^4({\mathbb{R}}^3)}\leq C \lambda^{1/8+\epsilon_1}\delta^{-(1/2+\epsilon_2)}\|f\|_{L^4({\mathbb{R}}^2)}.
\end{equation}

The following approach follows the proof of Proposition 3 in \cite{mss2}, but we need various modifications.

We may assume that the $\xi$-support of the symbol $a$ is in the first quadrant. We rewrite
\begin{align*}
\mathcal{F}_{\lambda}^{\delta}f(y,t)&=\int_{\mathbb{R}}e^{it\tau} \mathcal{F}_{\lambda}^{\delta}f(y,\hat{\tau})d\tau\\
&=\int_{\mathbb{R}}\int_{{\mathbb{R}}^2}e^{i(y\cdot \xi+t\tau)}e^{iE(\xi)}(\rho_1(y,\cdot)a(\xi,\cdot))^{\wedge}(\tau-q_{\delta}(\xi))
\rho_0(\frac{\delta}{\lambda}|\xi|)\tilde{\chi}(\frac{\xi_1}{\xi_2})\hat{f}(\xi)d\xi d\tau.
\end{align*}

We can reduce our proof to the situation $|\tau-q_{\delta}(\xi)|\leq C_0\lambda$ for an appropriate constant $C_0$ which is small and  satisfies $q_{\delta}(\xi)-2C_0\lambda>0$. In fact, if  $|\tau-q_{\delta}(\xi)|\geq C_0\lambda$, then

$(i)$ if $\tau\gg q_{\delta}(\xi)\approx \lambda >1$, by $|\xi|\approx \lambda/\delta$, then
$$|\tau-q_{\delta}(\xi)|\geq C\tau\geq C (\lambda+|(\delta \xi,\tau)|);$$

$(ii)$ if $\tau \lesssim q_{\delta}(\xi)\approx \lambda >1$, by assumption, then
$$|\tau-q_{\delta}(\xi)|\geq C_0\lambda \geq C(\lambda+|(\delta \xi,\tau)|).$$

Define the  operator $\mathcal{L}=I-\triangle_{\xi,\tau}$, where $\triangle_{\xi,\tau}=\frac{{\partial}^2}{{\partial \xi_1^2}}+\frac{{\partial}^2}{{\partial \xi_2^2}}+\frac{{\partial}^2}{{\partial \tau^2}}$, then the above arguments of $(i)$ and $(ii)$, $\rho_1 \in C_0^{\infty}({\mathbb{R}}^2\times [1/2,4])$, together with the fact that  $a$ is a symbol of order zero, imply that for sufficiently large $N'$,
\begin{equation}
\left|\mathcal{L}^2\left(\rho_1(y,\cdot)a(\cdot,\xi))^{\wedge}(\tau-q_{\delta}(\xi)\right)\right|\leq C_{N'}(\lambda+|(\delta \xi,\tau)|)^{-{N'}}
\end{equation}
and
\begin{align*}
&\left|{\mathcal{L}}^2\left(e^{iE(\xi)}(\rho_1(y,\cdot)a(\cdot,\xi))^{\wedge}(\tau-q_{\delta}(\xi))
\rho_0(\frac{\delta}{\lambda}|\xi|)\tilde{\chi}(\frac{\xi_1}{\xi_2})\right)\right|\\
&\leq C_{N'}(\lambda+|(\delta \xi,\tau)|)^{-N'}\\
& \leq C_{N'}\lambda^{-N'/2}(1+|(\delta \xi,\tau)|)^{-N'/2}.
\end{align*}

Since ${\mathcal{L}}^2(e^{i((y-x)\cdot \xi+t\tau)})=(1+|(y-x,t)|^2)^2$, by integration by parts in $(\xi,\tau)$ and the assumption $|\tau-q_{\delta}(\xi)|\geq C_0\lambda$, the kernel $K(y,t;x)$ of the operator $\mathcal{F}_{\lambda}^{\delta}$ can be controlled by
\begin{align*}
|K(y,t;x)|&\leq C_{N'} \frac{1}{(1+|(y-x,t)|^2)^2}{\lambda}^{-N'/2}\int_{{\mathbb{R}}^3}(1+|(\delta \xi,\tau)|)^{-N'/2}d\xi d\tau\\
&\leq C_{N'} \frac{1}{(1+|(y-x,t)|^2)^2}\lambda^{-N'/2}{\delta}^{-2}.
\end{align*}

Since $\delta \lambda >1$, then for sufficiently large $N<N'$,
\begin{equation}\label{kernel}
|K(y,t;x)|\leq C_N \lambda^{-N}\frac{1}{(1+|(y-x,t)|^2)^2}.
\end{equation}

In the next steps, we will always assume that $|\tau-q_{\delta}(\xi)|\leq C_0\lambda$ for a small appropriate constant $C_0$, which shows that
\begin{equation}\label{asuumption}
\lambda\approx q_{\delta}(\xi)-C_0\lambda\leq \tau\leq q_{\delta}(\xi)+C_0\lambda\approx \lambda.
\end{equation}

Let $\varphi \in C_0^{\infty}(\mathbb{R})$ satisfy supp $\varphi \subset [-1,1]$ and  $\sum_{n\in \mathbb{Z}}\varphi^2(\cdot-n)=1$. In order to give a decomposition for $\tau$, the dual to $t$, we define the operator $P^n$ on functions in ${\mathbb{R}}^3$ by
\begin{equation}
(P^ng)^{\wedge}(\eta,\tau)=\varphi(\lambda^{-1/2}\tau-n)\widehat{g}(\eta,\tau).
\end{equation}

Similarly, we define $\widehat{f_n}(\xi)=\varphi((\lambda^{-1/2}q_{\delta}(\xi)-n)/10)\hat{f}(\xi)$.

\begin{lem}\label{planelemma1}
For $N>0$ and $1\leq p\leq \infty$,
\begin{equation}
\biggl\|\sum_n(P^n)^2\mathcal{F}_{\lambda}^{\delta}(f-f_n)\biggl\|_{L^p({\mathbb{R}}^3)}\leq C_N\lambda^{-N}\|f\|_{L^p({\mathbb{R}}^2)}.
\end{equation}
\end{lem}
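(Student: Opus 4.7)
The plan is to prove the lemma by obtaining a pointwise kernel estimate for each summand and then summing in $n$. Writing out
\begin{equation*}
K_n(y,t;x)=\int\!\!\int e^{i((y-x)\cdot\xi+t\tau)}e^{iE(\xi)}(\rho_1(y,\cdot)a(\cdot,\xi))^{\wedge}(\tau-q_\delta(\xi))\rho_0\tilde\chi\,\varphi^2(\lambda^{-1/2}\tau-n)\Bigl(1-\varphi\Bigl(\tfrac{\lambda^{-1/2}q_\delta(\xi)-n}{10}\Bigr)\Bigr)d\xi\,d\tau,
\end{equation*}
the goal is the pointwise bound $|K_n(y,t;x)|\leq C_N\lambda^{-N}(1+|(y-x,t)|)^{-3}$ for every $N$, after which a Schur-test style argument closes the case.

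The decisive input is the scale mismatch between the two cutoffs. The factor $\varphi^2(\lambda^{-1/2}\tau-n)$ confines $\tau$ to $|\tau-n\lambda^{1/2}|\leq\lambda^{1/2}$, while the factor of $10$ in the second cutoff ensures that on the support of $1-\varphi((\lambda^{-1/2}q_\delta(\xi)-n)/10)$ one has $|q_\delta(\xi)-n\lambda^{1/2}|\gtrsim\lambda^{1/2}$ (choosing $\varphi$ to equal $1$ on a small neighborhood of $0$, which is compatible with $\operatorname{supp}\varphi\subset[-1,1]$). Hence $|\tau-q_\delta(\xi)|\gtrsim\lambda^{1/2}$ on the joint support, and the Schwartz decay of the Fourier transform in the last slot of $\rho_1(y,\cdot)a(\cdot,\xi)$ produces
\begin{equation*}
\bigl|(\rho_1(y,\cdot)a(\cdot,\xi))^{\wedge}(\tau-q_\delta(\xi))\bigr|\leq C_M\lambda^{-M/2}
\end{equation*}
for every $M$, uniformly under finitely many differentiations in $(\xi,\tau)$.

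I would then carry out the same integration-by-parts argument used to derive~\eqref{kernel}, applying $\mathcal{L}=I-\Delta_{\xi,\tau}$ repeatedly and using $\mathcal{L}^K e^{i((y-x)\cdot\xi+t\tau)}=(1+|y-x|^2+t^2)^K e^{i((y-x)\cdot\xi+t\tau)}$ to transfer $2K$ powers of $(1+|(y-x,t)|)^{-1}$ onto the symbol. Derivatives hitting either cutoff cost at worst $\lambda^{-1/2}$; derivatives of $e^{iE(\xi)}$, $\rho_0(\delta|\xi|/\lambda)$, and $\tilde\chi(\xi_1/\xi_2)$ are benign because $\nabla_\xi E=O(1)$ and $\delta/\lambda\leq1$ on the conic support; and the integration volume is controlled by $(\lambda/\delta)^2\lambda^{1/2}\leq\lambda^{9/2}$ thanks to the hypothesis $\delta^{-1}\leq\lambda$. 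Choosing $M$ sufficiently large delivers the desired pointwise bound for each $K_n$. Since at most $O(\lambda^{1/2})$ values of $n$ contribute nontrivially (as $q_\delta\lesssim\lambda$ on the symbol support), the same bound survives summation in $n$ after adjusting $M$, and Schur's test concludes the $L^p$-bound for every $1\leq p\leq\infty$.

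The main obstacle is implementing the support separation cleanly, since $1-\varphi(\cdot/10)$ is not identically zero on the full transition annulus $|\lambda^{-1/2}q_\delta(\xi)-n|\leq10$ unless $\varphi$ is flat near the origin. The remedy is either to fix in advance a $\varphi$ with the ``flat top'' property $\varphi\equiv1$ on some $[-a,a]$ (which is compatible with the partition-of-unity condition $\sum_n\varphi^2(\cdot-n)=1$ after mild modification), or to absorb the borderline transition region into the quantitative Schwartz gain, exploiting that the uniform Schwartz tail $|(\rho_1a)^{\wedge}(\sigma)|\leq C_M(1+|\sigma|)^{-M}$ together with the relative smallness of the transition region in $\xi$-space still produces an arbitrary negative power of $\lambda$.
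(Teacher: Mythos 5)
Your proposal follows essentially the same route as the paper: the support mismatch between $\varphi^2(\lambda^{-1/2}\tau-n)$ and $1-\varphi((\lambda^{-1/2}q_{\delta}(\xi)-n)/10)$ forces $|\tau-q_{\delta}(\xi)|\gtrsim\lambda^{1/2}$, the Schwartz decay of $(\rho_1 a)^{\wedge}$ plus integration by parts with $\mathcal{L}=I-\triangle_{\xi,\tau}$ yields a kernel bound $C_N\lambda^{-N}(1+|(y-x,t)|^2)^{-2}$, and summing over the $\approx\lambda^{1/2}$ relevant values of $n$ and applying a Schur/Young argument gives the $L^p$ estimate, exactly as in the paper. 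Your explicit treatment of the flat-top choice of $\varphi$ (needed so that $1-\varphi(\cdot/10)\neq 0$ really forces $|\lambda^{-1/2}q_{\delta}(\xi)-n|\gtrsim 1$) is a point the paper leaves implicit, and your first remedy is the correct way to close it.
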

\begin{proof}
For fixed $n$, we have
\begin{align*}
(P^n)^2\mathcal{F}_{\lambda}^{\delta}(f-f_n)(y,t)
&=\int_{\mathbb{R}}\int_{{\mathbb{R}}^2}e^{i(y\cdot \xi+t\tau+E(\xi))}\varphi^2(\lambda^{-1/2}\tau-n)(\rho_1(y,\cdot)a(\xi,\cdot))^{\wedge}(\tau-q_{\delta}(\xi))
\\
& \quad \times\rho_0(\frac{\delta}{\lambda}|\xi|)\tilde{\chi}(\frac{\xi_1}{\xi_2})[1-\varphi((\lambda^{-1/2}q_{\delta}(\xi)-n)/10)]\hat{f}(\xi)d\xi d\tau.
\end{align*}

From the support of $\varphi$, we have $\lambda^{1/2}(n-1)\leq \tau\leq \lambda^{1/2}(n+1)$, together with (\ref{asuumption}), we get $n\approx \lambda^{1/2}$.
We observe that $q_{\delta}(\xi)\not\in[\lambda^{1/2}(n-10),\lambda^{1/2}(n+10)]$, then we get   $|\tau-q_{\delta}(\xi)|\gtrsim (\lambda+|(\delta\xi,\tau)|)^{1/2}$.
Finally, as in the estimate (\ref{kernel}), the kernel of $(P^n)^2\mathcal{F}_{\lambda}^{\delta}$ defined by
\begin{align*}
\int_{\mathbb{R}}\int_{{\mathbb{R}}^2}&e^{i[(y-x)\cdot \xi+t\tau+E(\xi)]}\varphi^2(\lambda^{-1/2}\tau-n)(\rho_1(y,\cdot)a(\xi,\cdot))^{\wedge}(\tau-q_{\delta}(\xi))
\rho_0(\frac{\delta}{\lambda}|\xi|)\tilde{\chi}(\frac{\xi_1}{\xi_2})\\
&\quad \times [1-\varphi((\lambda^{-1/2}q_{\delta}(\xi)-n)/10)]d\xi d\tau
\end{align*}
can be dominated by $\mathcal{O}(\lambda^{-N}(1+|(y-x,t)|^2)^{-2})$ and
\begin{equation*}
\biggl\|\sum_n(P^n)^2\mathcal{F}_{\lambda}^{\delta}(f-f_n)\biggl\|_{L^p({\mathbb{R}}^3)}\leq C_{N'}\sum_{n\approx \lambda^{1/2}}\lambda^{-N'}\|f\|_{L^p({\mathbb{R}}^2)}\leq C_N \lambda^{-N}\|f\|_{L^p({\mathbb{R}}^2)}.
\end{equation*}

\end{proof}


\begin{lem}\label{planelemma2}
For $2\leq p\leq \infty$, we have
\begin{equation*}
\biggl\|\sum_{n\approx \lambda^{1/2}}(P^n)^2\mathcal{F}_{\lambda}^{\delta}f_n\biggl\|_{L^p({\mathbb{R}}^3)}\leq C\lambda^{(1/4-1/(2p))}\biggl\|\left(\sum_{n\approx \lambda^{1/2}}|P^n\mathcal{F}_{\lambda}^{\delta}f_n|^2\right)^{1/2}\biggl\|_{L^p({\mathbb{R}}^3)}.
\end{equation*}
\end{lem}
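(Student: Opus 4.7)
The plan is to view this as a vector-valued inequality of the form
$$\biggl\|\sum_n P^n g_n\biggr\|_p \leq C\lambda^{1/4-1/(2p)}\biggl\|\Bigl(\sum_n|g_n|^2\Bigr)^{1/2}\biggr\|_p$$
with $g_n := P^n\mathcal{F}_\lambda^\delta f_n$, and to obtain it by interpolating the endpoints $p=2$ and $p=\infty$ on the $\ell^2$-valued Lebesgue spaces. Note first that only $M\lesssim\lambda^{1/2}$ indices contribute: since $q_\delta$ is homogeneous of degree one in $\xi$ and the symbol of $\mathcal{F}_\lambda^\delta$ forces $|\xi|\approx \lambda/\delta$, we have $q_\delta(\xi)\approx \lambda$, while both the cutoff $\varphi((\lambda^{-1/2}q_\delta(\xi)-n)/10)$ defining $f_n$ and the multiplier $\varphi(\lambda^{-1/2}\tau-n)$ defining $P^n$ enforce $n\approx \lambda^{1/2}$.

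For the $L^2$ endpoint, $\widehat{P^n g_n}(\xi,\tau)$ is supported in the slab $\tau\in[\lambda^{1/2}(n-1),\lambda^{1/2}(n+1)]$, and these slabs have bounded overlap in $n$. Hence by Plancherel together with the trivial $L^2$-contractivity of $P^n$,
$$\biggl\|\sum_n P^n g_n\biggr\|_2^2 \leq C\sum_n\|P^n g_n\|_2^2 \leq C\sum_n\|g_n\|_2^2 = C\biggl\|\Bigl(\sum_n|g_n|^2\Bigr)^{1/2}\biggr\|_2^2.$$

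For the $L^\infty$ endpoint, Cauchy--Schwarz in $n$ over the $M\lesssim \lambda^{1/2}$ active indices gives
$$\biggl|\sum_n P^n g_n(y,t)\biggr| \leq M^{1/2}\biggl(\sum_n|P^n g_n(y,t)|^2\biggr)^{1/2}\lesssim \lambda^{1/4}\biggl(\sum_n|P^n g_n(y,t)|^2\biggr)^{1/2}.$$
Writing $P^n$ as convolution in $t$ with $\kappa_n(s)=\lambda^{1/2}e^{i\lambda^{1/2}ns}\check{\varphi}(\lambda^{1/2}s)$, whose modulus is independent of $n$, I then apply Minkowski's inequality for the $\ell^2$-norm of a vector-valued integral:
$$\biggl(\sum_n|P^n g_n(y,t)|^2\biggr)^{1/2}\leq \int|\kappa_0(s)|\biggl(\sum_n|g_n(y,t-s)|^2\biggr)^{1/2}ds\leq C\biggl\|\Bigl(\sum_n|g_n|^2\Bigr)^{1/2}\biggr\|_\infty,$$
so that $\|\sum_n P^n g_n\|_\infty \leq C\lambda^{1/4}\|(\sum_n|g_n|^2)^{1/2}\|_\infty$. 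Complex interpolation on $\ell^2$-valued Lebesgue spaces with parameter $\theta=1-2/p$ then yields the intermediate bound with factor $\lambda^{(1/4)(1-2/p)}=\lambda^{1/4-1/(2p)}$, and substituting back $g_n = P^n\mathcal{F}_\lambda^\delta f_n$ gives the claim.

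The only delicate point is the Minkowski step in the $L^\infty$ estimate: a naive bound $|P^n g_n|\leq \|\kappa_n\|_1\sup_s|g_n(y,s)|$ applied term by term would lose another factor of $M^{1/2}$, and it is essential to keep the $\ell^2$-norm inside the convolution so that no extra power of $\lambda$ appears. Once this is done, everything else is routine: the counting $M\lesssim\lambda^{1/2}$ is automatic, the $L^2$ step is just frequency-disjointness, and the interpolation is the standard Riesz--Thorin argument for Banach-space-valued functions.
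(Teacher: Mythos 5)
Your proof is correct and is essentially the argument the paper intends, which it only cites from Mockenhaupt--Seeger--Sogge: an $L^2$ bound from the almost-disjoint $\tau$-supports of the $P^n$, an $L^\infty$ bound via Cauchy--Schwarz over the $\approx\lambda^{1/2}$ active indices combined with keeping the $\ell^2$ norm inside the $t$-convolution, and vector-valued Riesz--Thorin interpolation to get the factor $\lambda^{1/4-1/(2p)}$. No gaps; the delicate Minkowski step you flag is exactly the point that makes the $L^\infty$ endpoint lossless.
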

The idea can be found in \cite{mss2}.


So far we have used a radial decomposition of the Fourier integral operator $\mathcal{F}_{\lambda}^{\delta}$ with respect to frequency variables. Now we make a further angular decomposition. As we have introduced in Lemma \ref{angularde}, here we redefine the homogeneous partition of unity of ${\mathbb{R}}^2\setminus \{0\}$ that depends on the scale $\lambda/\delta$. Specially, for fixed $j>0$, we choose functions $\chi_{\nu}, \nu=0,1,2,\cdots$, and the size of every angle is $(\delta/\lambda)^{1/2}$. Meanwhile $\chi_{\nu}$ satisfies the following conditions:
\begin{equation}\label{I}
\sum_{\nu}\chi_{\nu}(\xi)=1 \hspace{0.2cm}\textmd{for} \hspace{0.2cm}\textmd{all }\hspace{0.2cm}\xi \neq 0,
\end{equation}
\begin{equation}\label{II}
|\partial_{\xi}^{\alpha}\chi_{\nu}|\leq A_{\alpha} (\lambda/\delta)^{|\alpha|/2}|\xi|^{-|\alpha|} \hspace{0.2cm}\textmd{for} \hspace{0.2cm}\textmd{all }\hspace{0.2cm}\xi \neq 0.
\end{equation}

Define $Q_{\nu}$ by
\begin{equation*}
(Q_{\nu}g)^{\wedge}(\eta,\tau)=\Psi_{\nu}^{\delta}(\eta,\tau)\widehat{g}(\eta,\tau)=\psi\biggl(\frac{\tau-q_{\delta}(\eta)}{|\tau/{\delta}|^{\epsilon}}\biggl)\chi_{\nu}(\eta')\widehat{g}(\eta,\tau),
\end{equation*}
where $\eta'=\frac{\eta}{|\eta|}$ and  $\psi\in C_0^{\infty}(\mathbb{R})$ is supported in $[-2,2]$ and equal to $1$ in $[-1,1]$. In fact, $(Q_{\nu}g)^{\wedge}(\eta,\tau)$ is supported in a thin sector intersected with a thin neighborhood of the cone $\{(\eta,q_{\delta}(\eta))\}$, see Figure 2.

Put
\begin{equation*}
\mathcal{F}_{\lambda,\nu}^{\delta}f(y,t)=\rho_1(y,t)\int_{{\mathbb{R}}^2}e^{i(y\cdot \xi+E(\xi)+tq_{\delta}(\xi))}
a(\xi,t)\tilde{\chi}(\frac{\xi_1}{\xi_2})\rho_0(\frac{\delta}{\lambda}|\xi|)\chi_{\nu}(\xi)\widehat{f}(\xi)d\xi.
\end{equation*}

\begin{lem}\label{planelemma3}
Given a fixed $\epsilon>0$ and $n\approx \lambda^{1/2}$,  we define
\begin{equation}
R_{\lambda,\nu}^{\delta}f=P^n(\mathcal{F}_{\lambda,\nu}^{\delta}f)-P^nQ_{\nu}(\mathcal{F}_{\lambda,\nu}^{\delta}f).
\end{equation}
For any $N>0$, there is a uniform constant $C_N$ so that
\begin{equation}
\|R_{\lambda,\nu}^{\delta}f_n\|_{L^p({\mathbb{R}}^3)}\leq C_N(\frac{\lambda}{\delta})^{-N}\|f\|_{L^p({\mathbb{R}}^2)}.
\end{equation}
\end{lem}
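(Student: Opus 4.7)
The plan is to realize $R_{\lambda,\nu}^{\delta}=P^n(I-Q_\nu)\mathcal{F}_{\lambda,\nu}^{\delta}$ as an operator with a rapidly decaying kernel, so that the $L^p$-bound becomes essentially trivial. Taking Fourier transform in $(y,t)$ and setting
\begin{equation*}
b(\eta,\tau,\xi):=\int\int e^{-iy\cdot(\eta-\xi)-it(\tau-q_\delta(\xi))}\rho_1(y,t)a(\xi,t)\,dy\,dt,
\end{equation*}
which is Schwartz in $(\eta-\xi,\tau-q_\delta(\xi))$ uniformly in $\xi$, one obtains
\begin{equation*}
\widehat{R_{\lambda,\nu}^{\delta}f}(\eta,\tau)=m_n(\eta,\tau)\int_{{\mathbb{R}}^2}e^{iE(\xi)}b(\eta,\tau,\xi)\chi_\nu(\xi)\rho_0(\tfrac{\delta}{\lambda}|\xi|)\tilde{\chi}(\tfrac{\xi_1}{\xi_2})\hat{f}(\xi)\,d\xi,
\end{equation*}
where $m_n(\eta,\tau):=\varphi(\lambda^{-1/2}\tau-n)\bigl[1-\psi\bigl(\tfrac{\tau-q_\delta(\eta)}{|\tau/\delta|^\epsilon}\bigr)\chi_\nu(\eta')\bigr]$.

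The crucial observation is that on $\operatorname{supp}(m_n)\cap\operatorname{supp}\chi_\nu(\xi)$, the amplitude $b(\eta,\tau,\xi)$ enjoys rapid decay of order $(\lambda/\delta)^{-N}$ for every $N$. Indeed, for $m_n$ to be nonzero at least one of the following must hold: (a) $|\tau-q_\delta(\eta)|\ge|\tau/\delta|^\epsilon\approx(\lambda/\delta)^\epsilon$, or (b) $\chi_\nu(\eta')<1$. In case (a), since $q_\delta$ is homogeneous of degree one with $|\nabla q_\delta|\lesssim 1$ on $\operatorname{supp}(\tilde{\chi}\rho_0)$, the triangle inequality gives $(\lambda/\delta)^\epsilon\lesssim|\tau-q_\delta(\xi)|+|\eta-\xi|$, so the Schwartz decay of $b$ supplies the bound. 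In case (b), since $\chi_\nu(\xi)\neq 0$ places $\xi$ in the angular sector $\Gamma_\nu$ of width $(\delta/\lambda)^{1/2}$, while $\eta'$ is angularly separated from the interior of $\Gamma_\nu$, combined with $|\xi|\sim\lambda/\delta$ one derives $|\eta-\xi|\gtrsim(\lambda/\delta)^{1/2}$, and the Schwartz decay of $b$ again applies. Writing the kernel
\begin{equation*}
K(y,t;x)=\int\int\int e^{i[y\cdot\eta+t\tau-x\cdot\xi+E(\xi)]}m_n(\eta,\tau)b(\eta,\tau,\xi)\chi_\nu(\xi)\rho_0\tilde{\chi}\,d\xi\,d\eta\,d\tau
\end{equation*}
and integrating by parts in $(\xi,\eta,\tau)$ with $(1-\Delta_\xi-\Delta_\eta-\partial_\tau^2)^M$, after a dyadic localization in $(\eta-\xi,\tau-q_\delta(\xi))$ to handle the Schwartz tails of $b$, yields
\begin{equation*}
|K(y,t;x)|\le C_{N,M}(\lambda/\delta)^{-N}(1+|y|+|t|+|x|)^{-M}
\end{equation*}
for arbitrary $M$.

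Choosing $M>3$ and applying Young's inequality then gives $\|R_{\lambda,\nu}^{\delta}f\|_{L^p}\le C_N(\lambda/\delta)^{-N}\|f\|_{L^p}$; applied to $f_n$ (which satisfies $\|f_n\|_{L^p}\lesssim\|f\|_{L^p}$ by a Mikhlin--H\"ormander multiplier estimate on the relevant $\xi$-support) this gives the claim. The main technical obstacle I anticipate is the geometric bookkeeping in establishing the symbol bound --- specifically, in case (b), verifying that the angular separation on the unit sphere really produces an $(\lambda/\delta)^{1/2}$ separation in $|\eta-\xi|$ despite $|\eta|$ being a priori unrelated to $|\xi|$ (one has to split into $|\eta|\sim\lambda/\delta$ versus $|\eta|\not\sim\lambda/\delta$ and treat each case separately), and correctly juggling the three scales $(\delta/\lambda)^{1/2}$, $(\lambda/\delta)^\epsilon$, and $\lambda/\delta$. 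Once these inequalities are firmly in place, the rest is a routine integration-by-parts exercise.
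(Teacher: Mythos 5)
Your core argument is the same as the paper's: write $R_{\lambda,\nu}^{\delta}f_n$ with the amplitude $(\rho_1 a(\xi,\cdot))^{\wedge}(\eta-\xi,\tau-q_{\delta}(\xi))$, observe that on the support of $\varphi(\lambda^{-1/2}\tau-n)\bigl(1-\Psi_{\nu}^{\delta}(\eta,\tau)\bigr)$ with $\xi\in\supp\chi_{\nu}$ either the angular separation forces $|\eta-\xi|\gtrsim(\lambda/\delta)^{1/2}$ (after splitting $|\eta|\gg|\xi|$ versus $|\eta|\lesssim|\xi|$) or the separation from the cone forces $|\tau-q_{\delta}(\xi)|$ to be at least a small power of $\lambda/\delta$ (via $|q_{\delta}(\xi)-q_{\delta}(\eta)|\approx\delta|\xi-\eta|$), and then trade the Schwartz decay of the amplitude against the $(\lambda/\delta)^{1/2}$-losses from differentiating $\chi_{\nu}$ by integration by parts. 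Where you diverge is the endgame: the paper integrates by parts only in $(\eta,\tau)$, obtains a pointwise bound $|R_{\lambda,\nu}^{\delta}f_n(y,t)|\lesssim(\lambda/\delta)^{-N}(1+|(y,t)|^2)^{-M}\|\hat f\|_{L^{\infty}}$ using a volume count ($\approx\lambda\delta^{-3/2}$) of the $\xi$-support of $\chi_{\nu}\varphi((\lambda^{-1/2}q_{\delta}(\xi)-n)/10)$, and then uses the compact support of $f$ (from the earlier localization to balls) to control $\|\hat f\|_{L^{\infty}}\lesssim\|f\|_{L^p}$; you instead also integrate by parts in $\xi$ to produce an $x$-decaying kernel and close with Young/Schur. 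Both routes work, and yours has the mild advantage of not needing the compact-support reduction at this point.

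The one step you should not leave as stated is $\|f_n\|_{L^p}\lesssim\|f\|_{L^p}$ ``by a Mikhlin--H\"ormander multiplier estimate'': the multiplier $\varphi\bigl((\lambda^{-1/2}q_{\delta}(\xi)-n)/10\bigr)$ is not uniformly of Mikhlin type, since on its effective support $|\xi|\approx\lambda/\delta$ one has $|\xi|\,|\nabla_{\xi}\varphi(\cdots)|\approx\lambda^{1/2}$, so the uniform constant is not available by that theorem. This is easily repaired in either of two ways: (i) keep the factor $\varphi\bigl((\lambda^{-1/2}q_{\delta}(\xi)-n)/10\bigr)$ inside the $\xi$-integrand of your kernel (its $\xi$-derivatives are $O(\delta\lambda^{-1/2})$, hence harmless for the integration by parts), so the estimate applies directly to $f$ rather than to $f_n$ --- this is in effect what the paper does; or (ii) accept a polynomial-in-$\lambda$ loss in the passage $f\mapsto f_n$, which is absorbed by the spare factor $(\lambda/\delta)^{-N}$. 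With that fix, and with the bookkeeping you already anticipate (number of spare decay powers large compared with the number of $\xi$-, $\eta$-derivatives, exactly as in the paper's choice $2<M\ll\mu N'$), your proof is sound.
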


\begin{proof}
\begin{align*}
R_{\lambda,\nu}^{\delta}f_n(y,t)&=\int_{{\mathbb{R}}^2}e^{iE(\xi)}\biggl[\int_{{\mathbb{R}}^2}\int_{\mathbb{R}}e^{i(y\cdot \eta+t\tau)}
(1-\Psi_{\nu}^{\delta}(\eta,\tau))\varphi(\lambda^{-1/2}\tau-n)\\
& \quad \times
(\rho_1\cdot a(\xi,\cdot))^{\wedge}(\eta-\xi,\tau-q_{\delta}(\xi))d\eta d\tau \biggl]
\rho_0(\frac{\delta}{\lambda}|\xi|)\tilde{\chi}(\frac{\xi_1}{\xi_2})\chi_{\nu}(\xi)\widehat{f_n}(\xi)d\xi.
\end{align*}

In order to estimate $|(\eta-\xi,\tau-q_{\delta}(\xi))|$, we split our consideration into different cases:

1. When $\eta' \not \in \hspace{0.1cm}\textrm{supp }\hspace{0.1cm} \chi_{\nu}$, but $\xi\in \hspace{0.1cm}\textrm{supp} \hspace{0.1cm}\chi_{\nu}$, then $|\eta-\xi|\geq C(\frac{\lambda}{\delta})^{1/2}$ and the following inequalities will hold true:

(1). if $|\eta|\gg|\xi|\approx \frac{\lambda}{\delta}$,
\begin{equation*}
|\eta-\xi|\geq C|\eta|\geq C(\frac{\lambda}{\delta}+|(\eta, \frac{\tau}{\delta})|),
\end{equation*}

(2). if $|\eta|\lesssim|\xi|\approx \frac{\lambda}{\delta}$,
\begin{equation*}
|\eta-\xi|\geq C(\frac{\lambda}{\delta})^{1/2}\geq C(\frac{\lambda}{\delta}+|(\eta, \frac{\tau}{\delta})|)^{1/2}.
\end{equation*}

2. When $\eta' \in \hspace{0.1cm}\textrm{supp} \hspace{0.1cm} \chi_{\nu}$, but $\frac{|\tau-q_{\delta}(\eta)|}{|\tau/\delta|^{\epsilon}}\geq 2$ and $\xi\in \hspace{0.1cm}\textrm{supp} \hspace{0.1cm} \chi_{\nu}$, then  the following inequalities will hold true:

(1). if $|\eta|\gg|\xi|\approx \frac{\lambda}{\delta}$,
\begin{equation*}
|\eta-\xi|\geq C|\eta|\geq C(\frac{\lambda}{\delta}+|(\eta, \frac{\tau}{\delta})|),
\end{equation*}

(2). if $|\eta|\lesssim|\xi|\approx \frac{\lambda}{\delta}$, for $\mu \ll \epsilon$, we consider two subcases:

a. when $|\eta-\xi|\geq C|\tau/\delta|^{\mu}$, we have
\begin{equation*}
|\eta-\xi|\geq C(\frac{\lambda}{\delta}+|(\eta, \frac{\tau}{\delta})|)^{\mu},
\end{equation*}

b. when $|\eta-\xi|\leq C|\tau/\delta|^{\mu}$, since
\begin{equation*}
|\tau-q_{\delta}(\xi)|\geq |\tau-q_{\delta}(\eta)|-|q_{\delta}(\xi)-q_{\delta}(\eta)|
\end{equation*}
and
\begin{equation*}
|q_{\delta}(\xi)-q_{\delta}(\eta)|\approx \delta|\xi-\eta|\leq C\delta |\frac{\tau}{\delta}|^{\mu},
\end{equation*}
then we have
\begin{equation*}
|\tau-q_{\delta}(\xi)|\geq 2|\frac{\tau}{\delta}|^{\epsilon}-C\delta|\frac{\tau}{\delta}|^{\mu}\approx (\frac{\lambda}{\delta}+|(\eta, \frac{\tau}{\delta})|)^{\epsilon}.
\end{equation*}

Combing all the cases, if we set $\mathcal{L}=I-\triangle_{\eta,\tau}$, for any $2 <M\ll \mu N'$, we obtain,
\begin{equation}\label{equ1:planelemma2}
\left|\mathcal{L}^M\left(\widehat{\rho_1}(\eta-\xi,\tau-q_{\delta}(\xi))\right)\right|\leq C_{N'} (\frac{\lambda}{\delta}+|(\eta, \frac{\tau}{\delta})|)^{-\mu N'}.
\end{equation}

The estimate (\ref{II}) implies that $|\chi_{\nu}(\eta')|\leq (\lambda/\delta)^{1/2}$, in addition to (\ref{equ1:planelemma2}), we get
\begin{equation}
\mathcal{L}^M\biggl((1-\Psi_{\nu}^{\delta}(\eta,\tau))\varphi(\lambda^{-1/2}\tau-n)
\widehat{\rho_1}(\eta-\xi,\tau-q_{\delta}(\xi))\biggl)\leq C_{M,N'} (\frac{\lambda}{\delta})^{M/2}(\frac{\lambda}{\delta}+|(\eta, \frac{\tau}{\delta})|)^{-\mu N'}.
\end{equation}

By integration by parts for $(\eta,\tau)$, we have
\begin{align*}
&|R_{\lambda,\nu}^{\delta}f_n(y,t)|\\
&\leq \int_{{\mathbb{R}}^2}\left|\int_{{\mathbb{R}}^2}\int_{\mathbb{R}}e^{i(y\cdot \eta+t\tau)}
(1-\Psi_{\nu}^{\delta}(\eta,\tau))\varphi(\lambda^{-1/2}\tau-n)
(\rho_1\cdot a(\xi,\cdot))^{\wedge}(\eta-\xi,\tau-q_{\delta}(\xi))d\eta d\tau\right|\\
& \quad \times
\left|\rho_0(\frac{\delta}{\lambda}|\xi|)\tilde{\chi}(\frac{\xi_1}{\xi_2})\chi_{\nu}(\xi)\widehat{f_n}(\xi)\right|d\xi\\
&\leq \int_{{\mathbb{R}}^2}\int_{{\mathbb{R}}^2}\int_{\mathbb{R}}\left|{\mathcal{L}}^M\biggl(
(1-\Psi_{\nu}^{\delta}(\eta,\tau))
\varphi(\lambda^{-1/2}\tau-n)(\rho_1\cdot a(\xi,\cdot))^{\wedge}(\eta-\xi,\tau-q_{\delta}(\xi))\biggl)\right|d\eta d\tau\\
& \quad \times \frac{1}{(1+|(y,t)|^2)^M}
\left|\rho_0(\frac{\delta}{\lambda}|\xi|)\tilde{\chi}(\frac{\xi_1}{\xi_2})\chi_{\nu}(\xi)\widehat{f_n}(\xi)\right|d\xi\\
&\leq \frac{C_{M,N'}}{(1+|(y,t)|^2)^M}\int_{{\mathbb{R}}^2}\int_{{\mathbb{R}}^2}\int_{\mathbb{R}}(\frac{\lambda}{\delta})^{M}(\frac{\lambda}{\delta}+|(\eta, \frac{\tau}{\delta})|)^{-\mu N'}d\eta d\tau\left|\rho_0(\frac{\delta}{\lambda}|\xi|)
\tilde{\chi}(\frac{\xi_1}{\xi_2})\chi_{\nu}(\xi)\widehat{f_n}(\xi)\right|d\xi\\
&\leq C_{M,N'} (\frac{\lambda}{\delta})^{M-\mu N'/2}\frac{\delta }{(1+|(y,t)|^2)^M}\int_{{\mathbb{R}}^2}\left|\rho_0(\frac{\delta}{\lambda}|\xi|)
\tilde{\chi}(\frac{\xi_1}{\xi_2})\chi_{\nu}(\xi)\varphi(\frac{{\lambda}^{-1/2}q_{\delta}(\xi)-n}{10})\right|d\xi\|\widehat{f}\|_{L^{\infty}}\\
&\leq C_{M,N'}\delta \cdot \frac{\lambda}{{\delta}^{3/2}} (\frac{\lambda}{\delta})^{M-\mu N'/2}\frac{1}{(1+|(y,t)|^2)^M}\|\widehat{f}\|_{L^{\infty}}.\\
\end{align*}
The last inequality follows from the volume of set $\{\xi: \rho_0(\frac{\delta}{\lambda}|\xi|)
\tilde{\chi}(\frac{\xi_1}{\xi_2})\chi_{\nu}(\xi)\neq 0 \}$.

Since we have supposed that $f$ is supported in a fixed compact set, then there exists a sufficiently large $C_{M,N',p}$ so that the following inequality holds true,
\begin{align*}
|R_{\lambda,\nu}^{\delta}f_n(y,t)|
\leq C_{M,N',p}(\frac{\lambda}{\delta})^{-N}\frac{1}{(1+|(y,t)|^2)^M}\|f\|_{L^p}.
\end{align*}
We have finished the proof of Lemma \ref{planelemma3}.
\end{proof}

Until now, since $\sum_n(P^n)^2$ is the identity, applying Lemma \ref{planelemma1},  then we get
\begin{align*}
\|\mathcal{F}_{\lambda}^{\delta}f\|_{L^4}&=\biggl\|\sum_n(P^n)^2\mathcal{F}_{\lambda}^{\delta}f\biggl\|_{L^4}\\
& \leq \|\sum_n(P^n)^2\mathcal{F}_{\lambda}^{\delta}f_n\|_{L^4}+\biggl\|\sum_n(P^n)^2\mathcal{F}_{\lambda}^{\delta}(f-f_n)\biggl\|_{L^4}\\
& \leq \biggl\|\sum_n(P^n)^2\mathcal{F}_{\lambda}^{\delta}f_n\biggl\|_{L^4}+C_N\lambda^{-N}\|f\|_{L^4}.
\end{align*}

It is more convenient to consider a related square function than to estimate the sum of the main term in the right-hand side directly, so by Lemma \ref{planelemma2} and Lemma \ref{planelemma3}, we obtain
\begin{align*}
\biggl\|\sum_n(P^n)^2\mathcal{F}_{\lambda}^{\delta}f_n\biggl\|_{L^4}&\leq C\lambda^{1/8}\biggl\|\left(\sum_n|P^n\mathcal{F}_{\lambda}^{\delta}f_n|^2\right)^{1/2}\biggl\|_{L^4}\\
& \leq C\lambda^{1/8}\biggl\|\left(\sum_n|P^n\sum_{\nu}Q_{\nu}
(\mathcal{F}_{\lambda,\nu}^{\delta}f_n)|^2\right)^{1/2}\biggl\|_{L^4}+C_N\lambda^{-N}\|f\|_{L^4}
\end{align*}

To estimate the main term in the last expression we notice that, for each fixed $n\approx \lambda^{1/2}$ and  $\nu$, the Fourier transform of $P^nQ_{\nu}g$ is supported in the following region (see Figure 2),
\begin{equation*}
\mathcal{U}_{\nu}^{n}=\{(\eta,\tau)\in \mathbb{R}^3:(\eta,\tau)\in \hspace{0.1cm}\textrm{supp} \hspace{0.1cm} \Psi_{\nu}^{\delta}(\eta,\tau), |\lambda^{-1/2}\tau-n|\leq 1\}.
\end{equation*}

We need an estimate of the number of overlaps of algebraic sums of the sets $\mathcal{U}_{\nu}^{n}$, $\mathcal{U}_{\nu'}^{n'}$, for fixed $n, n'$ corresponding to indices $\nu$ for which $\xi_{\nu}$ lie in the first quadrant.

\begin{center}
\includegraphics[height=10.5cm]{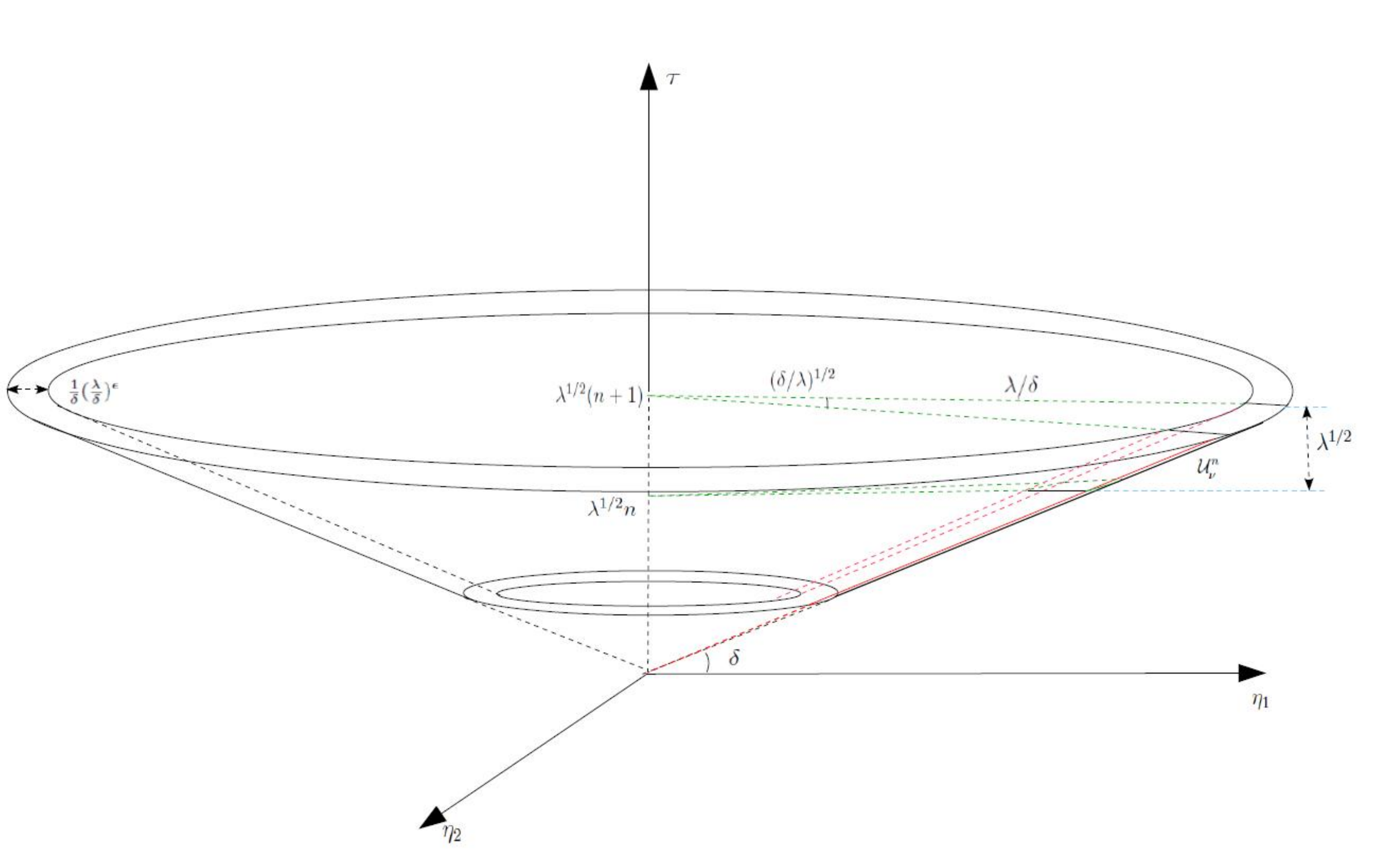}
\end{center}

\begin{center}
Figure 2: A thin neighborhood of the cone $\tau=\delta |\eta|$ and $\mathcal{U}_{\nu}^{n}$\\
( The figure of the cone $\tau=q_{\delta}(\eta)$ is similar to a sector of the cone $\tau=\delta |\eta|$. )
\end{center}


\begin{lem}\label{coverlemma1}
For fixed $n$, $n'\in \mathbb{Z}$, there is a constant $C$, independent of $n$, $n'$, $\lambda$ and $\delta$, such that
\begin{equation}
\sum_{\nu,\nu'}\chi_{\mathcal{U}_{\nu}^{n}+\mathcal{U}_{\nu'}^{n'}}(\eta,\tau)\leq C(\frac{\lambda}{\delta})^{2\epsilon}\log_2(\frac{\lambda}{\delta})^{1/2}.
\end{equation}
\end{lem}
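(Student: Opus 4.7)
The plan is to adapt the counting argument from Proposition 3 of \cite{mss2}, accounting for the fact that the cone $\{(\eta,q_{\delta}(\eta))\}$ is ``flatter'' than the light cone by a factor of $\delta$. The natural frequency scale in our setting is $\lambda/\delta$ rather than $\lambda$, and the angular cap width $\theta:=(\delta/\lambda)^{1/2}$ appearing in the decomposition of Lemma \ref{angularde} has been chosen precisely so that the cone looks, in the corresponding rescaled coordinates, exactly like the light cone at frequency $\lambda/\delta$. Once this rescaling is understood, the MSS strategy goes through with $\lambda$ replaced by $\lambda/\delta$ throughout.

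First I would fix $(\eta,\tau)$ and, for each pair $(\nu,\nu')$ for which $(\eta,\tau)\in\mathcal{U}_\nu^n+\mathcal{U}_{\nu'}^{n'}$, choose a decomposition $(\eta,\tau)=(\eta^{(1)}+\eta^{(2)},\tau^{(1)}+\tau^{(2)})$ with $(\eta^{(i)},\tau^{(i)})\in\mathcal{U}_{\nu_i}^{n_i}$, where $n_1=n$, $n_2=n'$. Writing $\omega^{(i)}:=\eta^{(i)}/|\eta^{(i)}|$, the defining conditions of $\mathcal{U}_{\nu_i}^{n_i}$ force $\omega^{(i)}$ to lie in the $\nu_i$-th cap of angular width $\theta$, $|\eta^{(i)}|\approx \lambda/\delta$, $|\tau^{(i)}-n_i\lambda^{1/2}|\le \lambda^{1/2}$, and $|\tau^{(i)}-q_{\delta}(\eta^{(i)})|\le 2(\lambda/\delta)^{\epsilon}$. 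Thus the problem reduces to counting, for fixed $(\eta,\tau)$, the admissible pairs $(\omega^{(1)},\omega^{(2)})$ modulo cap refinement.

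Next, I would introduce a dyadic decomposition on the angular separation: set $|\omega^{(1)}-\omega^{(2)}|\approx 2^{-k}$ for $k=0,1,\ldots,K$ with $K:=\lceil\log_2(\lambda/\delta)^{1/2}\rceil$, the case $|\omega^{(1)}-\omega^{(2)}|\lesssim\theta$ contributing only $O(1)$ pairs. At each dyadic scale I would fix $\nu_1$ and estimate the number of admissible $\nu_2$. The key geometric input is that the restricted slice $\{\tilde\chi(\xi_1/\xi_2)\neq 0\}$ of the level curve of $q_{\delta}$ on the unit circle has nonvanishing curvature (of order $\delta$), so that after factoring out the degree-one homogeneity of $q_{\delta}$, the map
\begin{equation*}
(\eta^{(1)},\eta^{(2)})\longmapsto\bigl(\eta^{(1)}+\eta^{(2)},\,q_{\delta}(\eta^{(1)})+q_{\delta}(\eta^{(2)})\bigr)
\end{equation*}
has transverse Jacobian comparable to $\delta\cdot 2^{-k}\cdot(\lambda/\delta)^{2}$, matching the MSS estimate after the $\lambda\mapsto\lambda/\delta$ rescaling. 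Combined with the $(\lambda/\delta)^{\epsilon}$ thickening in each of the two factors, this yields at most $O((\lambda/\delta)^{2\epsilon})$ admissible pairs per scale, uniformly in $k$. Summing over the $K\approx\log_2(\lambda/\delta)^{1/2}$ dyadic scales gives the claimed bound.

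The main obstacle is the uniform-in-$k$ step: one must verify that the transverse Jacobian behaves as claimed for the ``flatter'' cone $q_{\delta}=\delta\xi_1^{3}/\xi_2^{2}$, without losing extra negative powers of $\delta$. This is exactly where the cubic-over-quadratic structure of $q_{\delta}$ and the restriction $\xi_1/\xi_2\in[c_1,c_2]$ come in: they guarantee that in the natural rescaled coordinates the curvature of the level curve of $q_{\delta}$ on the support of $\tilde\chi$ is of order one, so that the MSS transversality bookkeeping carries over verbatim with $\lambda$ replaced by $\lambda/\delta$, producing precisely the factor $(\lambda/\delta)^{2\epsilon}$ per scale rather than something larger. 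Once this is settled, the log factor is automatic from the dyadic sum.
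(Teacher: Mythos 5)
Your overall architecture is the same as the paper's: reduce to a pointwise overlap count for fixed $(\eta,\tau)$, extract the factor $(\frac{\lambda}{\delta})^{2\epsilon}$ from the $\psi$-thickening of the two factors, decompose dyadically in the angular separation over $\approx\log_2(\frac{\lambda}{\delta})^{1/2}$ scales, and prove a per-scale bounded count using the curvature of the cross-section of $\{(\eta,q_{\delta}(\eta))\}$. The difference is in how the per-scale count is established: the paper passes to the unit-thickness plates $\Lambda_{\nu}^{n}$, describes the sumset $\Lambda_{\nu}^{n}+\Lambda_{\nu'}^{n'}$ at separation $|\nu-\nu'|\approx 2^{\ell}$ explicitly as a box of size $2^{\ell}\lambda^{-1}\times(\delta\lambda)^{-1/2}\times(\delta\lambda^{1/2})^{-1}$ (the geometric input being the Figure~5 computation that the normal to $\{q_{\delta}(\xi)=1\}$ turns by $\approx 2^{\ell}(\frac{\delta}{\lambda})^{1/2}$ between caps $2^{\ell}$ apart, because the radius of curvature is $\approx 1/\delta$ at $|\xi|\approx1/\delta$), and then bounds the maximal multiplicity by comparing the total volume of these sumsets with the volume of the high-multiplicity region $\Omega$. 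You instead assert the per-scale bound from a transversality statement for the addition map.

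This is where there is a genuine gap. The transverse Jacobian of $(\eta^{(1)},\eta^{(2)})\mapsto(\eta^{(1)}+\eta^{(2)},q_{\delta}(\eta^{(1)})+q_{\delta}(\eta^{(2)}))$ is, up to the polar-coordinate factor $r_1r_2$, just $|\nabla q_{\delta}(\eta^{(1)})-\nabla q_{\delta}(\eta^{(2)})|\approx\delta\cdot 2^{-k}$; it encodes only the thickened constraint on the sum $q_{\delta}(\eta^{(1)})+q_{\delta}(\eta^{(2)})$ and is blind to the slab localizations $|\lambda^{-1/2}\tau^{(i)}-n_i|\le 1$ (equivalently $q_{\delta}(\eta^{(i)})$ confined to windows of length $\approx\lambda^{1/2}$). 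But the per-scale bound is false without those slabs: the fiber of the addition map over $(\eta,\tau)$ is a curve along which the pair can slide through many angular caps while staying inside the $(\frac{\lambda}{\delta})^{\epsilon}$-neighborhood of the cone, and at the smallest separations one can hit on the order of $(\frac{\lambda}{\delta})^{\epsilon}\delta^{-1}$ pairs rather than $(\frac{\lambda}{\delta})^{2\epsilon}$. So a coarea/Jacobian bound plus thickening cannot by itself yield the cap-pair count "uniformly in $k$"; you must bring the $n,n'$ constraints (which you list in the setup but never use in the counting mechanism) into the transversality bookkeeping — this is exactly what the paper's plate description and volume-averaging argument for (\ref{coverlemma3}) accomplish, and it is also where the degeneration of the cinematic curvature by the factor $\delta$ has to be tracked: the saving grace, which your sketch does not isolate, is that although $|\nabla q_{\delta}|\approx\delta$, the \emph{direction} of $\nabla q_{\delta}$ still turns at unit rate in the angular variable, while the $\delta^{-1}$ losses are absorbed into the longer generator side $\frac{\lambda^{1/2}}{\delta}$ of the plates. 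Relatedly, the opening claim that after rescaling the cone looks "exactly like the light cone at frequency $\lambda/\delta$" is not accurate (the cross-sectional curvature at that frequency is $\delta$, not $1$), so the MSS counting cannot be quoted verbatim; the per-scale step has to be reproved for $q_{\delta}$, as the paper does.
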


\begin{proof}
First, we split every $\mathcal{U}_{\nu}^{n}$ into $(\frac{\lambda}{\delta})^{\epsilon}$ pieces along the normal direction to the cone $\tau=q_{\delta}(\eta)$ (vertical cross-section of $\mathcal{U}_{\nu}^{n}$ will be shown in Figure 3 (1)), then it suffices to prove that
\begin{equation}\label{coverlemma1'}
\sum_{\nu,\nu'}\chi_{\mathcal{\tilde{U}}_{\nu}^{n}+\mathcal{\tilde{U}}_{\nu'}^{n'}}(\eta,\tau)\leq C\log_2(\frac{\lambda}{\delta})^{1/2},
\end{equation}
where $\mathcal{\tilde{U}}_{\nu}^{n}$ is comparable to a rectangle tangential to the cone  of dimension $1\times \frac{\lambda^{1/2}}{\delta}\times(\frac{\lambda}{\delta})^{1/2}$ ( vertical cross-section of $\mathcal{\tilde{U}}_{\nu}^{n}$ and the comparable rectangle will be shown in Figure 3 (2) and the left-hand one of Figure 4).

\begin{center}
\includegraphics[height=7cm]{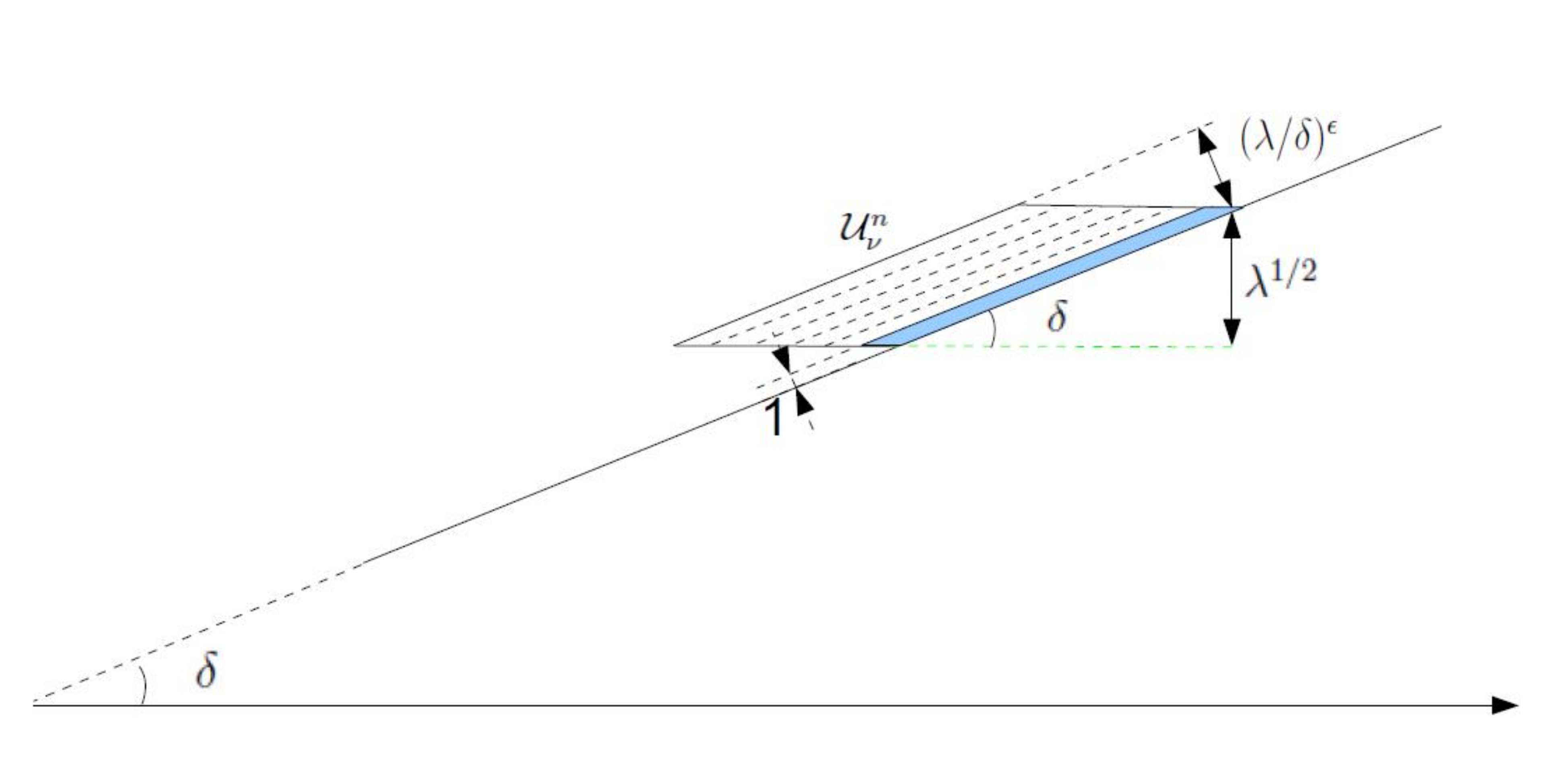}
\end{center}

\begin{center}
Figure 3 (1): Vertical cross-section of $\mathcal{U}_{\nu}^{n}$
\end{center}

\begin{center}
\includegraphics[height=7cm]{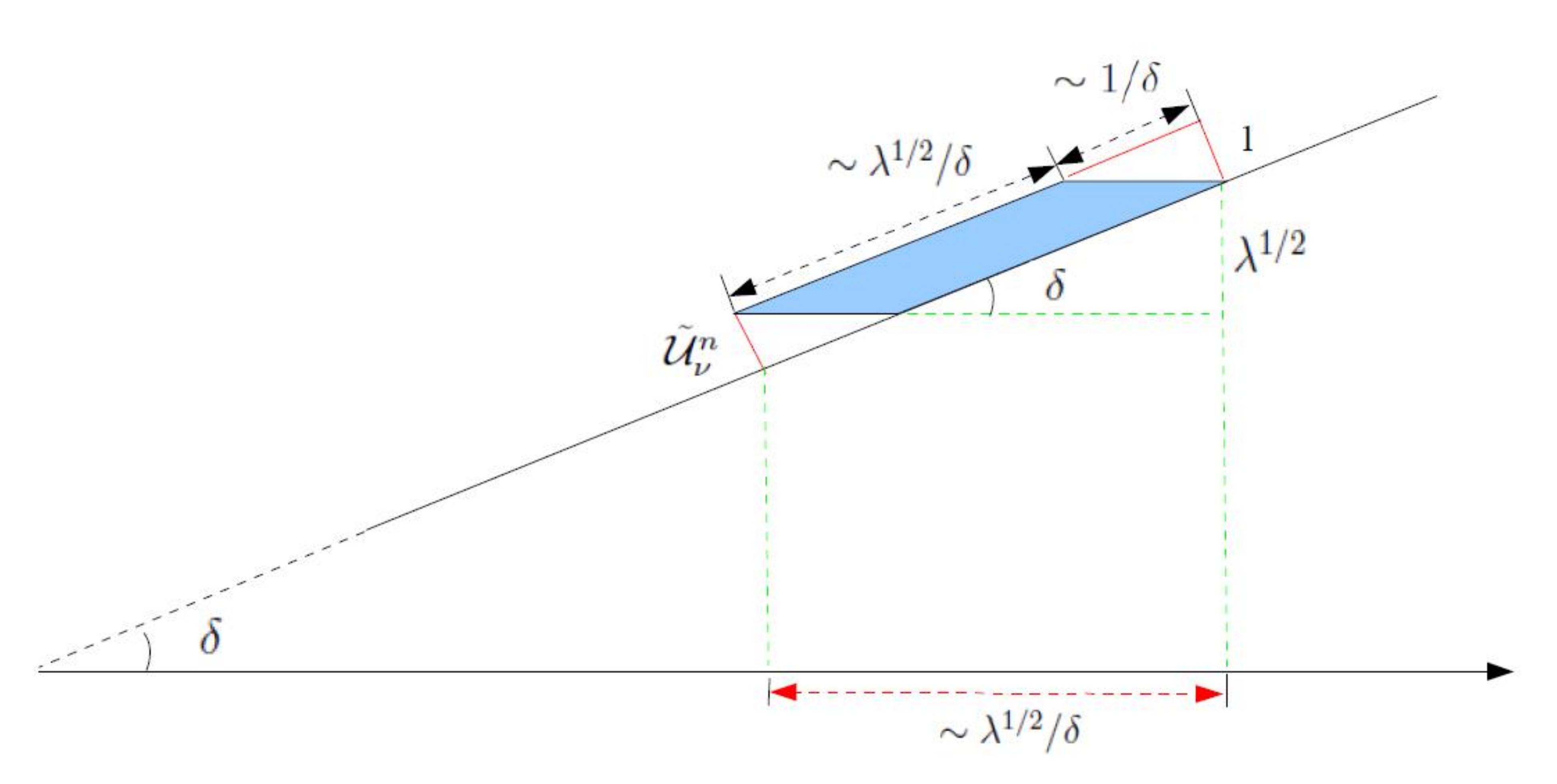}
\end{center}

\begin{center}
Figure 3 (2): Vertical cross-section of $\mathcal{\tilde{U}}_{\nu}^{n}$ and the comparable rectangle
\end{center}

Furthermore, here we use similar arguments as in \cite{mss} to prove a scaled version by $1/\lambda$ of inequality (\ref{coverlemma1'}).

To this end we define $\Gamma_{\nu}=\{\xi\in \mathbb{R}^2\backslash 0:\sqrt{\frac{\delta}{\lambda}}\nu\leq|\frac{\xi_2}{\xi_1}|\leq \sqrt{\frac{\delta}{\lambda}}(\nu+1)$
 for $\nu =(\frac{\pi}{2}-c_2)\sqrt{\frac{\lambda}{\delta}}, \cdots, (\frac{\pi}{2}-c_1)\sqrt{\frac{\lambda}{\delta}}$ and here non-zero positive numbers $c_1, c_2$ with $c_1<c_2$
 depend on the support of $\tilde{\chi}$. Next, for $n=0,1,\cdots, \lambda^{1/2}$, let $\Lambda_{\nu}^n$ be the set of all $(\eta,\tau)$
 such that dist$((\eta,\tau),(\xi,q_{\delta}(\xi)))\leq \lambda^{-1}$ for some $\xi \in \Gamma_{\nu}$
 with $q_{\delta}(\xi)\in [(\frac{\pi}{2}-c_1)^{-2}+\frac{n}{\sqrt{\lambda}}, (\frac{\pi}{2}-c_1)^{-2}+\frac{(n+1)}{\sqrt{\lambda}}]$. Thus $\Lambda_{\nu}^n$
 is basically a $\lambda^{-1}\times \frac{1}{\delta\sqrt{\lambda}}\times\frac{1}{\sqrt{\delta\lambda}}$ rectangle (see the right one of Figure 4)
 lying on $\{(\xi,q_{\delta}(\xi)):q_{\delta}(\xi)\in [(\frac{\pi}{2}-c_1)^{-2},(\frac{\pi}{2}-c_2)^{-2}]\}$. Its shortest side points in the normal direction to the cone.

\begin{center}
\includegraphics[height=6cm]{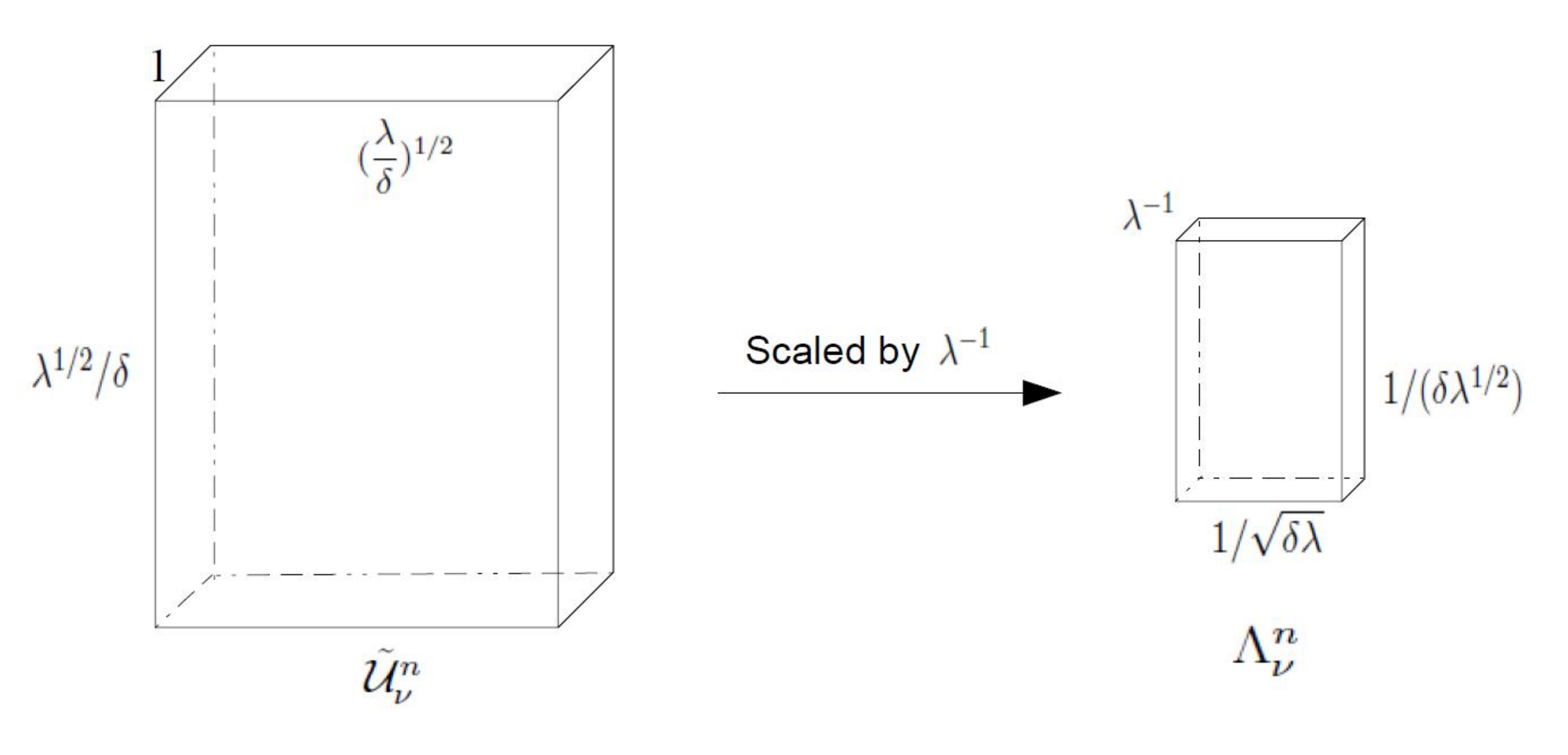}
\end{center}

\begin{center}
Figure 4
\end{center}

The scaled version of the overlap lemma is that there is a constant $C$ independent of $n$, $n'$, $\lambda$ and $\delta$ such that
\begin{equation}\label{coverlemma2}
\sum_{\nu,\nu'}\chi_{\Lambda_{\nu}^{n}+\Lambda_{\nu'}^{n'}}(\eta,\tau)\leq C\log_2(\frac{\lambda}{\delta})^{1/2}.
\end{equation}

However, inequality (\ref{coverlemma2}) follows from
\begin{equation}\label{coverlemma3}
\sum_{|\nu-\nu'|\thickapprox 2^{\ell}}\chi_{\Lambda_{\nu}^{n}+\Lambda_{\nu'}^{n'}}(\eta,\tau)\leq C,\hspace{0.2cm}0\leq \ell\leq \log_2(\frac{\lambda}{\delta})^{1/2}.
\end{equation}

To deduce inequality (\ref{coverlemma3}), we give a simple argument to show that for $\bar{\xi}\in \Gamma_{\nu}$ and $\tilde{\xi}\in \Gamma_{\nu'}$, the angle between the normals to the cone at $(\bar{\xi}, q_{\delta}(\bar{\xi}))$ and $(\tilde{\xi},q_{\delta}(\tilde{\xi}))$ is $\approx 2^{\ell}(\frac{\delta}{\lambda})^{1/2}$ if $|\nu-\nu'|\approx 2^{\ell}$.  It will follow from the claim that $dist(P,A)\approx dist(P,D)\approx 1/\delta$ (See Figure 5 for $q_{\delta}(\xi)=1$). Next, we will first prove this claim.

In Figure 5,  $AE$ and $DE$ are the tangent line of the curve $q_{\delta}(\xi)= 1$ at point $(\bar{\xi}_1, 0)$, $(\tilde{\xi}_1, 0)$, where $\bar{\xi}_1,\tilde{\xi}_1 \approx 1/\delta$. $PA$ and $PD$ are perpendicular respectively $AE$ and $DE$.

From the expression of the curve $q_{\delta}(\xi)=1$, it is easy to know that
$\frac{\pi}{2}-c_2\leq \tan  \angle AOB\approx\tan\angle B, \tan\angle C\approx\tan \angle DOB \leq \frac{\pi}{2}-c_1$.

Since $\tan (\angle DOB-\angle AOB)\approx 2^{\ell}(\frac{\delta}{\lambda})^{1/2}$, then
\begin{align*}
 &\tan \angle C-\tan\angle B\approx \delta^{1/2}(\tilde{\xi}_1^{1/2}-\bar{\xi}_1^{1/2})\approx \tan \angle DOB-\tan \angle AOB\approx 2^{\ell}(\frac{\delta}{\lambda})^{1/2},
\end{align*}
which implies $\tan(\angle C-\angle B)\approx 2^{\ell}(\frac{\delta}{\lambda})^{1/2}.$
As a result of $\angle APD+\angle AED=\angle DEI+\angle AED=180^\circ$, then $\angle APD=\angle DEI=\angle C-\angle B\approx 2^{\ell}(\frac{\delta}{\lambda})^{1/2}$.
\begin{center}
\includegraphics[height=14cm]{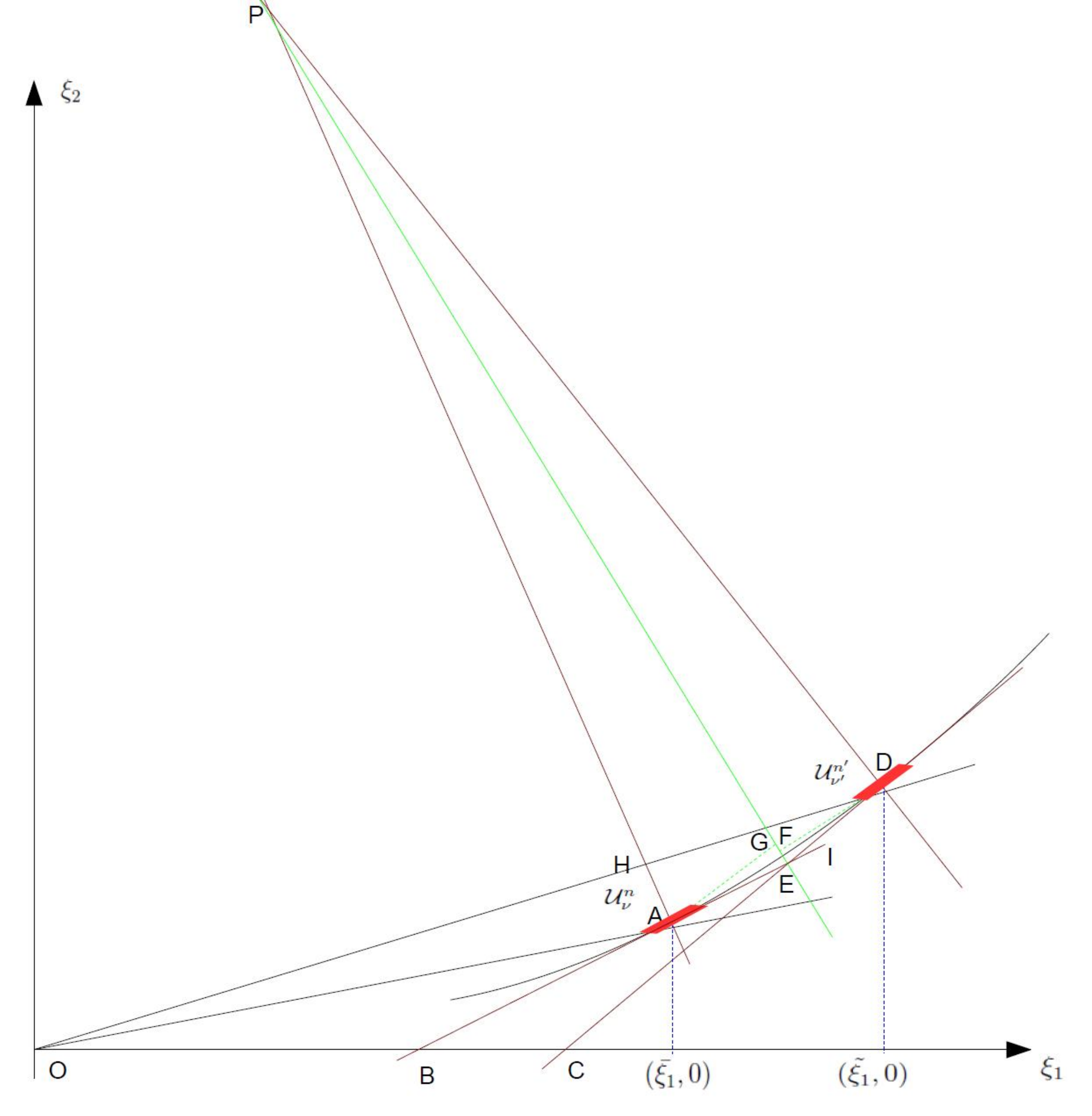}
\end{center}

\begin{center}
Figure 5: Curve $q_{\delta}(\xi)=1$
\end{center}

Since $q_{\delta}(\xi)=1$, then $\delta^{-1/2}\frac{\xi_2}{\xi_1}=\xi_1^{1/2}$, and the slope of the line $PA$ (the line $PD$) is equal to $\frac{3}{2}\delta^{1/2}\bar{\xi}_1^{1/2}$ ($\frac{3}{2}\delta^{1/2}\tilde{\xi}_1^{1/2}$). So
$\tilde{\xi}_1^{1/2}-\bar{\xi}_1^{1/2}=\delta^{-1/2}(\tan\angle DOB-\tan\angle AOB)\approx \delta^{-1/2}2^{\ell}(\frac{\delta}{\lambda})^{1/2}= 2^{\ell}/\lambda^{1/2}$.
Then the line $PA$ can be written as
\begin{equation*}
\xi_2= -\frac{\xi_1}{\frac{3}{2}\delta^{1/2}\bar{\xi}_1^{1/2}}+\delta^{1/2}\bar{\xi}_1^{3/2}+\frac{2}{3}\delta^{-1/2}\bar{\xi}_1^{1/2}
\end{equation*}
and the line $PD$ is
\begin{equation*}
\xi_2= -\frac{\xi_1}{\frac{3}{2}\delta^{1/2}\tilde{\xi}_1^{1/2}}+\delta^{1/2}\tilde{\xi}_1^{3/2}+\frac{2}{3}\delta^{-1/2}\tilde{\xi}_1^{1/2}.
\end{equation*}
It's easy to compute the point $P\approx (-\frac{1}{\delta},\frac{1}{\delta})$. The claim was proved.

Since $AG\bot PG$, $DF\bot PF$, $PA\bot AE$ and $PD\bot DE$, then $\angle GAE=\angle APG\approx 2^{\ell}(\frac{\delta}{\lambda})^{1/2}$ and $\angle EDF=\angle DPF\approx 2^{\ell}(\frac{\delta}{\lambda})^{1/2}$.

The projection on $PG$ of the width of $\mathcal{U}_{\nu}^{n}$ is equal to $\sin\angle GAE \times \frac{1}{\sqrt{\delta\lambda}}\approx \frac{2^{\ell}}{\lambda}\geq{\lambda}^{-1}$. For $\nu$, $\nu'$ with $|\nu-\nu'|\approx 2^{\ell}$,  $\Lambda_{\nu}^n+\Lambda_{\nu'}^{n'}$ is comparable to a rectangle of size $2^\ell\lambda^{-1}\times \frac{1}{\sqrt{\delta\lambda}}\times 1/(\delta\lambda^{1/2})$. In other words, $vol(\Lambda_{\nu}^n+\Lambda_{\nu'}^{n'})\approx 2^{\ell}\delta^{-1/2}\lambda^{-2}$.

Next we define
\begin{equation*}
\Omega=\{(\eta,\tau):\sum_{|\nu-\nu'|\thickapprox 2^{\ell}}\chi_{\Lambda_{\nu}^{n}+\Lambda_{\nu'}^{n'}}(\eta,\tau)\geq \frac{1}{2}\max_{(\eta,\tau)}\sum_{|\nu-\nu'|\thickapprox 2^{\ell}}\chi_{\Lambda_{\nu}^{n}+\Lambda_{\nu'}^{n'}}(\eta,\tau)\}.
\end{equation*}
Since the sum is equally distributed on $\Omega$, one can check that $$vol(\Omega)\approx \underbrace{(2^{\ell}(\frac{\delta}{\lambda})^{1/2}\times\frac{2^{\ell}}{\sqrt{\delta\lambda}})}_d\times \lambda^{-1/2}\times \delta^{-1}=\frac{2^{2\ell}}{\delta^2\lambda^{3/2}}.$$

\begin{center}
\includegraphics[height=7cm]{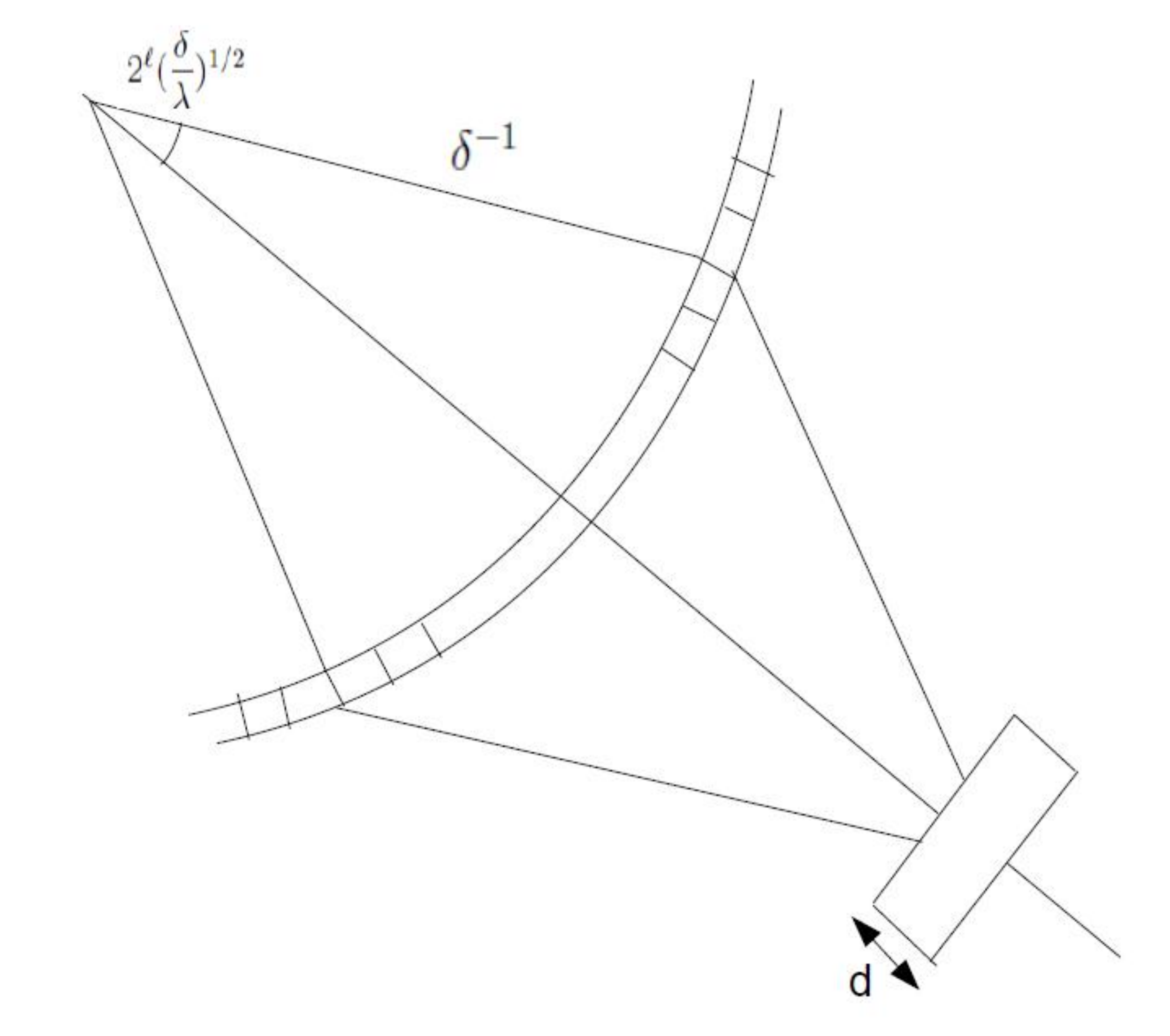}
\end{center}

\begin{center}
Figure 6 : $\Lambda_{\nu}^{n}+\Lambda_{\nu'}^{n'}$ for $|\nu-\nu'|\thickapprox 2^{\ell}$
\end{center}
\begin{center}
\includegraphics[height=3cm]{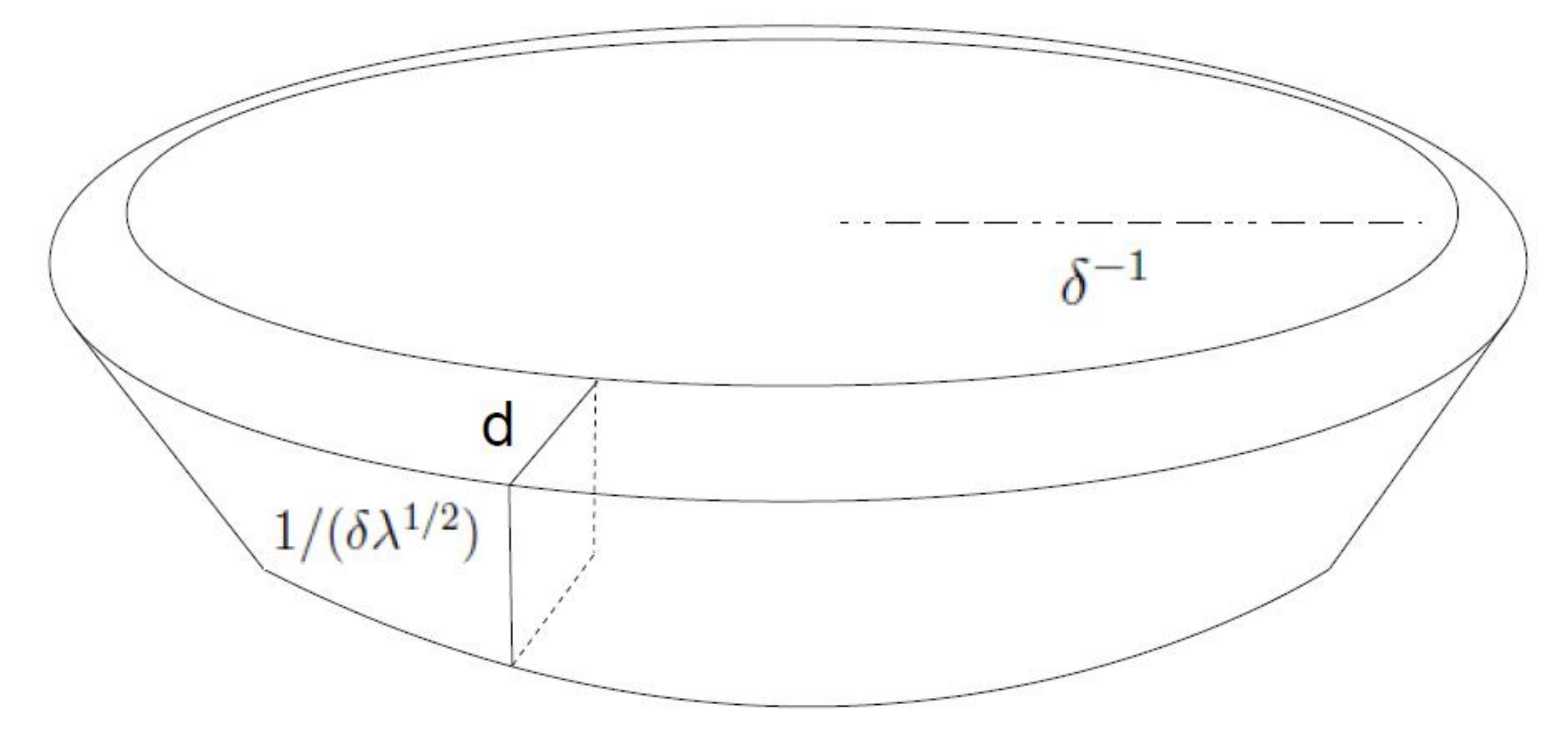}
\end{center}

\begin{center}
Figure 7 : $\Omega$
\end{center}

Set $\bar{C}=\max_{(\eta,\tau)}\sum_{|\nu-\nu'|\thickapprox 2^{\ell}}\chi_{\Lambda_{\nu}^{n}+\Lambda_{\nu'}^{n'}}(\eta,\tau)$, then
\begin{align*}
\bar{C}vol(\Omega)=\int_{\Omega}\bar{C}d\eta d\tau & \leq 2\int_{\mathbb{R}^3}\sum_{|\nu-\nu'|\thickapprox 2^{\ell}}\chi_{\Lambda_{\nu}^{n}+\Lambda_{\nu'}^{n'}}(\eta,\tau)d\eta d\tau\\
& =2\sum_{|\nu-\nu'|\thickapprox 2^{\ell}}vol({\Lambda_{\nu}^{n}+\Lambda_{\nu'}^{n'}})\\
& \lesssim 2^{\ell}(\frac{\lambda}{\delta})^{1/2}2^{\ell}\delta^{-3/2}\lambda^{-2}\lesssim 2^{2\ell}\frac{1}{\delta^2\lambda^{3/2}},
\end{align*}
together with the estimate of $vol(\Omega)$, we know $\bar{C}\leq C$, i.e.
$\sum_{|\nu-\nu'|\thickapprox 2^{\ell}}\chi_{\Lambda_{\nu}^{n}+\Lambda_{\nu'}^{n'}}(\eta,\tau)\leq C$.
\end{proof}

Let us continue our estimate for  $\biggl\|\biggl(\sum_n|P^n\sum_{\nu}Q_{\nu}
(\mathcal{F}_{\lambda,\nu}^{\delta}f_n)|^2\biggl)^{1/2}\biggl\|_{L^4}$. By Plancherel's theorem and the above overlap lemma, we have
\begin{align*}
&\biggl\|\biggl(\sum_n|P^n\sum_{\nu}Q_{\nu}
(\mathcal{F}_{\lambda,\nu}^{\delta}f_n)|^2\biggl)^{1/2}\biggl\|_{L^4}^4\\
&=\int_{\mathbb{R}^3}\biggl(\sum_n|P^n\sum_{\nu}Q_{\nu}
(\mathcal{F}_{\lambda,\nu}^{\delta}f_n)|^2\biggl)^2dydt\\
& =\int_{\mathbb{R}^3} \sum_n\left|P^n\sum_{\nu}Q_{\nu}
(\mathcal{F}_{\lambda,\nu}^{\delta}f_n)\right|^2\sum_{n'}\left|P^{n'}\sum_{\nu'}Q_{\nu'}
(\mathcal{F}_{\lambda,\nu'}^{\delta}f_{n'})\right|^2 dydt\\
& =\int_{\mathbb{R}^3} \sum_{n,n'}\left|\sum_{\nu,\nu'}P^nQ_{\nu}
(\mathcal{F}_{\lambda,\nu}^{\delta}f_n) P^{n'}Q_{\nu'}
(\mathcal{F}_{\lambda,\nu'}^{\delta}f_{n'})\right|^2 dydt\\
& =\int_{\mathbb{R}^3} \sum_{n,n'}\left|\sum_{\nu,\nu'}\chi_{U_{\nu}^n+U_{\nu'}^{n'}}(\eta, \tau)(P^nQ_{\nu}
(\mathcal{F}_{\lambda,\nu}^{\delta}f_n))^{\wedge}*(P^{n'}Q_{\nu'}
(\mathcal{F}_{\lambda,\nu'}^{\delta}f_{n'}))^{\wedge}(\eta, \tau)\right|^2 d\eta d\tau\\
&\leq \int_{\mathbb{R}^3} \sum_{n,n'}\sum_{\nu,\nu'}\chi^2_{U_{\nu}^n+U_{\nu'}^{n'}}(\eta, \tau)\sum_{\nu,\nu'}\left|(P^nQ_{\nu}
(\mathcal{F}_{\lambda,\nu}^{\delta}f_n))^{\wedge}*(P^{n'}Q_{\nu'}
(\mathcal{F}_{\lambda,\nu'}^{\delta}f_{n'}))^{\wedge}(\eta, \tau)\right|^2 d\eta d\tau\\
&\leq C(\frac{\lambda}{\delta})^{2\epsilon}\log_2(\frac{\lambda}{\delta})^{1/2}\int_{\mathbb{R}^3} \sum_{n,n'}\sum_{\nu,\nu'}\left|P^nQ_{\nu}
(\mathcal{F}_{\lambda,\nu}^{\delta}f_n) P^{n'}Q_{\nu'}
(\mathcal{F}_{\lambda,\nu'}^{\delta}f_{n'})\right|^2 dy dt\\
&=C(\frac{\lambda}{\delta})^{2\epsilon}\log_2(\frac{\lambda}{\delta})^{1/2}\biggl\|\biggl(\sum_{n,\nu}|P^nQ_{\nu}
(\mathcal{F}_{\lambda,\nu}^{\delta}f_n)|^2\biggl)^{1/2}\biggl\|_{L^4}^4.
\end{align*}

Hence by Lemma \ref{planelemma3}, it suffices to estimate $\biggl\|\biggl(\sum_{n,\nu}|P^n
\mathcal{F}_{\lambda,\nu}^{\delta}f_n|^2\biggl)^{1/2}\biggl\|_{L^4}$.

Furthermore,
\begin{align*}
&\biggl\|\biggl(\sum_{n,\nu}|P^n
(\mathcal{F}_{\lambda,\nu}^{\delta}f_n)|^2\biggl)^{1/2}\biggl\|_{L^4}^4\\
&=\biggl\|\biggl(\sum_{n,\nu}|\lambda^{1/2}e^{-i\lambda^{1/2}nt}\check{\varphi}(\lambda^{1/2}\cdot)*_t
\mathcal{F}_{\lambda,\nu}^{\delta}f_n(x,t)|^2\biggl)^{1/2}\biggl\|_{L^4}^4\\
&\leq C_N' \Biggl\|\biggl(\sum_{n,\nu}|\lambda^{1/2}\int_{\mathbb{R}}\frac{|\mathcal{F}_{\lambda,\nu}^{\delta}f_n(y,t-s)|}{(1+\lambda^{1/2}|s|)^N}ds
|^2\biggl)^{1/2}\Biggl\|_{L^4}^4\\
&\leq C_N'\int_{\mathbb{R}^2}\int_{\mathbb{R}}\biggl[\sum_{n,\nu}\lambda^{1/2}\int_{\mathbb{R}}\frac{1}
{(1+\lambda^{1/2}|s|)^{N}}ds\lambda^{1/2}\int_{\mathbb{R}}\frac{|\mathcal{F}_{\lambda,\nu}^{\delta}f_n(y,t-s)|^2}
{(1+\lambda^{1/2}|s|)^{N}}ds\biggl]^2dydt\\
&\leq C_N''\int_{\mathbb{R}^2}\int_{\mathbb{R}}\biggl(\sum_{n,\nu}\lambda^{1/2}\int_{\mathbb{R}}\frac{|\mathcal{F}_{\lambda,\nu}^{\delta}f_n(y,t-s)|^2}
{(1+\lambda^{1/2}|s|)^{N}}ds\biggl)^2dydt\\
&\leq C_N''\biggl\|\sum_{n,\nu}\lambda^{1/2}\int_{\mathbb{R}}\frac{|\mathcal{F}_{\lambda,\nu}^{\delta}f_n(y,t-s)|^2}
{(1+\lambda^{1/2}|s|)^{N}}ds\biggl\|_{L^2}^2\\
&\leq C_N''\biggl(\int_{\mathbb{R}}\lambda^{1/2}(1+\lambda^{1/2}|s|)^{-N}\|\sum_{n,\nu}|\mathcal{F}_{\lambda,\nu}^{\delta}f_n(y,t-s)|^2\|_{L^2}ds\biggl)^2\\
&\leq C_N'''\biggl\|\biggl(\sum_{n,\nu}|\mathcal{F}_{\lambda,\nu}^{\delta}f_n(y,t)|^2\biggl)^{1/2}\biggl\|_{L^4}^4,\\
\end{align*}
where $*_t$ means that convolution acts only on the variable $t$. At present we have
\begin{align*}
\|\mathcal{F}_{\lambda}^{\delta}f\|_{L^4}
&\leq C\lambda^{1/8}(\frac{\lambda}{\delta})^{\epsilon/2}[\log_2(\frac{\lambda}{\delta})^{1/2}]^{1/4}
\biggl\|\biggl(\sum_{n,\nu}|\mathcal{F}_{\lambda,\nu}^{\delta}f_n(x,t)|^2\biggl)^{1/2}\biggl\|_{L^4}+C_N\lambda^{-N}\|f\|_{L^4}.
\end{align*}

It remains to estimate $\biggl\|\biggl(\sum_{n,\nu}|\mathcal{F}_{\lambda,\nu}^{\delta}f_n(x,t)|^2\biggl)^{1/2}\biggl\|_{L^4}$. For $m=(m_1,m_2)\in \mathbb{Z}^2$, we define the operator $P_m^{\delta}$ acting on the functions in ${\mathbb{R}}^2$ by
\begin{equation}
(P_m^{\delta}f)^{\wedge}(\xi)=\varphi(\frac{\delta}{\lambda^{1/2}}\xi_1-m_1)\varphi(\frac{\delta}{\lambda^{1/2}}\xi_2-m_2)\widehat{f}(\xi),
\end{equation}
then $f=\sum_{m\in \mathbb{Z}^2}P_m^{\delta}f$.

\begin{center}
\includegraphics[height=7cm]{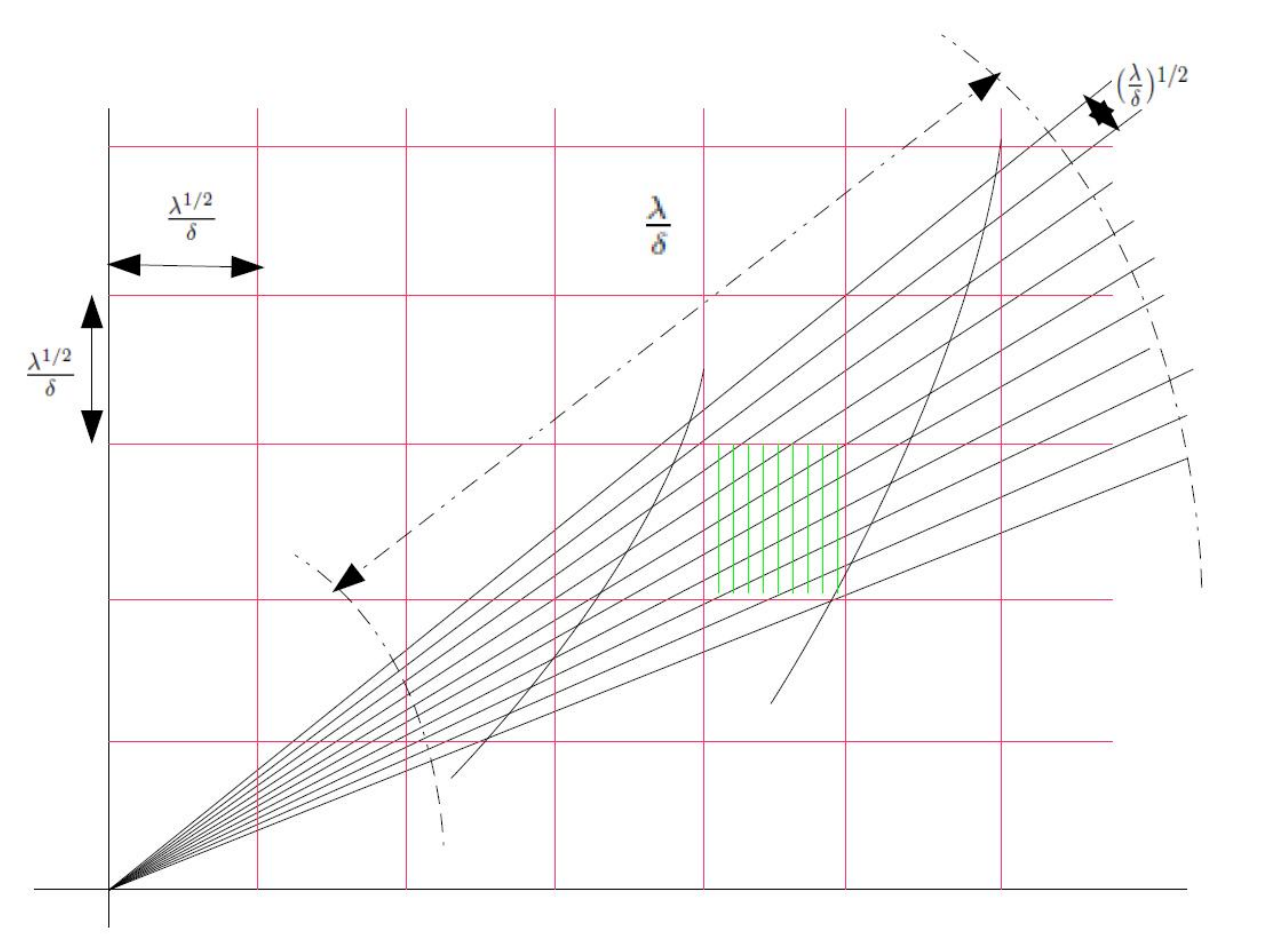}
\end{center}

\begin{center}
Figure 8 : Card $\{(n,\nu):m\in J_{n,\nu}\}\lesssim \delta^{-1/2}$
\end{center}

The definition of $f_n$ and the support properties of $\chi_{\nu}$ imply
\begin{equation*}
\mathcal{F}_{\lambda,\nu}^{\delta}f_n=\mathcal{F}_{\lambda,\nu}^{\delta}f_{n,\nu},
\end{equation*}
where $f_{n,\nu}=\sum_{m\in J_{n,\nu}}P_m^{\delta}f$ and the  index set $J_{n,\nu}$ is contained in the set of all $m\in \mathbb{Z}^2$  such that $\chi_{\nu}\widehat{P_m^{\delta}f_n}$ is not identically zero. It follows from the above discussion that
\begin{equation}\label{cardina1}
card \{J_{n,\nu}\}\leq C.
\end{equation}

Meanwhile, we also get
\begin{equation}\label{cardina2}
card \{(n,\nu):m\in J_{n,\nu}\}\lesssim \delta^{-1/2}.
\end{equation}

Let the kernel $K_{\lambda,\nu,n}^{\delta}(y,t;x)$ of $\mathcal{F}_{\lambda,\nu}^{\delta}f_{n,\nu}$ be
\begin{equation}\label{kernels}
\rho_1(y,t)\int_{\mathbb{R}^2}e^{i((y-x)\cdot \xi+E(\xi)+tq_{\delta}(\xi))}\rho_0(\frac{\delta}{\lambda}|\xi|)\tilde{\chi}(\frac{\xi_1}{\xi_2})
a(\xi,t)\chi_{\nu}(\xi)\varphi ((\lambda^{-1/2}q_{\delta}(\xi)-n )/10)d\xi.
\end{equation}

We need the following estimates. Notice that (\ref{equ:planekerna22}) is an estimate of a Kakeya-type maximal operator.
\begin{lem}\label{lemma index2}
For fixed $n$, $\nu$ and $g\in L^2(\mathbb{R}^2)$, then
\begin{equation}\label{equ:planekerna12}
\int_{\mathbb{R}^2}|K_{\lambda,\nu,n}^{\delta}(y,t;x)|dx\leq C;
\end{equation}
\begin{equation}\label{equ:planekerna22}
\biggl(\int_{\mathbb{R}^2}\sup_{\nu,n}\left\{\int_{\mathbb{R}^2}\int_{\mathbb{R}}|K_{\lambda,\nu,n}^{\delta}(y,t;x)||g(y,t)|dydt\right\}^2dx\biggl)^{1/2}\leq C\delta^{-1/2}(\log_2\frac{\lambda^{1/2}}{\delta})^2\|g\|_{L^2}.
\end{equation}
\end{lem}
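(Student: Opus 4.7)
My plan is to treat the two inequalities separately, but via a common geometric picture: for each pair $(\nu,n)$ the kernel $|K_{\lambda,\nu,n}^{\delta}(y,t;\cdot)|$ should be essentially supported on a plate in $x$-space whose dimensions are dictated by the Fourier support of the amplitude. First I identify those dimensions. The $\xi$-support imposed by $\rho_0(\frac{\delta}{\lambda}|\xi|)\tilde\chi(\xi_1/\xi_2)\chi_\nu(\xi)\varphi((\lambda^{-1/2}q_\delta(\xi)-n)/10)$ is, in the frequency coordinates adapted to the cone $\tau=q_\delta(\xi)$, of length $\sim \lambda^{1/2}/\delta$ in the radial direction (governed by $\varphi$), of length $\sim(\lambda/\delta)^{1/2}$ in the tangential direction (governed by $\chi_\nu$), and its total volume is $\sim \lambda/\delta^{3/2}$. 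Hence the dual plate $P_{\nu,n}(x)$ in $(y,t)$-space obtained by stationary phase in $\xi$ has tangential width $\sim(\delta/\lambda)^{1/2}$ and radial width $\sim \delta/\lambda^{1/2}$, centred at $x+\nabla E(\xi_{\nu,n})+t\,\nabla q_\delta(\xi_{\nu,n})$ for a representative frequency $\xi_{\nu,n}$.

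For estimate (\ref{equ:planekerna12}) I would apply integration by parts in $\xi$ using the second-order differential operator adapted to the two scales above (analogous to the operator $\mathcal{L}=I-2^{2j}\partial_{\xi_1}^2-2^{j}\partial_{\xi_2}^2$ used in the proof of Lemma \ref{lemmaL^2}). The estimates (\ref{anghomofunct1})-(\ref{anghomofunct2}) transplanted to the present angular grid of size $(\delta/\lambda)^{1/2}$, together with the symbol bound (\ref{symbola}) and the fact that $E$ and $q_\delta$ are homogeneous of degree one and smooth on the support of $\tilde\chi$, give
\begin{equation*}
|K_{\lambda,\nu,n}^{\delta}(y,t;x)|\leq C_N\frac{\lambda/\delta^{3/2}}{\bigl(1+(\lambda/\delta)^{1/2}|(y-x)_\tau-t\,\partial_\tau q_\delta|+(\lambda^{1/2}/\delta)|(y-x)_r-t\,\partial_r q_\delta|\bigr)^N},
\end{equation*}
where $(\cdot)_r,(\cdot)_\tau$ are the radial/tangential coordinates attached to $\xi_{\nu,n}$. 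Integrating in $x$ exactly matches the size of $P_{\nu,n}(x)$ times the amplitude, giving the uniform bound $O(1)$.

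For estimate (\ref{equ:planekerna22}) the previous pointwise bound reduces the matter to the $L^2$ boundedness of the Kakeya-type maximal operator
\begin{equation*}
\mathcal{N}g(x)=\sup_{\nu,n}\frac{1}{|P_{\nu,n}(x)|}\int_{P_{\nu,n}(x)}|g|\,dy\,dt,
\end{equation*}
since the Schwartz tails can be absorbed by a standard $\ell^1$ summation over dyadic dilates of the plate. Here $P_{\nu,n}(x)$ are plates tangent to (a perturbation of) the cone $\tau=q_\delta(\xi)$. I would prove $\|\mathcal{N}g\|_{L^2}\leq C\delta^{-1/2}(\log_2\lambda^{1/2}/\delta)^2\|g\|_{L^2}$ via the linearisation/duality argument of \cite{mss}: let $\nu(x),n(x)$ be measurable selectors, so that $\mathcal{N}g(x)\lesssim\int \chi_{P_{\nu(x),n(x)}(x)}(y,t)|P|^{-1}|g|$; by duality it is enough to estimate $\|\sum_{\nu,n}\chi_{P_{\nu,n}}/|P_{\nu,n}|\ast h\|_{L^2}$, where each $h$ is supported in the appropriate cell. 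Squaring and using Plancherel converts the problem into counting overlaps of the algebraic sums $P_{\nu,n}+P_{\nu',n'}$, which is exactly the content of the overlap argument already carried out in Lemma \ref{coverlemma1}, after summing over the $\sim\delta^{-1/2}$ scales arising from the relation (\ref{cardina2}). One logarithmic factor comes from the dyadic decomposition $|\nu-\nu'|\approx 2^\ell$ with $0\leq \ell\leq \log_2(\lambda/\delta)^{1/2}$ in (\ref{coverlemma3}); the second from a parallel decomposition in the $n$-variable produced by the slab $\varphi((\lambda^{-1/2}q_\delta-n)/10)$.

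The main obstacle will be the last step: bookkeeping the joint overlap of plates in the two different families $\{(\nu,n)\}$ and $\{(\nu',n')\}$ and producing exactly the factor $\delta^{-1/2}(\log_2\lambda^{1/2}/\delta)^2$. The rectangles are now tangent to a non-standard cone $\tau=q_\delta(\xi)$, so the curvature estimate of Lemma \ref{coverlemma1} (where $\operatorname{dist}(P,A)\approx 1/\delta$ instead of $O(1)$ for the light cone) has to be used at every step; quantifying how this flatness enters the Kakeya estimate is the technical heart of the proof, and is precisely where the $\delta^{-1/2}$ is forced.
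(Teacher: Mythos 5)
Your handling of (\ref{equ:planekerna12}) is essentially the paper's argument: a two-scale integration by parts adapted to the frequency box of size $\lambda^{1/2}/\delta\times(\lambda/\delta)^{1/2}$ gives the pointwise bound (\ref{equk1}), with amplitude $\lambda/\delta^{3/2}$ concentrated on the dual plate of size $\frac{\delta}{\lambda^{1/2}}\times(\frac{\delta}{\lambda})^{1/2}\times 1$ around $\gamma_{\theta_{\nu}}+(x,0)$, and integration in $x$ then gives the uniform bound. Your reduction of (\ref{equ:planekerna22}) to a maximal operator over these plates, with the Schwartz tails summed dyadically and each plate split into $\delta^{-1/2}$ tubes of square cross-section, is also exactly the paper's reduction to the Kakeya-type estimate (\ref{planelemma4}).

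The gap is in the proof of that maximal estimate itself. Your plan -- linearize, dualize, square, apply Plancherel, and count overlaps of the sumsets $P_{\nu,n}+P_{\nu',n'}$ ``exactly as in Lemma \ref{coverlemma1}'' -- does not go through as stated. Lemma \ref{coverlemma1} counts overlaps of the \emph{frequency-space} sets $\mathcal{U}_{\nu}^{n}+\mathcal{U}_{\nu'}^{n'}$, not physical-space plates, and its conclusion carries a factor $(\lambda/\delta)^{2\epsilon}$, which is not affordable in (\ref{equ:planekerna22}), where the loss must be $\delta^{-1/2}$ up to logarithms (the Remark after (\ref{planelemma4}) shows $\delta^{-1/2}$ is sharp); moreover the Plancherel-plus-sumset-overlap device is a square-function tool, and a supremum over $(\nu,n)$ cannot be dominated by such an $\ell^2$ expression without paying the cardinality of the index set. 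Your bookkeeping of the logarithms is also off: the kernel bound (\ref{equk1}) is uniform in $n$, so the supremum in $n$ is free and no decomposition in $n$ produces a log; in the paper both logarithms arise inside the proof of (\ref{planelemma4}), one from the dyadic frequency decomposition $A_{\theta}^{\tau}$ with $\tau\lesssim\lambda^{1/2}/\delta$, and one from the angular decomposition indexed by $|\langle(-\sin\theta,\cos\theta),\xi/|\xi|\rangle|\approx 2^{\ell}\tau^{-1/2}$. The essential missing idea is the oscillatory-integral mechanism that actually forces $\delta^{-1/2}$: after a Sobolev embedding in $\theta$ (Lemma \ref{lem:Lemma3}) one needs the computation $\frac{\partial}{\partial\theta}\langle\xi,\nabla q_{\delta}(\cos\theta,\sin\theta)\rangle=6\delta|\xi|\frac{\cos\theta}{\sin^4\theta}\sin(\theta_{\xi}-\theta)$, i.e.\ the phase has a single critical point in $\theta$ whose curvature is only of size $\delta$, which yields the kernel bounds (\ref{claim1}) and hence (\ref{equ:planeAthetaell}) with the $\delta^{-1}$ loss at the $L^2$-square level. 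Your closing remark that ``flatness enters the Kakeya estimate'' points at the right phenomenon, but the proposal contains no argument that extracts the factor $\delta^{-1/2}$, and the tool you name would not produce it.
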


\begin{lem}\cite{mss}\label{lemma index3}
For $2\leq p\leq \infty$,
\begin{equation*}
\biggl\|\biggl(\sum_{m\in \mathbb{Z}^2}|P_m^{\delta}f(y)|^2\biggl)^{1/2}\biggl\|_{L^p(\mathbb{R}^2)}\leq C \|f\|_{L^p(\mathbb{R}^2)}.
\end{equation*}
\end{lem}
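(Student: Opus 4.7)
The plan is to combine a scaling reduction, Khintchine's inequality to randomize the square function, and the Mikhlin--H\"ormander multiplier theorem for each randomized realization. This is the standard Littlewood--Paley-style argument; the point is that the frequency decomposition $\{\varphi(\tfrac{\delta}{\lambda^{1/2}}\xi_1-m_1)\varphi(\tfrac{\delta}{\lambda^{1/2}}\xi_2-m_2)\}_{m\in\mathbb{Z}^2}$ is a dilated lattice partition of unity with bounded overlap.

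First I would perform a scaling reduction exactly as in the proof of Lemma \ref{Appendix}. Introducing the $L^p$-isometry $\tau_\sigma f(x)=\sigma^{2/p}f(\sigma x)$ with $\sigma=\lambda^{1/2}/\delta$, direct computation shows that $\tau_\sigma^{-1}P_m^\delta \tau_\sigma$ is the Fourier multiplier with symbol $\varphi(\xi_1-m_1)\varphi(\xi_2-m_2)$. Therefore the bound for $P_m^\delta$ is equivalent to the same bound in the case $\delta/\lambda^{1/2}=1$, and I may assume the partition is $\{\varphi(\xi_1-m_1)\varphi(\xi_2-m_2)\}_{m\in\mathbb{Z}^2}$.

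Next, let $\{r_m(t)\}_{m\in\mathbb{Z}^2}$ be the Rademacher system on $[0,1]$, reindexed by $\mathbb{Z}^2$. By Khintchine's inequality, for $2\le p<\infty$,
\[
\biggl(\sum_{m}|P_m^\delta f(y)|^2\biggr)^{p/2}\approx\int_0^1\Bigl|\sum_{m}r_m(t)P_m^\delta f(y)\Bigr|^p\,dt,
\]
so after integrating in $y$ and applying Fubini, it suffices to show that the Fourier multiplier operator $T_t$ with symbol
\[
m_t(\xi)=\sum_{m\in\mathbb{Z}^2}r_m(t)\varphi(\xi_1-m_1)\varphi(\xi_2-m_2)
\]
is bounded on $L^p(\mathbb{R}^2)$ with norm independent of $t$. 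Since $|r_m(t)|=1$ and the supports $\{\varphi(\cdot-m_1)\varphi(\cdot-m_2)\}_{m}$ have uniformly bounded overlap, one has $|m_t(\xi)|\le C$; and because only finitely many terms contribute at each $\xi$, the same reasoning gives $|\partial^\alpha m_t(\xi)|\le C_\alpha$ uniformly in $t$, hence $|\partial^\alpha m_t(\xi)|\le C_\alpha(1+|\xi|)^{-|\alpha|}$. The Mikhlin--H\"ormander theorem then produces a uniform bound $\|T_t\|_{L^p\to L^p}\le C_p$ for $1<p<\infty$, and Khintchine read in reverse finishes the range $2\le p<\infty$.

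The main obstacle is the endpoint $p=\infty$, where the Rademacher--Mikhlin argument breaks down because Mikhlin fails at $L^\infty$. For this endpoint, the natural approach is to view $f\mapsto(P_m^\delta f)_{m\in\mathbb{Z}^2}$ as a vector-valued Calder\'on--Zygmund operator from $\mathbb{C}$ to $\ell^2$; the kernel satisfies the standard H\"ormander condition uniformly in the scale $\lambda^{1/2}/\delta$ by the same scaling argument, and hence the operator maps $L^\infty$ to $BMO(\ell^2)$. Combined with the $L^2$ bound (which is immediate from Plancherel and bounded overlap) and vector-valued interpolation, one recovers the $L^p(\ell^2)$ bound for the whole range $2\le p\le\infty$ in the sense required by \cite{mss}. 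Alternatively, since in our later application we only need $2\le p<\infty$, one may simply restrict to this range and invoke the Rademacher--Mikhlin argument above.
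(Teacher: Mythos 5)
Your scaling reduction and the Khintchine reduction are fine, but the central multiplier step fails. For the randomized symbol $m_t(\xi)=\sum_m r_m(t)\varphi(\xi_1-m_1)\varphi(\xi_2-m_2)$ one only gets $|\partial^{\alpha}m_t(\xi)|\le C_{\alpha}$ uniformly; the bound $|\partial^{\alpha}m_t(\xi)|\le C_{\alpha}(1+|\xi|)^{-|\alpha|}$ that you assert does not follow from this and is in fact false: near a cell centered at a lattice point $m$ with $|m|$ large the first derivatives of $m_t$ are of unit size, not of size $|m|^{-1}$, so the Mikhlin--H\"ormander hypothesis is violated. Worse, the conclusion you want from it is false, so the gap cannot be repaired within this strategy: since $\varphi$ may be taken real, each $T_t$ is self-adjoint, so a uniform bound $\|T_t\|_{L^p\to L^p}\le C_p$ would give the same uniform bound on $L^{p'}$, and Khintchine (read in the direction you already used) would then yield $\bigl\|\bigl(\sum_m|P_m^{\delta}f|^2\bigr)^{1/2}\bigr\|_{p'}\lesssim\|f\|_{p'}$ for $p'<2$. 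That inequality fails even for smooth unit-scale decompositions: testing in one variable with $\hat f=\sum_{m=1}^{N}\varphi(\cdot-m)$, the square function has $L^{p'}$ norm $\sim N^{1/2}$ while $\|f\|_{p'}\sim N^{1-1/p'}$, which is smaller when $p'<2$. Hence the randomized operators are not uniformly bounded on $L^p$ for any $p\neq 2$, and the Khintchine--Mikhlin route only ever recovers the trivial case $p=2$.

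The lemma is nevertheless true on the full range $2\le p\le\infty$, and the standard argument (this is how it is obtained in \cite{mss}) interpolates between two endpoints rather than randomizing: $p=2$ is Plancherel together with the bounded overlap of the cells, and $p=\infty$ is a genuine pointwise estimate $\bigl(\sum_m|P_m^{\delta}f(y)|^2\bigr)^{1/2}\le C\|f\|_{\infty}$ with $C$ independent of the scale. To see the latter, write $P_m^{\delta}f(y)$ as the Fourier transform, evaluated at the lattice point $\sigma m$ with $\sigma=\lambda^{1/2}/\delta$, of the function $g_y(z)=\sigma^2\check\varphi(\sigma(y_1-z_1))\check\varphi(\sigma(y_2-z_2))f(z)$; split $\mathbb{R}^2$ into cubes of side $2\pi/\sigma$, apply Bessel's inequality for Fourier series on each cube, use Minkowski's inequality in $\ell^2$ to sum the cubes, and exploit the rapid decay of $\check\varphi$ to make the sum over cubes converge. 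Interpolation of the sublinear square-function operator then gives all $2\le p\le\infty$. Note also that your fallback for the endpoint ($L^{\infty}\to BMO$ plus interpolation with $L^2$) would only produce $2\le p<\infty$, not the stated $p=\infty$; restricting to $p<\infty$ would indeed suffice for the application in this paper (only $p=4$ is used), but the range $2<p<\infty$ is exactly what your multiplier argument does not deliver.
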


Supposing that Lemma \ref{lemma index2} holds true, together with (\ref{cardina1}), (\ref{cardina2}) and Lemma \ref{lemma index3}, a duality argument and H\"{o}lder's inequality give
\begin{align*}
&\biggl\|\biggl(\sum_{n,\nu}|\mathcal{F}_{\lambda,\nu}^{\delta}f_n(y,t)|^2\biggl)^{1/2}\biggl\|_{L^4}^2\\
& =\sup_{\|g\|_{L^2}=1}\left|\int_{\mathbb{R}^2}\int_{\mathbb{R}}\sum_{n,\nu}|\mathcal{F}_{\lambda,\nu}^{\delta}f_n(y,t)|^2g(y,t)dydt\right|\\
&=\sup_{\|g\|_{L^2}=1}\sum_{n,\nu}\int_{\mathbb{R}^2}\int_{\mathbb{R}}\left|\int_{\mathbb{R}^2}
(K_{\lambda,\nu,n}^{\delta}(y,t;x))^{1/2}[(K_{\lambda,\nu,n}^{\delta}(y,t;x))^{1/2}f_{n,\nu}(x)]dx\right|^2|g(y,t)|dydt\\
&\leq  \sup_{\|g\|_{L^2}=1}\sum_{n,\nu}\int_{\mathbb{R}^2}\int_{\mathbb{R}}\int_{\mathbb{R}^2}
|K_{\lambda,\nu,n}^{\delta}(y,t;x)|dx\int_{\mathbb{R}^2}|K_{\lambda,\nu,n}^{\delta}(y,t;x)||f_{n,\nu}(x)|^2dx|g(y,t)|dydt\\
&\leq  C\sup_{\|g\|_{L^2}=1}\sum_{n,\nu}\int_{\mathbb{R}^2}\int_{\mathbb{R}}\int_{\mathbb{R}^2}
|K_{\lambda,\nu,n}^{\delta}(y,t;x)||f_{n,\nu}(x)|^2dx|g(y,t)|dydt\\
&\leq  C\sup_{\|g\|_{L^2}=1}\sum_{n,\nu}\int_{\mathbb{R}^2}|f_{n,\nu}(x)|^2\int_{\mathbb{R}^2}\int_{\mathbb{R}}
|K_{\lambda,\nu,n}^{\delta}(y,t;x)||g(y,t)|dydtdx\\
&\leq  C'\sup_{\|g\|_{L^2}=1}\int_{\mathbb{R}^2}\sum_{n,\nu}\sum_{m\in J_{n,\nu}}|P_m^{\delta}f(x)|^2\sup_{n,\nu}\int_{\mathbb{R}^2}\int_{\mathbb{R}}
|K_{\lambda,\nu,n}^{\delta}(y,t;x)||g(y,t)|dydtdx\\
&\leq  C'\sup_{\|g\|_{L^2}=1}\int_{\mathbb{R}^2}\sum_{m\in \mathbb{Z}^2}\sum_{\{(n,\nu):m\in J_{n,\nu}\}}|P_m^{\delta}f(x)|^2
\sup_{n,\nu}\int_{\mathbb{R}^2}\int_{\mathbb{R}}
|K_{\lambda,\nu,n}^{\delta}(y,t;x)||g(y,t)|dydtdx\\
&\leq  C''\delta^{-1/2}\sup_{\|g\|_{L^2}=1}\biggl\|\biggl(\sum_{m\in \mathbb{Z}^2}|P_m^{\delta}f(x)|^2\biggl)^{1/2}\biggl\|_{L^4}^2\\
&\quad\times\biggl(\int_{\mathbb{R}^2}\sup_{\nu,n}\left\{\int_{\mathbb{R}^2}\int_{\mathbb{R}}
|K_{\lambda,\nu,n}^{\delta}(y,t;x)||g(y,t)|dydt\right\}^2dx\biggl)^{1/2}\\
&\leq  C'''\delta^{-1}(\log_2\frac{\lambda^{1/2}}{\delta})^2\|f\|_{L^4}^2.
\end{align*}

Therefore,
\begin{align*}
\|\mathcal{F}_{\lambda}^{\delta}f\|_{L^4}
&\leq C\delta^{-1/2}\lambda^{1/8}(\frac{\lambda}{\delta})^{\epsilon/2}[\log_2(\frac{\lambda}{\delta})^{1/2}]^{1/4}\log_2\frac{\lambda^{1/2}}{\delta}
\|f\|_{L^4}+C_N\lambda^{-N}\|f\|_{L^4}\\
&\leq C\lambda^{1/8+\epsilon_1}\delta^{-(1/2+\epsilon_2)}\|f\|_{L^4},
\end{align*}
where $\epsilon_1$ and $\epsilon_2$ are very small.

\begin{center}
\includegraphics[height=8cm]{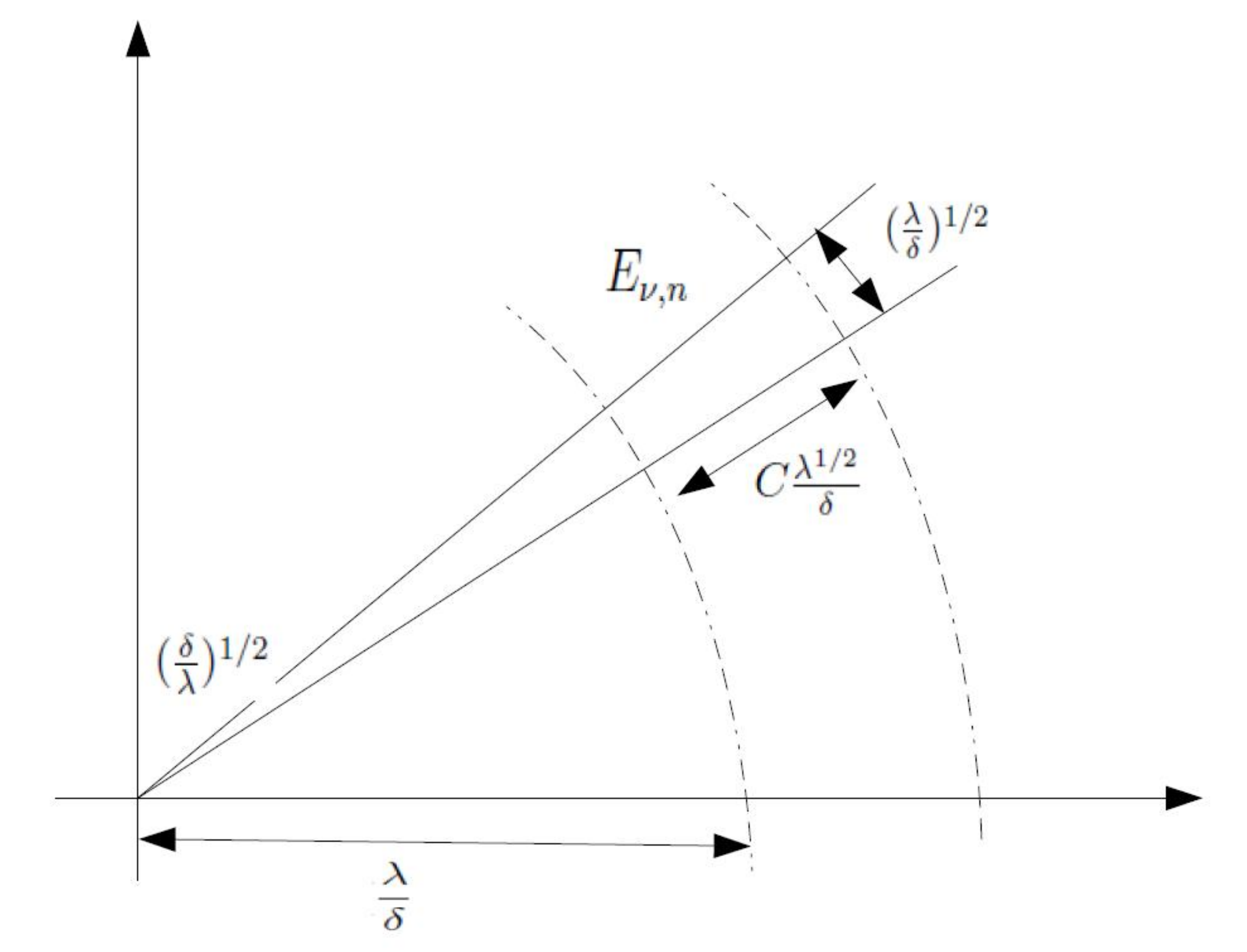}
\end{center}

\begin{center}
Figure 9 : $E_{\nu,n}$
\end{center}

We turn to prove Lemma \ref{lemma index2}. By (\ref{kernels}), we put
\begin{equation}
E_{\nu,n}:=\{\xi:\rho_0(\frac{\delta}{\lambda}|\xi|)\tilde{\chi}(\frac{\xi_1}{\xi_2})
a(\xi,t)\chi_{\nu}(\xi)\varphi ((\lambda^{-1/2}q_{\delta}(\xi)-n )/10)\neq 0\}
\end{equation}
and consider the size of $E_{\nu,n}$ (see Figure 9), first we can adopt some ideas from the proof of inequality (\ref{kernalL^infty}) to prove inequality (\ref{equ:planekerna12}), here we still use similar notations.

 After a rotation by the transformation $\tilde{T}$,  we put $\tilde{\tilde{\Psi}}(y,\xi)=\tilde{T}\xi\cdot y+E(\tilde{T}\xi)+tq_{\delta}(\tilde{T}\xi)$ and $h(\xi)=\tilde{\tilde{\Psi}}(y,\xi)-\tilde{\tilde{\Psi}}_{\xi}(y,\bar{\xi})\cdot\xi$, where $\bar{\xi}=(1,0)$. Since $\tilde{\tilde{\Psi}}$ is homogeneous of degree one, $|\xi|\approx \frac{\lambda}{\delta}$ and $|\xi_2|\leq C(\frac{\lambda}{\delta})^{1/2}$, then we can get some similar results with (\ref{h1}), (\ref{h2}), (\ref{anghomofunct2impr}),
\begin{equation}\label{h11}
\left|\left(\frac{\partial}{\partial\xi_1}\right)^Nh(\xi)\right|\lesssim (\frac{\lambda}{\delta})^{-N},\hspace{0.2cm}\left|\left(\frac{\partial}{\partial\xi_2}\right)^Nh(\xi)\right|\lesssim (\frac{\lambda}{\delta})^{-N/2},
\end{equation}
\begin{equation}\label{h12}
\left|\left(\frac{\partial}{\partial\xi_1}\right)^N(\chi_{\nu}\circ \tilde{T})(\xi)\right|\lesssim (\frac{\lambda}{\delta})^{-N},\hspace{0.2cm}\left|\left(\frac{\partial}{\partial\xi_2}\right)^N(\chi_{\nu}\circ \tilde{T})(\xi)\right|\lesssim (\frac{\lambda}{\delta})^{-N/2}, \hspace{0.2cm}N\geq 1.
\end{equation}

Furthermore,  set $\mathcal{L}:= I-\frac{\lambda}{\delta^2}\frac{\partial^2}{\partial\xi_1^2}-\frac{\lambda}{\delta}\frac{\partial^2}{\partial\xi_2^2}$ and $D_t(z):=(t^2z_1,tz_2)$. Because of (\ref{symbola}), (\ref{h11}), (\ref{h12}) and
$\mathcal{L}^N\left(\varphi((\lambda^{-1/2}q_{\delta}(T\xi)-n)/10)\right)\leq C$, then
$$\mathcal{L}^N\left(e^{ih(\xi)}\rho_0(\frac{\delta}{\lambda}|\xi|)\tilde{\chi}(\frac{\xi_1}{\xi_2})
a(\xi,t)\chi_{\nu}(\xi)\varphi ((\lambda^{-1/2}q_{\delta}(\xi)-n )/10)\right)\leq C.$$

Noting that the area of the region $E_{\nu,n}$ is at most $C\frac{\lambda}{\delta^{3/2}}$,  so by integration by parts in $\xi$, we obtain
\begin{equation}\label{equk1}
\begin{aligned}
|K_{\lambda,\nu,n}^{\delta}(y,t;x)|&\leq \frac{C_N'\lambda/\delta^{3/2}}{\left(1+\frac{\lambda}{\delta^2}|(\tilde{\tilde{\Psi}}_{\xi}(y-x,\bar{\xi}))_1|^2+\frac{\lambda}{\delta}|(\tilde{\tilde{\Psi}}_{\xi}(y-x,\bar{\xi}))_2|^2\right)^N}\\
&\leq\frac{C_N\lambda/\delta^{3/2}}{\left(1+| D_{\delta^{-1/2}}\lambda^{1/2}(\tilde{\tilde{\Psi}}_{\xi}(y-x,\bar{\xi}))|\right)^{2N}},
\end{aligned}
\end{equation}
where
$$|(\tilde{\tilde{\Psi}}_{\xi}(y-x,\bar{\xi}))_1|=\left|\langle y-x+\nabla E(\xi_{\nu})+t\nabla q_{\delta}(\xi_{\nu}),\xi_{\nu}\rangle\right|$$
and
$$|(\tilde{\tilde{\Psi}}_{\xi}(y-x,\bar{\xi}))_2|=\left|y-x+\nabla E(\xi_{\nu})+t\nabla q_{\delta}(\xi_{\nu})-\langle y-x+\nabla E(\xi_{\nu})+t\nabla q_{\delta}(\xi_{\nu}),\xi_{\nu}\rangle\xi_{\nu}\right|.$$

Inequality (\ref{equk1}) implies inequality  (\ref{equ:planekerna12}).

We introduce some notations. Given a direction $\xi_{\nu}=(\cos\theta_{\nu},\sin\theta_{\nu})\in S^1$, let $\gamma_{\theta_{\nu}}\subset\mathbb{R}^3$ be the ray defined by
\begin{equation}\label{gamma}
\gamma_{\theta_{\nu}}=\{(y,t):y+\nabla E(\cos\theta_{\nu},\sin\theta_{\nu})+t\nabla q_{\delta}(\cos\theta_{\nu},\sin\theta_{\nu})=0\}.
\end{equation}

From inequality (\ref{equk1}), we note that for fixed $x$, the kernels $K_{\lambda,\nu,n}^{\delta}(y,t;x)$ are essentially supported in a rectangle of size $\frac{\delta}{\lambda^{1/2}}\times(\frac{\delta}{\lambda})^{1/2}\times 1$ around $\gamma_{\theta_{\nu}}+(x,0)$, see Figure 10. This sheds some light on the proof of inequality (\ref{equ:planekerna22}).

We may assume $g\geq 0$ and choose non-negative functions $\varrho \in C_0^{\infty}([\pi/2-c_2,\pi/2-c_1])$ and $\beta_0 \in C_0^{\infty}([0,2])$ which satisfies $\beta_0(r)+\sum_{\ell=1}^{\infty}\beta(2^{-\ell}r)=1$, for $r>0$. Let $|\cdot|_{D}$ be a homogeneous norm under $D_t(z)=(t^2z_1,tz_2)$, i.e. $|D_t(z)|_{D}=t|z|_{D}$. Let $h_0(\xi)=\beta_0(|\xi|_{D})\in C_0^{\infty}(\mathbb{R}^2)$ and  $h(\xi)=\beta(|\xi|_{D})\in C_0^{\infty}(\mathbb{R}^2)$, then $h_0(\xi)+\sum_{\ell=1}^{\infty}h(D_{2^{-\ell}}\xi)=1$. It is clear that for $\ell>1$, $\xi\neq 0$,
$h(D_{2^{-\ell}}\xi)\neq 0 \Rightarrow 2^{\ell-1}\leq |\xi|_{D}\leq 2^{\ell+1}\Rightarrow |\xi|\gtrsim 2^{\ell} $.

Since the right side of inequality (\ref{equk1}) does not depend on $n$, then the left side of inequality (\ref{equ:planekerna22}) can be controlled by
\begin{align*}
&\biggl(\int_{\mathbb{R}^2}\sup_{\nu,n}\left\{\int_{\mathbb{R}^2}\int_{\mathbb{R}}|K_{\lambda,\nu,n}^{\delta}(y,t;x)||g(y,t)|dydt\right\}^2dx\biggl)^{1/2}\\
&\leq C_N'\biggl(\biggl\|\sup_{\nu}\frac{1}{\delta^{3/2}/\lambda}\int_{\mathbb{R}^2\times [1/2,4]}h_0( D_{\delta^{-1/2}} \lambda^{1/2}\tilde{\tilde{\Psi}}_{\xi}(y,\bar{\xi}))g(x+y,t) dydt\biggl\|_{L^2}\\
&\quad +
\biggl\|\sup_{\nu}\sum_{\ell=1}^{\infty}\int_{\mathbb{R}^2\times [1/2,4]}h(D_{2^{-\ell}}(D_{\delta^{-1/2}}\lambda^{1/2} \tilde{\tilde{\Psi}}_{\xi}(y,\bar{\xi})))\frac{\lambda/\delta^{3/2}g(x+y,t)}{\left(1+| D_{\delta^{-1/2}} \lambda^{1/2}\tilde{\tilde{\Psi}}_{\xi}(y,\bar{\xi})|\right)^{2N}} dydt\biggl\|_{L^2}\biggl)
\end{align*}
\begin{align*}
&\leq C''_N\sum_{\ell=0}^{\infty}2^{-2(N-3/2)\ell}\biggl\|\sup_{\nu}\frac{1}{2^{3(\ell+1)}\delta^{3/2}/\lambda}\int_{\{(y,t)\in\mathbb{R}^2\times [0,1]:|\lambda^{1/2}(y+\nabla E(\xi_{\nu})+t\nabla q_{\delta}(\xi_{\nu}))|_D\leq 2^{\ell+1}\delta^{1/2}\}}\\
&\quad \quad\quad\quad\quad\quad\quad\quad\times g(x+y,t)dydt\biggl\|_{L^2}\\
&\leq C''_N\sum_{\ell=0}^{\infty}2^{-2(N-3/2)\ell}\biggl\|\sup_{\nu}\frac{1}{|R_{\nu}^{\ell}|}\int_{R_{\nu}^{\ell}}g(x+y,t)dydt\biggl\|_{L^2},
\end{align*}
where
\begin{equation*}
R_{\nu}^{\ell}=\{(y,t)\in\mathbb{R}^2\times [1/2,4]:|\lambda^{1/2}(y+\nabla E(\xi_{\nu})+t\nabla q_{\delta}(\xi_{\nu}))|_D\leq 2^{\ell+1}\delta^{1/2}\}.
\end{equation*}

From the above argument, in order to prove inequality (\ref{equ:planekerna22}),
we need to prove an $L^2(\mathbb{R}^3)\rightarrow L^2(\mathbb{R}^2)$ maximal estimate involving averages over
cuboids $R_{\nu}^0$ of dimensions $\frac{\delta}{\lambda^{1/2}}\times(\frac{\delta}{\lambda})^{1/2}\times 1$,
which is basically tangential to the cone $\{(y,t)\in \gamma_{\theta}:\gamma_{\theta} \hspace{0.1cm}\textmd{is} \hspace{0.1cm} \textmd{defined}\hspace{0.1cm} \textmd{as } \hspace{0.1cm} (\ref{gamma}) \hspace{0.1cm}\textmd{and} \hspace{0.1cm}\theta\in \textmd{supp} \hspace{0.1cm}\rho\}$ (see Figure 10),
because in a similar way one can obtain $L^2(\mathbb{R}^3)\rightarrow L^2(\mathbb{R}^2)$ maximal estimates involving averages over $\{R_{\nu}^{\ell}\}_{\nu}$ which are $\frac{2^{2\ell}\delta}{\lambda^{1/2}}\times2^{\ell}(\frac{\delta}{\lambda})^{1/2}\times 1$ cuboids for $\ell=1,2,\cdots$.
However, we can split every $R_{\nu}^0$ along its longer side of size  $(\frac{\delta}{\lambda})^{1/2}$ into $\delta^{-1/2}$ pieces,
i.e. $\{R_{\nu,i}^0\}_{i=1}^{\delta^{-1/2}}$, where $R_{\nu,i}^0$ is  a $\frac{\delta}{\lambda^{1/2}}\times\frac{\delta}{\lambda^{1/2}}\times 1$ tube around $\gamma_{\theta_{\nu,i}}+(x,0)$ (see Figure 10). Then next we will prove a stronger maximal estimate involving averages over tubes $R_{\theta}$ defined by
\begin{equation}
R_{\theta}:=\{(y,t)\in\mathbb{R}^2\times [0,1]: dist\{(y,t),\gamma_{\theta}\}< \frac{\delta}{\lambda^{1/2}}\}.
\end{equation}
That is because
 \begin{align*}
\biggl\|\sup_{\nu}\frac{1}{|R_{\nu}^0|}\int_{R_{\nu}^0}g(x+y,t)dydt\biggl\|_{L^2}&\leq \delta^{1/2}\sum_{i=1}^{\delta^{-1/2}}\biggl\|\sup_{\nu}\frac{1}{|R_{\nu,i}^0|}\int_{R_{\nu,i}^0}g(x+y,t)dydt\biggl\|_{L^2}\\
&\leq \delta^{1/2}\sum_{i=1}^{\delta^{-1/2}}\biggl\|\sup_{\theta}\frac{1}{|R_{\theta}|}
\int_{R_{\theta}}\varrho(\theta)g(x+y,t)
dydt\biggl\|_{L^2}.
\end{align*}
Hence, we would be done if we can obtain the following Kakeya type estimate.

\begin{lem}
\begin{equation}\label{planelemma4}
\biggl\|\sup_{\theta}\frac{1}{|R_{\theta}|}
\int_{R_{\theta}}\varrho(\theta)g(x-y,t)
dydt\biggl\|_{L^2}\leq C\delta^{-1/2}(\log_2\frac{\lambda^{1/2}}{\delta})^2\|g\|_{L^2}.
\end{equation}
\end{lem}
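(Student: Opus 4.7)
The inequality is a Kakeya-type maximal estimate for tubes tangent to the ``flat'' cone
\[
\{(y,t)\in\mathbb{R}^2\times[0,1]:\,y+\nabla E(\xi_\theta)+t\,\nabla q_\delta(\xi_\theta)=0\},
\]
and my plan is to follow the linearization/duality/tube-overlap scheme used in \cite{mss2}, keeping careful track of the $\delta$-dependence that arises because $\nabla q_\delta$ has magnitude of order $\delta$ rather than order $1$.

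First I would linearize: choose a measurable selector $\theta:\mathbb{R}^2\to[\tfrac{\pi}{2}-c_2,\tfrac{\pi}{2}-c_1]$ such that, up to a factor of $2$, the supremum is attained by $A_{\theta(x)}g(x):=|R_{\theta(x)}|^{-1}\int_{R_{\theta(x)}}\varrho(\theta(x))g(x-y,t)\,dy\,dt$. I would then discretize by taking a maximal set $\{\theta_j\}$ of points $\theta_j$ separated by $r/\delta=\lambda^{-1/2}$; since $|\partial_\theta(-\nabla q_\delta(\xi_\theta))|\sim\delta$, two tubes whose $\theta$-parameters differ by less than $r/\delta$ are comparable, so we may assume $\theta(x)$ takes values in $\{\theta_j\}_{j=1}^{J}$ with $J\sim\lambda^{1/2}$. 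Writing $f_j=f\chi_{\{\theta(\cdot)=\theta_j\}}$, duality gives
\[
\|Mg\|_{L^2(\mathbb{R}^2)}=\sup_{\|f\|_2=1}\Bigl|\sum_j\langle A_{\theta_j}f_j,g\rangle\Bigr|\leq \Bigl\|\sum_j A_{\theta_j}^* f_j\Bigr\|_{L^2(\mathbb{R}^3)}\|g\|_{L^2(\mathbb{R}^3)},
\]
so it suffices to show $\|\sum_j A_{\theta_j}^* f_j\|_{L^2(\mathbb{R}^3)}^2\lesssim\delta^{-1}\bigl(\log\tfrac{\lambda^{1/2}}{\delta}\bigr)^4\|f\|_{L^2(\mathbb{R}^2)}^2$.

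Next I would expand the $L^2$ norm as a bilinear form
\[
\Bigl\|\sum_j A_{\theta_j}^* f_j\Bigr\|_2^2=\sum_{j,j'}\int\int f_j(x)\overline{f_{j'}(x')}\,\mathcal{K}(x,x';\theta_j,\theta_{j'})\,dx\,dx',
\]
where $\mathcal{K}(x,x';\theta,\theta')=|R_\theta|^{-1}|R_{\theta'}|^{-1}|(R_\theta+(x,0))\cap(R_{\theta'}+(x',0))|$. The key geometric input is the tube-intersection estimate. Writing $a(\theta)=\nabla E(\xi_\theta)$ and $b(\theta)=\nabla q_\delta(\xi_\theta)$, a direct computation (using that the $r$-neighborhood of $\gamma_\theta$ in $\mathbb{R}^3$ collapses, because $|b|\sim\delta$, to an $r$-neighborhood of the plane curve $y=-a(\theta)-t\,b(\theta)$) shows that for $|\theta-\theta'|=\beta$, the two axes meet at angle $\sim\delta\beta$, so
\[
|(R_\theta+(x,0))\cap(R_{\theta'}+(x',0))|\,\lesssim\,\min\Bigl(r^2,\,\tfrac{r^3}{\delta\beta}\Bigr),\qquad r=\delta/\lambda^{1/2},
\]
and the set of translations $x-x'\in\mathbb{R}^2$ producing a non-empty intersection is contained in the $r$-neighborhood of the segment $\{-(a(\theta)-a(\theta'))-t(b(\theta)-b(\theta')):t\in[0,1]\}$, of area $\lesssim r(\delta\beta+r)$.

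Finally I would dyadically decompose in the angular separation: let $E_k=\{(j,j'):2^{k}\lambda^{-1/2}\leq|\theta_j-\theta_{j'}|<2^{k+1}\lambda^{-1/2}\}$ for $k=0,1,\ldots,\lfloor\log_2\lambda^{1/2}\rfloor$. On $E_k$ the kernel satisfies $\mathcal{K}\lesssim(r^22^k)^{-1}$ and is supported (in the $x-x'$ variable) in a set of area $\lesssim 2^k r^2$, so by the Schur test the contribution of each $E_k$ to $\sup_x\int \mathcal{K}(x,x')\,dx'$ is $O(1)$. Summing over the $O(\log(\lambda^{1/2}/\delta))$ dyadic scales produces the required bound (in fact one gets the stronger $(\log(\lambda^{1/2}/\delta))^{1/2}$, which is absorbed in the stated right-hand side). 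The principal obstacle is the careful geometric bookkeeping in the tube-overlap lemma: one must extract the correct $\delta$-dependent angle $\sim\delta|\theta-\theta'|$ between tube axes, avoiding the naive estimate that would treat the cone as having curvature $1$ and would not reflect the flatness of $\{(\xi,q_\delta(\xi))\}$; with this geometric input in hand, the rest of the argument is a dyadic pigeonholing exactly in the spirit of Proposition 3 of \cite{mss2}.
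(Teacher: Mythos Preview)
Your approach is entirely different from the paper's and contains a genuine gap in the final Schur step. The paper does \emph{not} use a physical-space tube-overlap argument at all; instead it dominates the tube average by the oscillatory operator
\[
A_\theta g(x)=\int\!\!\int e^{i\xi\cdot[x+t\nabla q_\delta(\xi_\theta)+\nabla E(\xi_\theta)]}\sigma_{\bar\delta}(\theta,t,\xi)\,g(\hat\xi,t)\,d\xi\,dt,
\]
dyadically decomposes in $|\xi|\sim\tau$ and in the angular parameter $|\langle\xi_\theta^\perp,\xi/|\xi|\rangle|\sim2^\ell\tau^{-1/2}$, applies the Sobolev estimate $\|\sup_\theta F\|_2^2\le\|F\|_2\|\partial_\theta F\|_2$, and then uses Plancherel in $x$ to reduce to a pointwise bound on the scalar kernel
\[
H^{\ell,\tau}(t,t',\xi)=\int e^{i(t-t')\langle\xi,\nabla q_\delta(\xi_\theta)\rangle}|\varrho(\theta)\beta_{\ell,\tau}(\xi,\theta)|^2\,d\theta,
\]
which is handled by stationary phase. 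The factor $\delta^{-1/2}$ emerges precisely from $\partial_\theta^2\langle\xi,\nabla q_\delta(\xi_\theta)\rangle\sim\delta|\xi|$ in this oscillatory integral, i.e.\ from cancellation, not from geometric tube overlap.

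The gap in your argument is the sentence ``by the Schur test the contribution of each $E_k$ to $\sup_x\int\mathcal K(x,x')\,dx'$ is $O(1)$.'' The bounds $\mathcal K\lesssim(r^22^k)^{-1}$ and support area $\lesssim 2^kr^2$ hold only for a \emph{fixed} pair $(j,j')\in E_k$; they give $\int_{x'}\mathcal K(x,x',\theta_j,\theta_{j'})\,dx'=O(1)$ per pair. But the quantity you must control is $\sup_{j,x}\sum_{j'}\int_{x'}\mathcal K(x,x',\theta_j,\theta_{j'})\,dx'$ (equivalently, after undoing the discretization, $\int_{x'}\mathcal K(x,x',\theta(x),\theta(x'))\,dx'$), and the supports for different $j'$ sit at different translates $c(\theta_{j'})=a(\theta)-a(\theta_{j'})$. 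Since $|\partial_{\theta'}a(\theta')|\sim1$ (the curve $E$ has curvature of order one, independent of $\delta$), these supports spread over a region of length $\sim2^k\lambda^{-1/2}=2^kr/\delta$ rather than $2^kr$. Summing, the Cordoba-type integral is $\sim\lambda^{1/2}/\delta$, giving only $\|M\|_{2\to2}\lesssim\lambda^{1/4}\delta^{-1/2}$, which is too weak by the factor $\lambda^{1/4}$. The point is that the tubes here are within angle $O(\delta)$ of vertical, so tube intersections are large ($\sim r^3/(\delta\beta)$ rather than $r^3/\beta$); the purely geometric overlap argument cannot recover the required bound, and one genuinely needs the oscillatory method of the paper. (A related issue: your discretization at scale $\lambda^{-1/2}$ is not justified, since $|\partial_\theta\nabla E(\xi_\theta)|\sim1$ causes the tube base to shift by $\lambda^{-1/2}\gg r$ over such an interval; but this is secondary to the main gap.)
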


\begin{center}
\includegraphics[height=17  cm]{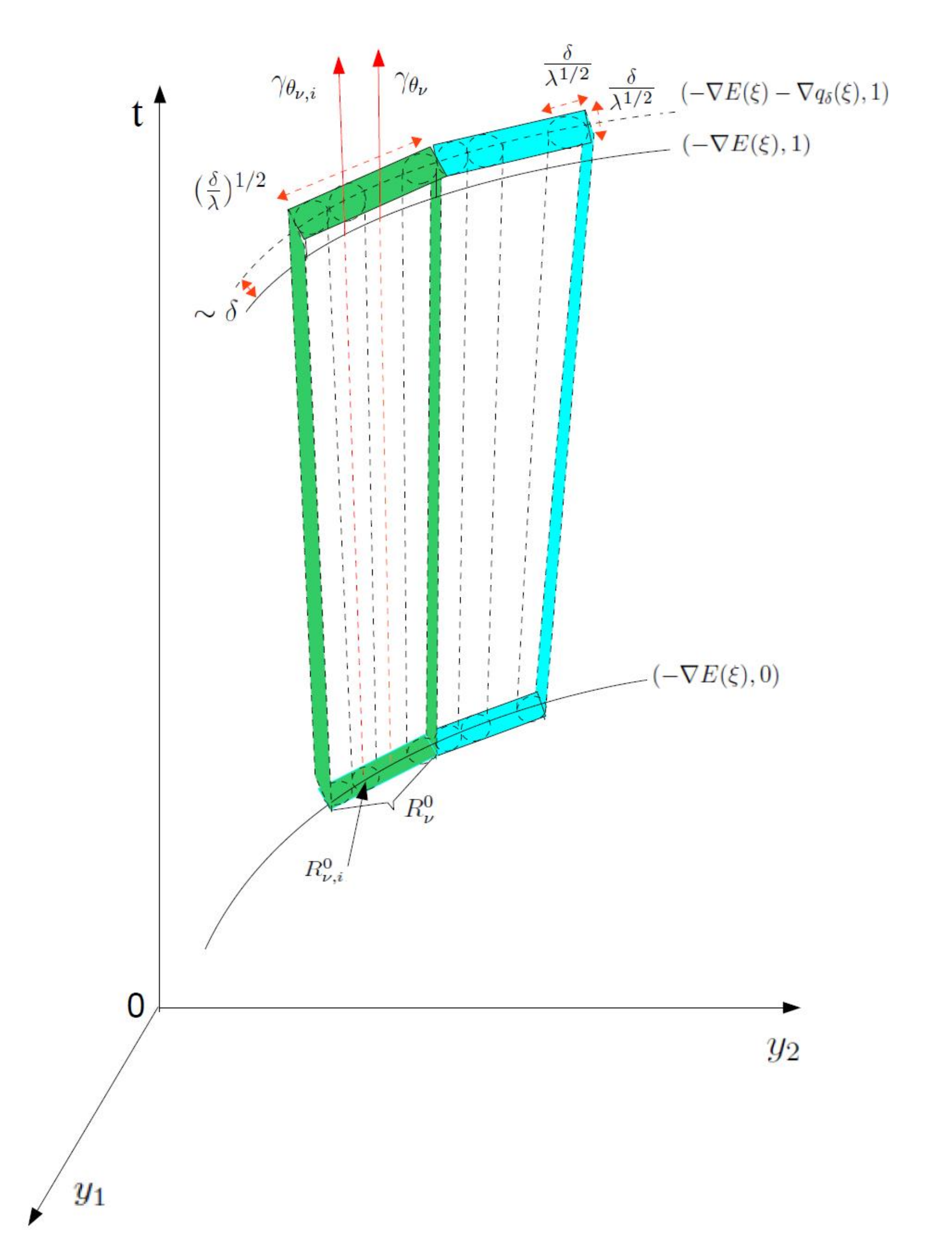}
\end{center}
\begin{center}
Figure 10 : cuboid $R_{\nu}^0$ and tube $R_{\nu,i}^0$
\end{center}

\begin{rem}

The constant $\delta^{-1/2}$ in inequality (\ref{planelemma4}) is sharp if we take $E(\xi)=|\xi|$ and $q_{\delta}(\xi)=\delta|\xi|$. Let
$$M_{\delta,\lambda}g(x)=\sup_{\theta}\frac{1}{|R_{\theta}|}
\int_{R_{\theta}}\varrho(\theta)g(x-y,t)
dydt.$$
In particular, we take $g(z,t)=\chi_{P_{\delta}}(z,t)$, where $P_{\delta}$ is a tubular neighborhood of width $\delta$ around the $t$-axis with height $1$. Assume that $x\in A_{\delta}:=\{x\in \mathbb{R}^2:1\leq |x|\leq 1+\delta,\hspace{0.3cm} c_1\leq |x_1/x_2|\leq c_2 \}$. Now taking $\theta$ so that $(\cos\theta,\sin\theta)=\frac{x}{|x|}$, then we note that
\begin{equation*}
 g*\chi_{R_{\theta}}(x)=|R_{\theta}\cap(P_{\delta}-(x,0))|\approx |R_{\theta}|,
 \end{equation*}
which implies $M_{\delta,\lambda}g(x) \gtrsim 1$ on $A_{\delta}$. Let $C_{\delta}:=\|M_{\delta,\lambda}\|_{L^2\rightarrow L^2}$. Since $|A_{\delta}|\approx \delta$, then
\begin{equation*}
\delta^{1/2}\lesssim\|M_{\delta,\lambda}g\|_{L^2}\leq C_{\delta}\delta \Rightarrow C_{\delta}\gtrsim\delta^{-1/2}.
 \end{equation*}

In fact, if we take $E(\xi)=\frac{\xi_1^2}{\xi_2}$ and $q_{\delta}(\xi)=\delta\frac{\xi_1^3}{\xi_2^2}$, in the similar way as above, we still get the same conclusion, regarding complicated computations, we omit here.
\end{rem}

\begin{proof}
We choose a suitable $\sigma\in C_0^{\infty}(\mathbb{R}^2)$ satisfying $\check{\sigma}\geq0$. Let $\bar{\delta}=\frac{\delta}{\lambda^{1/2}}$ and $\sigma_{\bar{\delta}}(\theta,t,\xi)=\varrho(\theta)\sigma(\bar{\delta}\xi)\chi_{[0,1]}(t)$. Then We have
\begin{align*}
&\int_{\{(y,t)\in \mathbb{R}^2\times [0,1]: dist\{(y,t),\gamma_{\theta}\} <\bar{\delta}\}}g(x-y,t)
dydt\\
&\leq  \int_{\mathbb{R}^2}\int_{\mathbb{R}}\varrho(\theta)\check{\sigma}\biggl(\frac{y+t\nabla q_{\delta}(\cos{\theta},\sin{\theta})+\nabla E(\cos{\theta},\sin{\theta})}{\bar{\delta}}\biggl)g(x-y,t)
\chi_{[0,1]}(t)dydt\\
&= {\bar{\delta}}^2\int_{\mathbb{R}^2}\int_{\mathbb{R}}\varrho(\theta)[\sigma(\bar{\delta}\cdot)]^{\vee}(y+t\nabla q_{\delta}(\cos{\theta},\sin{\theta})+\nabla E(\cos{\theta},\sin{\theta}))g(x-y,t)\\
&\quad \times
\chi_{[0,1]}(t)dydt\\
&= {\bar{\delta}}^2\int_{\mathbb{R}^2}\int_{\mathbb{R}}\int_{\mathbb{R}^2}e^{i\xi\cdot[x-y+t\nabla q_{\delta}(\cos{\theta},\sin{\theta})+\nabla E(\cos{\theta},\sin{\theta})]}\sigma_{\bar{\delta}}(\theta,t,\xi)d\xi g(y,t)dydt\\
&= {\bar{\delta}}^2\int_{\mathbb{R}^2}\int_{\mathbb{R}}e^{i\xi\cdot[x+t\nabla q_{\delta}(\cos{\theta},\sin{\theta})+\nabla E(\cos{\theta},\sin{\theta})]}\sigma_{\bar{\delta}}(\theta,t,\xi)g(\hat{\xi},t)d\xi dt\\
&=:{\bar{\delta}}^2A_{\theta}g(x),
\end{align*}
where $g(\hat{\xi},t)$ denotes the partial Fourier transform of $g$ with respect to the $\xi$-variables.

Therefore, we may further reduce to prove that
\begin{equation}
\|\sup_{\theta}|A_{\theta}g|\|_{L^2}\leq C\delta^{-1/2}|\log_2\bar{\delta}|^2\|g\|_{L^2}.
\end{equation}

In order to prove the above inequality, we need to break up the operator $A_{\theta}$. Just as before, we take $\beta \in C_0^{\infty}(\mathbb{R})$ and define the dyadic operators $A_{\theta}^{\tau}$ by
\begin{equation*}
A_{\theta}^{\tau}g(x)=\int_{\mathbb{R}^2}\int_{\mathbb{R}}e^{i\xi\cdot[x+t\nabla q_{\delta}(\cos{\theta},\sin{\theta})+\nabla E(\cos{\theta},\sin{\theta})]}\sigma_{\bar{\delta}}(\theta,t,\xi)\beta(\frac{|\xi|}{\tau})g(\hat{\xi},t)d\xi dt,
\end{equation*}
so it suffices to prove that
\begin{equation}\label{equ:planeA_theta}
\|\sup_{\theta}|A_{\theta}^{\tau}g|\|_{L^2}\leq C\delta^{-1/2}\log_2{\tau}^{1/2}\|g\|_{L^2},\hspace{0.2cm}\tau>2.
\end{equation}
This is because $A_{\theta}=\sum_{1<\kappa<|\log_2{\bar{\delta}}|+C}A_{\theta}^{2^{\kappa}}+\tilde{R}_{\theta}$, where $C$ is a fixed constant and  the kernel of $\tilde{R}_{\theta}$ defined by
\begin{equation*}
\int_{\mathbb{R}^2}e^{i\xi\cdot[(x-y)+t\nabla q_{\delta}(\cos{\theta},\sin{\theta})+\nabla E(\cos{\theta},\sin{\theta})]}\sigma_{\bar{\delta}}(\theta,t,\xi)\beta_0(|\xi|)d\xi
\end{equation*}
is controlled by  $\mathcal{O}((1+|y-x|)^{-N})$ for any $N$ with bounds independent of $\theta$.

Let $\theta_{\xi}=\arg\xi$. We need to make one final reduction based on the following observation:
\begin{equation}\label{compu}
\begin{aligned}
&\frac{\partial}{\partial \theta}\langle\xi,\nabla q_{\delta}(\cos{\theta},\sin{\theta})\rangle\\
&
=\left\langle\xi,(-\sin\theta,\cos\theta)\left(
\begin{array}{lcr}
\partial_{11}q_{\delta}(\cos\theta,\sin\theta)&\partial_{12}q_{\delta}(\cos\theta,\sin\theta)\\
\partial_{21}q_{\delta}(\cos\theta,\sin\theta)&\partial_{22}q_{\delta}(\cos\theta,\sin\theta)
\end{array}\right)\right\rangle\\
&=6\delta|\xi|\frac{\cos\theta}{\sin^4\theta}\langle\frac{\xi}{|\xi|},(-\sin\theta,\cos\theta)\rangle=6\delta|\xi|\frac{\cos\theta}{\sin^4\theta}\sin(\theta_{\xi}-\theta)=0\Leftrightarrow \theta=\theta_{\xi}.
\end{aligned}
\end{equation}

For $\ell>0$, put
\begin{align*}
A_{\theta}^{\tau,\ell}g(x)=&\int_{\mathbb{R}^2}\int_{\mathbb{R}}e^{i\xi\cdot[x+t\nabla q_{\delta}(\cos{\theta},\sin{\theta})+\nabla E(\cos{\theta},\sin{\theta})]}\sigma_{\bar{\delta}}(\theta,t,\xi)\beta(\frac{|\xi|}{\tau})\beta_{\ell,\tau}(\xi,\theta)g(\hat{\xi},t)d\xi dt,
\end{align*}
where $\beta_{\ell,\tau}(\xi,\theta)=\beta\left(2^{-\ell}\tau^{1/2}|\langle(-\sin\theta,\cos\theta),\frac{\xi}{|\xi|}\rangle|\right)$. In fact, $|\langle(-\sin\theta,\cos\theta),\frac{\xi}{|\xi|}\rangle|\leq C$, $C$ is very small, which implies that $\ell\leq C\log_2\tau^{1/2}$. We define $A_{\theta}^{\tau,0}=A_{\theta}^{\tau}-\sum_{0<\ell\leq C\log_2\tau^{1/2}}A_{\theta}^{\tau,\ell}$.

Again we use Lemma \ref{lem:Lemma3}, for  $\ell\geq 0$,
\begin{equation*}
\biggl\|\sup_{\theta}|A_{\theta}^{\tau,\ell}g|\biggl\|_{L^2}^2\leq C\biggl(\int\int|A_{\theta}^{\tau,\ell}g(x)|^2d\theta dx\biggl)^{1/2}
\biggl(\int\int|\frac{\partial}{\partial \theta}A_{\theta}^{\tau,\ell}g(x)|^2d\theta dx\biggl)^{1/2}.
\end{equation*}

Inequality (\ref{equ:planeA_theta}) will be obtained if we can show that
\begin{equation}\label{equ:planeAthetaell}
\biggl(\int\int|A_{\theta}^{\tau,\ell}g(x)|^2d\theta dx\biggl)^{1/2}\leq C\delta^{-1/2}2^{-\ell/2}\tau^{-1/4}\|g\|_{L^2},
\end{equation}
\begin{equation}\label{equ:planeAthetael2}
\biggl(\int\int|\frac{\partial}{\partial \theta}A_{\theta}^{\tau,\ell}g(x)|^2d\theta dx\biggl)^{1/2}\leq C\delta^{-1/2}2^{\ell/2}\tau^{1/4}\|g\|_{L^2}.
\end{equation}

From $\beta\left(2^{-\ell}\tau^{1/2}|\langle(-\sin\theta,\cos\theta),\frac{\xi}{|\xi|}\rangle|\right)\neq 0$ and $\beta(\frac{|\xi|}{\tau})\neq 0$, we know that on the support of the symbol of the operator $A_{\theta}^{\tau,\ell}$, $|\langle(-\sin\theta,\cos\theta),\frac{\xi}{|\xi|}\rangle|\approx 2^{\ell}\tau^{-1/2}$ and $|\xi|\approx \tau$.

Since
\begin{align*}
\frac{\partial}{\partial \theta}\xi\cdot\nabla E(\cos{\theta},\sin{\theta})&=\langle\xi,2(-\sin\theta,\cos\theta)\rangle \left(
\begin{array}{lcr}
\partial_{11}E(\cos\theta,\sin\theta)&\partial_{12}E(\cos\theta,\sin\theta)\\
\partial_{21}E(\cos\theta,\sin\theta)&\partial_{22}E(\cos\theta,\sin\theta)
\end{array}\right)\\
&=\langle\xi,2(-\sin\theta,\cos\theta)\rangle \left(
\begin{array}{lcr}
\frac{1}{\sin\theta}&-\frac{\cos\theta}{\sin^2\theta}\\
-\frac{\cos\theta}{\sin^2\theta}&\frac{\cos^2\theta}{\sin^3\theta}
\end{array}\right)\\
&=\frac{2}{\sin\theta}\langle\xi,(-\sin\theta,\cos\theta)\rangle \left(
\begin{array}{lcr}
1&-\frac{\cos\theta}{\sin\theta}\\
-\frac{\cos\theta}{\sin\theta}&\frac{\cos^2\theta}{\sin^2\theta}
\end{array}\right)\\
&=|\xi|\frac{2}{\sin^3\theta}\langle\frac{\xi}{|\xi|},(-\sin\theta,\cos\theta)\rangle,
\end{align*}
then
$$\left|\frac{\partial}{\partial \theta}\xi\cdot\biggl[t\nabla q_{\delta}(\cos{\theta},\sin{\theta})+\nabla E(\cos{\theta},\sin{\theta})\biggl]\right|\leq C2^{\ell}\tau^{1/2}.$$
It is easy to check that
$$\left|\frac{\partial}{\partial \theta}\beta\left(2^{-\ell}\tau^{1/2}|\langle(-\sin\theta,\cos\theta),\frac{\xi}{|\xi|}\rangle|\right)\right|\leq C2^{-\ell}\tau^{1/2},$$
one can see that $\frac{\partial}{\partial \theta}A_{\theta}^{\tau,\ell}$ behaves like $2^{\ell}\tau^{1/2}A_{\theta}^{\tau,\ell}$, then we only prove inequality (\ref{equ:planeAthetaell}).

Employing Plancherel's theorem, We have that
\begin{align*}
&\int\int|A_{\theta}^{\tau,\ell}g(x)|^2d\theta dx \\
&=\int_{\mathbb{R}^2}\int_{\mathbb{R}}A_{\theta}^{\tau,\ell}g(x)\overline{A_{\theta}^{\tau,\ell}g(x)}d\theta dx\\
&=\int_{\mathbb{R}^2}\int_{\mathbb{R}}\left(\int_{\mathbb{R}^2}\int_{\mathbb{R}}e^{i\xi\cdot[x+t\nabla q_{\delta}(\cos{\theta},\sin{\theta})+\nabla E(\cos{\theta},\sin{\theta})]}\sigma_{\bar{\delta}}(\theta,t,\xi)\beta(\frac{|\xi|}{\tau})\beta_{\ell,\tau}(\xi,\theta)g(\hat{\xi},t)d\xi dt\right)\\
&\quad\times \left(\int_{\mathbb{R}^2}\int_{\mathbb{R}}e^{-i\xi'\cdot[x+t'\nabla q_{\delta}(\cos{\theta},\sin{\theta})+\nabla E(\cos{\theta},\sin{\theta})]}\overline{\sigma_{\bar{\delta}}(\theta,t',\xi')\beta(\frac{|\xi'|}{\tau})\beta_{\ell,\tau}(\xi',\theta)g(\hat{\xi'},t')}d\xi' dt'\right)\\
&\quad\times d\theta dx\\
&=\int_{\mathbb{R}^2}\int_{\mathbb{R}}\int_{\mathbb{R}^2}\int_{\mathbb{R}}
\biggl(\int_{\mathbb{R}}e^{i[(t\xi-t'\xi')\cdot\nabla q_{\delta}(\cos{\theta},\sin{\theta})+(\xi-\xi')\cdot\nabla E(\cos{\theta},\sin{\theta})]}
\sigma_{\bar{\delta}}(\theta,t,\xi)\overline{\sigma_{\bar{\delta}}(\theta,t',\xi')}\\
&\quad\times\beta_{\ell,\tau}(\xi,\theta)\overline{\beta_{\ell,\tau}(\xi',\theta)}
d\theta\biggl)
\delta_0(\xi'-\xi)\beta(\frac{|\xi|}{\tau})\overline{\beta(\frac{|\xi'|}{\tau})}g(\hat{\xi},t)\overline{g(\hat{\xi'},t')
}d\xi dt d\xi' dt'\\
&=\int_{\mathbb{R}^2}\int_0^1\int_0^1
\left(\int_{\mathbb{R}}e^{i[(t-t')\xi\cdot\nabla q_{\delta}(\cos{\theta},\sin{\theta})]}|\rho(\theta)\beta_{\ell,\tau}(\xi,\theta)|^2
d\theta\right)|\sigma(\bar{\delta}\xi)\beta(\frac{|\xi|}{\tau})|^2g(\hat{\xi},t)\\
&\quad\times \overline{g(\hat{\xi},t')}
d\xi dt dt',
\end{align*}
where $\delta_0$ denotes the two-dimensional Dirac delta function.

Hence
\begin{align*}
\int\int|A_{\theta}^{\tau,\ell}g(x)|^2d\theta dx=\int_{\mathbb{R}^2}\biggl\{\int_0^1\int_0^1 H^{\ell,\tau}(t,t',\xi)\left|\beta(\frac{|\xi|}{\tau})\sigma(\bar{\delta}\xi)\right|^2g(\hat{\xi},t)\overline{g(\hat{\xi},t')}dtdt'\biggl\}d\xi,
\end{align*}
where
\begin{align*}
H^{\ell,\tau}(t,t',\xi)=\int_{\mathbb{R}}e^{i(t-t')\langle\xi,\nabla q_{\delta}(\cos{\theta},\sin{\theta})\rangle}|\varrho(\theta)\beta_{\ell,\tau}(\xi,\theta)|^2d\theta.
\end{align*}

First we claim that for $\ell>0$,
\begin{equation}\label{claim1}
|H^{\ell,\tau}(t,t',\xi)|\leq C2^{\ell}\tau^{-1/2}(1+\delta 2^{2\ell}|t-t'|)^{-N},\hspace{0.2cm}|\xi|\approx \tau.
\end{equation}

Since  $\langle(-\sin\theta,\cos\theta),\frac{\xi}{|\xi|}\rangle=\sin(\theta_{\xi}-\theta)$ and $|\theta_{\xi}-\theta|\approx 2^{\ell}\tau^{-1/2}$ on supp $\beta_{\ell,\tau}$, we make the change of variable $\theta=\theta_{\xi}+\arcsin(2^{\ell}\tau^{-1/2}\omega)$, then
\begin{align*}
H^{\ell,\tau}(t,t',\xi)&=\int_{\mathbb{R}}e^{i(t-t')\left\langle\xi,\nabla q_{\delta}\left(\cos({\theta_{\xi}+\arcsin(2^{\ell}\tau^{-1/2}\omega)}),\sin(\theta_{\xi}+\arcsin(2^{\ell}\tau^{-1/2}\omega)\right)\right\rangle}\\
&\quad\times |\varrho(\theta_{\xi}+\arcsin(2^{\ell}\tau^{-1/2}\omega))
\beta(|\omega|)|^2\frac{2^{\ell}\tau^{-1/2}}{\sqrt{1-2^{2\ell}\tau^{-1}\omega^2}}d\omega.
\end{align*}

In a similar way as (\ref{compu}) we have that
\begin{align*}
&\frac{\partial}{\partial\omega}\left\langle\xi,\nabla q_{\delta}\left(\cos(\theta_{\xi}+\arcsin(2^{\ell}\tau^{-1/2}\omega)),\sin(\theta_{\xi}+\arcsin(2^{\ell}\tau^{-1/2}\omega)\right)\right\rangle\\
&=\frac{62^{\ell}\tau^{-1/2}}{\sqrt{1-2^{2\ell}\tau^{-1}\omega^2}}\delta|\xi|\frac{\cos(\theta_{\xi}+\arcsin(2^{\ell}\tau^{-1/2}\omega))}{\sin^4(\theta_{\xi}+\arcsin(2^{\ell}\tau^{-1/2}\omega))}\\
&\quad\times
\biggl\langle\biggl(-\sin(\theta_{\xi}+\arcsin(2^{\ell}\tau^{-1/2}\omega)), \cos(\theta_{\xi}+\arcsin(2^{\ell}\tau^{-1/2}\omega))\biggl),\frac{\xi}{|\xi|}\biggl\rangle
\\
&=\frac{62^{\ell}\tau^{-1/2}\delta|\xi|}{\sqrt{1-2^{2\ell}\tau^{-1}\omega^2}}\frac{\cos(\theta_{\xi}+\arcsin(2^{\ell}\tau^{-1/2}\omega))}{\sin^4(\theta_{\xi}+\arcsin(2^{\ell}\tau^{-1/2}\omega))}
\langle\sin(\theta_{\xi}-(\theta_{\xi}+\arcsin(2^{\ell}\tau^{-1/2}\omega)))\rangle
\\
&=\frac{-62^{2\ell}\tau^{-1}\delta|\xi|\omega}{\sqrt{1-2^{2\ell}\tau^{-1}\omega^2}}\frac{\cos(\theta_{\xi}+\arcsin(2^{\ell}\tau^{-1/2}\omega))}{\sin^4(\theta_{\xi}+\arcsin(2^{\ell}\tau^{-1/2}\omega))}.
\end{align*}
Since the support of $\rho$ and $\beta$ gives that
$\theta_{\xi}+\arcsin(2^{\ell}\tau^{-1/2}\omega)\approx 1$ and $|\omega|\approx 1$, in addition to  $|\xi|\approx \tau$ and $\ell\leq C \log_2\tau^{1/2}$, then
\begin{align*}
&\frac{\partial}{\partial\omega}\left\langle\xi,\nabla q_{\delta}\left(\cos(\theta_{\xi}+\arcsin(2^{\ell}\tau^{-1/2}\omega)),\sin(\theta_{\xi}+\arcsin(2^{\ell}\tau^{-1/2}\omega)\right)\right\rangle\approx 2^{2\ell}\delta.
\end{align*}

The claim (\ref{claim1}) will follow from integration by parts. Based on the above claim,  H\"{o}lder's inequality and Plancherel's theorem, we have
\begin{align*}
&\int\int|A_{\theta}^{\tau,\ell}g(y)|^2d\theta dy\\
&\leq C_N2^{\ell}\tau^{-1/2}\int_{\mathbb{R}^2}\biggl\{\int_0^1\int_0^1 \frac{|\overline{g(\hat{\xi},t')}|}{(1+\delta2^{2\ell}|t-t'|)^N}dt' |g(\hat{\xi},t)|dt\biggl\} |\beta(\frac{|\xi|}{\tau})\sigma(\bar{\delta}\xi)|^2d\xi\\
&\leq C_N2^{\ell}\tau^{-1/2}\int_{\mathbb{R}^2}\biggl\{\int_0^1|\overline{g(\hat{\xi},\cdot)}|*(1+\delta2^{2\ell}|\cdot|)^{-N}(t) |g(\hat{\xi},t)|dt\biggl\}
|\beta(\frac{|\xi|}{\tau})\sigma(\bar{\delta}\xi)|^2d\xi\\
 &\leq C_N2^{\ell}\tau^{-1/2}\int_{\mathbb{R}^2}\biggl\||\overline{g(\hat{\xi},\cdot)}|*(1+\delta2^{2\ell}|\cdot|)^{-N}\biggl\|_{(L^2,dt)} \|g(\hat{\xi},t)\|_{(L^2,dt)} |\beta(\frac{|\xi|}{\tau})\sigma(\bar{\delta}\xi)|^2d\xi\\
 &\leq C_N2^{-\ell}\tau^{-1/2}\delta^{-1}\|g\|_{L^2}^2,
\end{align*}
and inequality (\ref{equ:planeAthetaell}) has been proved for $\ell>0$.

For $\ell=0$, the proof is simpler. We still make the change of variable  $\theta\rightarrow \theta_{\xi}+\arcsin(\tau^{-1/2}\omega)$, then the support of the symbol of the operator $H^{0,\tau}$ gives that $0$ is the non-degenerate critical point, since
\begin{align*}
&\frac{\partial^2}{\partial\omega^2}\langle\xi,\nabla q_{\delta}(\cos(\theta_{\xi}+\arcsin(\tau^{-1/2}\omega)),\sin(\theta_{\xi}+\arcsin(\tau^{-1/2}\omega)))\rangle\biggl|_{\omega=0}\\
&=-6\delta|\xi|\tau^{-1}\frac{\cos\theta_{\xi}}{\sin^4\theta_{\xi}}\approx \delta.
\end{align*}
Finally, integration by parts implies that
\begin{equation}
|H^{0,\tau}(t,t',\xi)|\leq C\tau^{-1/2}(1+\delta|t-t'|)^{-1/2}.
\end{equation}
We still use H\"{o}lder's inequality and Plancherel's theorem to get inequality (\ref{equ:planeAthetaell}) for $\ell=0$.
\end{proof}

\section[More general Fourier integral operators not satisfying the  cinematic curvature condition uniformly]{More general Fourier integral operators not satisfying the  cinematic curvature condition uniformly.}

The purpose of  this section is to prove Theorem \ref{L^4therom}. The main idea follows from the proof of Theorem 6.1 given in \cite{mss}. In fact, the proof has a similar structure with Section 3. Instead of repeating the proof here, we give a brief overview, and only a detailed argument partly. In this section, we still assume $\delta\lambda>1$ and the $\xi$-support of the symbol $a$ is in the first quadrant.

Since $t\approx 1$, we can replace $t$ by $1/t$ in (\ref{estimate:L4}). By (\ref{phase function}),  we write
\begin{equation*}
-t^{-2}\xi_2\Phi(s,\tilde{q}(s,\delta),\delta):=E(\xi)+q_{\delta}(\xi,t),\hspace{0.3cm}s=s(\xi,t^{-1}),
\end{equation*}
where
\begin{equation}\label{phasef}
E(\xi)=\frac{\xi_1^2}{2\xi_2}, \hspace{0.5cm}q_{\delta}(\xi,t)=t\delta\phi'(0)\frac{\xi_1^3}{\xi_2^2}+\delta^2 R(t,\delta,\xi),
\end{equation}
by abuse of notation, we have written $R(t,\delta,\xi)$ in place of $R(t^{-1},\delta,\xi)$ in (\ref{phasef}).

Then
\begin{equation}
 \tilde{F}_{\lambda}f(y,t):=\tilde{F}_{\lambda}^{\delta}f(y,t)=\int_{{\mathbb{R}}^2}e^{i(\xi \cdot y+E(\xi)+q_{\delta}(\xi,t))}\tilde{a}_{\lambda}(y,t,\xi)\hat{f}(\xi)d\xi,
\end{equation}
where $\tilde{a}_{\lambda}(y,t,\xi)=\rho_1(y,t)a(\xi,t)\rho_0(\frac{\delta}{\lambda}|\xi|)\tilde{\chi}(\frac{\xi_1}{\xi_2})$, which is a symbol of order zero in $\xi$.

The support of $\tilde{\chi}$ implies that  we are working in a fixed, small conic region $c_1\leq |\xi_1/\xi_2|\leq c_2$. Using the angular decomposition appeared in Chapter 3,  we can write  $\tilde{F}_{\lambda}=\sum_{\nu}F_{\lambda}^{\nu}$, where the sum runs only over $(\frac{\pi}{2}-c_2)\sqrt{\frac{\lambda}{\delta}}\lesssim \nu \lesssim (\frac{\pi}{2}-c_1)\sqrt{\frac{\lambda}{\delta}}$ and
\begin{equation}
 \tilde{F}_{\lambda}^{\nu}f(y,t):=\int_{{\mathbb{R}}^2}e^{i(\xi \cdot y+E(\xi)+q_{\delta}(\xi,t))}\tilde{a}_{\lambda}(y,t,\xi)\chi_{\nu}(\xi)\hat{f}(\xi)d\xi.
\end{equation}

By Minkowski's inequality, we have
\begin{align*}
\|\tilde{F}_{\lambda}f\|_{L^4}^2&=\|\sum_{\nu,\mu}\tilde{F}_{\lambda}^{\nu}f\tilde{F}_{\lambda}^{\mu}f\|_{L^2}\\
& =\|\sum_{\ell}\sum_{|\nu-\mu|\approx2^{\ell}}\tilde{F}_{\lambda}^{\nu}f\tilde{F}_{\lambda}^{\mu}f\|_{L^2}\\
&\leq C\sum _{\ell}\biggl\|\sum_{|\nu-\mu|\approx2^{\ell},\nu\leq \mu}\tilde{F}_{\lambda}^{\nu}f\tilde{F}_{\lambda}^{\mu}f\biggl\|_{L^2},
\end{align*}
where $2^{\ell}\leq (c_2-c_1)\sqrt{\frac{\lambda}{\delta}}$.

Now we make a further decomposition so that the symbol becomes
\begin{equation}
\tilde{a}_{\lambda,\ell}^{\nu,j}(y,t,\xi)=\tilde{a}_{\lambda}(y,t,\xi)\chi_{\nu}(\xi)\varphi(2^{\ell}\lambda^{-1}q_{\delta}'(\xi,t)-j),
\end{equation}
where $\varphi$ is defined as in Chapter 3 and $q_{\delta}'(\xi,t)=\partial_tq_{\delta}(\xi,t)$. Since $|\xi|\approx \frac{\lambda}{\delta}$, then we have $\tilde{F}_{\lambda}^{\nu}=\sum_{j\approx 2^{\ell}}\tilde{F}_{\lambda,\ell}^{\nu,j}$, where
\begin{equation*}
\tilde{F}_{\lambda,\ell}^{\nu,j}f(y,t)=\int_{{\mathbb{R}}^2}e^{i(\xi \cdot y+E(\xi)+q_{\delta}(\xi,t))}\tilde{a}_{\lambda,\ell}^{\nu,j}(y,t,\xi)\hat{f}(\xi)d\xi.
\end{equation*}

Since $c_1\leq |\xi_1/\xi_2|\leq c_2$ and $q_{\delta}'(\xi,t)=\delta\phi'(0)\frac{\xi_1^3}{\xi_2^2}+\delta^2\partial_tR(t,\delta,\xi)$, then $2^{-\ell}\lambda(j-1)\lesssim \delta |\xi|\lesssim 2^{-\ell}\lambda(j+1)$. The support of $\chi_{\nu}$ implies that  the $\xi$-support of the symbol is comparable to a $ 2^{-\ell}\frac{\lambda}{\delta}\times {(\frac{\lambda}{\delta})}^{1/2}$ rectangle.  Meanwhile, by (\ref{II}) and (\ref{phasef}), we notice that
\begin{equation}\label{symbol1}
|\partial_{\xi}^{\alpha}\partial_{y,t}^{\beta}\tilde{a}_{\lambda,\ell}^{\nu,j}(y,t,\xi)|\leq C_{\alpha,\beta}(1+|\xi|)^{-|\alpha|/2}(\delta\lambda)^{|\beta|/2}.
\end{equation}

Now we use two almost orthogonal lemmas. One of them is as follows
\begin{lem}\label{othogonal1}
Suppose that $|j+k-j'-k'|\geq \lambda^{\varepsilon}$. Then for any $N>0$,
\begin{equation}
\left|\int_{\mathbb{R}^3}\tilde{F}_{\lambda,\ell}^{\nu,j}f\tilde{F}_{\lambda,\ell}^{\mu,k}f\overline{\tilde{F}_{\lambda,\ell}^{\nu',j'}f\tilde{F}_{\lambda,\ell}^{\mu',k'}f}dydt\right|
\leq C_{\varepsilon,N}\lambda^{-N}\|f\|_{L^4}^4.
\end{equation}
\end{lem}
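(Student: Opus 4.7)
The lemma is an almost-orthogonality statement, and the natural strategy is to expand the fourfold product into an $11$-dimensional oscillatory integral and then integrate by parts in $t$ to generate rapid decay. Writing $\vec\xi=(\xi_1,\xi_2,\xi_3,\xi_4)$, $(j_1,j_2,j_3,j_4)=(j,k,j',k')$, and $(\nu_1,\nu_2,\nu_3,\nu_4)=(\nu,\mu,\nu',\mu')$, the left-hand side equals
\begin{equation*}
\int_{\mathbb{R}^3}\!\int_{\mathbb{R}^8}\! e^{i\Phi(y,t,\vec\xi)}\Big(\prod_{i=1}^{4}\tilde a_{\lambda,\ell}^{\nu_i,j_i}(y,t,\xi_i)\Big)\hat f(\xi_1)\hat f(\xi_2)\overline{\hat f(\xi_3)\hat f(\xi_4)}\,d\vec\xi\,dy\,dt,
\end{equation*}
with $\Phi=(\xi_1+\xi_2-\xi_3-\xi_4)\cdot y+\sum_{i=1,2}[E(\xi_i)+q_\delta(\xi_i,t)]-\sum_{i=3,4}[E(\xi_i)+q_\delta(\xi_i,t)]$. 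The cutoff $\varphi(2^{\ell}\lambda^{-1}q_\delta'(\xi_i,t)-j_i)$ in each amplitude confines $q_\delta'(\xi_i,t)$ to $[2^{-\ell}\lambda(j_i-1),2^{-\ell}\lambda(j_i+1)]$ on the support, so
\begin{equation*}
\partial_t\Phi=2^{-\ell}\lambda\,[(j+k)-(j'+k')]+O(2^{-\ell}\lambda),
\end{equation*}
and the hypothesis $|j+k-j'-k'|\geq\lambda^{\varepsilon}$ gives the decisive lower bound $|\partial_t\Phi|\gtrsim 2^{-\ell}\lambda^{1+\varepsilon}$ for $\lambda$ sufficiently large.

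\textbf{Integration by parts.} With $L^* g=-i\,\partial_t((\partial_t\Phi)^{-1}g)$ and no boundary contributions (since $\rho_1$ cuts off in $t$), the integral equals $\int\!\int e^{i\Phi}(L^*)^N(\prod_i\tilde a_i)\,d\vec\xi\,dy\,dt$ for every $N$. The symbol bound (\ref{symbol1}), namely $|\partial_t^{\alpha}\tilde a_{\lambda,\ell}^{\nu,j}|\lesssim(\delta\lambda)^{\alpha/2}$, the pointwise estimate $|\partial_t^n q_\delta|\lesssim\delta^2|\xi|\lesssim\delta\lambda$ for $n\geq 2$ (which follows from $q_\delta(\xi,t)=t\delta\phi'(0)\xi_1^3/\xi_2^2+\delta^2 R(t,\delta,\xi)$ and the homogeneity of $R$), and the structural relation $2^{2\ell}\leq\lambda/\delta$ together yield
\begin{equation*}
\frac{|\partial_t\tilde a_i|}{|\partial_t\Phi|}\lesssim\frac{(\delta\lambda)^{1/2}\cdot 2^{\ell}}{\lambda^{1+\varepsilon}}\leq\lambda^{-\varepsilon},\qquad\frac{|\partial_t^n\Phi|}{|\partial_t\Phi|^n}\lesssim\frac{\delta\lambda\cdot 2^{n\ell}}{\lambda^{n(1+\varepsilon)}}\leq\lambda^{-n\varepsilon}\ (n\geq 2),
\end{equation*}
so every term in the Leibniz expansion of $(L^*)^N(\prod_i\tilde a_i)$ is bounded on the support by $C_N\lambda^{-N\varepsilon}$.

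\textbf{Finishing and main obstacle.} The modified amplitude $\widetilde A_N:=(L^*)^N(\prod_i\tilde a_i)$ can be expanded as an absolutely convergent Fourier series in $\vec\xi$ on a fixed box of $\lambda$-polynomial size containing its support, giving $\widetilde A_N=\sum_{\vec m}c_{\vec m}(y,t)\prod_i e^{i m_i\cdot\xi_i/R}$ with $\|c_{\vec m}\|_\infty\leq C_{N,M}\lambda^{-N\varepsilon}(1+|\vec m|)^{-M}$ for every $M$. Substituting back reduces the integral to an absolutely convergent sum over $\vec m$ of genuine fourfold products of Fourier integral operators acting on $f$ (the factor $e^{im_i\cdot\xi_i/R}$ merely produces a harmless linear shift of the phase). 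H\"older's inequality and the crude polynomial $L^4$-boundedness $\|\tilde F_{\lambda,\ell}^{\nu,j}\|_{L^4\to L^4}\lesssim\lambda^{C_0}$ (obtainable by interpolation between the trivial $L^2$ bound via Plancherel and the $L^\infty$ bound via the $L^1$-norm of the kernel) then produce $|\text{LHS}|\lesssim C_N\lambda^{4C_0-N\varepsilon}\|f\|_{L^4}^4$, and choosing $N\varepsilon\geq N_0+4C_0$ proves the lemma for any prescribed $N_0$. The main obstacle is the middle step: verifying that every term in the Leibniz expansion of $(L^*)^N$ genuinely gains $\lambda^{-\varepsilon}$ at each application, even though $|\partial_t\Phi|$ can be as small as $(\delta\lambda)^{1/2}\lambda^{\varepsilon}$. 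The essential cancellation is that $\partial_t^n q_\delta$ for $n\geq 2$ has the small size $\delta\lambda$ rather than $\lambda$, which precisely matches the $(\delta\lambda)^{1/2}$ growth of the symbol's $t$-derivatives via $2^{2\ell}\leq\lambda/\delta$.
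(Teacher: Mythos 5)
Your proposal is correct and follows essentially the same route as the paper's proof: in both arguments the heart is the lower bound $|\partial_t\Phi|\gtrsim 2^{-\ell}\lambda\,|j+k-j'-k'|\geq (\delta\lambda)^{1/2}\lambda^{\varepsilon}$ forced by the cutoffs $\varphi(2^{\ell}\lambda^{-1}q_{\delta}'(\xi,t)-j_i)$, played against $|\partial_t^{n}q_{\delta}|\lesssim\delta\lambda$ for $n\geq 2$, $|\partial_t \tilde a|\lesssim 2^{\ell}\delta\leq(\delta\lambda)^{1/2}$, and the relations $2^{2\ell}\leq\lambda/\delta$, $\delta\lambda>1$, so that $N$ integrations by parts in $t$ gain $\lambda^{-N\varepsilon}$. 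The only divergence is the endgame: the paper bounds the resulting eight-dimensional frequency kernel pointwise and integrates it against $\|\hat f\|_{\infty}^{4}\leq\|f\|_{L^{1}}^{4}\lesssim\|f\|_{L^{4}}^{4}$ over its polynomially sized support (using the compact support of $f$), which is somewhat more direct than your Fourier-series decoupling plus crude $L^{4}$ operator bounds, but both close the argument with the same polynomial-loss-versus-$\lambda^{-N\varepsilon}$ bookkeeping.
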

\begin{proof}
\begin{equation}
\int_{\mathbb{R}^3}\tilde{F}_{\lambda,\ell}^{\nu,j}f\tilde{F}_{\lambda,\ell}^{\mu,k}f\overline{\tilde{F}_{\lambda,\ell}^{\nu',j'}f\tilde{F}_{\lambda,\ell}^{\mu',k'}f}dydt
=\int H_{j,k,j',k'}^{\nu,\mu,\nu',\mu'}(\eta,\xi,\eta',\xi')\hat{f}(\eta)\hat{f}(\xi)\overline{\hat{f}(\eta')\hat{f}(\xi')}d\eta d\xi d\eta' d\xi',
\end{equation}
where
\begin{equation}H_{j,k,j',k'}^{\nu,\mu,\nu',\mu'}(\eta,\xi,\eta',\xi')
=\int_{\mathbb{R}^3}e^{i\Psi(y,t,\eta,\xi,\eta',\xi')}b_{j,k,j',k'}^{\nu,\mu,\nu',\mu'}(y,t,\eta,\xi,\eta',\xi')dydt,
\end{equation}
$\Psi(y,t,\eta,\xi,\eta',\xi')=y\cdot(\eta+\xi-\eta'-\xi')+\frac{1}{2}(\frac{\eta_1^2}{\eta_2}+
\frac{\xi_1^2}{\xi_2}-\frac{\eta_1'^2}{\eta_2'}-\frac{\eta_1'^2}{\eta_2'})+F(t,\eta,\xi,\eta',\xi')$, $F(t,\eta,\xi,\eta',\xi')=q_{\delta}(\eta,t)+q_{\delta}(\xi,t)-q_{\delta}(\eta',t)-q_{\delta}(\xi',t)$ and
\begin{equation}
b_{j,k,j',k'}^{\nu,\mu,\nu',\mu'}(y,t,\eta,\xi,\eta',\xi')=\tilde{a}_{\lambda,\ell}^{\nu,j}(y,t,\eta)
\tilde{a}_{\lambda,\ell}^{\mu,k}(y,t,\xi)\overline{\tilde{a}_{\lambda,\ell}^{\nu',j'}(y,t,\eta')\tilde{a}_{\lambda,\ell}^{\mu',k'}(y,t,\xi')}.
\end{equation}

Let $\mathcal{L}g =\frac{\partial}{\partial_t}(\frac{g}{\partial_t\Psi})$.  By integration by parts, we have
\begin{equation}
|H_{j,k,j',k'}^{\nu,\mu,\nu',\mu'}(\eta,\xi,\eta',\xi')|
\leq \int_{\mathbb{R}^3}\left|\mathcal{L}^Nb_{j,k,j',k'}^{\nu,\mu,\nu',\mu'}(y,t,\eta,\xi,\eta',\xi')\right|dydt.
\end{equation}

Since $\partial_t\Psi=\partial_tF$, $\mathcal{L}g=\frac{\partial}{\partial_t}(\frac{g}{\partial_tF})$, then the inner integral can be written as the sum of expressions of the form
\begin{equation}\label{form}
\frac{\prod_{i=0}^{N_2}\left(\Psi_i(t,\eta)+\Psi_i(t,\xi)-\Psi_i(t,\eta')-\Psi_i(t,\xi')\right)}{(\partial_tF)^{N+N_1}}\partial_t^{\beta}b_{j,k,j',k'}^{\nu,\mu,\nu',\mu'}(y,t,\eta,\xi,\eta',\xi'),
\end{equation}
where $0\leq N_1\leq N$, $0\leq N_2\leq N_1$, $\beta\leq N-N_2$, and $\Psi_i$ are partial derivative of $\partial_tF$ with respect to $t$-variable. Note that $\Psi_i$ are still homogeneous of degree one in $\xi$.

Now from the estimate (\ref{symbol1}) and the support of $\varphi$, it is easy to see that
\begin{align*}
|\partial_t F(t,\eta,\xi,\eta',\xi')|&\geq C2^{-\ell}\lambda|j+k-j'-k'|\geq C (\delta\lambda)^{1/2}|j+k-j'-k'|.
\end{align*}

Since $|\Psi_i(t,\cdot)|\leq |\partial_t^{2+\alpha}q_{\delta}(\cdot,t)|\leq \delta^2\frac{\lambda}{\delta}=\delta\lambda$ for all $\alpha\geq 0$, then $$\left|\prod_{i=0}^{N_2}(\Psi_i(t,\eta)+\Psi_i(t,\xi)-\Psi_i(t,\eta')-\Psi_i(t,\xi'))\right|\leq C(\delta\lambda)^{N_2+1},$$
in addition to the fact that
\begin{equation}\label{b}
|\partial_tb_{j,k,j',k'}^{\nu,\mu,\nu',\mu'}|\leq 2^{\ell}\lambda^{-1}\delta^2\frac{\lambda}{\delta}=2^{\ell}\delta\leq (\frac{\lambda}{\delta})^{1/2}\delta=(\delta\lambda)^{1/2},
\end{equation}
we have
\begin{align*}
\left|H_{j,k,j',k'}^{\nu,\mu,\nu',\mu'}(y,\eta,\xi,\eta',\xi')\right|&\leq C\frac{(\delta\lambda)^{N_2+1}(\delta\lambda)^{(N-N_2)/2}}
{\left((\delta\lambda)^{1/2}|j+k-j'-k'|\right)^{N+N_1}}\leq C \lambda^{-\varepsilon N}(\delta\lambda),
\end{align*}
provided that  $|j+k-j'-k'|\geq \lambda^{\varepsilon}$. Since $b_{j,k,j',k'}^{\nu,\mu,\nu',\mu'}$ is supported on a set of measure $2^{-4\ell}(\frac{\lambda}{\delta})^6$ and $\delta^{-1}<\lambda$, then the integral is bounded by $C_{\varepsilon, N}\lambda^{-N}\|f\|_{L^1}^4$, in addition to the compact support of $f$, we get the desired result.
\end{proof}

The above lemma implies that
\begin{equation}\label{1}
\begin{aligned}
&\int\biggl|\sum_{|\nu-\mu|\approx2^{\ell}}\tilde{F}_{\lambda}^{\nu}f\tilde{F}_{\lambda}^{\mu}f\biggl|^2dydt\\
&\leq \int\biggl|\sum_{|\nu-\mu|\approx2^{\ell}}\sum_{j,k\leq 2^{\ell}}\tilde{F}_{\lambda,\ell}^{\nu,j}f\tilde{F}_{\lambda,\ell}^{\mu,k}f\biggl|^2dydt\\
&=\int\biggl|\sum_{s\leq 2^{\ell+1}}\sum_{j+k=s}\sum_{|\nu-\mu|\approx2^{\ell}}\tilde{F}_{\lambda,\ell}^{\nu,j}f\tilde{F}_{\lambda,\ell}^{\mu,k}f\biggl|^2dydt\\
&=\int\biggl|\sum_{\substack{s\leq 2^{\ell+1}\\s'\leq 2^{\ell+1}}}\sum_{\substack{j+k=s\\j'+k'=s'}}\sum_{\substack{|\nu-\mu|\approx2^{\ell}\\|\nu'-\mu'|\approx2^{\ell}}}\tilde{F}_{\lambda,\ell}^{\nu,j}f
\tilde{F}_{\lambda,\ell}^{\mu,k}f\overline{\tilde{F}_{\lambda,\ell}^{\nu',j'}f\tilde{F}_{\lambda,\ell}^{\mu',k'}f}\biggl|dydt\\
&\leq \int\biggl|\sum_{s,s',|s-s'|\leq \lambda^{\varepsilon}}
\sum_{\substack{j+k=s\\j'+k'=s'}}\sum_{\substack{|\nu-\mu|\approx2^{\ell}\\|\nu'-\mu'|\approx2^{\ell}}}\tilde{F}_{\lambda,\ell}^{\nu,j}f
\tilde{F}_{\lambda,\ell}^{\mu,k}f\overline{\tilde{F}_{\lambda,\ell}^{\nu',j'}f\tilde{F}_{\lambda,\ell}^{\mu',k'}f}\biggl|dydt+C\lambda^{-N}\|f\|_{L^4}^4.
\end{aligned}
\end{equation}
By the Cauchy-Schwarz inequality, we have that
\begin{equation}\label{2}
\begin{aligned}
&\int\biggl|\sum_{s,s',|s-s'|\leq \lambda^{\varepsilon}}
\sum_{\substack{j+k=s\\j'+k'=s'}}\sum_{\substack{|\nu-\mu|\approx2^{\ell}\\|\nu'-\mu'|\approx2^{\ell}}}\tilde{F}_{\lambda,\ell}^{\nu,j}f
\tilde{F}_{\lambda,\ell}^{\mu,k}f\overline{\tilde{F}_{\lambda,\ell}^{\nu',j'}f\tilde{F}_{\lambda,\ell}^{\mu',k'}f}\biggl|dydt\\
&\leq \lambda^{\varepsilon}\sum_{s\leq 2^{\ell+1}}\int\biggl|\sum_{\substack{j+k=s\\j'+k'=s}}\sum_{\substack{|\nu-\mu|\approx2^{\ell}\\|\nu'-\mu'|\approx2^{\ell}}}\tilde{F}_{\lambda,\ell}^{\nu,j}f
\tilde{F}_{\lambda,\ell}^{\mu,k}f\overline{\tilde{F}_{\lambda,\ell}^{\nu',j'}f\tilde{F}_{\lambda,\ell}^{\mu',k'}f}\biggl|dydt\\
&=\lambda^{\varepsilon}\sum_{s\leq 2^{\ell+1}}\int\biggl|\sum_{j+k=s}\sum_{|\nu-\mu|\approx2^{\ell}}\tilde{F}_{\lambda,\ell}^{\nu,j}f
\tilde{F}_{\lambda,\ell}^{\mu,k}f\biggl|^2dydt\\
&\leq C\lambda^{\varepsilon}2^{\ell+1}\sum_{s\leq 2^{\ell+1}}\int\sum_{j+k=s}\biggl|\sum_{|\nu-\mu|\approx2^{\ell}}\tilde{F}_{\lambda,\ell}^{\nu,j}f
\tilde{F}_{\lambda,\ell}^{\mu,k}f\biggl|^2dydt\\
&\leq C\lambda^{\varepsilon}2^{\ell+1}\biggl\|\left(\sum_{j,k}|\sum_{|\nu-\mu|\approx2^{\ell}}\tilde{F}_{\lambda,\ell}^{\nu,j}f
\tilde{F}_{\lambda,\ell}^{\mu,k}f|^2\right)^{1/2}\biggl\|_{L^2}^2.
\end{aligned}
\end{equation}

Now we use another almost orthogonality lemma to bring the sum in $\mu$ and $\nu$ outside of the square function.
\begin{lem}\label{othogonal2}
Suppose that $|\nu-\mu|\approx2^{\ell}$, $|\nu'-\mu'|\approx2^{\ell}$, and $|\nu-\nu'|+|\mu-\mu'|\geq \lambda^{\varepsilon}$. Then for any $N>0$,
\begin{equation}
\left|\int_{\mathbb{R}^3}\tilde{F}_{\lambda,\ell}^{\nu,j}f\tilde{F}_{\lambda,\ell}^{\mu,k}f\overline{\tilde{F}_{\lambda,\ell}^{\nu',j'}f\tilde{F}_{\lambda,\ell}^{\mu',k'}f}dydt\right|
\leq C_{\varepsilon,N}\lambda^{-N}\|f\|_{L^4}^4.
\end{equation}
\end{lem}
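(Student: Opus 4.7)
The plan is to mimic the proof of Lemma \ref{othogonal1} verbatim, but to integrate by parts in the $y$-variable rather than the $t$-variable, so as to exploit the angular-separation hypothesis $|\nu-\nu'|+|\mu-\mu'|\ge\lambda^{\varepsilon}$. Writing
\[\int_{\mathbb{R}^3}\tilde F_{\lambda,\ell}^{\nu,j}f\,\tilde F_{\lambda,\ell}^{\mu,k}f\,\overline{\tilde F_{\lambda,\ell}^{\nu',j'}f\,\tilde F_{\lambda,\ell}^{\mu',k'}f}\,dy\,dt=\int H^{\nu,\mu,\nu',\mu'}_{j,k,j',k'}(\eta,\xi,\eta',\xi')\,\hat f(\eta)\hat f(\xi)\overline{\hat f(\eta')\hat f(\xi')}\,d\eta\,d\xi\,d\eta'\,d\xi',\]
with $H$, $\Psi$, and $b=b_{j,k,j',k'}^{\nu,\mu,\nu',\mu'}$ exactly as in the proof of Lemma \ref{othogonal1}, I observe that $\nabla_y\Psi=(\eta+\xi)-(\eta'+\xi')$. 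Using the transpose operator $\mathcal{L}g=\nabla_y\cdot\bigl(g\,\nabla_y\Psi/|\nabla_y\Psi|^2\bigr)$, $N$ iterations of $\mathcal{L}$ produce an inner integral bounded by a constant multiple of $|\nabla_y\Psi|^{-N}$, since $y$-derivatives of $b$ land on $\rho_1(y,t)$, which is smooth and compactly supported and thus contributes only $O(1)$ factors.

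The heart of the argument is then the geometric lower bound
\[|\nabla_y\Psi|=|(\eta+\xi)-(\eta'+\xi')|\gtrsim\lambda^{\varepsilon}(\lambda/\delta)^{1/2}\]
on the support of $b$. To establish it I would write $\eta-\eta'\approx(\lambda/\delta)(\alpha_\nu-\alpha_{\nu'})\,\tau_{\nu,\nu'}$ and $\xi-\xi'\approx(\lambda/\delta)(\alpha_\mu-\alpha_{\mu'})\,\tau_{\mu,\mu'}$, where $\alpha_\sigma$ denotes the angle of $\xi_\sigma$ and $\tau_{\sigma,\sigma'}$ is the unit tangent perpendicular to the bisector of $\xi_\sigma,\xi_{\sigma'}$. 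In the unbalanced regime where one of $|\nu-\nu'|$, $|\mu-\mu'|$ dominates the other by a fixed factor, the triangle inequality gives the bound immediately. In the balanced regime $|\nu-\nu'|\approx|\mu-\mu'|\gtrsim\lambda^{\varepsilon}$, the constraint $|\nu-\mu|\approx|\nu'-\mu'|\approx 2^{\ell}$ forces the angle between $\tau_{\nu,\nu'}$ and $\tau_{\mu,\mu'}$ to be $O(2^{\ell}(\delta/\lambda)^{1/2})$; one then checks, using the sign convention $\nu\le\mu$, $\nu'\le\mu'$ fixed in (\ref{1}), that the tangential components add coherently along the common bisector direction and cannot cancel below the claimed threshold.

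Granting this lower bound, $N$ integrations by parts produce a factor of $\bigl(\lambda^{\varepsilon}(\lambda/\delta)^{1/2}\bigr)^{-N}$, which combined with the $O(1)$ volume of the $(y,t)$-support of $b$, the frequency-volume bound $C\,2^{-4\ell}(\lambda/\delta)^{6}$, the symbol estimate (\ref{b}) (which is unchanged since it did not rely on the $t$-estimate), and $\|\hat f\|_{L^\infty}\le\|f\|_{L^1}\le C\|f\|_{L^4}$, valid because $f$ is compactly supported, yields the claim after choosing $N$ large. The main obstacle is the geometric step in the balanced regime, where the two tangential contributions can nearly cancel; if the coherent-addition argument above proves too delicate to make uniform in $\ell$, a cleaner resolution is to employ a mixed $(y,t)$-IBP operator that simultaneously invokes the angular information and the radial $\partial_tF$-estimate exploited in Lemma \ref{othogonal1}, thereby ruling out the remaining degenerate configurations.
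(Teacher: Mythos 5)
Your main route has a genuine gap: the claimed lower bound $|\nabla_y\Psi|=|(\eta+\xi)-(\eta'+\xi')|\gtrsim\lambda^{\varepsilon}(\lambda/\delta)^{1/2}$ is false on the support of $b_{j,k,j',k'}^{\nu,\mu,\nu',\mu'}$. Angular separation of the sectors does not force the linear four-term sum to be large, because the radial localization of each factor is only to a band of width about $2^{-\ell}\lambda/\delta$, far coarser than the curvature scale. Concretely, take a parallelogram configuration $\eta'=\eta+v$, $\xi'=\xi-v$ with $v$ essentially tangential of length $\approx\lambda^{\varepsilon}(\lambda/\delta)^{1/2}$: then $\eta+\xi-\eta'-\xi'=0$ exactly, while $|\nu-\nu'|\approx|\mu-\mu'|\approx\lambda^{\varepsilon}$, and for any $\ell$ with $2^{\ell}\gg\lambda^{\varepsilon}$ one still has $|\nu'-\mu'|\approx2^{\ell}$ with the ordering $\nu\leq\mu$, $\nu'\leq\mu'$ preserved; the annular, conic and $j'$, $k'$ band constraints are all satisfiable since $j',k'$ are unrestricted in the lemma. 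So in your "balanced regime" the tangential contributions do cancel rather than add coherently, and the pure $y$-integration-by-parts cannot produce any decay; the same radial freedom also defeats the triangle-inequality argument in the unbalanced regime.

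What saves the statement is exactly the curvature of the $t$-phase, and this is how the paper argues: it keeps the integration by parts in $t$ from Lemma \ref{othogonal1}, writes the resulting expression as in (\ref{form})--(\ref{form2}), and invokes Lemma 6.8 of \cite{mss} for homogeneous functions of degree one. The upper bound (\ref{contorl1}) (applied with $\tilde\Psi=\delta^{-2}\partial_t^2q_{\delta}(\cdot,t)$) controls the numerator, while the lower bound (\ref{contorl2}) (applied with $\tilde\Psi=\delta^{-1}q_{\delta}'$, using rank $\tilde\Psi''_{\eta\eta}=1$ and the sign convention $\nu\leq\mu$, $\nu'\leq\mu'$) shows that $|\partial_tF|$ is bounded below by the angular quantity $2^{\ell}|\nu-\nu'|+|\nu-\nu'|^2+2^{\ell}|\mu-\mu'|+|\mu-\mu'|^2$ \emph{up to} an error $C_0|\eta+\xi-\eta'-\xi'|$; the dichotomy encoded in the maximum in (\ref{N2}) then exploits $y$-oscillation when $|\partial_y\Psi|$ dominates and $t$-oscillation otherwise. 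In other words, the separation hypothesis is converted into a lower bound for the four-term sum of the curved function $q_{\delta}'$ modulo the linear sum, not into a lower bound for the linear sum itself. Your closing suggestion of a mixed $(y,t)$ integration by parts is indeed the correct direction, but as written it is only a remark; the central geometric step of your primary argument is incorrect, so the proposal does not yet constitute a proof.
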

\begin{proof}
First
we will introduce  Lemma 6.8 in \cite{mss} as follows.
\begin{lem}\cite{mss}
Let $\tilde{\Psi}\in C^4(\mathbb{R}^2\backslash 0)$ be homogeneous of degree one. Let $\varsigma<\pi/4$, $a_0<1/4$, $A_0\geq 1$. Let $\mathcal{S}_{\lambda}$ be the intersection of a sector which subtends an angle of size $\varsigma$ with the annulus $\{\eta:(1-a_0)\lambda\leq |\eta|\leq (1+a_0)\lambda\}$. Let $h\in C^1(\mathbb{R}^2\backslash 0)$ be homogeneous of degree one such that $b_0|\eta|\leq h(\eta)\leq b_1|\eta|$, $|\nabla_{\eta}h(\eta)|\leq b_2$ for some positive constants $b_0$, $b_1$, $b_2$.

Suppose that $a_0^{-1}\leq 2^n$, $n\leq \ell$, $2^{\ell}\leq \varsigma \lambda^{1/2}$, and that $\eta$, $\xi$, $\eta'$, $\xi'\in  \mathcal{S}_{\lambda}$ are chosen such that for given integers $\nu$, $\mu$, $\nu'$, $\mu'$\\
(1) $|\arg(\eta)-\nu\lambda^{-1/2}|\leq \lambda^{-1/2}$; $|\arg(\eta')-\nu'\lambda^{-1/2}|\leq \lambda^{-1/2}$;\\
(2) $|\arg(\eta)-\nu\lambda^{-1/2}|\leq \lambda^{-1/2}$; $|\arg(\eta')-\nu'\lambda^{-1/2}|\leq \lambda^{-1/2}$;\\
(3) $2^{\ell-1}\lambda^{-1/2}\leq \max\{|\arg(\eta)-\arg(\xi)|, |\arg(\eta')-\arg(\xi')|\}\leq 2^{\ell+1}\lambda^{-1/2}$;\\
(4) $|h(\eta)-h(\eta')|\leq 2^{-n}\lambda$;\\
(5) $|h(\xi)-h(\xi')|\leq 2^{-n}\lambda$.

Then one can choose $\varsigma$, $a_0$ sufficiently small, and $A_0$ sufficiently large (only depending on $\tilde{\Psi}$, $b_0$, $b_1$, $b_2$) such that for all $\nu$, $\mu$, $\nu'$, $\mu'$ with $A_02^{\ell-n}\leq |\nu-\nu'|+|\mu-\mu'|\leq \varsigma\lambda^{1/2}$
\begin{equation}\label{contorl1}
\begin{aligned}
&|\tilde{\Psi}(\eta)+\tilde{\Psi}(\xi)-\tilde{\Psi}(\eta')-\tilde{\Psi}(\xi')|\\
&\quad\quad\quad\leq C[(2^{\ell}|\nu-\nu'|+|\nu-\nu'|^2)+(2^{\ell}|\mu-\mu'|+|\mu-\mu'|^2)+|\eta+\xi-\eta'-\xi'|].
\end{aligned}
\end{equation}
Suppose now that $\tilde{\Psi}$ satisfies the additional assumption rank $\tilde{\Psi}_{\eta\eta}''=1$. Then if either $\mu\leq \nu$ and $\mu'\leq \nu'$ or $\nu \leq \mu$ and
$\nu'\leq \mu'$ and if $A_02^{\ell-n}\leq |\nu-\nu'|+|\mu-\mu'|\leq \varsigma\lambda^{1/2}$ we have also with suitable positive constants $c_0$, $C_0$
\begin{equation}\label{contorl2}
\begin{aligned}
&|\tilde{\Psi}(\eta)+\tilde{\Psi}(\xi)-\tilde{\Psi}(\eta')-\tilde{\Psi}(\xi')|\\
&\quad\quad\quad\geq c_0[(2^{\ell}|\nu-\nu'|+|\nu-\nu'|^2)+(2^{\ell}|\mu-\mu'|+|\mu-\mu'|^2)]-C_0|\eta+\xi-\eta'-\xi'|.
\end{aligned}
\end{equation}
\end{lem}

Now in order to apply the above lemma to evaluate (\ref{form}), we rewrite (\ref{form}) as $\frac{\delta^{2(N_2+1)}}{\delta^{N+N_1}}$ times
\begin{equation}\label{form2}
\frac{\prod_{i=0}^{N_2}\left(\Psi_i(t,\eta)/\delta^2+\Psi_i(t,\xi)/\delta^2-\Psi_i(t,\eta')/\delta^2-\Psi_i(t,\xi')/\delta^2\right)}
{(\partial_tF/\delta)^{N+N_1}}\partial_t^{\beta}b_{j,k,j',k'}^{\nu,\mu,\nu',\mu'}(y,t,\eta,\xi,\eta',\xi').
\end{equation}

The worst case to bound the numerator of (\ref{form2}) from above is that we apply inequality (\ref{contorl1}) in the above lemma with $\tilde{\Psi}=\frac{1}{\delta^2}\partial_t^2 q_{\delta}(\cdot,t)$, at the same time, we will replace $\varsigma$ by $c_2-c_1$, $h(\eta)$ by $\frac{1}{\delta^2}\partial_t^2q_{\delta}(\eta,t)$ and $\lambda$ by $\lambda/\delta$ in the above lemma. In order to bound the denominator of (\ref{form2}) from below,  we apply inequality (\ref{contorl2}) in the above lemma with $\tilde{\Psi}=\frac{1}{\delta}q_{\delta}'$, in this case, we will replace $\varsigma$ by $c_2-c_1$, $h(\eta)$ by $\frac{1}{\delta}q_{\delta}'(\eta,t)$ and $\lambda$ by $\lambda/\delta$ in the above lemma.  Note that  $\partial_y\Psi(y,t,\eta,\xi,\eta',\xi')=\eta+\xi-\eta'-\xi'$,  by (\ref{b}), $\frac{\delta^{2(N_2+1)}}{\delta^{N+N_1}} \times$ (\ref{form2}) can be controlled by
\begin{equation}\label{N2}
\frac{C\delta^{N_2+1}(\delta\lambda)^{(N-N_2)/2}\left[\delta\left((2^{\ell}|\nu-\nu'|+|\nu-\nu'|^2)+(2^{\ell}|\mu-\mu'|+|\mu-\mu'|^2)+|\partial_y\Psi|\right)\right]^{N_2+1}}
{\biggl(\delta\max\left\{|\partial_y\Psi|,\left|(2^{\ell}|\nu-\nu'|+|\nu-\nu'|^2)+(2^{\ell}|\mu-\mu'|+|\mu-\mu'|^2)-|\partial_y\Psi|\right|\right\}\biggl)^{N+N_1}}
\end{equation}

If $|\nu-\nu'|+|\mu-\mu'|\geq \lambda^{\varepsilon}$, together with the fact that $(\frac{\pi}{2}-c_2)\sqrt{\frac{\lambda}{\delta}}\lesssim \nu \lesssim (\frac{\pi}{2}-c_1)\sqrt{\frac{\lambda}{\delta}}$, then
\begin{align*}
&\frac{1}{\delta\left((2^{\ell}|\nu-\nu'|+|\nu-\nu'|^2)+(2^{\ell}|\mu-\mu'|+|\mu-\mu'|^2)\right)}\\
&\leq C\delta^{-1} \left(2^{\ell}\lambda^{\epsilon}+\sqrt{\frac{\lambda}{\delta}}\lambda^{\epsilon}\right)^{-1}\\
&\leq C (\delta\lambda)^{-1/2}\lambda^{-\epsilon}.
\end{align*}

Since $\delta\lambda>1$, (\ref{N2}) can be controlled by
$\mathcal{O}\biggl(\frac{\delta^{N_2+1}(\delta\lambda)^{(N-N_2)/2}}{(\delta\lambda)^{(N-1)/2}{\lambda}^{\varepsilon(N-1)}}\biggl)
=\mathcal{O}(\frac{1}{\lambda^{\varepsilon (N-1)-1/2}})$. Now if we choose $N$ sufficiently large, the similar argument with Lemma \ref{othogonal1} gives the desired result.
\end{proof}

Let us continue the proof of our theorem. Based on Lemma \ref{othogonal2} and the Cauchy-Schwarz inequality, then we have that
\begin{align*}
&\biggl\|\biggl(\sum_{j,k}|\sum_{|\nu-\mu|\approx2^{\ell}}\tilde{F}_{\lambda,\ell}^{\nu,j}f
\tilde{F}_{\lambda,\ell}^{\mu,k}f|^2\biggl)^{1/2}\biggl\|_{L^2}^2\\
&=\int\sum_{j,k}\biggl|\sum_{\substack{|\nu-\mu|\approx2^{\ell}\\|\nu'-\mu'|\approx2^{\ell}}}\tilde{F}_{\lambda,\ell}^{\nu,j}f
\tilde{F}_{\lambda,\ell}^{\mu,k}f\overline{\tilde{F}_{\lambda,\ell}^{\nu',j}f
\tilde{F}_{\lambda,\ell}^{\mu',k}f}\biggl|dydt\\
&\leq \int\sum_{j,k}\biggl|\sum_{\substack{|\nu-\mu|,|\nu'-\mu'|\approx2^{\ell}\\|\nu-\nu'|+|\mu-\mu'|\leq \lambda^{\varepsilon} }}\tilde{F}_{\lambda,\ell}^{\nu,j}f
\tilde{F}_{\lambda,\ell}^{\mu,k}f\overline{\tilde{F}_{\lambda,\ell}^{\nu',j}f
\tilde{F}_{\lambda,\ell}^{\mu',k}f}\biggl|dydt+C\lambda^{-N}\|f\|_{L^4}^4\\
&\leq \lambda^{2\varepsilon}\int\sum_{j,k}\sum_{|\nu-\mu|\approx2^{\ell} }\biggl|\tilde{F}_{\lambda,\ell}^{\nu,j}f
\tilde{F}_{\lambda,\ell}^{\mu,k}f\biggl|^2dydt+C\lambda^{-N}\|f\|_{L^4}^4\\
&\leq \lambda^{2\varepsilon}\biggl\|\biggl(\sum_{j,\nu}|\tilde{F}_{\lambda,\ell}^{\nu,j}f|^2\biggl)^{1/2}\biggl\|_{L^4}^4+C\lambda^{-N}\|f\|_{L^4}^4.
\end{align*}

At present, we put all estimates together and get
\begin{align*}
\|\tilde{F}_{\lambda}f\|_{L^4}^2\leq C \lambda^{\varepsilon}\sum_{\ell}2^{\ell/2}\biggl\|\biggl(\sum_{j,\nu}|\tilde{F}_{\lambda,\ell}^{\nu,j}f|^2\biggl)^{1/2}\biggl\|_{L^4}^2+C\lambda^{-N}\|f\|_{L^4}^2.
\end{align*}

Furthermore, we introduce a finer decomposition of the operators $\tilde{F}_{\lambda,\ell}^{\nu,j}$, namely, we set
\begin{equation}
\tilde{F}_{\lambda,\ell}^{\nu,j,n}f(y,t):=\int_{{\mathbb{R}}^2}e^{i(\xi \cdot y+E(\xi)+q_{\delta}(\xi,t))}\tilde{a}_{\lambda,\ell}^{\nu,j,n}(y,t,\xi)\hat{f}(\xi)d\xi,
\end{equation}
where
\begin{equation}
\tilde{a}_{\lambda,\ell}^{\nu,j,n}(y,t,\xi)=\varphi(\lambda^{-1/2}q_{\delta}'(\xi,t)-n)\tilde{a}_{\lambda,\ell}^{\nu,j}(y,t,\xi).
\end{equation}

For fixed $y$, $t$, supp $\xi\rightarrow \tilde{a}_{\lambda,\ell}^{\nu,j,n}$ is comparable to a $\frac{\lambda^{1/2}}{\delta}\times(\frac{\lambda
}{\delta})^{1/2}$ rectangle and  $\tilde{F}_{\lambda,\ell}^{\nu,j}=\sum_{n}\tilde{F}_{\lambda,\ell}^{\nu,j,n}$, where the sum involves less than $2^{-\ell}\lambda^{1/2}$. Since
 on the support of the symbols $\tilde{a}_{\lambda,\ell}^{\nu,j,n}$, $\lambda^{1/2}(n-1) \leq q_{\delta}'(\xi,t)\leq \lambda^{1/2}(n+1)$ and on the support of the symbols $\tilde{a}_{\lambda,\ell}^{\nu,j}$, $2^{-\ell}\lambda(j-1) \leq q_{\delta}'(\xi,t)\leq 2^{-\ell}\lambda(j+1)$,  for fixed $j$,  then which implies that if $n$ satisfies $|n-n(j)|\leq 2^{-\ell}\lambda^{1/2}$ with $n(j)=[2^{-\ell}\lambda^{1/2}j]$, then $\tilde{F}_{\lambda,\ell}^{\nu,j,n}\neq 0$, where $[\cdot]$ denotes the nearest integer function.

One can follow the proof of Lemma \ref{othogonal1} and get an almost orthogonality lemma similar to Lemma \ref{othogonal1}:
\begin{lem}\label{othogonal3}
Suppose that $|m+n-m'-n'|\geq \lambda^{\varepsilon}$. Then for any $N>0$,
\begin{equation}
\left|\int_{\mathbb{R}^3}\tilde{F}_{\lambda,\ell}^{\nu,j,m}f\tilde{F}_{\lambda,\ell}^{\mu,k,n}f\overline{\tilde{F}_{\lambda,\ell}^{\nu,j,m'}f\tilde{F}_{\lambda,\ell}^{\mu,k,n'}f}dydt\right|
\leq C_{\varepsilon,N}\lambda^{-N}\|f\|_{L^4}^4.
\end{equation}
\end{lem}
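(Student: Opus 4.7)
The plan is to imitate the proof of Lemma \ref{othogonal1} essentially verbatim, with one input replaced: the coarse localization $\varphi(2^{\ell}\lambda^{-1}q_{\delta}'(\xi,t)-j)$ is replaced by the finer localization $\varphi(\lambda^{-1/2}q_{\delta}'(\xi,t)-n)$. The only quantity that changes is the lower bound for $|\partial_t F|$, but this change is precisely what drives the conclusion.

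First I would rewrite the left-hand side as an iterated Fourier integral,
\begin{equation*}
\int H_{j,k,m,n,m',n'}^{\nu,\mu}(\eta,\xi,\eta',\xi')\,\hat f(\eta)\hat f(\xi)\overline{\hat f(\eta')\hat f(\xi')}\,d\eta\,d\xi\,d\eta'\,d\xi',
\end{equation*}
where $H=\int e^{i\Psi}b\,dy\,dt$, with the same phase $\Psi(y,t,\cdot)=y\cdot(\eta+\xi-\eta'-\xi')+(E(\eta)+E(\xi)-E(\eta')-E(\xi'))+F(t,\cdot)$ as in Lemma \ref{othogonal1} (so $\partial_t\Psi=\partial_tF$), and with amplitude
\begin{equation*}
b=\tilde a_{\lambda,\ell}^{\nu,j,m}(y,t,\eta)\,\tilde a_{\lambda,\ell}^{\mu,k,n}(y,t,\xi)\,\overline{\tilde a_{\lambda,\ell}^{\nu,j,m'}(y,t,\eta')\,\tilde a_{\lambda,\ell}^{\mu,k,n'}(y,t,\xi')}.
\end{equation*}

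Next, I would extract the key lower bound on $\partial_tF$. On the support of $b$ the factor $\varphi(\lambda^{-1/2}q_{\delta}'(\cdot,t)-\cdot)$ forces $q_{\delta}'(\eta,t)\in[\lambda^{1/2}(m-1),\lambda^{1/2}(m+1)]$, and analogously for the other three frequencies, so
\begin{equation*}
|\partial_tF(t,\eta,\xi,\eta',\xi')|=|q_{\delta}'(\eta,t)+q_{\delta}'(\xi,t)-q_{\delta}'(\eta',t)-q_{\delta}'(\xi',t)|\geq C\lambda^{1/2}|m+n-m'-n'|,
\end{equation*}
provided the above quantity dominates the error $\mathcal{O}(\lambda^{1/2})$, which follows from the hypothesis $|m+n-m'-n'|\geq \lambda^{\varepsilon}$ (we simply absorb the constant by replacing $\varepsilon$ by $\varepsilon/2$, say).

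Now I would apply the differential operator $\mathcal{L}g=\partial_t(g/\partial_tF)$ exactly $N$ times inside the $t$-integral defining $H$, so that $H$ becomes a sum of terms of the form of (\ref{form}) from the proof of Lemma \ref{othogonal1}. Each such term is bounded just as there: the higher $t$-derivatives $\Psi_i$ of $\partial_tF$ satisfy $|\Psi_i|\leq\delta\lambda$, the $t$-derivatives of $b$ satisfy $|\partial_t^{\beta}b|\leq (\delta\lambda)^{|\beta|/2}$ by (\ref{symbol1}), and the support of $b$ in $(y,t,\eta,\xi,\eta',\xi')$ has the same volume as before (the finer $m,n,m',n'$-cutoffs shrink the $t$-support but this only helps). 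Altogether the integrand is bounded by
\begin{equation*}
\frac{C(\delta\lambda)^{N_2+1}(\delta\lambda)^{(N-N_2)/2}}{(\lambda^{1/2}|m+n-m'-n'|)^{N+N_1}},
\end{equation*}
and using $|m+n-m'-n'|\geq\lambda^{\varepsilon}$ together with $\delta\lambda>1$ the ratio is $\mathcal{O}(\lambda^{-\varepsilon N/2})$ for large $N$, with an at most polynomial loss in $\lambda$ coming from the $(y,t,\eta,\xi,\eta',\xi')$-volume of integration. A final application of the triangle inequality in the $\hat f$'s, together with the compact support of $f$ (so $\|\hat f\|_{L^\infty}\lesssim\|f\|_{L^4}$ uniformly), yields the desired bound $C_{\varepsilon,N}\lambda^{-N}\|f\|_{L^4}^4$ after redefining $N$.

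The only place that requires genuine care — and the main (minor) obstacle — is the verification that the new cutoff $\varphi(\lambda^{-1/2}q_{\delta}'(\xi,t)-n)$ is compatible with the symbol estimate (\ref{symbol1}): differentiating this factor in $t$ costs $\lambda^{-1/2}|\partial_tq_{\delta}'|\lesssim\lambda^{-1/2}\delta^2\cdot\lambda/\delta=\delta^{3/2}\lambda^{1/2}\leq (\delta\lambda)^{1/2}$, and differentiating in $\xi$ costs $\lambda^{-1/2}|\partial_\xi q_{\delta}'|\lesssim\lambda^{-1/2}\delta\lesssim|\xi|^{-1/2}$ since $|\xi|\approx\lambda/\delta$, so the refined amplitudes $\tilde a_{\lambda,\ell}^{\nu,j,n}$ still satisfy (\ref{symbol1}) with the same constants up to fixed factors. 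Once this symbolic check is in place, the integration-by-parts argument proceeds word for word as in Lemma \ref{othogonal1}.
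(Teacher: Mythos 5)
Your proposal is correct and takes essentially the same route as the paper, which proves Lemma \ref{othogonal3} simply by remarking that one follows the proof of Lemma \ref{othogonal1}; your write-up spells this out, with the finer cutoffs yielding $|\partial_t F|\gtrsim\lambda^{1/2}|m+n-m'-n'|$ in place of $(\delta\lambda)^{1/2}|j+k-j'-k'|$, which is at least as strong since $\delta<1$, while all numerator and support bounds carry over unchanged. One harmless arithmetic slip: $\lambda^{-1/2}\delta^{2}\cdot\lambda/\delta=\delta\lambda^{1/2}$, not $\delta^{3/2}\lambda^{1/2}$, but since $\delta\lambda^{1/2}\leq(\delta\lambda)^{1/2}$ the symbol verification still goes through.
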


Using this lemma and the Cauchy-Schwarz inequality, we obtain

\begin{align*}
&\biggl\|\biggl(\sum_{j,\nu}|\tilde{F}_{\lambda,\ell}^{\nu,j}f|^2\biggl)^{1/2}\biggl\|_{L^4}^4\\
&=\sum_{\substack{\nu,\mu\\j,k}}\int\biggl|\sum_{m,n}\tilde{F}_{\lambda,\ell}^{\nu,j,m}f\tilde{F}_{\lambda,\ell}^{\mu,k,n}f\biggl|^2dydt\\
&=\sum_{\substack{\nu,\mu\\j,k}}\int\biggl|\sum_{\substack{s\leq 2^{-\ell}\lambda^{1/2}\\s'\leq 2^{-\ell}\lambda^{1/2}}}\sum_{\substack{m-m'=s\\n'-n=s'}}\tilde{F}_{\lambda,\ell}^{\nu,j,m}f\tilde{F}_{\lambda,\ell}^{\mu,k,n}f
\overline{\tilde{F}_{\lambda,\ell}^{\nu,j,m'}f\tilde{F}_{\lambda,\ell}^{\mu,k,n'}f}\biggl|dydt\\
&\leq \sum_{\substack{\nu,\mu\\j,k}}\int\biggl|\sum_{\substack{s\leq 2^{-\ell}\lambda^{1/2}\\s'\leq 2^{-\ell}\lambda^{1/2}\\|s-s'|\leq \lambda^{\epsilon}}}\sum_{\substack{m-m'=s\\n'-n=s'}}\tilde{F}_{\lambda,\ell}^{\nu,j,m}f\tilde{F}_{\lambda,\ell}^{\mu,k,n}f
\overline{\tilde{F}_{\lambda,\ell}^{\nu,j,m'}f\tilde{F}_{\lambda,\ell}^{\mu,k,n'}f}\biggl|dydt\\
&\quad +C_N\lambda^{-N}\|f\|_{L^4}^4\\
& \leq \lambda^{\epsilon}\sum_{\substack{\nu,\mu\\j,k}}\sum_{s\leq 2^{-\ell}\lambda^{1/2}}\int\biggl|\sum_{\substack{m-m'=s\\n'-n=s}}\tilde{F}_{\lambda,\ell}^{\nu,j,m}f\tilde{F}_{\lambda,\ell}^{\mu,k,n}f
\overline{\tilde{F}_{\lambda,\ell}^{\nu,j,m'}f\tilde{F}_{\lambda,\ell}^{\mu,k,n'}f}\biggl|dydt\\
& = \lambda^{\epsilon}\sum_{\substack{\nu,\mu\\j,k}}\sum_{s\leq 2^{-\ell}\lambda^{1/2}}\int\biggl|\sum_{m-m'=s}\tilde{F}_{\lambda,\ell}^{\nu,j,m}f\overline{\tilde{F}_{\lambda,\ell}^{\nu,j,m'}f}
\sum_{n-n'=s}\overline{\tilde{F}_{\lambda,\ell}^{\mu,k,n}f\overline{\tilde{F}_{\lambda,\ell}^{\mu,k,n'}f}}\biggl|dydt\\
& \leq \lambda^{\epsilon}\sum_{\substack{\nu,\mu\\j,k}}\int\Biggl(\sum_{s\leq 2^{-\ell}\lambda^{1/2}}\biggl|\sum_{m-m'=s}\tilde{F}_{\lambda,\ell}^{\nu,j,m}f\overline{\tilde{F}_{\lambda,\ell}^{\nu,j,m'}f}\biggl|^2\Biggl)^{1/2}\\
&\quad \times
\Biggl(\sum_{s\leq 2^{-\ell}\lambda^{1/2}}\biggl|\sum_{n-n'=s}\tilde{F}_{\lambda,\ell}^{\mu,k,n}f\overline{\tilde{F}_{\lambda,\ell}^{\mu,k,n'}f}\biggl|^2\Biggl)^{1/2}dydt\\
& = \lambda^{\varepsilon}\int\biggl(\sum_{j,\nu}\biggl(\sum_{s\leq 2^{-\ell}\lambda^{1/2}}\biggl|\sum_{n-n'=s}\tilde{F}_{\lambda,\ell}^{\nu,j,n}f\overline{\tilde{F}_{\lambda,\ell}^{\nu,j,n'}f}\biggl|^2\biggl)^{1/2}\biggl)^2
dydt\\
& \leq C \lambda^{\varepsilon}2^{-\ell}\lambda^{1/2}\int\Biggl(\sum_{j,\nu}\biggl(\sum_{s\leq 2^{-\ell}\lambda^{1/2}}\sum_{n-n'=s}\biggl|\tilde{F}_{\lambda,\ell}^{j,\nu,n}f\overline{\tilde{F}_{\lambda,\ell}^{\nu,j,n'}f}\biggl|^2\biggl)^{1/2}\Biggl)^2
dydt\\
& \leq C \lambda^{\varepsilon}2^{-\ell}\lambda^{1/2}\int\Biggl(\sum_{j,\nu}\biggl(\sum_{n}|\tilde{F}_{\lambda,\ell}^{\nu,j,n}f|^4\biggl)^{1/2}\Biggl)^2
dydt\\
&\leq C\lambda^{\varepsilon}2^{-\ell}\lambda^{1/2}\biggl\|\biggl(\sum_{j,\nu,n}|\tilde{F}_{\lambda,\ell,n}^{\nu,j}f|^2\biggl)^{1/2}\biggl\|_{L^4}^4.
\end{align*}

We would be done if we could prove that
\begin{equation}\label{P^n}
\biggl\|\biggl(\sum_{j,\nu,n}|\tilde{F}_{\lambda,\ell}^{\nu,j,n}f|^2\biggl)^{1/2}\biggl\|_{L^4}\leq C\delta^{-(1/2+\varepsilon_2)}\lambda^{\varepsilon_1}\|f\|_{L^4}.
\end{equation}

Now comparing expressions of $\tilde{F}_{\lambda,\ell}^{\nu,j,n}f$ and $\mathcal{F}_{\lambda,\nu}^{\delta}f_n$ which appeared in Section 3, we see that they have a similar support in $\xi$, see Figure 8. Since all estimates after (\ref{lemma index2}) in Section 3 are valid under small smooth perturbations $q_{\delta}'$, then from now on, we can combine the idea from Section 3 and the proof of Theorem 6.1 in \cite{mss}, the $L^1$ boundness on the kernel of the remaining operator, Carleson's square function estimate, and a Kakeya-type maximal estimate yield the factor $\delta^{-(1/2+\varepsilon_2)}\lambda^{\varepsilon_1}$. We thus finish the proof for the more general phase function.


\section[Maximal functions associated with nonisotropic dilations of
some classes of hypersurfaces in ${\mathbb{R}}^3$ ]{Maximal functions associated with nonisotropic dilations of
some classes of hypersurfaces in ${\mathbb{R}}^3$.}

\subsection{Proofs for surfaces with one non-vanishing principal curvature}

\subsubsection{Maximal theorem  with $2a_2\neq a_3$}\label{nonvanishing}
In this section we give the proof for Theorem \ref{3nonvanish}.

We can always choose non-negative functions $\eta_1$,
$\eta_2$ $\in C_0^{\infty}({\mathbb{R}})$ so that $\eta(x)\leq \eta_1(x_1)\eta_2(x_2)$.
Since
\begin{equation*}
\left|\int_{\mathbb{R}^2}f(y-\delta_t(x_1,x_2,\Phi(x_1,x_2)))\eta(x)dx\right |\leq \int_{\mathbb{R}^2}|f|(y-\delta_t(x_1,x_2,\Phi(x_1,x_2)))\eta_1(x_1)\eta_2(x_2)dx,
\end{equation*}
then we may assume $\eta(x)=\eta_1(x_1)\eta_2(x_2)$ and  $f\geq
0$, $a_1=1$. Set $(y_2, y_3)=y'$ and $(\xi_2,\xi_3)=\xi'$. Denote $1+a_2+a_3$ by $Q$ and $(t^{a_2}\xi_2, t^{a_3}\xi_3)$ by
$\delta'_t\xi'$.

Put
\begin{equation*}
A_tf(y):=\int_{\mathbb{R}^2}f(y-\delta_t(x_1,x_2,\Phi(x_1,x_2)))\eta_1(x_1)\eta_2(x_2)dx.
\end{equation*}

By means of the Fourier inversion formula, we can write
\begin{equation*}
 A_tf(y)=\frac{1}{(2\pi)^3}\int_{{\mathbb{R}}^3}e^{i\xi\cdot
  y}\int_{\mathbb{R}}e^{-it\xi_1x_1}\eta_1(x_1)\widehat{d\mu_{x_1}}(\delta'_t\xi')dx_1\hat{f}(\xi)d\xi,
\end{equation*}
where
\begin{equation}\label{symbolwe}
 \widehat{d\mu_{x_1}}(\xi')=\int_{\mathbb{R}}e^{-i(\xi_2x_2+\xi_3\Phi(x_1,x_2))}\eta_2(x_2)dx_2.
\end{equation}

Choose a non-negative function $\beta\in C_0^{\infty}(\mathbb{R})$
such that
\begin{equation*}
\textrm{supp }\hspace{0.2cm}\beta \subset[1/2,2] \hspace{0.5cm}\textrm{and}
\hspace{0.5cm}\sum_{j\in \mathbb{Z}}\beta(2^{-j}r)=1 \hspace{0.5cm}
\textrm{for} \hspace{0.2cm} r>0.
\end{equation*}

Put
\begin{equation*}
A_{t,j}f(y):= \frac{1}{(2\pi)^3}\int_{{\mathbb{R}}^3}e^{i\xi\cdot
  y}\int_{\mathbb{R}}e^{-it\xi_1x_1}\eta_1(x_1)\widehat{d\mu_{x_1}}(\delta'_t\xi')dx_1\beta(2^{-j}|\delta'_t\xi'|)\hat{f}(\xi)d\xi,
\end{equation*}
and $M_j$ is the corresponding maximal operator.

\begin{align*}
 {}A_{t}^0f(y)
:&=A_{t}f(y)-\sum_{j=1}^{\infty}A_{t,j}f(y)
\\
&=\frac{1}{(2\pi)^3}\int_{{\mathbb{R}}^3}e^{i\xi\cdot
  y}\int_{\mathbb{R}}e^{-it\xi_1x_1}\eta_1(x_1)\widehat{d\mu_{x_1}}(\delta'_t\xi')dx_1\rho(|\delta'_t\xi'|)\hat{f}(\xi)d\xi,
\end{align*}
where $\rho$ is supported in a neighborhood of the origin. Since
\begin{align*}
{}&\frac{1}{(2\pi)^3}\int_{{\mathbb{R}}^3}e^{i\xi\cdot
  y}\int_{\mathbb{R}}e^{-it\xi_1x_1}\eta_1(x_1)\widehat{d\mu_{x_1}}(\delta'_t\xi')dx_1\rho(|\delta'_t\xi'|)d\xi
\\
&=\frac{1}{(2\pi)^2}\int_{\mathbb{R}}\eta_1(x_1)\int_{{\mathbb{R}}^2}e^{i\xi'\cdot
  y'}\widehat{d\mu_{x_1}}(\delta'_t\xi')\rho(|\delta'_t\xi'|)d\xi'\delta_0(y_1-tx_1)dx_1
\\
&=\frac{1}{(2\pi)^2}\frac{1}{t}\eta_1(\frac{y_1}{t})\int_{{\mathbb{R}}^2}e^{i\xi'\cdot
  y'}\widehat{d\mu_{\frac{y_1}{t}}}(\delta'_t\xi')\rho(|\delta'_t\xi'|)d\xi'
\\
&=\frac{1}{(2\pi)^2}t^{-Q}\eta_1(\frac{y_1}{t})\int_{{\mathbb{R}}^2}e^{i\delta'_{t^{-1}}\xi'\cdot
  y'}\widehat{d\mu_{\frac{y_1}{t}}}(\xi')\rho(|\xi'|)d\xi',
\end{align*}
then  $A_{t}^0f(y)=t^{-Q}f*K(t^{-1}y_1,t^{-a_2}y_2,t^{-a_3}y_3)=f*K_{\delta_{t^{-1}}}(y)$,
where
\begin{equation*}
K(y)=\frac{1}{(2\pi)^2}\eta_1(y_1)\int_{{\mathbb{R}}^2}e^{i\xi'\cdot
  y'}\widehat{d\mu_{y_1}}(\xi')\rho(|\xi'|)d\xi'.
\end{equation*}

Since $\Phi$ satisfies (\ref{conditionnonvani}) and $y_1 \in$ supp $\eta_1$, for $N\in \mathbb{N}$ and multi-index $\alpha$ with $|\alpha|=N$, then Theorem \ref{lem:Lemma1} implies that
\begin{equation}
|D_{\xi'}^{\alpha}\widehat{d\mu_{y_1}}(\xi')|\leq C_N'(1+|\xi'|)^{-1/2}.
\end{equation}

By integration by parts in $\xi'$, we get
\begin{align*}
|K(y)|&\leq C''_N\frac{1}{(1+|y_1|)^{N}}\frac{1}{(1+|y'|)^{N}}\int_{{\mathbb{R}}^2}\left|D_{\xi'}^{\alpha}\left(\widehat{d\mu_{y_1}}(\xi')\rho(|\xi'|)\right)\right|d\xi'\\
&\leq C_N\prod_{i=1}^3\frac{1}{(1+|y_i|)^N}.
\end{align*}

Now we choose $N$ sufficiently large, then
\begin{equation}\label{M1}
\sup_{t>0}|A_t^0f(y)|\leq C_N Mf(y),
\end{equation}
where $M$ is the non-isotropic Hardy-Littlewood maximal operator defined by
\begin{equation}\label{maximalfuxntion}
Mf(y):=\sup_{t>0}\frac{1}{|B(y,t)|}\int_{B(y,t)}f(x)dx,
\end{equation}
where $B(y,t)=\{x:|y-x|_{\delta}<t\}$ and $|x|_{\delta}=\max_{i=1}^3|x_i|^{1/{a_i}}$.

So it suffices to prove that
\begin{equation}
\|M_jf\|_{L^p}\leq
C_p2^{-j\epsilon(p)}\|f\|_{L^p},\hspace{0.2cm} j\geq 1, \hspace{0.2cm}2<p<\infty,
\hspace{0.2cm} \textrm{some }\hspace{0.2cm}\epsilon(p)>0.
\end{equation}

Since  $A_{t,j}$ is localized to frequencies $|\delta_t\xi|\approx 2^j$, we can employ Lemma \ref{Appendix} in a similar way as in Section \ref{nonvanishingcur}
to prove that
\begin{equation*}
\|M_j\|_{L^p\rightarrow L^p}\lesssim \|M_{j,loc}\|_{L^p\rightarrow
L^p},
\end{equation*}
where $M_{j,loc}f(y):=\sup_{t\in[1,2]}|A_{t,j}f(y)|$.

Indeed, for fixed $j\geq 1$ and  all $\ell \in \mathbb{Z}$, let $\bigtriangleup^j_{\ell}$ be Littlewood-Paley
operator in ${\mathbb{R}}^2$ defined by
$\widehat{\bigtriangleup^j_{\ell}f}(\xi)=\tilde{\beta}(2^{-j}|\delta'_{2^{\ell}}\xi'|)\hat{f}(\xi)$, where $\tilde{\beta}\in C_0^{\infty}({\mathbb{R}})$ is nonnegative and satisfies $\beta(|\delta'_{t}\xi'|)=\beta(|\delta'_{t}\xi'|)\tilde{\beta}(|\xi'|)$.
$M_jf(y)$ is equal to

\begin{align*}
&\sup_{\ell\in \mathbb{Z}}\sup_{t\in[1,2]}\left|\int_{{\mathbb{R}}^3}e^{i\xi\cdot y}\int_{\mathbb{R}}e^{-i2^{\ell}t\xi_1x_1}\eta_1(x_1)\widehat{d\mu_{x_1}}(\delta'_{2^{\ell}t}\xi')dx_1\beta(2^{-j}
|\delta'_{2^{\ell}t}\xi'|)\tilde{\beta}(2^{-j}|\delta'_{2^{\ell}}\xi'|)
\hat{f}(\xi)d\xi\right|
\\
&= \biggl(\sum_{\ell\in \mathbb{Z}}\sup_{t\in[1,2]}\left|\int_{{\mathbb{R}}^3}e^{i\xi\cdot y}\int_{\mathbb{R}}e^{-i2^{\ell}t\xi_1x_1}\eta_1(x_1)\widehat{d\mu_{x_1}}(\delta'_{2^{\ell}t}\xi')dx_1\beta(2^{-j}
|\delta'_{2^{\ell}t}\xi'|)\widehat{\bigtriangleup^j_{\ell}f}(\xi)d\xi\right|^p\biggl)^{1/p}
\\
&=\biggl(\sum_{\ell\in \mathbb{Z}}\sup_{t\in[1,2]}\biggl|2^{-\ell Q}\int_{{\mathbb{R}}^3}e^{i\xi\cdot \delta_{2^{-\ell}}y}\int_{\mathbb{R}}e^{-it\xi_1x_1}\eta_1(x_1)\widehat{d\mu_{x_1}}(\delta'_{t}\xi')dx_1\beta(2^{-j}
|\delta'_{t}\xi'|) \\
&\quad\quad\quad\times \widehat{\bigtriangleup^j_{\ell}f}(\delta_{2^{-\ell}}\xi)d\xi\biggl|^p\biggl)^{1/p}
\\
&=\biggl(\sum_{\ell\in \mathbb{Z}}\biggl|\mathcal{M}_{j,loc}(\bigtriangleup^j_{\ell}f\circ\delta_{2^{\ell}})(\delta_{2^{-\ell}}y)\biggl|^p\biggl)^{1/p}.
\end{align*}

Since $p>2$, then Lemma \ref{Appendix} implies that
\begin{align*}
\|M_jf(y)\|_{L^p}^p
&= \sum_{\ell\in \mathbb{Z}}\int_{{\mathbb{R}}^3}\biggl|\mathcal{M}_{j,loc}(\bigtriangleup^j_{\ell}f\circ\delta_{2^{\ell}})(\delta_{2^{-\ell}}y)\biggl|^pdy
\\
& = \sum_{\ell\in \mathbb{Z}}2^{\ell Q}\biggl\|\mathcal{M}_{j,loc}(\bigtriangleup^j_{\ell}f\circ\delta_{2^{\ell}})\biggl\|_{L^p}^p
\\
& \leq \|\mathcal{M}_{j,loc}\|_{L^p\rightarrow L^p}^p \sum_{\ell\in \mathbb{Z}}2^{\ell Q}\int_{{\mathbb{R}}^3}|\bigtriangleup^j_{\ell}f(\delta_{2^{\ell}}y)|^pdy
\\
& =\|\mathcal{M}_{j,loc}\|_{L^p\rightarrow L^p}^p \sum_{\ell\in \mathbb{Z}}\int_{{\mathbb{R}}^3}|\bigtriangleup^j_{\ell}f(y)|^pdy\\
&\leq \|\mathcal{M}_{j,loc}\|_{L^p\rightarrow L^p}^p \int_{\mathbb{R}}\int_{{\mathbb{R}}^2}\biggl(\sum_{\ell\in \mathbb{Z}}|\bigtriangleup^j_{\ell}f(y)|^2\biggl)^{p/2}dy\\
&=\|\mathcal{M}_{j,loc}\|_{L^p\rightarrow L^p}^p \Biggl\|\biggl\|\biggl(\sum_{\ell\in \mathbb{Z}}|\bigtriangleup^j_{\ell}f(y)|^2\biggl)^{1/2}\biggl\|_{L^p({\mathbb{R}}^2)}\Biggl\|_{L^p(\mathbb{R})}^p
\\
&\leq C_p\|\mathcal{M}_{j,loc}\|_{L^p\rightarrow L^p}^p\|f\|_{L^p({\mathbb{R}}^3)}.
\end{align*}

For fixed $t\in[1,2]$, let us estimate $\widehat{d\mu_{x_1}}(\delta'_t\xi')$.

Set
\begin{equation}
s:=s(\xi',t)=-\frac{t^{a_2}\xi_2}{t^{a_3}\xi_3}, \hspace{0.3cm}\textrm{for} \hspace{0.2cm} \xi_3\neq 0,
\end{equation}
and
\begin{equation}
\Psi(x_1,x_2,s):=-sx_2+\Phi(x_1,x_2).
\end{equation}

We observe that
\begin{equation}
\partial_2\Psi(0,0,0)=0 \hspace{0.5cm}\textrm{and}\hspace{0.5cm}\partial_2^2\Psi(0,0,0)\neq0,
\end{equation}
since $\Phi$ satisfies the condition (\ref{conditionnonvani}). The implicit function theorem implies that there must be a smooth solution $x_2=\psi(x_1,s)$ to the equation
\begin{equation}
\partial_2\Phi(x_1,x_2)=s,
\end{equation}
where $x_1$ and $s$ are enough small. Here if $t\in [\frac{1}{2},4]$, we can choose $x_1$ and $s$ sufficiently small such that
\begin{equation}\label{equation}
\partial_2\Phi(\frac{x_1}{t},\psi(\frac{x_1}{t},s))=s.
\end{equation}

For above sufficiently small $x_1$, a standard application of the method of stationary phase in $x_2$  yields that
\begin{equation*}
\widehat{d\mu_{x_1}}(\delta'_t\xi')=e^{-it^{a_3}\xi_3\tilde{\Psi}(x_1,s)}
\frac{\chi_{x_1}(t^{a_2}\xi_2/t^{a_3}\xi_3)}{{(1+|\delta'_t\xi'|)^{1/2}}}
A_{x_1}(\delta'_t\xi')+B_{x_1}(\delta'_t\xi'),
\end{equation*}
where $\tilde{\Psi}(x_1,s):=\Psi(x_1,\psi(x_1,s),s)$ and  $\chi_{x_1}$ is a smooth function supported on the set $\{z:|z|<\epsilon_{x_1}\}$, where $\epsilon_{x_1}$ can be controlled by a small positive constant independent on $x_1$.
Moreover, $t^{a_3}\xi_3\tilde{\Psi}(x_1,s)$
is a smooth function which is homogeneous of degree one in $\xi'$.  Meanwhile, $A_{x_1}$ is a
symbol of order zero such that
\begin{equation}\label{symbio}
|D^{\alpha}_{\xi'}A_{x_1}(\xi')|\leq
C_{\alpha}(1+|\xi'|)^{-|\alpha|},
\end{equation}
where $\alpha$ is a multi-index and $C_{\alpha}$ are admissible
constants.  $B_{x_1}$ is a smooth function and  satisfies
\begin{equation}\label{B1}
|D_{\xi'}^{\alpha}B_{x_1}(\xi')|\leq C_{\alpha,N}(1+|\xi'|)^{-N},
\end{equation}
again with admissible constants $C_{\alpha,N}$ and $N\in \mathbb{N}$.

For fixed $t\in [1,2]$,
\begin{align*}
A_{t,j}^0f(y):&=\frac{1}{(2\pi)^3}\int
_{{\mathbb{R}}^3}e^{i\xi\cdot y
}\int_{\mathbb{R}}e^{-it\xi_1x_1}\eta_1(x_1)B_{x_1}(\delta'_t\xi')dx_1\beta(2^{-j}|\delta'_t\xi'|)\hat{f}(\xi)d\xi
\\
&=\mathcal{F}^{-1}\biggl\{\int
_{{\mathbb{R}}^3}e^{i\xi\cdot y
}\biggl(\int_{\mathbb{R}}e^{-it\xi_1x_1}\eta_1(x_1)B_{x_1}(\delta'_t\xi')dx_1\biggl)\beta(2^{-j}|\delta'_t\xi'|)\biggl\}*f(y),
\end{align*}
and
\begin{align*}
&\mathcal{F}^{-1}\biggl\{\int
_{{\mathbb{R}}^3}e^{i\xi\cdot y
}\biggl(\int_{\mathbb{R}}e^{-it\xi_1x_1}\eta_1(x_1)B_{x_1}(\delta'_t\xi')dx_1\biggl)\beta(2^{-j}|\delta'_t\xi'|)\biggl\}(y)
\\
&=\frac{1}{(2\pi)^2}\int_{{\mathbb{R}}^2}e^{i\xi'\cdot y'}\int_{\mathbb{R}}\eta_1(x_1)B_{x_1}(\delta'_t\xi')\delta_0(y_1-tx_1)dx_1\beta(2^{-j}|\delta'_t\xi'|)d\xi'
\\
&=\frac{1}{(2\pi)^2}t^{-Q}\eta_1(\frac{y_1}{t})\int_{{\mathbb{R}}^2}e^{i\xi'\cdot \delta'_{t^{-1}}y'}B_{y_1/t}(\xi')\beta(2^{-j}|\xi'|)d\xi'
\\
&=\tilde{K}_{\delta_{t^{-1}}}(y),
\end{align*}
where $\tilde{K}(y)=\frac{1}{(2\pi)^2}\eta_1(y_1)\int_{{\mathbb{R}}^2}e^{i\xi'\cdot y'}B_{y_1}(\xi')\beta(2^{-j}|\xi'|)d\xi'$. By (\ref{B1}) and the support of $\beta$, it is easy to get
\begin{equation*}
|\tilde{K}(y)|\leq C_N2^{-jN}\prod_{i=1}^3\frac{1}{(1+|y_i|)^N},
\end{equation*}
and
\begin{equation}\label{M2}
\sup_{t\in[1,2]}|A_{t,j}^0f(y)|\leq C_N2^{-jN}Mf(y),
\end{equation}
where $M$ denotes the Hardy-Littlewood maximal operator defined by (\ref{maximalfuxntion}).

Put
\begin{align*}
A_{t,j}^1f(y):=&\frac{1}{(2\pi)^3}\int
_{{\mathbb{R}}^3}e^{i\xi\cdot y
}\int_{\mathbb{R}}e^{-it\xi_1x_1}\eta_1(x_1)e^{-it^{a_3}\xi_3\tilde{\Psi}(x_1,s)}E_{x_1}(\delta'_t\xi')dx_1\beta(2^{-j}|\delta'_t\xi'|)\hat{f}(\xi)d\xi
\end{align*}
where
\begin{equation*}
E_{x_1}(\delta'_t\xi'):=
\frac{\chi_{x_1}(t^{a_2}\xi_2/t^{a_3}\xi_3)}{(1+|\delta'_t\xi'|)^{1/2}}
A_{x_1}(\delta'_t\xi'),
\end{equation*}
and denote by $\mathcal{M}_{j,loc}^1$ the corresponding maximal operator. It remains to prove that
\begin{equation}
\|\mathcal{M}_{j,loc}^1f\|_{L^p}\leq
C_p2^{-j\epsilon(p)}\|f\|_{L^p},\hspace{0.2cm} j\geq 1, \hspace{0.2cm}2<p<\infty,
\hspace{0.2cm} \textrm{some }\hspace{0.2cm}\epsilon(p)>0.
\end{equation}

\begin{align*}
\|\mathcal{M}_{j,loc}^1f\|_{L^p}&=\biggl\|\sup_{t\in[1,2]}\frac{1}{(2\pi)^2}\biggl|\int_{\mathbb{R}}\eta_1(x_1)\int
_{{\mathbb{R}}^2}e^{i(\xi'\cdot y'-t^{a_3}\xi_3\tilde{\Psi}(x_1,s))}E_{x_1}(\delta'_t\xi')
\\& \quad\times \beta(2^{-j}|\delta'_t\xi'|)f(y_1-tx_1,\widehat{\xi'})d\xi'dx_1\biggl|\biggl\|_{L^p(dy)}
\\
&\leq C\biggl \|\sup_{t\in[1,2]}\biggl|\int_{\mathbb{R}}\eta_1(\frac{x_1}{t})\int
_{{\mathbb{R}}^2}e^{i(\xi'\cdot y'-t^{a_3}\xi_3\tilde{\Psi}(\frac{x_1}{t},s))}E_{x_1/t}(\delta'_t\xi')
\\& \quad \times \beta(2^{-j}|\delta'_t\xi'|)f(y_1-x_1,\widehat{\xi'})d\xi'dx_1\biggl|\biggl\|_{L^p(dy)}
\\
& =C \biggl\|\sup_{t\in[1,2]}|\widetilde{A_{t,j}^1}f|\biggl\|_{L^p(dy)}=C\|\widetilde{\mathcal{M}_{j,loc}^1}f\|_{L^p},
\end{align*}
where
\begin{equation}\widetilde{A_{t,j}^1}f(y):=\int_{\mathbb{R}}\eta_1(\frac{x_1}{t})\int
_{{\mathbb{R}}^2}e^{i(\xi'\cdot y'-t^{a_3}\xi_3\tilde{\Psi}(\frac{x_1}{t},s))}E_{x_1/t}(\delta'_t\xi')
\beta(2^{-j}|\delta'_t\xi'|)f(y_1-x_1,\widehat{\xi'})d\xi'dx_1,\
\end{equation}
and
\begin{equation}
\widetilde{\mathcal{M}_{j,loc}^1}f(y):=\sup_{t\in[1,2]}|\widetilde{A_{t,j}^1}f(y)|.
\end{equation}

Set
\begin{equation}
Q_{x_1}(y',t,\xi')=\xi'\cdot y'-t^{a_3}\xi_3\tilde{\Psi}(\frac{x_1}{t},s).
\end{equation}
Choose a bump function $\tilde{\rho} \in C_0^{\infty}(\mathbb{R})$ supported in $[1/2,4]$ such that $\tilde{\rho}(t)=1$ if $1\leq t\leq 2$.  By Lemma \ref{lem:Lemma3}, we have
\begin{equation}\label{wellknoweest}
\begin{aligned}
\|\widetilde{\mathcal{M}_{j,loc}^1}f(y)\|^p_{L^p}&\leq C_p\biggl(\int
_{{\mathbb{R}}^3}\int_{1/2}^4\left|\tilde{\rho}(t)\widetilde{A_{t,j}^1}f(y)\right|^p dtdy\biggl )^{1/p'}\\
&\quad \times \biggl(\int
_{{\mathbb{R}}^3}\int_{1/2}^4\left|\frac{\partial}{\partial t}\left(\tilde{\rho}(t)\widetilde{A_{t,j}^1}f(y)\right)\right|^pdtdy\biggl)^{1/p}.
\end{aligned}
\end{equation}

Moreover, we can choose $\tilde{\eta}_1\in C_0^{\infty}(\mathbb{R})$ such that $\tilde{\eta}_1=1$  on the support of $\eta_1$, then we have
\begin{equation*}
\frac{\partial}{\partial t}\widetilde{A_{t,j}^1}f(y)  =\int_{\mathbb{R}}\tilde{\eta}_1(\frac{x_1}{t})\int
_{{\mathbb{R}}^2}e^{iQ_{x_1}(y',t,\xi')}h(t,j,x_1,\xi')f(y_1-x_1,\widehat{\xi'})d\xi'dx_1,
\end{equation*}
where
\begin{align*}
h(t,j,x_1,\xi')&=\left(-t^{-2}x_1\eta_1'(\frac{x_1}{t})+\frac{\partial}{\partial t}Q_{x_1}(y',t,\xi')\right)E_{x_1/t}(\delta'_t\xi')\beta(2^{-j}|\delta'_t\xi'|)
\\
& +\eta_1(\frac{x_1}{t})\frac{\partial}{\partial t}E_{x_1/t}(\delta'_t\xi')\beta(2^{-j}|\delta'_t\xi'|)
+\eta_1(\frac{x_1}{t})E_{x_1/t}(\delta'_t\xi')\frac{\partial}{\partial t}\beta(2^{-j}|\delta'_t\xi'|),
\end{align*}
and
\begin{equation*}
\left|\frac{\partial}{\partial t}Q_{x_1}(y',t,\xi'))\right|=\left|\xi_3\frac{\partial}{\partial t}(t^{a_3}\Psi(\frac{x_1}{t},\psi(\frac{x_1}{t},s),s))\right|\leq C|\xi_3|,
\end{equation*}
since $t\approx 1$, $x_1$ and the support of $\chi_{x_1}$ are sufficiently small.  Note that $A_{x_1}$ satisfies (\ref{symbio}), then it is easy to see that
$\frac{\partial}{\partial t}(\tilde{\rho}(t)\widetilde{A_{t,j}^1})$ behaves like $2^{j}\widetilde{A_{t,j}^1}$. Clearly we only need to show the $L^p$-boundedness of the operator $\widetilde{A_{t,j}^1}$.

Furthermore, choose a function $\eta_0\in C_0^{\infty}(\mathbb{R})$, non-negative so that  for arbitrary $t\in [1/2,4]$, $\eta_1(\frac{x_1}{t})\leq \eta_0(x_1)$, then
\begin{align*}
&\biggl(\int
_{{\mathbb{R}}^3}\int_{1/2}^4|\tilde{\rho}(t)\widetilde{A_{t,j}^1}f(y)|^pdtdy\biggl)^{1/p}
\\
&\lesssim\biggl\|\int_{\mathbb{R}}\eta_0(x_1)\biggl|\tilde{\rho}(t)\int
_{{\mathbb{R}}^2}e^{iQ_{x_1}(y',t,\xi')}E_{x_1/t}(\delta'_t\xi')\beta(2^{-j}|\delta'_t\xi'|)\\
&\quad\quad\quad \times
f(y_1-x_1,\widehat{\xi'})d\xi'\biggl| dx_1\biggl\|_{L^p([1/2,4]\times \mathbb{R}^3, dtdy'dy_1)}
\\
&\lesssim\Biggl\|\int_{\mathbb{R}}\eta_0(x_1)\biggl\|\tilde{\rho}(t)\int
_{{\mathbb{R}}^2}e^{iQ_{x_1}(y',t,\xi')}E_{x_1/t}(\delta'_t\xi')\beta(2^{-j}|\delta'_t\xi'|)\\
&\quad\quad\quad\times
f(y_1-x_1,\widehat{\xi'})d\xi'
\biggl\|_{L^p([1/2,4]\times \mathbb{R}^2,dtdy')}dx_1
\Biggl\|_{L^p(\mathbb{R}, dy_1)}.
\end{align*}

Now for the inner norm we would apply the local smoothing estimate from Theorem \ref{lem:lemma4}, so we should verify that $Q_{x_1}(y',t,\xi')$
satisfies the non-degeneracy condition (\ref{nondegeneracy}) and the cone condition (\ref{cone:condition}). Apparently, the non-degeneracy condition will follow from
\begin{equation*}
\hspace{0.2cm}\textrm{rank}\hspace{0.2cm} \partial^2_{(y',t),\xi'}Q_{x_1}(y',t,\xi')=\textrm{rank}
\left(
\begin{array} {lcr}
1 & 0  \\
0 & 1  \\
* & *
\end{array}
\right)
=2.
\end{equation*}

Since $\partial_2^2\Phi(0,0)\neq 0$, we can choose $U$ sufficiently small such that
\begin{equation}\label{phi1}
C_1|\partial_2^2\Phi(0,0)|\geq |\partial_2^2\Phi(x_1,x_2)|\geq C_2|\partial_2^2\Phi(0,0)|.
\end{equation}

Also since
\begin{equation}\label{equatins1}
\partial_2\Phi(\frac{x_1}{t},\psi(\frac{x_1}{t},s))=s,
\end{equation}
by applying $\frac{\partial}{\partial s}$ on both sides,  then we have
$$\partial_2^2\Phi(\frac{x_1}{t},\psi(\frac{x_1}{t},s))\partial_2\psi(\frac{x_1}{t},s)=1,$$
which implies that
\begin{equation}\label{equation2}
\partial_2\psi(\frac{x_1}{t},s)=\frac{1}{\partial_2^2\Phi(\frac{x_1}{t},\psi(\frac{x_1}{t},s))}.
\end{equation}
Next, we will use the fact that $|\xi_2|\ll|\xi_3|\approx |\xi'|\approx 2^j$ $(j\geq 1)$.

By the above arguments,  we get
\begin{align*}
\partial_tQ_{x_1}(y'_0,t_0,\xi') & =\xi_3\biggl[a_2t_0^{a_2-1}\frac{\xi_2}{\xi_3}\psi(\frac{x_1}{t_0},s)+{t_0}^{a_2}\frac{\xi_2}{\xi_3}\partial_2
\psi(\frac{x_1}{t_0},s)(a_2-a_3){t_0}^{a_2-a_3-1}\frac{\xi_2}{\xi_3}
\\
&\quad+
a_3{t_0}^{a_3-1}\Phi(\frac{x_1}{t_0},\psi(\frac{x_1}{t_0},s))+
 {t_0}^{a_3}\partial_2\Phi(\frac{x_1}{t_0},\psi(\frac{x_1}{t_0},s))\partial_2
\psi(\frac{x_1}{t_0},s)
\\
&\quad\times (a_2-a_3){t_0}^{a_2-a_3-1}\frac{\xi_2}{\xi_3}+x_1R^1(x_1,t_0,s)\biggl]
\\
& =\xi_3\biggl[a_2t_0^{a_2-1}\frac{\xi_2}{\xi_3}\psi(\frac{x_1}{t_0},s)+
a_3{t_0}^{a_3-1}\Phi(\frac{x_1}{t_0},\psi(\frac{x_1}{t_0},s))+x_1R^1(x_1,t_0,s)\biggl].
\end{align*}

Furthermore, we will prove that
\begin{equation}\label{coneoc}
|\partial_{\xi_2}^2(\partial_tQ_{x_1}(y'_0,t_0,\xi'))|\approx 1.
\end{equation}
Note that $\partial_tQ_{x_1}(y'_0,t_0,\xi')$ is homogeneous of degree one in $\xi'$, in addition to (\ref{coneoc}), then we obtain that
\begin{equation}
\textmd{rank}\hspace{0.2cm}\left(\frac{\partial^2}{\partial \xi_i\partial\xi_j}\right)\left\langle\partial_{(y',t)}Q_{x_1}(y_0',t_0,\xi'),\theta\right\rangle=1,\hspace{0.2cm}i,j=2,3,
\end{equation}
where $\pm\theta$ are the unique directions for which $\nabla_{\xi'}\langle\partial_{(y',t)}Q_{x_1}(y_0',t_0,\xi'),\theta\rangle =0$. This implies the cone condition (\ref{cone:condition}).

Now, let us turn to prove (\ref{coneoc}). Since $x_1$ is sufficiently small, $\left|\left(\frac{\partial}{\partial\xi_2}\right)^{\alpha}R^1(x_1,t_0,s)\right|$, $\alpha\leq 2$ is a remainder term. So we only consider
\begin{equation}\label{Q}
\biggl|\frac{{\partial}^2}{\partial{\xi_2^2}}\biggl(a_2t_0^{a_2-1}\frac{\xi_2}{\xi_3}\psi(\frac{x_1}{t_0},s)+
a_3{t_0}^{a_3-1}\Phi(\frac{x_1}{t_0},\psi(\frac{x_1}{t_0},s))\biggl)\biggl|\approx 1.
\end{equation}
\begin{align*}
{}&\frac{\partial}{\partial\xi_2}\biggl(a_2t_0^{a_2-1}\frac{\xi_2}{\xi_3}\psi(\frac{x_1}{t_0},s)+
a_3{t_0}^{a_3-1}\Phi(\frac{x_1}{t_0},\psi(\frac{x_1}{t_0},s))\biggl)
\\
& =a_2t_0^{a_2-1}\frac{1}{\xi_3}\psi(\frac{x_1}{t_0},s)-a_2t_0^{a_2-1}\frac{\xi_2}{\xi_3}\partial_2\psi(\frac{x_1}{t_0},s){t_0}^{a_2-a_3}\frac{1}{\xi_3}
\\
&\quad-a_3{t_0}^{a_3-1}\partial_2\Phi(\frac{x_1}{t_0},\psi(\frac{x}{t_0},s))\partial_2\psi(\frac{x}{t_0},s){t_0}^{a_2-a_3}\frac{1}{\xi_3}+x_1R^2(x_1,t_0,s)
\\
&=t_0^{a_2-1}\frac{1}{\xi_3}\biggl(a_2\psi(\frac{x_1}{t_0},s)+(a_3-a_2)t_0^{a_2-a_3}\frac{\xi_2}{\xi_3}\partial_2\psi(\frac{x_1}{t_0},s)
+x_1R^2(x_1,t_0,s)\biggl).
\end{align*}

By the same reason as before,  we only estimate
\begin{align*}
&\frac{{\partial}^2}{\partial{\xi_2^2}}\biggl(a_2\psi(\frac{x_1}{t_0},s)+(a_3-a_2)t_0^{a_2-a_3}\frac{\xi_2}{\xi_3}\partial_2\psi(\frac{x_1}{t_0},s)\biggl)
\\
&=-a_2\partial_2\psi(\frac{x_1}{t_0},s){t_0}^{a_2-a_3}\frac{1}{\xi_3}+(a_3-a_2)t_0^{a_2-a_3}\frac{1}{\xi_3}\partial_2
\psi(\frac{x_1}{t_0},s)-
(a_3-a_2)t_0^{a_2-a_3}\frac{\xi_2}{\xi_3}
\\
&\quad \quad\quad \times \partial_2^2\psi(\frac{x_1}{t_0},s)t_0^{a_2-a_3}\frac{1}{\xi_3}
\\
&={t_0}^{a_2-a_3}\frac{1}{\xi_3}\biggl((a_3-2a_2)\partial_2\psi(\frac{x_1}{t_0},s)+
(a_2-a_3)t_0^{a_2-a_3}\frac{\xi_2}{\xi_3}\partial_2^2\psi(\frac{x_1}{t_0},s)\biggl).
\end{align*}

(\ref{equation2}) and  (\ref{phi1}) imply that  $\partial_2\psi(\frac{x_1}{t_0},s)\geq \frac{1}{C_1|\Phi^2(0,0)|}\neq 0$. Moreover, since $2a_2\neq a_3$ and $|\xi_2|\ll |\xi_3|$, then $|(a_3-2a_2)\partial_2\psi(\frac{x_1}{t_0},s)|\approx 1$ and
$|(a_2-a_3)t_0^{a_2-a_3}\frac{\xi_2}{\xi_3}\partial_2^2\psi(\frac{x_1}{t_0},s)|\ll 1$. We finish the proof of  (\ref{Q}) and (\ref{coneoc}).

Hence we can apply Theorem \ref{lem:lemma4} for $\mu=-1/2$, and obtain
\begin{align*}
&\biggl(\int
_{{\mathbb{R}}^3}\int_{1/2}^4|\tilde{\rho}(t)\widetilde{A_{t,j}^1}f(y)|^pdtdy\biggl)^{1/p}\\
&\leq C_p 2^{-j(1/p+\epsilon(p))}\biggl\|\int_{\mathbb{R}}\eta_0(x_1)\|f(y_1-x_1,y')\|_{L^p({\mathbb{R}}^2,dy')}dx_1\biggl\|_{L^p(\mathbb{R},dy_1)}
\\
&\leq C_p 2^{-j(1/p+\epsilon(p))}\|f\|_{L^p}\|\eta_0\|_{L^1},
\end{align*}
and thus by  (\ref{wellknoweest}), we get
\begin{align*}
&\|\widetilde{\mathcal{M}_{j,loc}^1}f(y)\|_{L^p}^p\\
&\leq C_p\biggl( 2^{-j(1/p+\epsilon(p))}\|f\|_{L^p}\|\eta_0\|_{L^1}\biggl)^{p-1}\biggl( 2^{-j(-1+1/p+\epsilon(p))}\|f\|_{L^p}\|\eta_0\|_{L^1}\biggl)
\\
&= C_p2^{-jp\epsilon(p)}\|f\|_{L^p}^p\|\eta_0\|_{L^1}^p.
\end{align*}

Now we have finished the proof of Theorem \ref{3nonvanish}.


\subsubsection{Maximal theorem  with $2a_2=a_3$}\label{dim32a2=a3}

In this section, we will prove Theorem \ref{theocurvanishi}.

First we may assume $\eta(x)=\eta_1(x_1)\eta_2(x_2)$ with non-negative functions $\eta_1$,
$\eta_2$ $\in C_0^{\infty}({\mathbb{R}})$, and $f\geq
0$, $a_2=1$, $a_3=2$. Let $(y_2, y_3)=y'$ and $(\xi_2,\xi_3)=\xi'$. Denote $a_1+3$ by $Q$ and $(t\xi_2, t^2\xi_3)$ by
$\tilde{\delta}_t\xi'$. We choose $B>0$ very small and  $\tilde{\rho}\in C_0^{\infty}(\mathbb{R})$ such that supp $\tilde{\rho}\subset\{x\in \mathbb{R}:B/2\leq|x|\leq 2B\}$  and $\sum_k\tilde{\rho}(2^kx)=1$.

Put
\begin{equation*}
A_tf(y):=\int_{\mathbb{R}^2}f(y_1-t^{a_1}x_1,y_2-tx_2,y_3-t^2x_2^2\phi(x_2))\eta(x)dx=\sum_k\widetilde{A_t^k}f(y),
\end{equation*}
where $\widetilde{A_t^k}f(y)=\int_{\mathbb{R}^2}f(y_1-t^{a_1}x_1,y_2-tx_2,y_3-t^2x_2^2\phi(x_2))\eta_1(x_1)\eta_2(x_2)\tilde{\rho}(2^kx_2)dx$.

Define the isometric operator T on $L^p(\mathbb{R}^3)$ by
\begin{equation*}
Tf(x_1,x_2,x_3)=2^{3k/p}f(x_1,2^kx_2,2^{2k}x_3).
\end{equation*}

One can easily compute that
$$T^{-1}\widetilde{A_t^k}Tf(y)=2^{-k}\int_{\mathbb{R}^2}f(y_1-t^{a_1}x_1,y_2-tx_2,y_3-t^2x_2^2\phi(\frac{x_2}{2^k}))\eta_1(x_1)\eta_2(\frac{x_2}{2^k})\rho(x_2)dx.$$

It suffices to prove the following estimate
\begin{equation*}
\sum_k2^{-k}\|\sup_{t>0}|A_t^k|\|_{L^p\rightarrow L^p}\leq C_p,\hspace{0.2cm}p>2,
\end{equation*}
where
\begin{equation}
A_t^kf(y):=\int_{\mathbb{R}^2}f(y_1-t^{a_1}x_1,y_2-tx_2,y_3-t^2x_2^2\phi(\frac{x_2}{2^k}))\eta_1(x_1)\eta_2(\frac{x_2}{2^k})\rho(x_2)dx.
\end{equation}

By means of the Fourier inversion formula, we can write
\begin{equation*}
 A_t^kf(y)=\frac{1}{(2\pi)^3}\int_{{\mathbb{R}}^3}e^{i\xi\cdot
  y}\widehat{\eta_1}(t^{a_1}\xi_1)\widehat{d\mu_{k,m}}(\tilde{\delta}_t\xi')\hat{f}(\xi)d\xi,
\end{equation*}
where
\begin{align*}
 \widehat{d\mu_{k,m}}(\tilde{\delta}_t\xi')&=\int_{\mathbb{R}}e^{-i(t\xi_2x_2+t^2\xi_3x_2^2\phi(\frac{x_2}{2^k}))}\tilde{\rho}(x_2)\eta_2(2^{-k}x_2)dx_2.
\end{align*}

Choose a non-negative function $\beta\in C_0^{\infty}(\mathbb{R})$
such that
\begin{equation*}
\hspace{0.2cm}\textrm{supp }\hspace{0.2cm} \beta \subset[1/2,2] \hspace{0.5cm}\textrm{and}
\hspace{0.5cm}\sum_{j\in \mathbb{Z}}\beta(2^{-j}r)=1 \hspace{0.5cm}
\textrm{for} \hspace{0.5cm} r>0.
\end{equation*}
and set
\begin{equation}
A_{t,j}^kf(y):=\frac{1}{(2\pi)^3}\int_{{\mathbb{R}}^3}e^{i\xi\cdot y}\widehat{\eta_1}(t^{a_1}\xi_1)\widehat{d\mu_{k,m}}(\tilde{\delta}_t\xi')\beta(2^{-j}|\tilde{\delta}_t\xi'|)\hat{f}(\xi)d\xi,
\end{equation}
and denote by $M_j^k$ the corresponding maximal operator.

From the proof of inequality (\ref{M1}), it is easy to see that the supremum of the absolute value of
the difference between $A_{t}^kf(y)$ and $\sum_{j=1}^{\infty}A_{t,j}^kf(y)$ is dominated by the Hardy-Littlewood maximal function $Mf(y)$ defined by (\ref{maximalfuxntion}). It remains to consider the $L^p$-boundedness ($2<p<\infty$) of the maximal operator $M_j^k$ for $j\geq 1$.

Since $A_{t,j}^k$ is localized to frequencies $|\tilde{\delta}_t\xi'|\approx 2^j$, combining  the method of Section \ref{nonvanishing} and Lemma \ref{Appendix}, then we will have
\begin{equation*}
\|M_j^k\|_{L^p\rightarrow L^p}\lesssim \|M_{j,loc}^k\|_{L^p\rightarrow
L^p},
\end{equation*}
where $M_{j,loc}^kf(y):=\sup_{t\in[1,2]}|A_{t,j}^kf(y)|$.

For fixed $t\in[1,2]$, let us estimate $\widehat{d\mu_{k,m}}(\tilde{\delta}_t\xi')$.

Set
\begin{equation}
\delta:=2^{-k},\hspace{0.3cm}s:=s(\xi',t)=-\frac{\xi_2}{t\xi_3},\hspace{0.2cm}\textrm{for}\hspace{0.2cm}\xi_3\neq 0,
\end{equation}
\begin{equation}
\Phi(s,x_2,\delta):=-sx_2+x_2^2\phi(\delta x_2).
\end{equation}

A similar argument as in Section \ref{nonvanishing} and Section \ref{vanishm>1} shows that we can reduce to considering for $ j\geq 1$, the $L^p$-boundedness of the operator $\widetilde{A_{t,j}^k}$ given by
 \begin{align*}
\widetilde{A_{t,j}^k}f(y)&:=\int_{\mathbb{R}}\eta_1(\frac{x_1}{t^{a_1}})\int
_{{\mathbb{R}}^2}e^{i(\xi'\cdot y'-t^2\xi_3\tilde{\Phi}(s,\delta))}\chi_{k,m}(\frac{\xi_2}{t\xi_3})
\frac{A_{k,m}(\tilde{\delta}_t\xi')}{(1+|\tilde{\delta}_t\xi'|)^{1/2}}
\beta(2^{-j}|\tilde{\delta}_t\xi'|)\\
&\quad \times f(y_1-x_1,\widehat{\xi'})d\xi'dx_1,
\end{align*}
where  $\tilde{\Phi}(s,\delta):=\Phi(s,\tilde{q}(s,\delta),\delta)$ and $x_2=\tilde{q}(s,\delta)$ is the solution of the equation\\
$\partial_2\Phi(s,x_2,\delta)=0$ and smoothly converges to the solution $\tilde{q}(s,0)=s$ of the equation $\partial_2\Phi(s,x,0)=0$  if we assume $\phi(0)=1/2$. The phase function can be written as
\begin{equation}\label{weneed}
-t^2\xi_3\tilde{\Phi}(s,\delta):=\frac{\xi_2^2}{2\xi_3}+(-1)^{m+1}\delta^m\frac{\phi^{(m)}(0)}{m!}\frac{\xi_2^{m+2}}{t^m\xi_3^{m+1}}+R(t,\xi',\delta),
\end{equation}
which is homogeneous of degree one and can be considered as a small perturbation of  $\frac{\xi_2^2}{2\xi_3}+(-1)^{m+1}\delta^m\frac{\phi^{(m)(0)}}{m!}\frac{\xi_2^{m+2}}{t^m\xi_3^{m+1}}$. $\chi_{k,m}$ is a smooth function supported in the interval $[c_{k,m},\tilde{c}_{k,m}]$, for certain non-zero constants $c_{k,m}$ and $\tilde{c}_{k,m}$ dependent only on $k$ and $m$. $A_{k,m}$ is a symbol of order zero and $\{A_{k,m}(\tilde{\delta}_t\xi')\}_k$ is contained in a bounded subset of symbols of order zero. Denote by $\widetilde{\mathcal{M}_{j,loc}^k}$ the corresponding maximal operator.

Following the argument of (\ref{local}), we can
choose a bump function $\rho_1 \in C_0^{\infty}(\mathbb{R}^2\times [1/2,4])$, by  Lemma \ref{lem:Lemma3}, we have
\begin{equation}
\begin{aligned}
\|\widetilde{\mathcal{M}_{j,loc}^k}f\|^p_{L^p}&\leq C_p\biggl(\int
_{{\mathbb{R}}^3}\int_{1/2}^4|\rho_1(y',t)\widetilde{A_{t,j}^k}f(y)|^pdtdy\biggl)^{1/p'}\\
&\quad \times \biggl(\int
_{{\mathbb{R}}^3}\int_{1/2}^4|\frac{\partial}{\partial t}(\rho_1(y',t)\widetilde{A_{t,j}^k}f(y))|^pdtdy\biggl)^{1/p}.
\end{aligned}
\end{equation}

Moreover, we choose a non-negative function $\tilde{\eta}_1\in C_0^{\infty}(\mathbb{R})$ such that $\eta_1=1$ on the support of $\eta_1$, then
\begin{equation*}
\frac{\partial}{\partial t}\left(\widetilde{A_{t,j}^k}f(y)\right)=\int_{\mathbb{R}}\tilde{\eta}_1(\frac{x_1}{t})\int
_{{\mathbb{R}}^2}e^{i(\xi'\cdot y'-t^2\xi_3\tilde{\Phi}(s,\delta))}h_{k,m}(y,t,j,\xi')
f(y_1-x_1,\widehat{\xi'})d\xi'dx_1,
\end{equation*}
where
\begin{align*}
h_{k,m}(y,t,j,\xi')&=\left(\frac{\partial}{\partial t}\rho_1(y',t)\eta_1(\frac{x_1}{t})-\rho_1(y',t)\frac{x_1}{t^2}\eta_1'(\frac{x_1}{t})-\rho_1(y',t)\eta_1(\frac{x_1}{t})\frac{\partial}{\partial t}(t^2\xi_3\tilde{\Phi}(s,\delta))\right)\\
&\quad \times \chi_{k,m}(\frac{\xi_2}{t\xi_3})
\frac{A_{k,m}(\tilde{\delta}_t\xi')}{(1+|\tilde{\delta}_t\xi'|)^{1/2}}
\beta(2^{-j}|\tilde{\delta}_t\xi'|)\\
&\quad -\rho_1(y',t)\eta_1(\frac{x_1}{t})\frac{\xi_2}{t^2\xi_3}\chi_{k,m}'(\frac{\xi_2}{t\xi_3})\frac{A_{k,m}(\tilde{\delta}_t\xi')}{(1+|\tilde{\delta}_t\xi'|)^{1/2}}
\beta(2^{-j}|\tilde{\delta}_t\xi'|)\\
&\quad +\rho_1(y',t)\eta_1(\frac{x_1}{t})\chi_{k,m}(\frac{\xi_2}{t\xi_3})\frac{\partial}{\partial t}\left(\frac{A_{k,m}(\tilde{\delta}_t\xi')}{(1+|\tilde{\delta}_t\xi'|)^{1/2}}\right)
\beta(2^{-j}|\tilde{\delta}_t\xi'|)\\
&\quad +\rho_1(y',t)\eta_1(\frac{x_1}{t})\chi_{k,m}(\frac{\xi_2}{t\xi_3})\frac{A_{k,m}(\tilde{\delta}_t\xi')}{(1+|\tilde{\delta}_t\xi'|)^{1/2}}
\frac{\partial}{\partial t}\left(\beta(2^{-j}|\tilde{\delta}_t\xi'|)\right).
\end{align*}
Since  $t\approx 1$, $x_1$ and support of $\chi_{k,m}$ are sufficiently small, by (\ref{weneed}), then $|t^2\xi_3\tilde{\Phi}(s,\delta)|\approx 2^{j}\delta^m$. Moreover, $\{A_{k,m}(\tilde{\delta}_t\xi')\}_k$ is contained in a bounded subset of symbols of order zero,  so we obtain
\begin{equation}
|h_{k,m}(y,t,j,\xi')|\lesssim 2^{j/2}\delta^m+2^{-j/2}.
\end{equation}
Now it is easy to see that
$(2^{j/2}\delta^m+2^{-j/2})^{-1}\frac{\partial}{\partial t}(\rho_1(y',t)\widetilde{A_{t,j}^k}f)$ behaves like $2^{j/2}\widetilde{A_{t,j}^k}f$. It is sufficient to estimate $\|\rho_1(y',t)\widetilde{A_{t,j}^k}f(y)\|_{L^p(\mathbb{R}^3\times[1/2,4],dt dy)}$.

Furthermore, choosing a function $\eta_0\in C_0^{\infty}(\mathbb{R})$, non-negative, such that for arbitrary $t\in [1,2]$, $a_1>0$, $\eta_1(\frac{x_1}{t^{a_1}})\leq \eta_0(x_1)$,  we get
 \begin{align*}
&\left|\int_{\mathbb{R}}\eta_1(\frac{x_1}{t^{a_1}})\int
_{{\mathbb{R}}^2}e^{i(\xi'\cdot y'-t^2\xi_3\tilde{\Phi}(s,\delta))}\chi_{k,m}(\frac{\xi_2}{t\xi_3})
\frac{A_{k,m}(\tilde{\delta}_t\xi')}{(1+|\tilde{\delta}_t\xi'|)^{1/2}}
\beta(2^{-j}|\tilde{\delta}_t\xi'|)f(y_1-x_1,\widehat{\xi'})d\xi'dx_1\right|\\
&\leq \int_{\mathbb{R}}\eta_0(x_1)\biggl|\int
_{{\mathbb{R}}^2}e^{i(\xi'\cdot y'-t^2\xi_3\tilde{\Phi}(s,\delta))}\chi_{k,m}(\frac{\xi_2}{t\xi_3})
\frac{A_{k,m}(\tilde{\delta}_t\xi')}{(1+|\tilde{\delta}_t\xi'|)^{1/2}}
\beta(2^{-j}|\tilde{\delta}_t\xi'|)f(y_1-x_1,\widehat{\xi'})d\xi'\biggl|dx_1.
\end{align*}

In order to apply the regularity estimate Lemma \ref{mlemmaL^2} for $j\leq 9km$ and the local smoothing estimate Theorem \ref{F_lambda} for $j>9km$ of the Fourier integral operators not satisfying the "cinematic curvature condition" uniformly, we freeze $x_1$, in fact, by  Minkowski's and Young's inequalities, we have
\begin{align*}
&\|\rho_1(y',t)\widetilde{A_{t,j}^k}f(y)\|_{L^p(\mathbb{R}^3\times[1/2,4],dtdy)}
\\
&\leq C\Bigg(\int
_{{\mathbb{R}}^3}\int\biggl(\rho_1(y',t)\int_{\mathbb{R}}\eta_0(x_1)\biggl|\int
_{{\mathbb{R}}^2}e^{i(\xi'\cdot y'-t^2\xi_3\tilde{\Phi}(s,\delta))}\chi_{k,m}(\frac{\xi_2}{t\xi_3})
\frac{A_{k,m}(\tilde{\delta}_t\xi')}{(1+|\tilde{\delta}_t\xi'|)^{1/2}}
\beta(2^{-j}|\tilde{\delta}_t\xi'|)\\
&\quad\times f(y_1-x_1,\widehat{\xi'})d\xi'\biggl|dx_1\biggl)^pdtdy\Biggl)^{1/p}
\\
&\leq C \Biggl\|\int_{\mathbb{R}}\eta_0(x_1)\biggl\|\rho_1(y',t)\int
_{{\mathbb{R}}^2}e^{i(\xi'\cdot y'-t^2\xi_3\tilde{\Phi}(s,\delta))}\chi_{k,m}(\frac{\xi_2}{t\xi_3})
\frac{A_{k,m}(\tilde{\delta}_t\xi')}{(1+|\tilde{\delta}_t\xi'|)^{1/2}}
\beta(2^{-j}|\tilde{\delta}_t\xi'|)\\
&\quad\times f(y_1-x_1,\widehat{\xi'})d\xi'
\biggl\|_{L^p({\mathbb{R}}^2\times[\frac{1}{2},4],dtdy')}dx_1
\Biggl\|_{L^p(\mathbb{R},dy_1)}\\
& \leq C\|\eta_0\|_{L^1}\Biggl\|\biggl\|\rho_1(y',t)\int
_{{\mathbb{R}}^2}e^{i(\xi'\cdot y'-t^2\xi_3\tilde{\Phi}(s,\delta))}\chi_{k,m}(\frac{\xi_2}{t\xi_3})
\frac{A_{k,m}(\tilde{\delta}_t\xi')}{(1+|\tilde{\delta}_t\xi'|)^{1/2}}
\beta(2^{-j}|\tilde{\delta}_t\xi'|)\\
&\quad \times f(y_1,\widehat{\xi'})d\xi'
\biggl\|_{L^p({\mathbb{R}}^2\times[\frac{1}{2},4],dtdy')}\Biggl\|_{L^p(\mathbb{R},dy_1)}.
\end{align*}

Finally, together with the arguments from  Section \ref{vanishm>1}, we finish the proof.


\subsection{Proofs for surfaces of finite type }


\subsubsection{Maximal function theorem  with $da_2\neq a_3$} \label{da2neqa3}

Theorem \ref{3vanish2} will be proved in this section.

First we assume $\eta(x)=\eta_1(x_1)\eta_2(x_2)$ with non-negative functions $\eta_1$,
$\eta_2$ $\in C_0^{\infty}({\mathbb{R}})$ and $f\geq
0$, $a_1=1$. Let $(y_2, y_3)=y'$ and $(\xi_2,\xi_3)=\xi'$. Denote  $(t^{a_2}\xi_2, t^{a_3}\xi_3)$ by
$\delta'_t\xi'$. We choose $B>0$ very small and  $\tilde{\rho}\in C_0^{\infty}(\mathbb{R})$ such that supp $\tilde{\rho}\subset\{x\in \mathbb{R}:B/2\leq|x|\leq 2B\}$  and $\sum_k\tilde{\rho}(2^kx)=1$.

Put
\begin{equation*}
A_tf(y):=\int_{\mathbb{R}^2}f(y_1-tx_1,y_2-t^{a_2}x_2,y_3-t^{a_3}x_2^d\Phi(x))\eta(x)dx:=\sum_k\widetilde{A_t^k}f(y),
\end{equation*}
where $\widetilde{A_t^k}f(y):=\int_{\mathbb{R}^2}f(y_1-tx_1,y_2-t^{a_2}x_2,y_3-t^{a_3}x_2^d\Phi(x))\eta(x)\tilde{\rho}(2^kx_2)dx$.

Define the isometric operator on $L^p(\mathbb{R}^3)$  by
\begin{equation*}
Tf(x_1,x_2,x_3)=2^{k(d+1)/p}f(x_1,2^kx_2,2^{dk}x_3).
\end{equation*}

By the arguments in Section \ref{nonvanishing},  it suffices to prove  the following inequality
\begin{equation*}
\sum_k2^{-k}\|\sup_{t>0}|A_t^k|\|_{L^p\rightarrow L^p}\leq C_p,
\end{equation*}
where
\begin{equation}
A_t^kf(y)=\int_{\mathbb{R}^2}f(y_1-tx_1,y_2-t^{a_2}x_2,y_3-t^{a_3}x_2^d\Phi(x_1,\frac{x_2}{2^k}))\eta_1(x_1)\eta_2(2^{-k}x_2)\tilde{\rho}(x_2)dx.
\end{equation}

 By means of the Fourier inversion formula, we can write
\begin{equation*}
 A_t^kf(y)=\frac{1}{(2\pi)^3}\int_{{\mathbb{R}}^3}e^{i\xi\cdot
  y}\int_{\mathbb{R}}e^{-it\xi_1x_1}\eta_1(x_1)\widehat{d\mu_{k,x_1,d}}(\delta'_t\xi')dx_1\hat{f}(\xi)d\xi,
\end{equation*}
where
\begin{align*}
 \widehat{d\mu_{k,x_1,d}}(\delta'_t\xi')&=\int_{\mathbb{R}}e^{-i(t^{a_2}\xi_2x_2+t^{a_3}\xi_3x_2^d\Phi(x_1,\frac{x_2}{2^k}))}\tilde{\rho}(x_2)\eta_2(2^{-k}x_2)dx_2.
\end{align*}

Choose a non-negative function $\beta\in C_0^{\infty}(\mathbb{R})$
such that
\begin{equation*}
\hspace{0.2cm}\textrm{supp} \hspace{0.2cm} \beta \subset[1/2,2] \hspace{0.5cm}\textrm{and}
\hspace{0.5cm}\sum_{j\in \mathbb{Z}}\beta(2^{-j}r)=1 \hspace{0.5cm}
\textrm{for} \hspace{0.5cm} r>0.
\end{equation*}
Define
\begin{equation*}
A_{t,j}^kf(y):= \frac{1}{(2\pi)^3}\int_{{\mathbb{R}}^3}e^{i\xi\cdot
  y}\int_{\mathbb{R}}e^{-it\xi_1x_1}\eta_1(x_1)\widehat{d\mu_{k,x_1,d}}(\delta'_t\xi')dx_1\beta(2^{-j}|\delta'_t\xi'|)\hat{f}(\xi)d\xi,
\end{equation*}
and denote by $M_j^k$ the corresponding maximal operator.

From the arguments of Section \ref{nonvanishing}, we see that
$\sup_{t>0}\left|\sum_{j\leq 0}^{\infty}A_{t,j}^kf(y)\right|$
can be dominated by the Hardy-Littlewood maximal function $Mf(y)$ defined by (\ref{maximalfuxntion}), then it suffices to prove that
\begin{equation}
\|M_j^k\|_{L^p\rightarrow L^p}\leq
C_p2^{-j\epsilon(p)},\hspace{0.3cm} j\geq 1, \hspace{0.2cm}2<p<\infty,
\hspace{0.2cm} \textrm{some} \hspace{0.2cm}\epsilon(p)>0.
\end{equation}

Since $A_{t,j}^k$ is localized to frequencies $|\delta_t'\xi'|\approx 2^j$,  we can still use Lemma \ref{Appendix}
to prove that
\begin{equation*}
\|M_j^k\|_{L^p\rightarrow L^p}\lesssim \|M_{j,loc}^k\|_{L^p\rightarrow
L^p},
\end{equation*}
where $M_{j,loc}^kf(y):=\sup_{t\in[1,2]}|A_{t,j}^kf(y)|$.

For fixed $t\in[1,2]$, let us estimate $\widehat{d\mu_{k,x_1,d}}(\delta'_t\xi')$.

Set
\begin{equation}
\delta:=2^{-k}, \hspace{0.2cm} s:=s(\xi',t)=-\frac{t^{a_2}\xi_2}{t^{a_3}\xi_3}, \hspace{0.3cm}\textrm{for} \hspace{0.2cm} \xi_3\neq 0,
\end{equation}
and
\begin{equation}
\Psi(x_1,x_2,s,\delta):=-sx_2+\Phi_k(x_1,x_2),
\end{equation}
where  $\Phi_k(x_1,x_2):=x_2^d\Phi(x_1,\delta x_2)$.

Since $x_2\approx 1$ here, in addition to $\Phi(0,0)\neq 0$, then we can reduce our proof to the case $d=2$. So from now on we can proceed similarly as in Section \ref{nonvanishing}.

This finishes the proof of Theorem \ref{3vanish2}.


\subsubsection{Maximal theorem with $da_2=a_3$}\label{da2=a_3}

Combining the proofs of Section \ref{dim32a2=a3} and Section \ref{vanishfinite}, it is easy to get Theorem \ref{vanish3}. We omit the details here.


\subsection{Proofs for surfaces not passing through the origin}\label{notpass1}

$(i)$ We will show the proof of Theorem \ref{corollary}, where $da_2\neq a_3$.

Following the proof of Section \ref{da2neqa3}, we only modify some places.

Define
\begin{equation}\label{Atl}
\begin{aligned}
A_{t,j}^kf(y)&:= \frac{1}{(2\pi)^3}\int_{{\mathbb{R}}^3}e^{i\langle\xi,
  (y_1,y_2,y_3-t^{a_3}2^{dk})\rangle}\int_{\mathbb{R}}e^{-it\xi_1x_1}\eta_1(x_1)\widehat{d\mu_{k,x_1,d}}(\delta'_t\xi')dx_1\\
&\quad\times\beta(2^{-j}|\delta'_t\xi'|)\hat{f}(\xi)d\xi.
\end{aligned}
\end{equation}

Set  $A_{t}^{k,0}f(y):=\sum_{j\leq 0}^{\infty}A_{t,j}^kf(y)$ and $M^{k,0}f(y):=\sup_{t>0}|A_{t}^{k,0}f(y)|$. It is easy to see that
\begin{equation}
A_{t}^{k,0}f(y)=f*K^{\sigma}_{\delta_{t^{-1}}}(y),
\end{equation}
where
$\sigma=(0,0,2^{dk})$ and $K^{\sigma}_{\delta_{t^{-1}}}(y)=t^{-Q}K^{\sigma}(t^{-1}y_1,t^{-a_2}y_2,t^{-a_3}y_3)$ and $K^{\sigma}$ is the translate
\begin{equation}\label{K1}
K^{\sigma}(y)=K(y-\sigma).
\end{equation}
Indeed, since
\begin{align*}
{}&\frac{1}{(2\pi)^3}\int_{{\mathbb{R}}^3}e^{i\langle\xi,
  (y_1,y_2,y_3-t^{a_3}2^{dk})\rangle}\int_{\mathbb{R}}e^{-it\xi_1x_1}\eta_1(x_1)\widehat{d\mu_{k,x_1,d}}(\delta'_t\xi')dx_1\rho(|\delta'_t\xi'|)d\xi
\\
&=\frac{1}{(2\pi)^2}\int_{\mathbb{R}}\eta_1(x_1)\int_{{\mathbb{R}}^2}e^{i\langle\xi',
  (y_2,y_3-t^{a_3}2^{dk})\rangle}\widehat{d\mu_{k,x_1,d}}(\delta'_t\xi')\rho(|\delta'_t\xi'|)d\xi'\delta_0(y_1-tx_1)dx_1
\\
&=\frac{1}{(2\pi)^2}\frac{1}{t}\eta_1(\frac{y_1}{t})\int_{{\mathbb{R}}^2}e^{i\langle\xi',
  (y_2,y_3-t^{a_3}2^{dk})\rangle}\widehat{d\mu_{k,\frac{y_1}{t},d}}(\delta'_t\xi')\rho(|\delta'_t\xi'|)d\xi'
\\
&=\frac{1}{(2\pi)^2}t^{-Q}\eta_1(\frac{y_1}{t})\int_{{\mathbb{R}}^2}e^{i\langle\xi',\delta'_{t^{-1}}y'-(0,2^{dk})\rangle
  }\widehat{d\mu_{k,\frac{y_1}{t},d}}(\xi')\rho(|\xi'|)d\xi'
\\
&=K^{\sigma}_{\delta_{t^{-1}}}(y),
\end{align*}
then
\begin{equation*}
K(y):=\frac{1}{(2\pi)^2}\eta_1(y_1)\int_{{\mathbb{R}}^2}e^{i\xi'\cdot
  y'}\widehat{d\mu_{k,y_1,d}}(\xi')\rho(|\xi'|)d\xi',
\end{equation*}
and by integration by parts,
\begin{equation}\label{K2}
|K(y)|\leq C_N(1+|y|)^{-N}, N\in \mathbb{N}.
\end{equation}

Now, we choose $N$ large enough, (\ref{K1}) and (\ref{K2}) show that
\begin{equation}\label{l1}
\|M^{k,0}\|_{L^{\infty}\rightarrow L^{\infty}}\leq C,
\end{equation}
with a constant $C$ which does not depend on $\sigma$. Moreover, scaling by the factor $2^{-kd}$ in the direction of the vector $\sigma$, we see that
\begin{equation}\label{l2}
\|M^{k,0}\|_{L^1\rightarrow L^{1,\infty}}\leq C 2^{kd},
\end{equation}
since we can compare with $2^{kd}M$, where $M$ is the Hardy-Littlewood maximal operator defined by (\ref{maximalfuxntion}).

Finally,  by interpolation between (\ref{l1}) and (\ref{l2}), we obtain that
\begin{equation}
 \|M^{k,0}\|_{L^{p}\rightarrow L^{p}}\leq C 2^{kd/p}, \hspace{0.4cm}  p>1.
\end{equation}
Then $p>d$ implies that $\sum_k2^{-k}\|M^{k,0}\|_{L^p\rightarrow L^p}\leq C\sum_k2^{k(d/p-1)}\lesssim 1$.

Hence, it suffices to prove that
\begin{equation*}
\|M_j^k\|_{L^p\rightarrow L^p}\leq
C_p2^{-j\epsilon(p)}2^{kd/{p}},\hspace{0.2cm} j\geq 1, \hspace{0.2cm}d<p<\infty,
\hspace{0.2cm} \textrm{some }\hspace{0.2cm}\epsilon(p)>0,
\end{equation*}
where $M_j^kf(y):=\sup_{t>0}|A_{t,j}^kf(y)|$.

Since $A_{t,j}^k$ is localized to frequencies $|\delta_t'\xi'|\approx 2^j$, we still use Lemma \ref{Appendix}
to prove that
\begin{equation*}
\|M_j^k\|_{L^p\rightarrow L^p}\lesssim \|M_{j,loc}^k\|_{L^p\rightarrow
L^p},
\end{equation*}
where $M_{j,loc}^kf(y):=\sup_{t\in[1,2]}|A_{t,j}^kf(y)|$.

Furthermore, a standard application of the method of stationary
phase requires us to show the $L^p$-boundedness of the operator $\widetilde{\mathcal{M}_{j,loc}^{k,1}}$ given by
\begin{equation}\label{M}
\begin{aligned}
\widetilde{\mathcal{M}_{j,loc}^{k,1}}f(y)&:=\sup_{t\in[1,2]}|\widetilde{A_{t,j}^{k,1}}f(y)|\\
&=\sup_{t\in[1,2]}\biggl|\int_{\mathbb{R}}\eta_1(\frac{x_1}{t})\int
_{{\mathbb{R}}^2}e^{iQ_{k,x_1,d}(y',t,\xi')}E_{k,x_1/t,d}
(\delta'_t\xi')dx_1\\
&\quad\times\beta(2^{-j}|\delta'_t\xi'|)f(y_1-x_1,\widehat{\xi'})d\xi'dx_1\biggl|,
\end{aligned}
\end{equation}
where
\begin{equation}
Q_{k,x_1,d}(y',t,\xi'):=\langle\xi', (y_2,y_3-t^{a_3}2^{dk})\rangle-t^{a_3}\xi_3\tilde{\Psi}(\frac{x_1}{t},s,\delta),
\end{equation}
\begin{equation}
\tilde{\Psi}(\frac{x_1}{t},s,\delta):=\Psi(\frac{x_1}{t},\psi(\frac{x_1}{t},s,\delta),s,\delta),
\end{equation}
and
\begin{equation}
\Psi(x_1,x_2,s,\delta):=-sx_2+x_2^d\Phi(x_1,\delta x_2).
\end{equation}

By Lemma \ref{lem:Lemma3}, we have
\begin{equation*}
\|\widetilde{\mathcal{M}_{j,loc}^{k,1}}f\|^p_{L^p}\leq C_p\biggl(\int
_{{\mathbb{R}}^3}|\rho(t)\widetilde{A_{t,j}^{k,1}}f(y)|^p\biggl)^{1/p'}\biggl(\int
_{{\mathbb{R}}^3}|\frac{\partial}{\partial t}(\rho(t)\widetilde{A_{t,j}^{k,1}}f(y))|^p\biggl)^{1/p}.
\end{equation*}

Since
\begin{align*}
\left|\frac{\partial}{\partial t}Q_{k,x_1,d}(y',t,\xi'))\right|&=\left|-a_3t^{a_3-1}\xi_32^{kd}+\xi_3\frac{\partial}{\partial t}(t^{a_3}\tilde{\Psi}(\frac{x_1}{t},s,\delta))\right|\leq C2^{kd}|\xi_3|\approx 2^{dk}2^j,
\end{align*}
then
$\frac{\partial}{\partial t}(\rho(t)\widetilde{A_{t,j}^{k,1}})$ behaves like $2^{kd}2^{j}\widetilde{A_{t,j}^{k,1}}$. Clearly we  only consider the $L^p$-estimate for the operator $\widetilde{A_{t,j}^{k,1}}$.

The same estimates will follow from the proof of Section \ref{da2neqa3},
by the assumption $p>d$, finally we obtain
\begin{align*}
&\sum_k2^{-k}\sum_j\|\widetilde{\mathcal{M}_{j,loc}^{k,1}}\|_{L^p\rightarrow L^p}\\
&\leq C_p\sum_k2^{-k}\sum_j\biggl( 2^{-j(1/p+\epsilon(p))}\|\eta_0\|_{L^1}\biggl)^{(p-1)/p}\biggl( 2^{kd}2^{-j(-1+1/p+\epsilon(p))}\|\eta_0\|_{L^1}\biggl)^{1/p}
\\
&= C_p\sum_k2^{-k(1-d/p)}\sum_j2^{-jp\epsilon(p)}\|\eta_0\|_{L^1}\leq C_p.
\end{align*}

We have finished the proof of Theorem \ref{corollary}.

$(ii)$ Theorem \ref{vanishnotorigin} will follow from the argument of Section \ref{da2=a_3} and $(i)$, here we assume that $a_2=1$, $a_3=d$ and $1\leq m<\infty$. We just give main steps to clarify the proof.

Let $\phi(0)=1/d$, $s=-\frac{\xi_2}{t^{d-1}\xi_3}$ and $\tilde{\Phi}(s,\delta):=\Phi(s,\tilde{q}(s,\delta),\delta)$. Standard application of the method of stationary phase requires us to show for $j>0$, the $L^p$-boundedness of the operator $\widetilde{\mathcal{M}_{j,loc}^{k,1}}$ given by
\begin{equation}
\begin{aligned}
\widetilde{\mathcal{M}_{j,loc}^{k,1}}f(y)&:=\sup_{t\in[1,2]}|\widetilde{A_{t,j}^{k,1}}f(y)|\\
&=\sup_{t\in[1,2]}\biggl|\int_{\mathbb{R}}\eta_1(\frac{x_1}{t^{a_1}})\int
_{{\mathbb{R}}^2}e^{i\langle\xi',(y_2,y_3-t^d2^{kd})\rangle-t^d\xi_3\tilde{\Phi}(s,\delta)}E_{k,d,m}
(\delta'_t\xi')\\
&\quad\times\beta(2^{-j}|\delta'_t\xi'|)f(y_1-x_1,\widehat{\xi'})d\xi'dx_1\biggl|,
\end{aligned}
\end{equation}
where
\begin{align*}
-t^d\xi_3\tilde{\Phi}(s,\delta)&=-t^d\xi_3\Phi(s,\tilde{q}(s,\delta),\delta)\\
&=\biggl(\frac{1}{d^{\frac{d}{d-1}}}-\frac{1}{d^{\frac{1}{d-1}}}\biggl)\biggl(-\frac{d\xi_2^d}{\xi_3}\biggl)^{\frac{1}{d-1}}
-\frac{\delta^m\phi^{(m)}(0)}{t^mm!}
\biggl(-\frac{\xi_2}{\xi_3^{\frac{m+1}{m+d}}}\biggl)^{\frac{d+m}{d-1}}+ R(t,\xi',\delta,d),
\end{align*}
where $R(t,\xi,\delta,d)$ is homogeneous of degree one in $\xi$ and has at least $m+1$ power of $\delta$.


The similar argument with (\ref{local}) allows us to choose $\rho_1\in C_0^{\infty}(\mathbb{R}^2\times[1/2,4])$, by Lemma \ref{lem:Lemma3}, we have
\begin{align*}
\|\widetilde{\mathcal{M}_{j,loc}^{k,1}}f\|_{L^p}^p&\leq C_p\biggl(\int_\mathbb{R}^3\int_{1/2}^4\biggl|\rho_1(y',t)\widetilde{A_{t,j}^{k,1}}f(y)\biggl|^pdtdy\biggl)^{1/p'}\\
&\quad\times\biggl(\int_\mathbb{R}^3\int_{1/2}^4|\frac{\partial}{\partial t}(\rho_1(y',t)\widetilde{A_{t,j}^{k,1}}f(y))\biggl|^pdtdy\biggl)^{1/p'}.
\end{align*}

Since \begin{equation}\label{h_k_m}
\frac{\partial}{\partial t}(\rho_1(y',t)\widetilde{A_{t,j}^{k,1}}f(y))=\int_{{\mathbb{R}}^2}e^{i(\xi \cdot y-t^d\xi_2\tilde{\Phi}(s,\delta))}h_{k,m,d}(y,t,\xi,j)\hat{f}(\xi)d\xi,
\end{equation}
where $|h_{k,m,d}(y,t,\xi,j)|\lesssim  2^{j/2}2^{kd}$.

So we still have the same regularity estimates as (\ref{mlemmaL^2}), i.e.
\begin{equation*}
\|\widetilde{\mathcal{M}_{j,loc}^{k,1}}\|_{L^2\rightarrow L^2}\leq C2^{kd/2},
\end{equation*}
 and
\begin{equation*}
\|\widetilde{\mathcal{M}_{j,loc}^{k,1}}\|_{L^{\infty}\rightarrow L^{\infty}}\leq C.
\end{equation*}
Employing interpolation theorem, together the assumption $p>d$ and $k$ sufficiently big, then we have
\begin{align*}
\sum_k2^{-k}\sum_{0<j\leq 9km}\|\widetilde{\mathcal{M}_{j,loc}^{k,1}}\|_{L^p\rightarrow L^p}
&\leq C\sum_k2^{-k}\sum_{0<j\leq 9km}2^{kd/p}\\
&\leq C\sum_k2^{-k(1-d/p)}km \leq Cm,
\end{align*}
for $2< p<\infty$.

Based on these arguments, for the part of $j>9km$, combing the argument of Section 2.3.2 and the assumption $p>d$, we can get the result.

\section[Appendix]{Appendix.}

\begin{prop}
If $1\leq m<\infty$, then $p>2$ is a necessary condition for  the maximal inequality (\ref{equ:planem=1}).
\end{prop}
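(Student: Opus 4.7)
The plan is to construct a Knapp-type family of test functions $\{f_\delta\}_{\delta > 0}$ and show that the ratio $\|\mathcal{M}f_\delta\|_p/\|f_\delta\|_p$ diverges as $\delta \to 0^+$ unless $p \geq 2$; the strict inequality $p > 2$ then follows by a standard endpoint sharpening (logarithmic superposition of scaled Knapp copies, using the scaling invariance of $\mathcal{M}$ under $(y_1,y_2)\mapsto(sy_1,s^2y_2)$).

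I take $f_\delta = \chi_{R_\delta}$ where $R_\delta = \{(u,v) : u \in [c_1,c_2],\ |v - \phi(0)u^2| < \delta\}$ is a $\delta$-thin tubular neighborhood of an arc of the ``limit parabola'' $v = \phi(0)u^2$, with $0 < c_1 < c_2$ chosen compatibly with $\mathrm{supp}\,\eta$; then $\|f_\delta\|_p^p = |R_\delta| \sim \delta$. For each $y = (y_1,y_2)$ and $t > 0$, the translated dilated curve $x \mapsto y - \delta_t\gamma(x)$, rewritten in $u = y_1 - tx$, becomes the downward parabola $v(u) = y_2 - (y_1-u)^2\phi((y_1-u)/t)$, with vertex $(y_1,y_2)$ and horizontal tangent. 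Using $\phi(s) = \phi(0) + \frac{\phi^{(m)}(0)}{m!}s^m + O(s^{m+1})$, the tangency equations $D(u_0) = D'(u_0) = 0$ for $D(u) := v(u) - \phi(0)u^2$ at some $u_0 \in [c_1,c_2]$ are solved to leading order by $u_0 = y_1/2$ together with
\begin{equation*}
y_2 \;=\; \frac{\phi(0)}{2}\,y_1^2 \;+\; \frac{\phi^{(m)}(0)}{m!\cdot 2^{m+1}}\,\frac{y_1^{m+2}}{t^m} \;+\; O\!\left(\frac{y_1^{m+3}}{t^{m+1}}\right).
\end{equation*}
Since $D''(u_0) \approx -4\phi(0) \neq 0$, the translated curve lies within vertical distance $\delta$ of $v = \phi(0)u^2$ on the interval $|u - u_0| \lesssim \sqrt{\delta}$, producing a tangent contact of $u$-length $\sim\sqrt{\delta}$.

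Next I exploit the freedom in $t$: because $\phi^{(m)}(0) \neq 0$, letting $t$ range over a fixed compact subinterval of $(0,\infty)$ sweeps a $y_2$-interval of length $\gtrsim y_1^{m+2}$, so varying $(y_1,t)$ jointly carves out a two-dimensional tangent region $S_\delta \subset \mathbb{R}^2$ of positive measure $|S_\delta|\gtrsim 1$ (in constants depending only on $\phi$, $m$, and $\mathrm{supp}\,\eta$, but not on $\delta$). On $S_\delta$, the tangent arc of $u$-length $\sqrt{\delta}$ corresponds via $\Delta x = \Delta u/t$ to an $x$-interval of length $\sim\sqrt{\delta}$ contained in $\mathrm{supp}\,\eta$, so
\begin{equation*}
\mathcal{M}f_\delta(y) \;\geq\; \int_{|x - x_0|\,\lesssim\,\sqrt{\delta}} \eta(x)\,dx \;\gtrsim\; \sqrt{\delta} \qquad\text{for every } y \in S_\delta.
\end{equation*}
Hence $\|\mathcal{M}f_\delta\|_p^p \gtrsim \delta^{p/2}$ while $\|f_\delta\|_p^p \sim \delta$, so the assumed inequality (\ref{equ:planem=1}) forces $\delta^{(p-2)/2} \lesssim 1$ uniformly in $\delta \in (0,1]$, which gives $p \geq 2$, and after the endpoint sharpening, $p > 2$.

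The main obstacle will be the uniform control of the tangent contact: I must verify that the quadratic non-degeneracy $D''(u_0) \approx -4\phi(0)$ persists as $y$ ranges over $S_\delta$ and $t$ varies in the admissible range, so that the contact length is genuinely $\sim \sqrt{\delta}$ and not smaller. This reduces to careful bookkeeping of the error terms in the Taylor expansion of $\phi$ near $0$, using only $\phi(0)\neq 0$, $\phi^{(m)}(0)\neq 0$, and $\phi \in C^\infty$, with the choice of the small parameter defining $\mathrm{supp}\,\eta$ made accordingly.
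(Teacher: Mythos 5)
Your single-scale Knapp computation is plausible in outline (and genuinely different in flavor from the paper's argument, which sends the curve through a point singularity rather than tangentially along a thickened parabola), but by itself it only yields $\delta^{1/2}\lesssim C\,\delta^{1/p}$ for all small $\delta$, i.e. $p\geq 2$. The entire content of the proposition is the failure \emph{at} $p=2$, and that is exactly the step you have not supplied. The one-line upgrade you propose --- a ``logarithmic superposition of scaled Knapp copies, using the scaling invariance of $\mathcal{M}$ under $(y_1,y_2)\mapsto(sy_1,s^2y_2)$'' --- cannot work as stated, for a structural reason: since $\mathcal{M}$ commutes with these dilations, each rescaled copy $g_k=a_k\,f_{\delta}(s_k\,\cdot_1,s_k^2\,\cdot_2)$ produces exactly the same $L^2$-ratio as the single-scale example, and both its support and its tangency region are the $s_k^{-1}$-parabolic rescalings of the originals, hence essentially pairwise disjoint for lacunary $s_k$. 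For nonnegative, essentially disjointly supported pieces the lower bounds your construction provides give $\|\mathcal{M}\sum_k g_k\|_2^2\gtrsim\sum_k a_k^2\delta s_k^{-3}$ while $\|\sum_k g_k\|_2^2\approx\sum_k a_k^2\delta s_k^{-3}$, so the certified ratio stays $O(1)$: no divergence at $p=2$ can be extracted this way. The obstruction is geometric: the translated dilated curve $u\mapsto y_2-(y_1-u)^2\phi((y_1-u)/t)$ and the limit parabola have second derivatives of opposite sign, so for a single admissible pair $(y,t)$ there is only one tangential contact; one value of $t$ can never collect contributions from Knapp caps sitting at many different parabolic scales, yet accumulating many scales at a single $(y,t)$ is precisely what is needed to break $L^2$.

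What does work --- and is what the paper does --- is to make one curve, for one choice $t=t(y)$, pick up a logarithmically divergent integral. The paper takes $f(x)=\chi_{[-1/2,1/2]^2}(x)\,|x|^{-1}(\log\frac{1}{|x|})^{-1}\in L^2$, and for $y$ in a set of positive measure (this is where $1\leq m<\infty$ enters, through solvability of $\phi(a)=y_2/y_1^2$ on $\mathrm{supp}\,\eta$) chooses $t_0=y_1/a$ so that the curve passes through the singular point; near $x=a$ the integrand dominates $c\,(z\log\frac{1}{z})^{-1}$, whose integral diverges, so $\mathcal{M}f\equiv\infty$ there and (\ref{equ:planem=1}) fails for every $p\leq 2$. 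If you prefer to stay inside your tangency framework, the correct ``logarithmic superposition'' is over thicknesses of the \emph{same} arc, not over parabolic rescalings: take $f=\sum_k \frac{2^{k/2}}{k}\chi_{R_{2^{-k}}}$, i.e. essentially $f(x)\approx d(x)^{-1/2}\bigl(\log\frac{1}{d(x)}\bigr)^{-1}$ with $d(x)=\mathrm{dist}\bigl(x,\{v=\phi(0)u^2\}\bigr)$, which lies in $L^2$; for $y$ in your tangency region the same pair $(y,t)$ gives contact of length $\sim 2^{-k/2}$ with every layer, whence $\mathcal{M}f(y)\gtrsim\sum_k k^{-1}=\infty$, and the endpoint is excluded. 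Without one of these two devices, your proposal establishes only $p\geq 2$, not the stated necessity of $p>2$.
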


\begin{proof}

Let $f(x)=\frac{\chi_{[-1/2,1/2]^2}(x)}{|x|\log\frac{1}{|x|}}$. It's clear that $f\in L^p(\mathbb{R}^2)$ if $p\leq 2$.

 Now we take $\epsilon<\frac{1}{100}$ such that $[\epsilon/2,4\epsilon]\subset $ supp $\eta$, then fix it. We may assume that $\phi(0)>0$, $\phi^{(j)}(0)=0$, $j=1, 2,\cdots, m-1$, and $\phi^{(m)}(0)>0$. Let $y_1\in [1/8,1/4]$ and $y_1^2\phi(\epsilon)\leq y_2\leq y_1^2\phi(2\epsilon)$. By the inverse function theorem, we can set $a:=a(y)=\phi^{-1}(y_2/y_1^2)$,
 then $\epsilon\leq a\leq 2\epsilon$  (otherwise, if we assume $\phi(0)>0$ and $\phi^{(m)}(0)<0$, since $\phi'(x)=\frac{\phi^{(m)}(0)}{(m-1)!}x^{m-1}+\mathcal{O}(x^{m})$, then $\phi(x)$ is strictly decreasing in the interval  $[\epsilon/2,4\epsilon]$. In this case, we take $y_1^2\phi(2\epsilon)\leq y_2\leq y_1^2\phi(\epsilon)$). We choose $\epsilon_1>0$ such that $\epsilon_1\ll\epsilon$, then $\{x:|x-a|<\epsilon_1\}\subset$ supp $\eta$.  Now we have
\begin{align*}
Mf(y)&=\sup_{t>0}\left|\int f(y_1-tx,y_2-t^2x^2\phi(x))\eta(x)dx\right|\\
&=
\sup_{t>0}\int\frac{\chi_{[-1/2,1/2]^2}(y_1-tx,y_2-t^2x^2\phi(x))}{|(y_1-tx,y_2-t^2x^2\phi(x))|\log\frac{1}{|(y_1-tx,y_2-t^2x^2\phi(x))|}}\eta(x)dx\\
& \geq
\int_{\{x:|x-a|<\epsilon_1\}}\frac{\chi_{[-1/2,1/2]^2}(y_1-t_0x,y_2-t_0^2x^2\phi(x))}{|(y_1-t_0x,y_2-t_0^2x^2\phi(x))
|\log\frac{1}{|(y_1-t_0x,y_2-t_0^2x^2\phi(x))|}}dx,
\end{align*}
where $t_0=y_1/a$. We also observe that $\frac{1}{16\epsilon}\leq t_0\leq \frac{1}{4\epsilon}$.

If $|x-a|<\epsilon_1$, then $x$ can be written as $x=a+z=\frac{y_1}{t_0}+z$, $|z|<\epsilon_1$, which gives
$|y_1-t_0x|=t_0|x-\frac{y_1}{t_0}|=t_0|z|\leq \frac{\epsilon_1}{4\epsilon}<1/4$, and $|y_2-t_0^2x^2\phi(x))|$ is equal to
\begin{align*}
&|y_2-\frac{y_1^2}{a^2}(a+z)^2(\phi(a)+\phi'(a)z+\mathcal{O}(z^2))|\\
& =|y_2-\frac{y_1^2}{a^2}a^2\phi(a)-\frac{y_1^2}{a^2}(2az+z^2)\phi(a+z)-\frac{y_1^2}{a^2}a^2
(\phi'(a)z+\mathcal{O}(z^2))|\\
&=|\frac{y_1^2}{a^2}(2az+z^2)\phi(a+z)
+y_1^2
(\phi'(a)z+\mathcal{O}(z^2))|\\
&=t_0|z||y_1(2+\frac{z}{a})\phi(a+z)+ay_1(\phi'(a)+\mathcal{O}(z))|
\end{align*}
\begin{align*}
&\leq \frac{\epsilon_1}{4\epsilon}[\frac{1}{4}(2+\frac{\epsilon_1}{\epsilon})\phi(4\epsilon)+\frac{\epsilon}{2}(\phi(a)+\mathcal{O}(\epsilon_1))]t_0|z|<1/4.
\end{align*}

Set $g(z)=y_1(2+\frac{z}{a})\phi(a+z)+ay_1(\phi'(a)+\mathcal{O}(z))$ which is bounded from above by, say $\tilde{C}$. From the above argument, we have $(y_1-t_0x,y_2-t_0^2x^2\phi(x))\in [-1/2,1/2]$. Then making a change of variables, we have
\begin{align*}
Mf(y)&\geq
\int_{\{x:|x-a|<\epsilon_1\}}\frac{dx}{|(y_1-t_0x,y_2-t_0^2x^2\phi(x))
|\log\frac{1}{|(y_1-t_0x,y_2-t_0^2x^2\phi(x))|}}\\
&\geq \int_0^{\epsilon_1}\frac{dz}{t_0z|(1,g(z))|
\log\frac{1}{t_0z|(1,g(z))|}}.
\end{align*}

Let $C=\sqrt{1+\tilde{C}^2}$. It is a fact that $\frac{1}{x\log\frac{1}{x}}$ decrease monotonically in the neighborhood of the origin.  Together with the above estimates, we will have
\begin{align*}
\frac{1}{t_0z|(1,g(z))|
\log\frac{1}{t_0z|(1,g(z))|}} \geq \frac{1}{Ct_0z
\log\frac{1}{Ct_0z}}.
\end{align*}

Finally, for the fixed $\epsilon$ and $\epsilon_1 \ll \epsilon$, we have
\begin{align*}
Mf(y)\geq \int_0^{\epsilon_1}\frac{dz}{Ct_0z
\log\frac{1}{Ct_0z}}\geq \frac{4\epsilon}{C}\int_0^{C\epsilon_1/(16\epsilon)}\frac{dz}{z\log\frac{1}{z}}=\infty.
\end{align*}
\end{proof}

\begin{flushleft}
\vspace{0.3cm}\textsc{Wenjuan Li\\Mathematisches Seminar\\ Christian-Albrechts-Universit\"{a}t zu Kiel\\ Ludewig-Meyn-Stra{\ss}e 4,\\D-24098\\Kiel, Germany\\Current Address\\School of Natural and Applied Sciences\\Northwest Polytechnical University\\710129\\Xi'an, People's Republic of China}

\emph{E-mail address}: \textsf{liwj@nwpu.edu.cn}

\end{flushleft}

\end{document}